\documentclass[10pt,a4paper]{article}

\usepackage{geometry}
\usepackage{amsmath}
\usepackage{amssymb}
\usepackage{amsthm}
\usepackage{bbm}
\usepackage{mathtools}
\usepackage{stmaryrd}
\usepackage[all]{xy}
\usepackage{hyperref}
\usepackage{graphicx}
\usepackage[utf8]{inputenc}
\usepackage{blindtext}
\usepackage[T1]{fontenc}
\usepackage[document]{ragged2e}
\usepackage{subcaption}
\usepackage{placeins}
\usepackage[main=english]{babel}
\usepackage{enumitem}
\usepackage{float}
\usepackage{tikz}
\usetikzlibrary{calc}
\usetikzlibrary{patterns}
\usepackage{comment}
\usepackage{multirow}
\usepackage{hhline}
\usepackage{pdfpages}
\usepackage{cleveref}

\numberwithin{equation}{section}

\usepackage[
backend=bibtex,
autocite=inline,
autolang=hyphen,
autopunct,
style=numeric,
sorting=nty,
giveninits=true,
doi=false,
url=true,
isbn=false,
maxcitenames=50,
maxbibnames = 50,
]{biblatex}
\AtEveryBibitem{\clearfield{note}} 
\AtEveryBibitem{\clearfield{volume}}
\AtEveryBibitem{\clearfield{number}}  
\renewbibmacro{in:}{%
  \ifentrytype{article}{}{\printtext{\bibstring{in}\intitlepunct}}}

\newtheorem{theorem}{Theorem}[section]
\newtheorem{corollary}[theorem]{Corollary}
\newtheorem{definition}[theorem]{Definition}
\newtheorem{lemma}[theorem]{Lemma}
\newtheorem{remark}[theorem]{Remark}

\newtheorem{proposition}[theorem]{Proposition}
\newtheorem{notation}[theorem]{Notation}

\newtheorem{convention}[theorem]{Convention}

\makeatletter
\def\moverlay{\mathpalette\mov@rlay}
\def\mov@rlay#1#2{\leavevmode\vtop{%
   \baselineskip\z@skip \lineskiplimit-\maxdimen
   \ialign{\hfil$\m@th#1##$\hfil\cr#2\crcr}}}
\newcommand{\charfusion}[3][\mathord]{
    #1{\ifx#1\mathop\vphantom{#2}\fi
        \mathpalette\mov@rlay{#2\cr#3}
      }
    \ifx#1\mathop\expandafter\displaylimits\fi}
\makeatother

\newcommand{\supp}{\operatorname{supp}}
\newcommand{\CR}{\operatorname{CR}}
\newcommand{\vertiii}[1]{{\left\vert\kern-0.25ex\left\vert\kern-0.25ex\left\vert #1 
    \right\vert\kern-0.25ex\right\vert\kern-0.25ex\right\vert}}
\newcommand{\range}[1]{{\mathbb{N}_{\leq #1}}}
\newcommand{\multirange}[2]{{\mathbb{N}_{\leq #2}^{#1}}}
\newcommand{\llbrace}{\{ \! \! \{}
\newcommand{\rrbrace}{\} \!\! \}}

\def\XXint#1#2#3{{\setbox0=\hbox{$#1{#2#3}{\int}$ }
\vcenter{\hbox{$#2#3$ }}\kern-.6\wd0}}

\title{Discrete reliability for high-order Crouzeix--Raviart finite elements}
\author{Bohne, Nis-Erik\footnote{nis-erik.bohne@math.uzh.ch, Universität Zürich, Institut für Mathematik, Winterthurerstrasse 190, CH-8057 Zürich} and Sauter, Stefan A.\footnote{stas@math.uzh.ch, Universität Zürich, Institut für Mathematik, Winterthurerstrasse 190, CH-8057 Zürich}}
\date{17. Feb. 2026}

\begin{document}
\maketitle
\justifying
\begin{abstract}
In this paper, the adaptive numerical solution of a 2D Poisson model problem
by Crouzeix-Raviart elements ($\operatorname*{CR}_{k}$ $\operatorname*{FEM}$)
of arbitrary odd degree $k\geq1$ is investigated. The analysis is based on an
established, abstract theoretical framework: the \textit{axioms of adaptivity}
imply optimal convergence rates for the adaptive algorithm induced by a
residual-type a posteriori error estimator. Here, we introduce the error
estimator for the $\operatorname*{CR}_{k}$ $\operatorname*{FEM}$
discretization and our main theoretical result is the proof ot Axiom 3:
\textit{discrete reliability}. This generalizes results for adaptive lowest
order $\operatorname*{CR}_{1}$ $\operatorname*{FEM}$ in the literature. For
this analysis, we introduce and analyze new local quasi-interpolation
operators for $\operatorname*{CR}_{k}$ $\operatorname*{FEM}$ which are key for
our proof of discrete reliability. We present the results of numerical
experiments for the adaptive version of $\operatorname*{CR}_{k}$
$\operatorname*{FEM}$ for some low and higher (odd) degrees $k\geq1$ which
illustrate the optimal convergence rates for all considered values of $k$.
\end{abstract}

\textbf{AMS-Classification:} 65N30, 65N50

\textbf{Key words:} Discrete reliability, non conforming Crouzeix-Raviart elements, quasi-interpolation operators, axioms of adaptivity

\section{Introduction}
\label{Sec:Introduction}

Let $\Omega \subseteq \mathbb{R}^2$ be a bounded polygonal Lipschitz domain. We consider the Galerkin discretization of the Poisson equation
\begin{align}
- \Delta u & = f
\qquad \text{in } \Omega 
\end{align}
satisfying homogeneous Dirichlet boundary conditions, by the non-conforming Crouzeix-Raviart finite element ($\CR_k$ FEM) of arbitrary odd degree $k \geq 1$.
The Crouzeix-Raviart element was first introduced in \cite{CrouzeixRaviart} as a finite dimensional function space $W_h$ over a triangulation $\mathcal{T}$ by imposing orthogonality conditions of the jumps across interelement boundaries. This provides substantial freedom in designing concrete Crouzeix-Raviart type elements. Some examples of such elements in 2D include, but are not limited to, the lowest order Crouzeix-Raviart element \cite{CrouzeixRaviart} for degree $k=1$ (see, e.g, \cite{Brenner_Crouzeix} for a survey), the Fortin-Soulie element \cite{Fortin_Soulie} for $k = 2$, the Crouzeix-Falk element \cite{Crouzeix_Falk} for $k=3$, the Gauss-Legendre elements \cite{Baran_Stoyan} for $k \geq 4$ and of course the standard conforming $hp$-finite elements. Furthermore in \cite{ChaLeeLee} the cases $k = 4,6$ are discussed for the Stokes equation in 2D.
Some examples of 3D Crouzeix-Raviart elements include the linear $\CR_1$ FEM \cite{CrouzeixRaviart} and \cite{Fortin_d3} provides a local basis construction for quadratic Crouzeix-Raviart elements. In \cite{CDS} a family of maximal Crouzeix-Raviart elements which allow a local basis is presented and most recently, the work \cite{BohneSauterCiarletdD} extends the 2D Crouzeix-Raviart elements to any space dimension $d \geq 2$ for any degree $k \geq 1$ and provides a family of local interpolations operators for odd degrees $k \geq 1$.


In this paper, the adaptive version of the Crouzeix-Raviart finite element method ($\CR_k$ AFEM) generated by loops of the form
\begin{align}
\texttt{solve} \rightarrow 
\texttt{estimate} \rightarrow
\texttt{mark} \rightarrow
\texttt{refine}
\label{Eq:Adaptive Algorithm}
\end{align} 
driven by an residual a posteriori error estimator (cf., \eqref{Eq:A posteriori}) is investigated. 
We employ Dörfler marking \cite{Doerfler_convergent} as the marking strategy and newest vertex bisection \cite[Sec. 2.1]{Mitchell-NVB} as a refinement strategy. This work strives towards a proof of optimal convergence of $\CR_k$ AFEM of arbitrary odd degree $k \geq 1$ driven by our residual error estimator.

The axioms of adaptivity \cite{CFPP-AxiomsOfAdaptivity}: \emph{stability} \eqref{Eq:Stability}, \emph{reduction} \eqref{Eq:Reduction}, \emph{discrete reliability} \eqref{Eq:Discrete reliability} and \emph{general quasi orthogonality} \eqref{Eq:Quasi orthogonoality - general} in combination with Dörfler marking and newest vertex bisection, provide a general theoretical framework that is sufficient for a proof of optimal convergence rates of an adaptive process as in \eqref{Eq:Adaptive Algorithm}. 
For standart $h$-version conforming AFEM, optimal convergence with respect to its respective residual error estimator is known, see e.g., \cite{CKNS-AFEMOptimalRatesConf,Stevenson_adapt1}. Section 5.1 of \cite{CFPP-AxiomsOfAdaptivity} provides a proof of optimal convergence of conforming AFEM within the framework of the axioms of adaptivity. 
For non-conforming methods there are two popular methodical approaches: first, fully discontinuous methods within discontinuous Galerkin (dG) discretizations (see, e.g., \cite{ABCM02} for a unified analysis) and hybrid high order methods (HHO) (see e.g, \cite{DPD-HHOBook}). Both of these methods need stabilization of the bilinear form. Second, methods such as $\CR_k$ FEM which do not need stabilization of the bilinear form. 
For the lowest order $\CR_1$ AFEM optimal convergence is known (cf., e.g, \cite{BMS-NCAFEMOptimal,Rabus-OptimalRatesNCFEM}) and \cite[Sec. 5.2]{CFPP-AxiomsOfAdaptivity} provides a proof of optimal convergence within the axioms of adaptivity framework. Optimal convergence of high-order $\CR_k$ AFEM remains an open question. In \cite{Bonito2010}, optimal convergence of an adaptive symmetric interior penalty dG method is shown. For HHO the picture is less complete. In \cite[Sec. 4]{DPD-HHOBook} a reliable and efficient error estimator which contains a stabilization term is presented for HHO; however the reduction property \eqref{Eq:Reduction} is still open for HHO.
Recent research provides an efficient and reliable stabilization free residual estimator for HHO \cite{BCGT-StabFreeHHO} but discrete reliability as in \eqref{Eq:Discrete reliability} remains open.


In this paper we follow the theoretical framework of the axioms of adaptivity. We prove that the $\CR_k$ FEM of arbitrary odd degree $k \geq 1$ is discretely reliable \eqref{Eq:Discrete reliability} (cf. Theorem \ref{Thm:Discrete reliabilty}) and satisfies the stability and reduction properties \eqref{Eq:Stability} and \eqref{Eq:Reduction} with respect to our residual error estimator. 
The main difficulty for non-conforming finite element methods, like $\CR_k$ FEM, in the context of the axioms of adaptivity is that the finite element spaces are generally not nested under mesh refinement. For example, if $\widehat{\mathcal{T}}$ is a refinement of the triangulation $ \mathcal{T}$, then the $\CR_k$ spaces are generally non-nested, i.e., $\CR_k ( \mathcal{T} ) \nsubseteq \CR_k ( \widehat{\mathcal{T}} )$. One way of dealing with this non-nestedness is the introduction of conforming companion/enrichment operators $J$, which were already used in \cite{Brenner-confComp}.
The paper \cite{CarstensenPuttkammer-howToProofDiscreteReliabiltiyNonConf} provides a general dimension-independent approach for the proof of discrete reliability for non-conforming finite element methods via the use of an operator $J$, which is a conforming companion operator to a non-conforming quasi interpolation operator $I_{\operatorname{NC}} : V + V (\mathcal{T}) \to V ( \mathcal{T} )$, where $V$ is the space of weak solutions and $V ( \mathcal{T} ) \nsubseteq V$ is a non-conforming finite element space.

The approach in \cite{CarstensenPuttkammer-howToProofDiscreteReliabiltiyNonConf} relies on two key assumptions. It requires the non-conforming quasi-interpolation $I_{\operatorname{NC}}$ to \textbf{(a)} be the orthogonal projection with respect to the energy bilinear form and \textbf{(b)} be constructed from local degrees of freedom. For the lowest order $\CR_1$ FEM, the quasi-interpolation operator $I_1^{\mathcal{T}, \CR} : V +  \CR_{1,0} (\mathcal{T} )  \to \CR_{1,0} ( \mathcal{T} )$ constructed from the degrees of freedom of $\CR_{1,0} ( \mathcal{T} )$ (cf., \eqref{Eq:Non-Conf Approx Operator} with $k=1$) satisfies both \textbf{(a)} and \textbf{(b)}. Property \textbf{(a)} follows from integration by parts and \textbf{(b)} holds by construction. For $\CR_k$ FEM of odd degree $k \geq 3$, the simultaneous validity of  \textbf{(a)} and \textbf{(b)} for any given quasi-interpolation operator is still unknown. 

To circumvent the orthogonality requirement \textbf{(a)}, we generalize the approach of \cite{CarstensenPuttkammer-howToProofDiscreteReliabiltiyNonConf} by introducing 
novel quasi-interpolation operators. The construction of these operators utilize the local degrees of freedom of $\CR_k$ FEM for odd degrees $k \geq 1$ in $d = 2$ as described \cite[Sec. 6.1]{BohneSauterCiarletdD}. For even degree $k \geq 2$, \cite[Rem. 26]{BohneSauterCiarletdD} implies that the degrees of freedom for $\CR_k$ FEM must be global quantities and are therefore unsuited for our analysis. The paper \cite{CarstensenPuttkammer-howToProofDiscreteReliabiltiyNonConf} covers the case $k = 1$ for any dimension $d \geq 2$. Our focus is on the case $d = 2$ and odd $k \geq 1$. This is a key step towards a full theory for optimal convergence of the $\CR_k$ AFEM for arbitrary odd degree $k \geq 1$ in 2D.

The paper is structured as follows. In Section \ref{Sec:Notation} we introduce the relevant notation and function spaces. In Section \ref{Sec:dRel} we introduce the Poisson model problem and the residual error estimator for the $\CR_k$ FEM with a brief recap on the axioms of adaptivity. We also state and prove the main theorem of the paper via a series of auxiliary results. Sections \ref{Sec:NonConfApprox} -- \ref{Sec:Unseen} are dedicated to the proofs of the auxiliary results with a focus on the explicit construction of the quasi-interpolation operators involved. In Section \ref{Sec:StabilityReductionEfficiency} we prove both, the stability and reduction properties \eqref{Eq:Stability} and \eqref{Eq:Reduction} of the $\CR_k$ FEM with respect to the residual error estimator. Finally, in Section \ref{Sec:NumericalExperiments} we provide numerical evidence of optimal convergence of high order $\CR_k$ AFEM for $k \in \{ 1,3,5,7\}$.

%
%



\section{Notation}
\label{Sec:Notation}

For $n \in \mathbb{N}_0$ we define the index sets $\range{n} := \{0, 1, \dots , n \}$ and $\{  \boldsymbol{\alpha} \in \mathbb{N}_0^2 
\; \vert \; 0 \leq
\vert \boldsymbol{\alpha} \vert := \sum_{i=1}^2 \alpha_i  \leq n \}$.

\subsection{Meshes}

Let $\Omega \subseteq \mathbb{R}^2$ be an open, bounded, polygonal Lipschitz domain with boundary $\partial \Omega$ and let $\mathcal{T}$ be a triangulation of $\Omega$ consisting of closed triangles which is conforming: for any two triangles $K, K^{\prime} \in \mathcal{T}$, either $K = K^{\prime}$ or the intersection $K \cap K^{\prime}$ is a common edge $E$, a common vertex $\mathbf{z}$, or the empty set. For a triangle $K \in \mathcal{T}$ we denote its diameter by $h_K$, its volume by $\vert K \vert$, and the diameter of the largest inscribed ball in $K$ by $\rho_K$. The shape-regularity constant of $\mathcal{T}$ is given by
$
\gamma_{\mathcal{T}}
:=
{\max_{K\in\mathcal{T}} {h_{K}} / {\rho_{K}}}.
$
Let $T :=  [\{ \mathbf{Z}_0, \mathbf{Z}_1, \mathbf{Z}_2 \}] \subseteq \mathbb{R}^2$ be the reference triangle with vertices $\mathbf{Z}_0 := \mathbf{0}$, $\mathbf{Z}_1 = (1,0)^T$ and $\mathbf{Z}_2 := (0,1 )^T$, where $[ \omega ]$ denotes the convex hull of $\omega \subseteq \mathbb{R}^2$.


\begin{convention}\label{Conv:Hidden constants}
For two quantities $A,B > 0$, we write $A \lesssim B$ if there exists a constant $C > 0$ such that $A \leq C B$ depending on the shape-regularity constant $\gamma_{\mathcal{T}}$ and the polynomial degree $k$ with $C $ possibly deteriorating, if $k \to \infty$ or $\gamma_{\mathcal{T}} \to \infty$. We write $A \approx B$ if $A \lesssim B \lesssim A$. The value of the constant $C$ may change with every occurrence.
\end{convention}

For a sub-mesh $\mathcal{S} \subseteq \mathcal{T}$ we introduce $\omega_{\mathcal{S}} := \bigcup_{K \in  \mathcal{S}} K$ for the area covered by the triangles in $\mathcal{S}$ and $h_{\mathcal{S}} := \max \{ h_K \; \vert \; K \in \mathcal{S}\}$ for the (local) mesh size of $\mathcal{S}$. We collect all edges and vertices of $\mathcal{S}$ in the sets $\mathcal{E} (  \mathcal{S} )$ and $\mathcal{V} ( \mathcal{S} )$ respectively. To distinguish between inner-/boundary edges/vertices we use subscripts:
\begin{align}
\begin{matrix}
\qquad \ \ \mathcal{E}_{\partial \Omega} ( \mathcal{S} ) 
:=
\{  E \in \mathcal{E} ( \mathcal{S} ) 
\; \vert \; 
E \subseteq \partial \Omega \}
&\hspace{2mm}& \text{and} &
\qquad \mathcal{V}_{\partial \Omega} ( \mathcal{S} ) 
:=
\{  \mathbf{z} \in \mathcal{V} ( \mathcal{S} ) 
\; \vert \; 
\mathbf{z} \in \partial \Omega \},&\hspace{2mm}
\\
\mathcal{E}_{\Omega} ( \mathcal{S} ) := \mathcal{E} ( \mathcal{S} ) \setminus  \mathcal{E}_{\partial \Omega} ( \mathcal{S} )
&\hspace{2mm}& \text{and} &
 \mathcal{V}_{\Omega} ( \mathcal{S} ) := \mathcal{V} ( \mathcal{S} ) \setminus  \mathcal{V}_{\partial \Omega} ( \mathcal{S} ).&\hspace{2mm}
\end{matrix}
\label{Eq:Boundary edges vertices}
\end{align}
For any given triangle $K \in \mathcal{S}$, let $\mathcal{E} ( K )$ and $\mathcal{V} ( K )$ denote the sets of edges and vertices of $K$. If we add $\Omega$ or $\partial \Omega$ as a subscript, then we follow convention \eqref{Eq:Boundary edges vertices}. Let $\Gamma \subseteq \Omega$ denote the interface between $\omega_{\mathcal{S}}$ and $\omega_{\mathcal{T} \setminus \mathcal{S}}$ which consists of all edges which lie in both $\mathcal{E} (\mathcal{S})$ and $\mathcal{E} (\mathcal{T} \setminus \mathcal{S})$, i.e., 
\begin{align}
\begin{matrix}
\mathcal{E} ( \Gamma ) 
:= 
\mathcal{E} ( \mathcal{S} ) \cap \mathcal{E} ( \mathcal{T} \setminus \mathcal{S} )
&
\text{and} &
\Gamma 
:=
\bigcup_{E \in \mathcal{E} ( \Gamma )} E .
\end{matrix}
\label{Eq:Gamma}
\end{align} 
\begin{figure}
\centering
\begin{tikzpicture}[scale=.7]
\coordinate (zero) at (0,0);
\coordinate (ref) at (1,0);
\coordinate (z2) at ($(zero)!2!80:(ref)$);
\coordinate (z3) at ($(zero)!2.5!150:(ref)$);
\coordinate (z4) at ($(zero)!2.2!20:(ref)$);
\coordinate (z5) at ($(z2)!0.93!50:(z4)$);
\coordinate (z6) at ($(z3)!0.93!50:(z2)$);
\coordinate (z7) at ($(z3)!0.65!290:(zero)$);
\coordinate (z8) at ($(zero)!1.5!330:(ref)$);

\coordinate (z41) at ($(z4)!0.93!170:(z2)$);
\coordinate (z42) at ($(z4)!.8!250:(z2)$);
\coordinate (z21) at ($(z2)!.8!105:(z4)$);

\coordinate (z31) at ($(z3)!.8!125:(z2)$);
\coordinate (z32) at ($(z3)!.7!180:(z2)$);

\coordinate (zero1) at ($(zero)!1.5!270:(ref)$);

\draw[line width = 1, violet]  (z2) --(z3) -- (zero) -- (z4) -- (z2);
\draw [line width = 1, olive] (z4) -- (z5) -- (z2) -- (z6) -- (z3) -- (z7) -- (zero) --(z8) --(z4);
\draw[line width = 1,red] (z41) -- (z42) -- (z5) -- (z21) -- (z6) -- (z31) -- (z32) -- (z7) -- (zero1) -- (z8) -- (z41);

\draw (z2) -- (zero);
\draw (z4) -- (z41);
\draw (z4) -- (z42);
\draw (z2) -- (z21);
\draw (z3) -- (z31);
\draw (z3) -- (z32);
\draw (zero) -- (zero1);

\node at (barycentric cs:zero=2,z2=1,z3=1) {$K$};
\node at (barycentric cs:zero=1,z2=1,z4=1) {$K^{\prime}$};

\node at (barycentric cs:z6=1,z2=.6,z3=1) {$\mathcolor{olive}{\mathcal{S}^{1/2}}$};
\node at (barycentric cs:z31=1,z3=.5,z6=1) {$\mathcolor{red}{\mathcal{S}^1}$};
\node at (barycentric cs:zero=.31,z2=1,z3=1) {$\mathcolor{violet}{\mathcal{S}}$};

\end{tikzpicture}
\caption{For the adjacent triangles $ K, K^{\prime}$ the purple line depicts the internal boundary $\Gamma$. For $\mathcal{S} := \{K,K^{\prime}\}$, the areas $\omega_{\mathcal{S}^{1/2}}$ and $\omega_{\mathcal{S}^1}$ are encircled by the olive and red lines respectively.}
\label{Fig:Layers}
\end{figure}
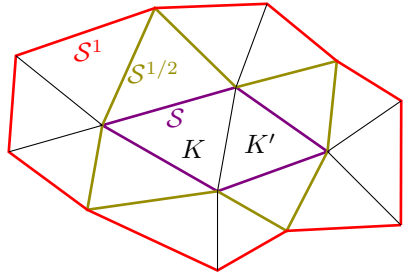
Furthermore, we set $\mathcal{S}^0 := \mathcal{S}$ and for $i \in \mathbb{N}_0$, we write
\begin{subequations}  \label{Eq:Layers}
\begin{align}
\mathcal{S}^{i+1}
&:= 
\{ K \in \mathcal{T} 
\; \vert \; 
\exists K^{\prime} \in \mathcal{S}^i: \
K  \cap K^{\prime} \neq \emptyset \},
\label{SubEq:Layers - plus one layer}
\\
\mathcal{S}^{i+1/2} 
&:=
\{ K \in \mathcal{T} 
\; \vert \;
\exists K^{\prime} \in \mathcal{S}^i: \ K \cap K^{\prime} \in \mathcal{E} (\mathcal{S}^i)\}.
\label{SubEq:Layers - Hedgehog}
\end{align}
\end{subequations}
The set $\mathcal{S}^{i+1/2}$ given by \eqref{SubEq:Layers - Hedgehog} is an intermediate step in a transition from $\mathcal{S}^i$ to $\mathcal{S}^{i+1}$, see Figure \ref{Fig:Layers} for an illustration. Given a triangle $K \in \mathcal{S}$, an edge $E \in \mathcal{E} ( \mathcal{S} )$, or a vertex $\mathbf{z} \in \mathcal{V} ( \mathcal{S} )$, we define the simplex-, edge-, and vertex patches $\mathcal{S}_K, \mathcal{S}_E, \mathcal{S}_{\mathbf{z}} \subseteq \mathcal{S}$ by
\begin{align}
\begin{matrix}
\mathcal{S}_K
:=
\{ K^{\prime} \in \mathcal{S} 
\; \vert \; 
K \cap K^{\prime} \neq \emptyset \},
&
\mathcal{S}_E
:=
\{ K^{\prime} \in \mathcal{S} 
\; \vert \;
E \in \mathcal{E} ( K^{\prime} ) \}
& \text{and} &
\mathcal{S}_{\mathbf{z}}
:=
\{ K^{\prime} \in \mathcal{S} 
\; \vert \;
\mathbf{z} \in \mathcal{V} ( K^{\prime} ) \}.
\end{matrix}
\label{Eq:Entity patches}
\end{align}
If a superscript is used, e.g. like in $(\mathcal{S}_K)^{1/2}$, we follow \eqref{Eq:Layers}. For any vertex $ \mathbf{z} \in \mathcal{V} ( \mathcal{S} )$, let the set
\begin{align}
\mathcal{E}_{\mathbf{z}} ( \mathcal{S} )
&:=
\{ E \in \mathcal{E} ( \mathcal{S} ) 
\; \vert \;
\mathbf{z} \in E \}
\label{Eq:S contained edge spider}
\end{align}
contain all edges in $ \mathcal{E} (\mathcal{S})$ which have $\mathbf{z}$ as an endpoint.

Since $\mathcal{T}$ is shape-regular, there exist constants $N_{\max}, N_{\operatorname{ov}} > 0 $ depending only on the shape-regularity constant $\gamma_{\mathcal{T}}$ such that
\begin{align}
\begin{matrix}
N_{\max} 
:= 
\max_{\mathbf{z} \in \mathcal{V} ( \mathcal{T} )}
\operatorname{card} \mathcal{T}_{\mathbf{z}} 
\lesssim 1
& \text{and} &
N_{\operatorname{ov}}:= 
\max_{K \in \mathcal{T}} \operatorname{card} 
\{ K^{\prime} \in \mathcal{T} 
\; \vert \; 
K \in \mathcal{T}_{K^{\prime}}^m
 \}
\lesssim 1
& m \in \{0,1\}
\end{matrix}
\label{Eq:Finite overlap and minimal angle constants}
\end{align}
with $\operatorname{card} ( \cdot )$ denoting the counting measure. Shape-regularity also implies the equivalence of local mesh sizes
\begin{align}
\begin{matrix}
\vert K \vert^{1/2}
\leq
h_K 
\leq
h_{\mathcal{T}_{\mathbf{z}}}
\leq
h_{\mathcal{T}_K}
\lesssim
\vert K \vert^{1/2}
& \text{and} &
h_E
\leq
h_K
\leq
h_{\mathcal{T}_E} 
\lesssim
h_E
\end{matrix},
\label{Eq:Equivalence local mesh size and volume}
\end{align}
for all $K \in \mathcal{T}$, $E \in \mathcal{E} ( K )$ and $\mathbf{z} \in \mathcal{V} ( K )$. The length of an edge $E \in \mathcal{E} ( \mathcal{T} )$ is denoted by $h_E := \vert E \vert$.

For any $E \in \mathcal{E} ( \mathcal{T} )$ we fix a numbering of the endpoints of $E$, i.e., $\mathcal{V} ( E ) = \{ \mathbf{z}_0, \mathbf{z}_1 \}$. This numbering induces both a tangential- and normal vector for $E$ in the following way: let $\mathbf{t}_E := (\mathbf{z}_1 - \mathbf{z}_0)/ h_E$ be the induced unit tangent vector of $E$ and let $\mathbf{n}_E := ( -t_{E,2}, t_{E,1})^T$ denote the induced unit normal vector of the edge $E$. As a convention, the numbering of the endpoints of any boundary edge $E \in \mathcal{E}_{\partial \Omega} (\mathcal{T})$ is chosen in such a way that $\mathbf{n}_E$ points into the exterior of the domain $\Omega$. The normal vector induces a labeling of the triangles in the edge patch $\mathcal{T}_E$ by
\begin{align}
\mathcal{T}_E := \{ K_{\operatorname{L}}, K_{\operatorname{R}} \},
\label{Eq:Enumeration edge patch}
\end{align} 
where $\mathbf{n}_E$ points into $K_{\operatorname{R}}$. 

\begin{definition} \label{Def:Admissible refinement}
A mesh $\widehat{\mathcal{T}}$ is an \emph{admissible refinement} of $\mathcal{T}$ if $\widehat{\mathcal{T}}$ is generated from $\mathcal{T}$ a finite series of \emph{newest vertex bisections} (NVB), see \cite[Sec. 2.1]{Mitchell-NVB}. Let $\mathbb{T} ( \mathcal{T} )$ denote the set of all admissible refinements of the mesh $\mathcal{T}$.
\end{definition}

\begin{remark} \label{Rem:Shape-regularity}
As a convention, if $\mathcal{T}$ and $\widehat{\mathcal{T}}$ appear in the same context, then $\widehat{\mathcal{T}}$ is an admissible refinement of $\mathcal{T}$. Since we know from \cite[Lem. 1]{nochetto2011primer} that $ \gamma_{\operatorname{spr}} := \sup_{\widehat{\mathcal{T}} \in \mathbb{T} ( \mathcal{T} )} \gamma_{\widehat{\mathcal{T}}} \lesssim 1$ only depends on $\mathcal{T}$, we assume that the hidden constant in Convention \ref{Conv:Hidden constants} depends on $\gamma_{\mathcal{T}}$ if $\mathcal{T}$ and $\widehat{\mathcal{T}}$ appear in the same context.
All mesh related notation is also applied to refined meshes.
\end{remark}

Given a coarse triangle $K \in \mathcal{T}$, the set 
$\operatorname{succ} ( K )
:=
\{  \widehat{K} \in \widehat{\mathcal{T}} 
\; \vert \;
\widehat{K} \subseteq K \}
$
contains all successor triangles of $K$ in $\widehat{\mathcal{T}} \in \mathbb{T} ( \mathcal{T} )$.
The same notation also applies for edges. Let the sets $\mathcal{R}  \subseteq \mathcal{T}$ and $\widehat{\mathcal{R}} \subseteq \widehat{\mathcal{T}}$ be given by
\begin{align}
\begin{matrix}
\mathcal{R} := \mathcal{T} \setminus \widehat{\mathcal{T}}
&
\text{and} &
\widehat{\mathcal{R}}
:=
\widehat{\mathcal{T}} \setminus \mathcal{T}
\end{matrix}.
\label{Eq:Refined elements}
\end{align}
The set $\widehat{\mathcal{R}}$ contains all fine triangles which have a predecessor $K \in \mathcal{R}$.
By construction, we have
\begin{align}
\begin{matrix}
\widehat{\mathcal{T}} \setminus \widehat{\mathcal{R}} 
=
\mathcal{T} \setminus \mathcal{R}
=
\mathcal{T} \cap \widehat{\mathcal{T}}
& \text{and}&
\omega_{\mathcal{R}} = \omega_{\widehat{\mathcal{R}}}
\end{matrix}.
\label{Eq:Refined domain}
\end{align}

\subsection{Function spaces}

We employ standard Lebesgue- and Sobolev notation for the continuous function spaces over the field of real numbers. Let $H_{0}^{1}( \Omega )  $ be the closure of the
space of infinitely smooth, compactly supported functions with respect
to the $H^{1}$ norm. 
The scalar product and norm in $L^{2} (  \Omega)  $ are standard: $(  u,v )  _{L^{2} (  \Omega )  } := \int_{\Omega} uv$ and $\Vert u \Vert _{L^{2} (  \Omega )} := (  u,u )  _{L^{2} (  \Omega )  }^{1/2}$.
Vector-valued analogues of these function spaces are denoted by bold letters, e.g., $\mathbf{L}^{2}(
\Omega )  =(  L^2(  \Omega)  )^{2}$ and analogously for other quantities. The $\mathbf{L}^{2}(  \Omega)  $ scalar product and norm for
vector-valued functions are given by $(  \mathbf{u},\mathbf{v})_{\mathbf{L}^{2}(  \Omega )} := \int_{\Omega} \langle \mathbf{u},\mathbf{v} \rangle$ and $\Vert \mathbf{u}\Vert _{\mathbf{L}^{2}(\Omega)} := (  \mathbf{u},\mathbf{u})_{\mathbf{L}^{2}(  \Omega )  }^{1/2}$,
with the standard Euclidean scalar product $\langle \cdot,\cdot \rangle$ in $\mathbb{R}^2$. For any function $u \in H^1 ( \Omega )$ we write $\nabla u \in \mathbf{L}^2 ( \Omega )$ for the gradient of $u$. We introduce the piecewise $H^1$-space
\begin{align}
H^1 ( \mathcal{T} )
&:=
\{ u  \in L^2 ( \Omega )
\; \vert \; 
\forall K \in \mathcal{T}: \
\left. u \right\vert_{\overset{\circ}{K}} \in H^1 ( \overset{\circ}{K} ) \}
\label{Eq:Brocken H1}
\end{align}
and denote the piecewise gradient of any $ u \in H^1 ( \mathcal{T} )$ by $\nabla_{\mathcal{T}} u \in \mathbf{L}^2 ( \Omega )$ defined by
\begin{align}
\left. \left( \nabla_{\mathcal{T}} u \right) \right\vert_{\overset{\circ}{K}}
:= \nabla \left(\left. u \right\vert_{\overset{\circ}{K}} \right).
\label{Eq:Broken gradient}
\end{align}

\begin{convention} \label{Not:Sumation convention in H1 broken}
A function $v\in H^{1}(  \mathcal{T})  $ may be discontinuous
across simplex facets. In this way the values on a facet are not uniquely
defined. For $K\in\mathcal{T}$ the function $\left.  v\right\vert
_{\overset{\circ}{K}}$ is well-defined and the function $\left.  v\right\vert
_{K}$ equals $\left.  v\right\vert _{\overset{\circ}{K}}$ in the interior of
$K$ and is defined on $\partial K$ as the continuous continuation of $\left.
v\right\vert _{\overset{\circ}{K}}$ to $\partial K$. In a similar fashion the sum of functions $v_{1},v_{2}\in H^{1}( \mathcal{T})$ on some $K \in \mathcal{T}$ is defined by
\begin{align*}
\left.  (  v_{1}+v_{2})  \right\vert _{K}
&:=
\left.  v_{1}\right\vert_{K}
+
\left.  v_{2}\right\vert _{K}.
\end{align*}
If the functions $v,v_{1},v_{2}$ belong to $C^{0}(  \overline{\Omega} ) $, these definitions coincide with the standard restriction to $K$.
\end{convention}

Following the numbering convention of edge patches \eqref{Eq:Enumeration edge patch}, we write
\begin{subequations} \label{Eq:Edge jump and mean}
\begin{align}
\llbracket u \rrbracket_E 
&:=
\begin{cases}
\left. \left. u \right\vert_{K_{\operatorname{L}}} \right\vert_E 
- 
\left. \left. u \right\vert_{K_{\operatorname{R}}} \right\vert_E
& E \in \mathcal{E}_{\Omega} ( \mathcal{T} ),
\\
\left. \left. u \right\vert_{K_{\operatorname{L}}} \right\vert_E
& 
E \in \mathcal{E}_{\partial \Omega} ( \mathcal{T} ),
\end{cases}
\qquad \forall u \in H^1 ( \mathcal{T} ),
\label{SubEq:Edge jump and mean - jump}
\\
\llbrace u \rrbrace_E
&:=
\begin{cases}
\frac{1}{2}
(\left. \left. u \right\vert_{K_{\operatorname{L}}} \right\vert_E 
+ 
\left. \left. u \right\vert_{K_{\operatorname{R}}} \right\vert_E)
& E \in \mathcal{E}_{\Omega} ( \mathcal{T} ),
\\
\left. \left. u \right\vert_{K_{\operatorname{L}}} \right\vert_E
& 
E \in \mathcal{E}_{\partial \Omega} ( \mathcal{T} ),
\end{cases}
\qquad \forall u \in H^1 ( \mathcal{T} ),
\label{SubEq:Edge jump and mean - mean}
\end{align}
\end{subequations}
for the jump (resp. mean) over the edge $E$. Recall that some elementary computations reveal that for any given edge $E \in \mathcal{E}_{\Omega} ( \mathcal{T} )$
\begin{align}
\llbracket uv \rrbracket_E 
&\overset{\text{\cite[(2.14)]{Shedensack-DiscreteHelmholtz}}}{=}
\llbracket u \rrbracket_E \llbrace v \rrbrace_E
+
\llbrace u \rrbrace_E \llbracket v \rrbracket_E
\qquad
\forall u,v \in H^1 ( \mathcal{T} ).
\label{Eq:Jump of products}
\end{align}

The set of polynomials of total degree $k \in \mathbb{N}_0$ over $\omega$ is denoted by $\mathbb{P}_k ( \omega )$
and $\perp_{L^2 (\omega)}$ denotes the standard $L^2 ( \omega )$-orthogonality. 
As a convention we set $\mathbb{P}_{-1} ( \omega) = \{ 0\}$.
We use \eqref{SubEq:Edge jump and mean - jump}, to define non-conforming approximation spaces of $H^1 ( \Omega )$ and $H_0^1 ( \Omega )$. 
For any polynomial degree $k \in \mathbb{N}$, let $H_k^{\CR} ( \mathcal{T} ), H_{k,0}^{\CR} ( \mathcal{T} ) \subseteq H^1 ( \mathcal{T} )$ be given by
\begin{subequations} \label{Eq:Broken H1 jumps}
\begin{align}
H_k^{\CR} ( \mathcal{T} )
&:=
\{ u \in H^1 ( \mathcal{T} ) 
\; \vert \;
\forall E \in \mathcal{E}_{\Omega} ( \mathcal{T} ) : \
\llbracket u \rrbracket_E \perp_{L^2 (E)} \mathbb{P}_{k-1} ( E ) \},
\label{SubEq:Broken H1 jumps - full}
\\
H_{k,0}^{\CR} ( \mathcal{T} )
&:=
\{ u \in H_k^{\CR} ( \mathcal{T} ) 
\; \vert \;
\forall E \in \mathcal{E}_{ \partial \Omega} ( \mathcal{T} ) : \
\left. u \right\vert_E 
 \perp_{L^2 (E)} \mathbb{P}_{k-1} ( E ) \}.
\end{align}
\end{subequations}
For the discrete spaces we write
\begin{subequations} \label{Eq:Discrete spaces}
\begin{align}
\mathbb{P}_k ( \mathcal{T} )
&:=
\{ p \in H^1 ( \mathcal{T} ) 
\; \vert \; 
\forall K \in \mathcal{T}: \
\left. p \right\vert_K \in \mathbb{P}_k ( K ) \},
\label{SubEq:Discrete spaces - pw poly}
\\
S_k ( \mathcal{T} ) 
&:=
\mathbb{P}_k ( \mathcal{T} )
\cap
H^1 ( \Omega )
\quad \text{and} \quad
S_{k,0} ( \mathcal{T} ) 
:=
S_k ( \mathcal{T} )
\cap
H_0^1 ( \Omega ),
\label{SubEq:Discrete spaces - 0bc conf}
\\
\CR_k ( \mathcal{T} )
&:=
H_k^{\CR} ( \mathcal{T} ) 
\cap  
\mathbb{P}_k ( \mathcal{T} ),
\quad \text{and} \quad
\CR_{k,0} ( \mathcal{T} )
:=
H_{k,0}^{\CR} ( \mathcal{T} ) 
\cap  
\mathbb{P}_k ( \mathcal{T} ).
\label{SubEq:Discrete spaces - CR spaces}
\end{align}
\end{subequations}

\begin{remark} \label{Rem:Jump condition}
Note that the jump condition of functions in $H_k^{\CR} ( \mathcal{T} )$ implies that, for any interior edge $E \in \mathcal{E}_{\Omega} ( \mathcal{T} )$, it holds that
$
\int_E q \left. u \right\vert_{K_{\operatorname{L}}}
=
\int_E q \left. u \right\vert_{K_{\operatorname{R}}}
$ 
for all functions $u \in H_k^{\CR} ( \mathcal{T} )$ and $q \in \mathbb{P}_{k-1} ( E )$.
\end{remark}

Given any $\mathcal{S} \subseteq \mathcal{T}$ and integrable function $u : \omega_{\mathcal{S}} \to \mathbb{R}$, we denote the integral mean of $u$ over $\omega_{\mathcal{S}}$ by
\begin{align}
\overline{u}_{\mathcal{S}}
&:=
\frac{1}{\vert \omega_{\mathcal{S}} \vert} \int_{\omega_{\mathcal{S}}} u.
\label{Eq:Integreal mean}
\end{align}
For any triangle $K \in \mathcal{T}$ we denote  the barycentric coordinate of $\mathbf{z} \in \mathcal{V} ( K )$ by $\lambda_{K, \mathbf{z} } \in \mathbb{P}_1 ( K )$, i.e., $\lambda_{K, \mathbf{z} } ( \mathbf{y} ) = \delta_{\mathbf{z},\mathbf{y}}$ for all $\mathbf{z}, \mathbf{y} \in \mathcal{V} ( K )$, where $\delta_{\mathbf{z},\mathbf{y}}$ denotes the Kronecker-delta. 
We will also need orthogonal polynomials on triangles. For any multi-index $\boldsymbol{\alpha} \in \mathbb{N}^2$, the function $P_{T,\boldsymbol{\alpha}}^{(1,1,1)} \in \mathbb{P}_{\vert \boldsymbol{\alpha} \vert} ( T )$, for the reference element $T$ reads 
\begin{align}
P_{T,\boldsymbol{\alpha}}^{(1,1,1)} ( x,y)
= 
P_{\alpha_1}^{( 1, 2 \alpha_2 + 3 )} ( 1 - 2x)
( 1 -x )^{\alpha_2} 
P_{\alpha_2}^{( 1,1 )} \left( 2 \frac{y}{1-x} - 1 \right)
\qquad
( x, y) \in T
\label{Eq:Bivariate ortho poly - ref}
\end{align}
where $P_n^{(\alpha, \beta )}$ are the Jacobi polynomials of degree $n \in \mathbb{P}$ for the parameters $\alpha, \beta > -1$, cf. \cite[Table 18.3.1]{NIST:DLMF}.
The polynomials $P_{T, \boldsymbol{\alpha}}^{( 1,1,1 )}$ where first introduced in \cite{Proriol-orthoPolyTriangle} and satisfy
\begin{align*}
\left( W_T  P_{T,\boldsymbol{\alpha}}^{(1,1,1)}, P_{T,\boldsymbol{\alpha}^{\prime }}^{(1,1,1)} \right)_{L^2 ( T ) }
&=
C_{\boldsymbol{\alpha}}
\delta_{\boldsymbol{\alpha}, \boldsymbol{\alpha}^{\prime}},
\end{align*}
where $W_T ={\prod_{\mathbf{z} \in \mathcal{V} (T) }} \lambda_{T, \mathbf{z}}$ and $C_{\boldsymbol{\alpha}} > 0$ is some constant depending on $\boldsymbol{\alpha}$, see \cite[(3)]{Proriol-orthoPolyTriangle}. The lifted versions $P_{K,\boldsymbol{\alpha}}^{(1,1,1)} \in \mathbb{P}_{\vert \boldsymbol{\alpha} \vert} ( K)$ for $K \in \mathcal{T}$ read
\begin{align}
P_{K,\boldsymbol{\alpha}}^{(1,1,1)}
&:=
P_{T, \boldsymbol{\alpha}}^{(1,1,1)} \circ \chi_K^{-1}
\label{Eq:Bivariate ortho poly - physical}
\end{align}
where $\chi_K : T \to K$ is some affine transformation.
Setting $W_K = W_T \circ \chi_K^{-1}$, we compute via an affine pullback to the reference element $T$, that 
\begin{align}
\left( W_K  P_{K,\boldsymbol{\alpha}}^{(1,1,1)}, P_{K,\boldsymbol{\alpha}^{\prime }}^{(1,1,1)} \right)_{L^2 ( K ) }
&=
C_{\boldsymbol{\alpha}}
\frac{\vert K \vert}{\vert T \vert}
\delta_{\boldsymbol{\alpha}, \boldsymbol{\alpha}^{\prime}}.
\label{Eq:Pairwise orthogonality}
\end{align}

\section{Discrete reliability}
\label{Sec:dRel}

In this section, we introduce the Poisson model problem and define the corresponding non-conforming residual error estimator. We state and prove the main theorem of this paper: the Crouzeix--Raviart finite element method is discretely reliable for any odd polynomial degree $k \geq 1$.

\subsection{The model problem and the estimator}
Let $a : H_0^1 ( \Omega ) \times H_0^1 ( \Omega ) \to \mathbb{R}$ be the bilinear form:
$a ( u, v ) := ( \nabla u, \nabla v )_{\mathbf{L}^2 ( \Omega )}$ for all $u,v \in H_0^1 ( \Omega )$. We are considering the following problem: for given $f \in L^2 ( \Omega )$, find $u \in H_0^1 ( \Omega )$ such that
\begin{align}
a ( u , v ) 
&=  (f, v )_{L^2 ( \Omega )}
\qquad \forall v \in H_0^1 ( \Omega ).
\label{Eq:VariationalPoisson}
\end{align}
As a discretization method, we use the non-conforming Crouzeix--Raviart finite element method ($\CR_k$ FEM) based on the choice $\CR_{k,0} (\mathcal{T} )$ as a finite dimensional subspace in a Galerkin method: find $u_h \in \CR_{k,0} ( \mathcal{T} )$ which satisfies
\begin{align}
a_{\mathcal{T}} ( u_h, v_h ) =  ( f,  v_h )_{L^2 ( \Omega )}
\qquad \forall v_h \in \CR_{k,0} ( \mathcal{T} ),
\label{Eq:DiscretePoisson}
\end{align}
where the discrete bilinear form $a_{\mathcal{T}} : H_{k,0}^{\CR} ( \mathcal{T} ) \times H_{k,0}^{\CR} ( \mathcal{T} ) \to \mathbb{R}$ is given by
$
a_{\mathcal{T}} (u_h, v_h )
:= 
( \nabla_{\mathcal{T}} u_h, \nabla_{\mathcal{T}} v_h )_{\mathbf{L}^2 ( \Omega )}.
$ 

For a given right-hand side function $f \in L^2 ( \Omega )$ and triangulation $\mathcal{T}$, the residual error estimator $\eta ( \cdot ; \mathcal{T},f) : \mathbb{P}_k ( \mathcal{T} ) \to \mathbb{R}$ is defined by:
\begin{subequations} \label{Eq:A posteriori}
\begin{align}
\mu (v_h ;  K,f )^2
&:=
\vert K \vert \Vert f + \Delta v_h \Vert_{L^2 ( K )}^2
+
\vert K \vert^{1/2}
\sum_{E \in \mathcal{E}_{\Omega} ( K )}
\Vert \langle \llbracket \nabla_{\mathcal{T}} u_h \rrbracket_E , \mathbf{n}_E \rangle \Vert_{L^2 ( E )}^2
\quad \forall v_h \in \mathbb{P}_k ( \mathcal{T}),
\label{SubEq:A posteriori - conformity estimator}
\\
\nu ( v_h ; K )^2
&:=
\vert K \vert^{1/2}
\sum_{E \in \mathcal{E} ( K )}
\Vert \langle \llbracket \nabla_{\mathcal{T}} u_h \rrbracket_E , \mathbf{t}_E \rangle \Vert_{L^2 ( E )}^2
\qquad \forall v_h \in \mathbb{P}_k ( \mathcal{T} ),
\label{SubEq:A posteriori - nonconformity error}
\\
\eta ( v_h ; K,f )^2
&:=
\mu (v_h ;  K,f )^2
+
\nu ( v_h ; K )^2
\qquad \forall v_h \in \mathbb{P}_k ( \mathcal{T}),
\label{SubEq:A posteriori - local}
\\
\eta ( v_h ; \mathcal{S}, f )^2
&:= 
\sum_{K \in \mathcal{S}} 
\eta ( v_h ; K,f )^2 
\qquad \forall v_h \in \mathbb{P}_k ( \mathcal{T} ), \
\mathcal{S} \subseteq \mathcal{T}.
\label{SubEq:A posteriori - global}
\end{align}
\end{subequations}


\subsection{Recap: Theory of adaptive methods by the axioms of adaptivity}

The paper \cite{CFPP-AxiomsOfAdaptivity} provides a framework for a proof of optimal rates of an adaptive finite element method with a given a posteriori error estimator. The core assumptions for the theoretical framework consists of four fundamental properties called the axioms.  
Given an initial triangulation $\mathcal{T}_0$ and an odd polynomial degree $k \geq 1$, the axioms for the $\operatorname{CR}_k$ AFEM with the residual estimator $\eta$ from \eqref{Eq:A posteriori} read as follows: Assume that $\mathcal{T}_0$ is some initial triangulation.

\textbf{Stability:} There exists a constant $\Lambda_1 > 0$ such that for all $\mathcal{T}, \widehat{\mathcal{T}} \in \mathbb{T} ( \mathcal{T}_0 )$, where $\widehat{\mathcal{T}}$ is an admissible refinement of $\mathcal{T}$, any  $v \in \CR_{k,0} ( \mathcal{T} )$ and $\widehat{v} \in \CR_{k,0} ( \widehat{\mathcal{T}})$ satisfy
\begin{align}
\vert \eta ( v ; \mathcal{T} \cap \widehat{\mathcal{T}}, f) 
- 
\eta ( \widehat{v}; \widehat{\mathcal{T}} \cap \mathcal{T}, f) \vert
& \leq \Lambda_1
\Vert \nabla_{\mathcal{T}} v 
-  
\nabla_{\widehat{\mathcal{T}}} \widehat{v} \Vert_{\mathbf{L}^{2}( \Omega )}.
\tag{A1}
\label{Eq:Stability}
\end{align} 

\textbf{Reduction:} There exist constants $\Lambda_2 > 0$ and $0 < \rho_2 < 1$ such that for all $\mathcal{T}, \widehat{\mathcal{T}} \in \mathbb{T} ( \mathcal{T}_0 )$, where $\widehat{\mathcal{T}}$ is an admissible refinement of $\mathcal{T}$, any $v \in \CR_{k,0} ( \mathcal{T} )$ and $\widehat{v} \in \CR_{k,0} ( \widehat{\mathcal{T}})$ satisfy
\begin{align}
\eta ( \widehat{v}; \widehat{\mathcal{T}} \setminus \mathcal{T}, f)^2
& \leq
\rho_2 \eta ( v; \mathcal{T} \setminus \widehat{\mathcal{T}}, f)^2 
+
\Lambda_2
\Vert \nabla_{\mathcal{T}} v 
-  
\nabla_{\widehat{\mathcal{T}}} \widehat{v} \Vert_{\mathbf{L}^{2}( \Omega )}^2.
\tag{A2}
\label{Eq:Reduction}
\end{align}

\textbf{Discrete reliability: } There exists a constant $\Lambda_3 > 0$ such that for all $\mathcal{T}, \widehat{\mathcal{T}} \in \mathbb{T} ( \mathcal{T}_0 )$, where $\widehat{\mathcal{T}}$ is an admissible refinement of $\mathcal{T}$, the  coarse/fine discrete solution $u_h \in \CR_{k,0} (\mathcal{T} )$ and $\widehat{u}_h \in \CR_{k,0} ( \widehat{\mathcal{T} })$ of \eqref{Eq:DiscretePoisson} satisfy
\begin{align}
\Vert \nabla_{\widehat{\mathcal{T}}} \widehat{u}_h - \nabla_{\mathcal{T}} u_h \Vert_{\mathbf{L}^2 ( \Omega )}
& \leq \Lambda_3
\eta ( u_h ; \mathcal{R}^1, f).
\tag{A3}
\label{Eq:Discrete reliability}
\end{align}

\textbf{Quasi orthogonality:} There exists a constant $\Lambda_4 > 0$ such that the sequence of meshes $\mathcal{T}_j$ and discrete solutions $u_j \in \CR_{k,0} ( \mathcal{T}_j)$ of \eqref{Eq:DiscretePoisson} for $j \in \mathbb{N}_0$ generated by \eqref{Eq:Adaptive Algorithm} satisfy
\begin{align}
\sum_{j = \ell }^{\infty}
\Vert \nabla_{\mathcal{T}_{j+1}} u_{j+1} - \nabla_{\mathcal{T}_j} u_j \Vert_{\mathbf{L}^{2}( \Omega )}
& \leq \Lambda_4
\eta ( u_{\ell}; \mathcal{T}_{\ell} , f ),
\tag{A4}
\label{Eq:Quasi orthogonoality - general}
\end{align}

The requirements for the refinement strategy are given in \cite[Sec. 2.4, (2.7) -- (2.10)]{CFPP-AxiomsOfAdaptivity}.

\begin{proposition}[{\cite[Thm. 4.1]{CFPP-AxiomsOfAdaptivity}}] \label{Prop:OptimalRates}
Given a refinement strategy that satisfies \cite[(2.7) -- (2.10)]{CFPP-AxiomsOfAdaptivity}, \eqref{Eq:Stability} -- \eqref{Eq:Quasi orthogonoality - general} imply optimal convergence of the error estimator if Dörfler marking \cite{Doerfler_convergent} is used.
\end{proposition}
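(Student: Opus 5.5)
The proof follows the abstract argument of \cite[Sec.~4]{CFPP-AxiomsOfAdaptivity}, where only \eqref{Eq:Stability}--\eqref{Eq:Quasi orthogonoality - general} and the refinement properties \cite[(2.7)--(2.10)]{CFPP-AxiomsOfAdaptivity} are used. Nestedness of the discrete spaces is never needed, so the nonconforming character of $\CR_{k,0}(\mathcal{T})$ plays no role at this level. Throughout, $\eta_j := \eta(u_j;\mathcal{T}_j,f)$ denotes the estimator at step $j$ and $\theta \in (0,1]$ the Dörfler parameter. The argument has three steps.

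\textbf{Step 1 (estimator reduction).} For consecutive meshes $\mathcal{T}_j$, $\mathcal{T}_{j+1}$ I split $\eta_{j+1}^2$ into the contributions of the unrefined elements $\mathcal{T}_{j+1}\cap\mathcal{T}_j$ and of the refined ones $\mathcal{T}_{j+1}\setminus\mathcal{T}_j$.
\begin{itemize}
\item On the unrefined elements I apply \eqref{Eq:Stability} together with Young's inequality with a parameter $\delta>0$.
\item On the refined elements I apply \eqref{Eq:Reduction}.
\end{itemize}
Dörfler marking then yields
\begin{align*}
\eta_{j+1}^2 \le q\,\eta_j^2 + C_\delta \Vert \nabla_{\mathcal{T}_{j+1}} u_{j+1} - \nabla_{\mathcal{T}_j} u_j\Vert_{\mathbf{L}^2(\Omega)}^2
\end{align*}
for some $q<1$, after choosing $\delta$ small.

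\textbf{Step 2 (linear convergence).} I combine the estimator reduction with \eqref{Eq:Quasi orthogonoality - general} to obtain a tail bound $\sum_{j>\ell}\eta_j^2 \lesssim \eta_\ell^2$. This is equivalent to R-linear convergence $\eta_{\ell+n}^2 \lesssim q_{\mathrm{lin}}^n \eta_\ell^2$.

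The idea is to sum the reduction inequality from $j=\ell$ to $N$ and absorb the sum of the increments by the quasi-orthogonality. Here \eqref{Eq:Quasi orthogonoality - general} is to be read with squared increments and squared estimator on the right, as in \cite{CFPP-AxiomsOfAdaptivity}.

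I expect this to be the main obstacle, because quasi-orthogonality only controls a \emph{tail} of increments by a \emph{single} estimator value. The route I would try first is that of \cite[Prop.~4.10/4.11]{CFPP-AxiomsOfAdaptivity}:
\begin{itemize}
\item use an $\varepsilon$-weighted version of the reduction inequality to obtain $\sum_{j=\ell}^N \eta_j^2 \le C\eta_\ell^2$ uniformly in $N$;
\item then apply the elementary summability lemma, which converts a uniform tail bound into geometric decay.
\end{itemize}

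\textbf{Step 3 (optimality of marking and counting).} By Step 2, convergence holds; what remains is to show that the number of elements grows at the best possible rate.
\begin{itemize}
\item \emph{Dörfler property of good refinements.} Fix $s>0$ with finite approximation-class norm $\Vert(\eta,u,f)\Vert_{\mathbb{A}_s}$. Using the overlay property of NVB \cite[(2.9)]{CFPP-AxiomsOfAdaptivity}, I choose for each $\ell$ a refinement $\widehat{\mathcal{T}}$ of $\mathcal{T}_\ell$ with $\eta(\widehat u;\widehat{\mathcal{T}},f)^2\le q_\star\eta_\ell^2$ and $\#\widehat{\mathcal{T}}-\#\mathcal{T}_\ell \lesssim \eta_\ell^{-1/s}$. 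Then \eqref{Eq:Stability} and \eqref{Eq:Discrete reliability} show that the set $\mathcal{R}^1$ satisfies the Dörfler criterion for all sufficiently small $\theta$.
\item \emph{Bound on marked sets.} Since the marked set $\mathcal{M}_\ell$ has quasi-minimal cardinality, this gives $\#\mathcal{M}_\ell \lesssim \#\mathcal{R}^1 \lesssim \eta_\ell^{-1/s}$.
\item \emph{Counting.} The NVB closure estimate \cite[(2.10)]{CFPP-AxiomsOfAdaptivity} and the geometric decay from Step 2 then give
\begin{align*}
\#\mathcal{T}_\ell-\#\mathcal{T}_0 \lesssim \sum_{j<\ell}\#\mathcal{M}_j \lesssim \sum_{j<\ell}\eta_j^{-1/s}\lesssim \eta_\ell^{-1/s}.
\end{align*}
\end{itemize}
This is exactly the optimal rate $\eta_\ell \lesssim (\#\mathcal{T}_\ell-\#\mathcal{T}_0)^{-s}$.
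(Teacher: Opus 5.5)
Your sketch is correct and is the argument of \cite[Thm.~4.1]{CFPP-AxiomsOfAdaptivity}, which the paper cites without reproducing. Two small remarks: your squared reading of \eqref{Eq:Quasi orthogonoality - general} is actually implied by the stated form, since $\sum_j a_j^2\le(\sum_j a_j)^2$ for $a_j\ge 0$, which gives the CFPP version with $\varepsilon_{\mathrm{qo}}=0$, so Step~2 is just a direct summation of the estimator reduction and not a real obstacle; and Step~3 leaves implicit the quasi-monotonicity $\eta(\widehat u_h;\widehat{\mathcal T},f)^2\lesssim\eta(u_h;\mathcal T,f)^2$ for refinements $\widehat{\mathcal T}$ of $\mathcal T$, which follows from \eqref{Eq:Stability}--\eqref{Eq:Discrete reliability} and is what transfers the small estimator of the near-optimal mesh from the approximation class to its overlay with $\mathcal T_\ell$ (the overlay property itself only controls cardinalities).
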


\subsection{The main theorem}

The main focus of this paper is on  \eqref{Eq:Discrete reliability}, i.e., discrete reliability. This is summarized in the main theorem of this paper.

\begin{theorem}[Discrete reliability] \label{Thm:Discrete reliabilty}
Let $k \geq 1$ be odd and let $\mathcal{T}$ be some coarse triangulation. 
Consider any $\widehat{\mathcal{T}} \in \mathbb{T} ( \mathcal{T} )$.
For a given $f \in L^2 ( \Omega )$, we denote the fine/coarse discrete solution of \eqref{Eq:DiscretePoisson} by $\widehat{u}_h \in \CR_{k,0} ( \widehat{\mathcal{T}} )$ and $u_h \in \CR_{k,0} ( \mathcal{T} )$ respectively. Then
\begin{align*}
\Vert \nabla_{\widehat{\mathcal{T}}} \widehat{u}_h - \nabla_{\mathcal{T}} u_h \Vert_{\mathbf{L}^2 ( \Omega )}
& \lesssim
\eta ( u_h ; \mathcal{R}^1, f),
\end{align*}
where $\mathcal{R}$ and $\mathcal{R}^1$ are as in \eqref{Eq:Refined elements} and \eqref{Eq:Layers}.
\end{theorem}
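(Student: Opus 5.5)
We split $e := \nabla_{\widehat{\mathcal{T}}} \widehat{u}_h - \nabla_{\mathcal{T}} u_h$ via a discrete Helmholtz decomposition on the fine mesh. Since $\CR_{k,0}(\mathcal{T}) \not\subseteq \CR_{k,0}(\widehat{\mathcal{T}})$, we first pass through a quasi-interpolation. Let $\widehat{I}_k : H_{k,0}^{\CR}(\widehat{\mathcal{T}}) + \CR_{k,0}(\mathcal{T}) \to \CR_{k,0}(\widehat{\mathcal{T}})$ be a local quasi-interpolation built from the local degrees of freedom for odd $k$ of \cite[Sec. 6.1]{BohneSauterCiarletdD}. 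It should be $L^2$- and $H^1$-stable in the broken sense, reproduce $\CR_{k,0}(\widehat{\mathcal{T}})$, and act as the identity on $u_h$ on elements of $\mathcal{T}\setminus\mathcal{R}^1$, i.e. $(u_h - \widehat{I}_k u_h)|_{\omega_{\mathcal{T}\setminus \mathcal{R}^1}} = 0$. The last property holds because on unrefined elements away from $\mathcal{R}$ the edge moments are unchanged.

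We then write
\[
\Vert e\Vert^2 = a_{\widehat{\mathcal{T}}}(\widehat{u}_h - \widehat{I}_k u_h, \widehat u_h - u_h) + a_{\widehat{\mathcal{T}}}(\widehat{I}_k u_h - u_h, \widehat{u}_h - u_h).
\]
The second term is bounded by $\Vert \nabla_{\widehat{\mathcal{T}}}(\widehat I_k u_h - u_h)\Vert \,\Vert e\Vert$. The first factor is supported in $\omega_{\mathcal{R}^1}$. By local approximation it is bounded by broken jumps of $u_h$ across coarse edges in $\mathcal{R}^1$, and these are controlled by the tangential jump terms $\nu(u_h;\mathcal{R}^1)$ together with the estimate $\Vert \llbracket u_h\rrbracket_E\Vert \lesssim h_E\Vert\llbracket\nabla u_h\cdot \mathbf t_E\rrbracket_E\Vert$, which follows from the moment conditions.

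For the first term, set $\widehat v := \widehat u_h - \widehat I_k u_h \in \CR_{k,0}(\widehat{\mathcal{T}})$. Using the fine discrete equation,
\[
a_{\widehat{\mathcal{T}}}(\widehat v, \widehat u_h) - a_{\widehat{\mathcal T}}(\widehat v,u_h) = (f,\widehat v) - a_{\widehat{\mathcal T}}(\widehat v,u_h).
\]
We now introduce a second operator $\mathcal{I} : \CR_{k,0}(\widehat{\mathcal{T}}) \to \CR_{k,0}(\mathcal{T})$, a coarsening quasi-interpolation from local degrees of freedom. It must coincide with $\widehat v$ on $\mathcal{T}\setminus\mathcal{R}^1$, be locally stable, and satisfy the key "unseen" property: the difference $\widehat v - \mathcal{I}\widehat v$ is supported in $\omega_{\mathcal R^1}$ and has vanishing moments against $\mathbb{P}_{k-1}$ on coarse edges. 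Subtracting the coarse equation $(f,\mathcal I\widehat v) = a_{\mathcal T}(u_h,\mathcal I \widehat v)$ leaves the residual $(f, w) - a_{\widehat{\mathcal T}}(u_h, w)$ with $w := \widehat v - \mathcal{I}\widehat v$. Elementwise integration by parts on coarse elements yields three kinds of terms:
\begin{itemize}
\item volume terms $(f+\Delta u_h, w)_K$,
\item normal-jump terms $\langle\llbracket \nabla u_h\rrbracket\cdot\mathbf n_E,\{w\}\rangle_E$,
\item mean-gradient terms $\langle \llbrace\nabla u_h\rrbrace\cdot \mathbf n_E,\llbracket w\rrbracket\rangle_E$, using \eqref{Eq:Jump of products}.
\end{itemize}
The last kind vanishes on coarse edges because $\llbrace\nabla u_h\rrbrace\cdot\mathbf n_E$ is in $\mathbb P_{k-1}(E)$ and $w$ is CR on the coarse skeleton. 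On fine edges interior to coarse elements, $u_h$ is smooth, so no jumps of $u_h$ appear there.

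The volume and normal-jump terms are bounded by Cauchy–Schwarz, the weighted local estimates $\Vert h^{-1} w\Vert_{L^2(K)} + \Vert h^{-1/2} w\Vert_{L^2(E)} \lesssim \Vert\nabla_{\widehat{\mathcal T}}\widehat v\Vert_{L^2(\omega_{\mathcal{T}_K})}$, and finite overlap \eqref{Eq:Finite overlap and minimal angle constants}. This gives $\lesssim \mu(u_h;\mathcal R^1,f)\,\Vert\nabla_{\widehat{\mathcal T}}\widehat v\Vert$. Finally, $\Vert\nabla_{\widehat{\mathcal T}}\widehat v\Vert\le\Vert e\Vert + \Vert\nabla_{\widehat{\mathcal T}}(u_h-\widehat I_k u_h)\Vert$, and the second part is already bounded by $\nu$. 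Dividing by $\Vert e\Vert$ yields the claim.

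The main obstacle is the construction of $\mathcal{I}$. It must be local, be the identity outside $\mathcal R^1$, and provide Poincaré/trace-type bounds for $w$ in terms of fine broken gradients. For $k\ge3$ the lack of Galerkin orthogonality of the canonical interpolant (property (a)) prevents following \cite{CarstensenPuttkammer-howToProofDiscreteReliabiltiyNonConf} directly. My first attempt would be to define $\mathcal I$ by averaging fine edge moments on coarse edges together with interior moments against $W_K P^{(1,1,1)}_{K,\boldsymbol\alpha}$ using \eqref{Eq:Pairwise orthogonality}. The needed approximation bounds would then come from a Poincaré inequality on patches for broken CR functions, which is valid since the fine jumps have vanishing means.
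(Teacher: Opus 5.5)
Your overall architecture matches the paper's: you split off a fine $\CR_k$ approximation of $u_h$, then treat $a_{\widehat{\mathcal T}}(\widehat v,\widehat e)$ with a coarsening operator and integration by parts. The first step, however, has a genuine gap.

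\textbf{The gap.} The bound $\Vert\nabla_{\widehat{\mathcal T}}(\widehat I_k u_h-u_h)\Vert_{\mathbf L^2(\Omega)}\lesssim\nu(u_h;\mathcal R^1)$ is exactly Lemma~\ref{Lem:Upper bound by tangential jumps}, the hardest ingredient of the paper, and ``local approximation'' does not give it. On a fine sub-edge $\widehat E$ of a refined coarse edge $E$ the moments of $u_h$ are two-valued. An interpolant built from fine degrees of freedom must therefore average them or pick a side. The resulting error on the fine elements next to $\widehat E$ is of size $h_{\widehat E}^{-1/2}\Vert\Pi_{k-1}^{\widehat E}\llbracket u_h\rrbracket_E\Vert_{L^2(\widehat E)}$, where $\Pi_{k-1}^{\widehat E}$ is the $L^2(\widehat E)$-projection onto $\mathbb P_{k-1}(\widehat E)$. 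This is the coarse jump measured with the \emph{fine} weight, and these contributions do not add up to a coarse quantity.

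For instance, take $u_h=b_{E',k-1}$, which is the discrete solution for a suitable $f$. Its jump on a neighbouring edge $E$ is $\pm P_k^{(0,0)}$ in the arclength of $E$, which equals $\pm1$ at the endpoints. After $m$ NVB refinements towards an endpoint of $E$, each of the roughly $m$ dyadic sub-edges contributes an amount of order one to $\Vert\nabla_{\widehat{\mathcal T}}(\widehat I_k u_h-u_h)\Vert^2_{\mathbf L^2(\Omega)}$. Meanwhile $\nu(u_h;\mathcal R^1)$ does not depend on $m$. So such an operator is not broken-$H^1$-stable on $\CR_{k,0}(\mathcal T)$ uniformly in the refinement either.

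\textbf{What the paper does instead.} The missing idea is never to interpolate the discontinuous $u_h$ on the fine mesh.
\begin{enumerate}
\item The paper first builds a conforming companion $J_{k,0}u_h\in S_{k+1,0}(\mathcal T)$ that preserves all coarse $\CR_k$ moments and reproduces $S_{k,0}(\mathcal T)$ (Lemma~\ref{Lem:Properties conforming companion}). It then sets $\widehat u^\ast:=I_{k,0}^{\widehat{\mathcal T},\CR}(J_{k,0}u_h)$.
\item Since $u_h|_{\widehat K}\in\mathbb P_k(\widehat K)$ is reproduced by the local interpolant, $(\widehat u^\ast-u_h)|_{\widehat K}=I^{\operatorname{loc}}_{\widehat K,k}(J_{k,0}u_h-u_h)$. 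The error is therefore bounded by the coarse-scale quantity $\Vert\nabla_{\mathcal T}(J_{k,0}u_h-u_h)\Vert_{\mathbf L^2(\omega_{\mathcal R})}$, independently of the refinement depth.
\item This quantity is controlled by $\nu(u_h;\mathcal R^1)$ through the seminorm-equivalence argument of \cite[Thm.~3.2]{CarstensenPuttkammer-howToProofDiscreteReliabiltiyNonConf}, which needs \eqref{Eq:C6} and \eqref{Eq:C7}.
\end{enumerate}
That step, not $\mathcal I$, is where the real work lies.

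\textbf{The second half is sound.} Your treatment of $a_{\widehat{\mathcal T}}(\widehat v,\widehat e)$ is correct and lighter than the paper's. You use the fine equation with $\widehat v$ and the coarse equation with $\mathcal I\widehat v$ separately. Hence $\mathcal I\widehat v$ only has to lie in $\CR_{k,0}(\mathcal T)$, not in $\CR_{k,0}(\mathcal T)\cap\CR_{k,0}(\widehat{\mathcal T})$ as the paper's $\widehat P=M^{\mathcal R}_{k,0}\circ M^{\widehat{\mathcal R}}_{k,0}$ must.

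The coarse interpolant $I_k^{\mathcal T,\CR}$, evaluated on fine $\CR_k$ functions via coarse edge and volume moments, appears to suffice for this:
\begin{itemize}
\item $w$ vanishes outside $\omega_{\mathcal R}$.
\item Every one-sided trace of $w$ on a coarse edge $E$ is $L^2$-orthogonal to $\mathbb P_{k-1}(E)$, so even the normal-jump terms vanish.
\item The local bounds for $w$ should follow from a discrete Poincar\'e inequality and a broken trace inequality for fine $\CR_k$ functions on each $K\in\mathcal R$; you would need to verify these.
\end{itemize}
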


The proof of Theorem \ref{Thm:Discrete reliabilty} relies on a series of auxiliary results. We start with a non-conforming quasi-interpolation operator. An explicit construction of this operator is given in Section \ref{Sec:NonConfApprox}.

\begin{lemma} \label{Lem:Non-Conf Approx Operator}
For any $k \geq 1$ odd and triangulation $\mathcal{T}$ there exists a bounded quasi-interpolation operator $I_k^{\mathcal{T}, \CR} : H_k^{\CR} ( \mathcal{T} ) \to \CR_k ( \mathcal{T} )$ satisfying the following properties: for all $u \in H_k^{\CR} ( \mathcal{T})$, we have that,
\begin{subequations} \label{Eq:Non-Conf Approx Operator}
\begin{align}
\int_E q  I_k^{\mathcal{T}, \CR} u 
&=
\int_E q u 
\qquad \qquad \forall q \in \mathbb{P}_{k-1} ( E ), \ E \in \mathcal{E} (\mathcal{T}),
\label{SubEq:Non-Conf Approx Operator - edge moment}
\\
\int_K p  I_k^{\mathcal{T}, \CR} u
&=
\int_K p u
\qquad \qquad \forall p \in \mathbb{P}_{k-3} ( K ), \ K \in \mathcal{T},
\label{SubEq:Non-Conf Approx Operator- vol moment}
\\
\Vert \nabla I_k^{\mathcal{T}, \CR} u \Vert_{\mathbf{L}^2 ( K )} 
& \lesssim 
\Vert \nabla u \Vert_{\mathbf{L}^2 ( K )}
\qquad
\forall K \in \mathcal{T}.
\label{SubEq:Non-Conf Approx Operator - loc bound}
\end{align}
\end{subequations}
\end{lemma}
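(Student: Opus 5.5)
The operator is defined elementwise from local degrees of freedom. For $K\in\mathcal{T}$ and $v\in H^1(K)$, let $\Pi_K v\in\mathbb{P}_k(K)$ be the polynomial satisfying
\begin{align*}
\int_E q\,\Pi_K v=\int_E q\,v\quad\forall q\in\mathbb{P}_{k-1}(E),\ E\in\mathcal{E}(K),
\qquad
\int_K p\,\Pi_K v=\int_K p\,v\quad\forall p\in\mathbb{P}_{k-3}(K).
\end{align*}
Then set $(I_k^{\mathcal{T},\CR}u)|_K:=\Pi_K(u|_K)$, using Convention \ref{Not:Sumation convention in H1 broken} for traces. The functionals are well defined on $H^1(K)$ by the trace theorem.

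Counting them gives $3k+\tfrac{(k-2)(k-1)}{2}=\tfrac{(k+1)(k+2)}{2}=\dim\mathbb{P}_k(K)$. So $\Pi_K$ is well defined once unisolvence is shown. This is the step I expect to be the main obstacle, and the only place where oddness of $k$ enters. It is the local degree-of-freedom result of \cite[Sec.~6.1]{BohneSauterCiarletdD}, but I would argue it directly.

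Let $p\in\mathbb{P}_k(K)$ have all these moments equal to zero.
\begin{itemize}
\item On each edge $E$, $p|_E\in\mathbb{P}_k(E)$ is $L^2(E)$-orthogonal to $\mathbb{P}_{k-1}(E)$. Hence $p|_E=c_E\,L_{k,E}$, where $L_{k,E}$ is the Legendre polynomial of degree $k$ transported to $E$.
\item Since $L_k(\pm1)=(\pm1)^k$ and $k$ is odd, $L_{k,E}$ takes opposite values $\pm1$ at the two endpoints of $E$.
\item Continuity of $p$ at the three vertices gives $c_{E_1}=-c_{E_2}=c_{E_3}=-c_{E_1}$ when going once around $\partial K$. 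Hence all $c_E=0$, so $p|_{\partial K}=0$. (For even $k$ the signs do not flip and this argument fails, consistent with \cite[Rem.~26]{BohneSauterCiarletdD}.)
\item Since $p$ vanishes on $\partial K$, we can write $p=W_K q$ with $q\in\mathbb{P}_{k-3}(K)$. Testing the volume moments with $q$ gives $(W_K q,q)_{L^2(K)}=0$. Because $W_K>0$ in the interior of $K$, this forces $q=0$. Equivalently, one can expand $q$ in the basis $P^{(1,1,1)}_{K,\boldsymbol{\alpha}}$ and use \eqref{Eq:Pairwise orthogonality}.
\end{itemize}

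With $\Pi_K$ in hand, \eqref{SubEq:Non-Conf Approx Operator - edge moment} and \eqref{SubEq:Non-Conf Approx Operator- vol moment} hold by construction. On an interior edge, each of $K_{\operatorname{L}},K_{\operatorname{R}}$ reproduces the $\mathbb{P}_{k-1}(E)$-moments of its own trace of $u$. For $u\in H_k^{\CR}(\mathcal{T})$ these two traces have equal $\mathbb{P}_{k-1}(E)$-moments by Remark \ref{Rem:Jump condition}. Hence $\llbracket I_k^{\mathcal{T},\CR}u\rrbracket_E\perp_{L^2(E)}\mathbb{P}_{k-1}(E)$, so $I_k^{\mathcal{T},\CR}u\in\CR_k(\mathcal{T})$. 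The edge moments on the common edge are single-valued, as claimed in \eqref{SubEq:Non-Conf Approx Operator - edge moment}.

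For the local bound \eqref{SubEq:Non-Conf Approx Operator - loc bound}, I use that $\Pi_K$ reproduces $\mathbb{P}_k(K)$, and in particular constants. Thus $\nabla\Pi_K u=\nabla\Pi_K(u-\overline{u}_{\{K\}})$. On the reference element $T$, unisolvence and finite dimensionality give
\begin{align*}
\Vert\nabla\Pi_T v\Vert_{\mathbf{L}^2(T)}\lesssim\sum_{E}\Vert v\Vert_{L^2(E)}+\Vert v\Vert_{L^2(T)}\lesssim\Vert v\Vert_{H^1(T)}.
\end{align*}
The second inequality is the trace inequality. The moment conditions transform covariantly under the affine map $\chi_K$, since polynomial test spaces are preserved, so $\Pi_K v=(\Pi_T(v\circ\chi_K))\circ\chi_K^{-1}$. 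Applying the reference bound to $v=u-\overline{u}_{\{K\}}$, scaling, and using the Poincaré inequality $\Vert u-\overline{u}_{\{K\}}\Vert_{L^2(K)}\lesssim h_K\Vert\nabla u\Vert_{\mathbf{L}^2(K)}$ yields $\Vert\nabla\Pi_K u\Vert_{\mathbf{L}^2(K)}\lesssim\Vert\nabla u\Vert_{\mathbf{L}^2(K)}$. The constant depends only on $k$ and $\gamma_{\mathcal{T}}$.

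Boundedness of $I_k^{\mathcal{T},\CR}$ on $H^1(\mathcal{T})$ follows similarly from the unscaled reference estimate $\Vert\Pi_T v\Vert_{H^1(T)}\lesssim\Vert v\Vert_{H^1(T)}$, transported to $K$.
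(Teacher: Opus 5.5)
Your proof is correct, and it produces the same operator as the paper: by unisolvence, the paper's $I_k^{\mathcal{T},\CR}$ from \eqref{Eq:CR Interpolation} restricts on each $K$ exactly to your $\Pi_K$, cf.\ \eqref{Eq:Local desciption}. The route is different, though. The paper does not define the operator through local unisolvence. It assembles it globally as $\Pi_k^{\mathcal{E}(\mathcal{T})}u+\Pi_{k,0}^{\mathcal{T}}(u-\Pi_k^{\mathcal{E}(\mathcal{T})}u)$, using the explicit basis $\mathcal{B}_k^{\CR}(\mathcal{T})$ and the quasi-biorthogonal functionals $F_{E,j}$, $F_{K,\boldsymbol{\alpha}}$ imported from \cite{BohneSauterCiarletdD}. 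Membership in $\CR_k(\mathcal{T})$ is then automatic, where you instead use Remark \ref{Rem:Jump condition}. The moment identities follow from Proposition \ref{Prop:Quasi biduality properties} together with the fact that the $g_{E,j}$ and $g_{K,\boldsymbol{\alpha}}$ span $\mathbb{P}_{k-1}(E)$ and $\mathbb{P}_{k-3}(K)$. In the paper, the oddness of $k$ enters only through those imported results. Only for the gradient bound does the paper switch to the local interpolant, citing \cite{ArnoldBrezzi-mixedNonConvFEM85} for unisolvence. It then argues essentially as you do: an inverse inequality, an $L^2$ stability bound, and multiplicative trace and Poincar\'e inequalities on $K$, which is your reference-element scaling argument written out on the physical element. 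Your version is shorter and self-contained, and your Legendre sign argument around the three-cycle of edges shows exactly why odd $k$ is needed and where even $k$ fails. What the paper's construction buys is an explicit splitting into the nonconforming edge part ($b_{E,k-1}$), the conforming edge part ($b_{E,j}$, $j\le k-2$) and the volume bubbles. The paper recombines these pieces later to build $\overset{\bullet}{\Pi}_{k,0}$, $M_{k,0}^{\mathcal{S}}$ and $J_{k,0}$; your definition gives the same operator, but not that decomposition.
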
 

The existence of $I_k^{\mathcal{T},\CR}$ is proven in \cite[Lem. 2.1]{ArnoldBrezzi-mixedNonConvFEM85} via a dimension counting argument and by showing that \eqref{SubEq:Non-Conf Approx Operator - edge moment} -- \eqref{SubEq:Non-Conf Approx Operator- vol moment} are locally unisolvent for $\mathbb{P}_k ( K )$. Here, we provide an explicit construction of $I_k^{\mathcal{T}, \CR}$ using local edge- and triangle oriented basis functions and their associated functionals in Section \ref{Sec:NonConfApprox}. The construction of $I_k^{\mathcal{T},\CR}$ generates valuable insight for all subsequent operator construction in this paper.


Apart from the operator $I_k^{\mathcal{T}, \CR}$ we need a second approximation operator. This operator will also be explicitly constructed in Section \ref{Sec:PartConfOp}.

\begin{lemma} \label{Lem:Partially conforming}
Let $k \geq 1$ be odd and $\mathcal{T}$ be some triangulation of $\Omega$. For every sub-mesh $\mathcal{S} \subseteq \mathcal{T}$, there exists bounded linear operator $M_{k,0}^{\mathcal{S}} : H_{k,0}^{\CR} ( \mathcal{T} ) \to \CR_{k,0} ( \mathcal{T} )$ such that
\begin{subequations} \label{Eq:Mixed Operator properties}
\begin{align}
\left. (M_{k,0}^{\mathcal{S}} u) \right\vert_{\omega_{\mathcal{S}}} 
&\in S_k ( \mathcal{S})
\qquad \qquad \qquad \ \ \,
\forall u \in  H_{k,0}^{\CR} ( \mathcal{T} ),
\label{SubEq:Mixed Operator properties - conforming on S}
\\
\left. (  M_{k,0}^{\mathcal{S}} u ) \right\vert_{\omega_{\mathcal{T} \setminus \mathcal{S}^{1/2}} }
&= 
\left. u \right\vert_{\omega_{\mathcal{T} \setminus \mathcal{S}^{1/2}} }
\qquad \qquad \quad \,
\forall u \in \CR_{k,0} ( \mathcal{T} ),
\label{ṢubEq:Mixed Operator properties - identity on TmSplus}
\\
\Vert \nabla M_{k,0}^{\mathcal{S}} u \Vert_{\mathbf{L}^2 ( K )}
& \lesssim 
\Vert \nabla_{\mathcal{T}} u \Vert_{\mathbf{L}^2 ( D_K )}
\qquad \quad \ \forall u \in  H_{k,0}^{\CR} ( \mathcal{T} ),
\label{SubEq:Mixed Operator properties - loc bounded}
\\
\Vert ( \operatorname{Id} - M_{k,0}^{\mathcal{S}}) u \Vert_{L^{2} (  K )}
& \lesssim
h_K \Vert \nabla_{\mathcal{T}} u \Vert_{\mathbf{L}^2 ( D_K )}
\qquad \forall K \in \mathcal{T},
\label{SubEq:Mixed Operator properties - approx}
\end{align}
\end{subequations} 
where $D_K \subseteq \overline{\Omega}$ is some local triangle neighborhood of $K$ and $\operatorname{Id}$ signifies the identity operator.
Due to \eqref{SubEq:Mixed Operator properties - conforming on S}, we refer to $M_{k,0}^{\mathcal{S}}$ as the \emph{partially conforming} operator.
\end{lemma}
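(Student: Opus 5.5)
We construct $M_{k,0}^{\mathcal{S}}$ explicitly by gluing the $\CR_k$ degrees of freedom outside $\mathcal{S}$ to a conforming averaging construction on $\mathcal{S}$. The starting point is the local basis of $\CR_{k,0}(\mathcal{T})$ for odd $k$ from \cite[Sec. 6.1]{BohneSauterCiarletdD}, as already used for $I_k^{\mathcal{T},\CR}$ in Lemma \ref{Lem:Non-Conf Approx Operator}. On each triangle, $\mathbb{P}_k(K)$ is split into vertex-, edge- and triangle-oriented parts. The edge and triangle moments in \eqref{SubEq:Non-Conf Approx Operator - edge moment}--\eqref{SubEq:Non-Conf Approx Operator- vol moment} fix everything except a residual non-conforming component. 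For odd $k$ this component is spanned by the ``Crouzeix--Raviart bubble'' type functions, which are localized on vertex patches or single triangles.

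\emph{Step 1.} For $u\in H^{\CR}_{k,0}(\mathcal{T})$ we first apply $I_k^{\mathcal{T},\CR}$ and obtain $u_h:=I_k^{\mathcal{T},\CR}u\in\CR_{k,0}(\mathcal{T})$. On $\CR_{k,0}(\mathcal{T})$ this map is the identity, since the moments are unisolvent.

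\emph{Step 2.} We define $M_{k,0}^{\mathcal{S}}u$ by prescribing its degrees of freedom in a conforming Lagrange/Bernstein-type basis on $\mathcal{S}$. For each vertex $\mathbf{z}\in\mathcal{V}(\mathcal{S})$ (and each edge-interior node on edges of $\mathcal{S}$) we take the value as an average of the one-sided values $u_h|_{K}(\mathbf{z})$ over $K\in\mathcal{T}_{\mathbf{z}}$. At boundary vertices the value is set to $0$, which keeps zero traces on $\partial\Omega$. Interior triangle moments are copied from $u_h$.

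\emph{Step 3.} On triangles of $\mathcal{S}^{1/2}\setminus\mathcal{S}$ we must re-couple to the unchanged $\CR$ function in $\mathcal{T}\setminus\mathcal{S}^{1/2}$. Such a triangle shares an edge with $\mathcal{S}$, and possibly only vertices on its other edges. We keep the $\CR$ edge moments of $u_h$ on edges not in $\mathcal{E}(\mathcal{S})$, which preserves the jump conditions of $\CR_{k,0}(\mathcal{T})$ across those edges. On the edge shared with $\mathcal{S}$ we take the continuous trace from Step 2. Unisolvence on $\mathbb{P}_k(K)$ of this mixed data (one full conforming edge trace plus $k$ moments on the other two edges, plus interior moments) has to be checked. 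This is where oddness of $k$ enters: the vertex-oriented $\CR$ functions for odd $k$ are locally supported, so vertex values can be decoupled.

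With this construction, \eqref{SubEq:Mixed Operator properties - conforming on S} and \eqref{ṢubEq:Mixed Operator properties - identity on TmSplus} hold by design. Triangles in $\mathcal{T}\setminus\mathcal{S}^{1/2}$ see only unchanged data. Triangles touching $\mathcal{S}$ only at a vertex require the vertex-oriented basis functions to be modified only inside $\mathcal{S}^{1/2}$; this must be arranged in the choice of basis and is the main obstacle. If it fails directly, we would enlarge the definition by letting the vertex functions at $\mathbf{z}\in\mathcal{V}(\mathcal{S})$ be conforming pieces supported in $\mathcal{S}^{1/2}$, compensated by edge corrections.

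For \eqref{SubEq:Mixed Operator properties - loc bounded}--\eqref{SubEq:Mixed Operator properties - approx} we use the standard averaging argument. The difference $(\operatorname{Id}-M_{k,0}^{\mathcal{S}})u_h$ on $K$ is a combination of basis functions scaled by differences of one-sided values of $u_h$ at shared nodes. These differences are bounded by $h_E^{-1/2}\Vert\llbracket u_h\rrbracket_E\Vert_{L^2(E)}$. Since $\llbracket u_h\rrbracket_E\perp\mathbb{P}_{k-1}(E)$ (and likewise on boundary edges), a Poincaré-type argument on the jump together with a trace inequality gives $\lesssim\Vert\nabla_{\mathcal{T}}u_h\Vert_{\mathbf{L}^2(\mathcal{T}_E)}$. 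Summing over the patch $D_K$ (for example $\omega_{(\mathcal{T}_K)^1}$) and combining with \eqref{SubEq:Non-Conf Approx Operator - loc bound}, plus the approximation estimate $\Vert u-I_k^{\mathcal{T},\CR}u\Vert_{L^2(K)}\lesssim h_K\Vert\nabla u\Vert_{\mathbf{L}^2(K)}$ (from moment preservation and Poincaré), yields both bounds by scaling to the reference triangle and using shape-regularity \eqref{Eq:Finite overlap and minimal angle constants}.
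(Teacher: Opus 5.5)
Your Step 3 fails as stated. On a triangle $K\in\mathcal{S}^{1/2}\setminus\mathcal{S}$ with exactly one edge $E$ on $\Gamma$, you prescribe $(k+1)+2k+\dim\mathbb{P}_{k-3}(K)=\dim\mathbb{P}_k(K)+1$ conditions. The same excess of one occurs when two edges lie on $\Gamma$. So the unisolvence you plan to check is false. The data map is injective, since it contains the local $\CR_k$ degrees of freedom, but it is not onto, and the data from Step 2 and from $u_h$ are generically incompatible. Take $k=1$, $E=[\mathbf a,\mathbf b]$ and third vertex $\mathbf c$. Every $p\in\mathbb{P}_1(K)$ satisfies $p(\mathbf a)-p(\mathbf b)=2(\overline{p}_{[\mathbf a,\mathbf c]}-\overline{p}_{[\mathbf b,\mathbf c]})$. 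Because $u_h|_K$ has the same edge means, your averaged values $\alpha_{\mathbf a},\alpha_{\mathbf b}$ would have to satisfy $\alpha_{\mathbf a}-\alpha_{\mathbf b}=u_h|_K(\mathbf a)-u_h|_K(\mathbf b)$. This fails, for instance, whenever $u_h$ is continuous at $\mathbf b$ but $\alpha_{\mathbf a}\neq u_h|_K(\mathbf a)$. So you cannot have a continuous trace across $\Gamma$ and also keep the $\CR$ moments of $u_h$ on the other edges of $K$. Oddness of $k$ does not help here; this is pure counting. A smaller issue: setting only boundary vertex values to zero does not give the $\CR$ boundary condition on boundary edges of $\mathcal S$ for $k\geq 3$. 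All boundary nodes must be set to zero.

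The missing observation is that the lemma never asks for continuity across $\Gamma$. Property \eqref{SubEq:Mixed Operator properties - conforming on S} requires conformity only inside $\omega_{\mathcal S}$, and across $\Gamma$ the $\CR_k$ condition $\llbracket M_{k,0}^{\mathcal S}u\rrbracket_E\perp\mathbb{P}_{k-1}(E)$ is enough. Transfer to $K$ only the $\mathbb{P}_{k-1}(E)$-moments of the Step-2 trace. Then the data on $K$ are exactly the local $\CR_k$ degrees of freedom, which are unisolvent for odd $k$; that local unisolvence is where oddness enters your approach. The result lies in $\CR_{k,0}(\mathcal T)$, conformity on $\omega_{\mathcal S}$ and the identity on $\mathcal T\setminus\mathcal S^{1/2}$ hold, and your jump-based estimate of $u_h-M_{k,0}^{\mathcal S}u_h$ gives both bounds. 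That estimate is a legitimate alternative to the paper's route, which uses $L^2$-stability of the functionals, reproduction of constants, an inverse inequality and a discrete Poincar\'e inequality.

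The paper uses exactly this freedom through a global basis: $M_{k,0}^{\mathcal S}=\Pi_{k,0}^{\mathcal V(\mathcal S)}+\Pi_{k,0}^{\mathcal E(\mathcal T\setminus\mathcal S^{1/2}),\operatorname{nc}}+\overset{\bullet}{\Pi}_{k,0}$. Here $\psi_{\mathcal S,k}^{\mathbf z}=\frac12\sum_{E\in\mathcal E_{\mathbf z}(\mathcal S)}b_{E,k-1}$ is conforming on $\omega_{\mathcal S}$ for odd $k$. On an interface edge, the coefficient of the single function $b_{E,k-1}$ comes from the vertex functionals $F_{\mathcal S,\mathbf z}$ rather than from $F_{E,k-1}(u)$. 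This makes membership in $\CR_{k,0}(\mathcal T)$ automatic. Your ``main obstacle'' does not arise there either: each $b_{E,k-1}$ is supported on its edge patch, so $\operatorname{supp}\psi_{\mathcal S,k}^{\mathbf z}\subseteq\omega_{\mathcal S^{1/2}}$.
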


Lemma \ref{Lem:Non-Conf Approx Operator} allows us to prove the existence of a discrete right-inverse operator for the operator $I_k^{\mathcal{T}, \CR}$. This is done via a \emph{conforming companion} operator $J_{k,0} : H_{k,0}^{\CR} ( \mathcal{T} ) \to H_0^1 ( \Omega )$. The details of the construction are given in Section \ref{Sec:RightInverse}.

\begin{lemma} \label{Lem:Upper bound by tangential jumps}
Let $\widehat{\mathcal{T}} \in \mathbb{T} ( \mathcal{T} )$ be given. Then for every coarse $\CR_k$ function $v \in \CR_{k,0} ( \mathcal{T} )$ there exists a fine $\CR_k$ function $\widehat{v}^{\ast} \in \CR_{k,0} ( \widehat{\mathcal{T}} )$ such that
\begin{subequations} \label{Eq:Non-Conf}
\begin{align}
I_k^{\mathcal{T}, \CR} \widehat{v}^{\ast} 
&= 
v,
\label{SubEq:Non-Conf - right inverse}
\\
\Vert \nabla_{\mathcal{T}} v - \nabla_{\widehat{\mathcal{T}}}\widehat{v}^{\ast} \Vert_{\mathbf{L}^2 ( \Omega )} &\lesssim
\nu (v; \mathcal{R}^1),
\label{SubEq:Non-Conf -  error estimate}
\end{align}
\end{subequations}
where $\mathcal{R}$ and $\mathcal{R}^1$ are as in \eqref{Eq:Refined elements} and \eqref{Eq:Layers}. 
\end{lemma}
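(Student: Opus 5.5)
The construction of $\widehat{v}^{\ast}$ goes in two stages: a conforming companion followed by a fine-mesh correction. First, construct a conforming companion operator $J_{k,0} : \CR_{k,0} ( \mathcal{T} ) \to S_{k,0} ( \mathcal{T}^{\prime} ) \subseteq H_0^1 ( \Omega )$, where $\mathcal{T}^{\prime}$ is a fixed (e.g. one-level bisection or barycentric) refinement if degrees of freedom require it. It should satisfy two requirements.

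The first is the right-inverse property $I_k^{\mathcal{T},\CR} J_{k,0} v = v$. By Lemma \ref{Lem:Non-Conf Approx Operator} and local unisolvence of \eqref{SubEq:Non-Conf Approx Operator - edge moment}--\eqref{SubEq:Non-Conf Approx Operator- vol moment}, this means preserving all edge moments against $\mathbb{P}_{k-1}(E)$ and volume moments against $\mathbb{P}_{k-3}(K)$. To achieve it, I start with nodal averaging $A v$ of $v$ at vertices and Lagrange nodes. I then add local edge bubbles (supported on $\mathcal{T}_E$) and volume bubbles (supported on $K$) that correct the moments of $v - Av$. Well-posedness of the bubble corrections uses the weighted orthogonal polynomials $P^{(1,1,1)}_{K,\boldsymbol{\alpha}}$ and \eqref{Eq:Pairwise orthogonality}, together with their 1D edge analogues.

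The second is the local estimate $\Vert \nabla_{\mathcal{T}}( v - J_{k,0} v ) \Vert_{\mathbf{L}^2(K)} \lesssim \big(\sum_{E \cap K \neq \emptyset} h_E \Vert \langle \llbracket \nabla_{\mathcal{T}} v \rrbracket_E, \mathbf{t}_E \rangle \Vert_{L^2(E)}^2\big)^{1/2}$. This follows from the standard argument that the jumps of the piecewise polynomial $v$ at nodes are controlled by tangential derivative jumps, since $\llbracket v \rrbracket_E$ has vanishing $\mathbb{P}_{k-1}$ moments and hence is determined by its tangential derivative. Scaling and norm equivalence on finite-dimensional patches then give the bound.

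Next, the quantity $J_{k,0} v$ is not yet supported on the refined region only. I therefore localize by setting
\[
\widehat{v}^{\ast} := v + I_k^{\widehat{\mathcal{T}},\CR}\big( M \big),
\]
where $M$ is built from $J_{k,0} v - v$ using the partially conforming operator of Lemma \ref{Lem:Partially conforming} with $\mathcal{S} := \mathcal{R}$. Alternatively, and more simply, I define $\widehat{v}^{\ast} := I_k^{\widehat{\mathcal{T}},\CR} w$ with $w := M^{\mathcal{R}}_{k,0} v + (J_{k,0} v - v)|_{\text{refined patch}}$, arranged so that two things hold. On $\omega_{\mathcal{T} \setminus \mathcal{R}^{1/2}}$, $w = v$; since there the meshes coincide and $v$ is locally polynomial, $\widehat{v}^{\ast} = v$. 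On $\omega_{\mathcal{R}}$, $w$ is conforming, so the fine interpolant is well defined and its fine edge moments on interior fine edges integrate consistently. The right-inverse identity \eqref{SubEq:Non-Conf - right inverse} is verified moment by moment. Each coarse edge $E$ is a union of fine edges, and each $q \in \mathbb{P}_{k-1}(E)$ restricts to $\mathbb{P}_{k-1}$ on every fine subedge, so \eqref{SubEq:Non-Conf Approx Operator - edge moment} on $\widehat{\mathcal{T}}$ transfers coarse edge moments of $w$ to $\widehat{v}^{\ast}$. The same holds for volume moments via \eqref{SubEq:Non-Conf Approx Operator- vol moment}, once one checks that fine interior edges inside a coarse $K$ contribute nothing, which follows from integration by parts against $\mathbb{P}_{k-3}$ and the CR jump condition. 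It then remains that $w$ has the same coarse moments as $v$, which is guaranteed by the moment-preserving construction of $J_{k,0}$ and $M$.

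The error estimate \eqref{SubEq:Non-Conf - error estimate} follows from the local $H^1$-stability \eqref{SubEq:Non-Conf Approx Operator - loc bound}, the fact that $\nabla(\widehat{v}^{\ast}-v)$ vanishes outside $\omega_{\mathcal{R}^{1}}$, and the local companion estimate summed with finite overlap \eqref{Eq:Finite overlap and minimal angle constants}. The powers $h_E$ match $\vert K\vert^{1/2}$ in $\nu$ by \eqref{Eq:Equivalence local mesh size and volume}. I expect the main obstacle to be the moment-preserving localization. One must design the transition layer $\mathcal{R}^{1/2}\setminus\mathcal{R}$ so that the modified function still has exactly the coarse moments of $v$ on every coarse edge and triangle, including those of the transition layer. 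I would try to handle this by performing the bubble corrections on coarse edges of $\mathcal{R}$ only inside $\omega_{\mathcal{R}^{1/2}}$, where odd $k$ guarantees local edge-oriented CR basis functions.
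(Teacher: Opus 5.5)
\textbf{Gap: the localization step.} Two parts of your plan match the paper: the companion operator that preserves all coarse moments and satisfies a local tangential-jump bound, and the moment-by-moment check of \eqref{SubEq:Non-Conf - right inverse}. The paper's $J_{k,0}$ maps into $S_{k+1,0}(\mathcal T)$ through the degree-$(k+1)$ bubble $\Phi_E$, which settles your hedge about extra degrees of freedom. Its jump bound comes from (C6)--(C7) and the Carstensen--Puttkammer theorem, which is the norm-equivalence argument you sketch. The problem is the localization step you call the main obstacle: as written, both of your variants fail.

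In $\widehat v^\ast := v + I_k^{\widehat{\mathcal T},\CR}(\cdot)$, the correction lies in $\CR_k(\widehat{\mathcal T})$, but $v$ in general does not. On a half edge $\widehat E\subset E$, the jump of $v$ is orthogonal to $\mathbb P_{k-1}(E)$ but not to $\mathbb P_{k-1}(\widehat E)$. So this $\widehat v^\ast\notin\CR_{k,0}(\widehat{\mathcal T})$; that is exactly the non-nestedness problem.

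Your second variant relies on $M^{\mathcal R}_{k,0}$ preserving coarse moments, and it does not. For $E\in\mathcal E_\Omega(\mathcal R)$, $F_{E,k-1}(M^{\mathcal R}_{k,0}v)$ equals $\tfrac12$ times the sum of the vertex averages $F_{\mathcal R,\mathbf z}(v)$ over the interior endpoints $\mathbf z$ of $E$, not $F_{E,k-1}(v)$. Hence $w=M^{\mathcal R}_{k,0}v+(J_{k,0}v-v)$ carries the coarse moments of $M^{\mathcal R}_{k,0}v$ on the refined patch, and \eqref{SubEq:Non-Conf - right inverse} fails. This $w$ is also not conforming on $\omega_{\mathcal R}$, because $v$ is not. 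Finally, Lemma \ref{Lem:Partially conforming} gives no tangential-jump bound for $v-M^{\mathcal R}_{k,0}v$ on the transition layer, so the error estimate would not close either.

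\textbf{What you are missing: no localization is needed.} Take simply $\widehat v^\ast:=I_{k,0}^{\widehat{\mathcal T},\CR}(J_{k,0}v)$, as the paper does.

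On an unrefined $K\in\mathcal T\cap\widehat{\mathcal T}$, the fine interpolant reduces to $I^{\operatorname{loc}}_{K,k}$ by \eqref{Eq:Local desciption}. This operator depends only on the moments against $\mathbb P_{k-1}(E)$ for $E\in\mathcal E(K)$ and against $\mathbb P_{k-3}(K)$. These moments coincide for $J_{k,0}v$ and $v$: on inner edges by moment preservation, on boundary edges because both vanish. So $\widehat v^\ast|_K=I^{\operatorname{loc}}_{K,k}(v|_K)=v|_K$ by \eqref{SubEq:Prop LkK - Local projection}, and $\widehat v^\ast-v$ is automatically supported in $\omega_{\mathcal R}$.

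On $\widehat K\in\widehat{\mathcal R}$, $I^{\operatorname{loc}}_{\widehat K,k}$ reproduces $v|_{\widehat K}\in\mathbb P_k(\widehat K)$. Its local $H^1$-stability (as in \eqref{SubEq:Non-Conf Approx Operator - loc bound}) then gives $\Vert\nabla(\widehat v^\ast-v)\Vert_{\mathbf L^2(\widehat K)}\lesssim\Vert\nabla(J_{k,0}v-v)\Vert_{\mathbf L^2(\widehat K)}$. Summing over $\widehat{\mathcal R}$ yields $\Vert\nabla_{\widehat{\mathcal T}}\widehat v^\ast-\nabla_{\mathcal T}v\Vert_{\mathbf L^2(\Omega)}\lesssim\Vert\nabla_{\mathcal T}(J_{k,0}v-v)\Vert_{\mathbf L^2(\omega_{\mathcal R})}$.

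Your local companion bound, applied on the patches $\mathcal T_K$ with $K\in\mathcal R$, then gives $\nu(v;\mathcal R^1)$. All edges involved belong to triangles of $\mathcal R^1$, and $h_E\approx\vert K'\vert^{1/2}$ there. The transition-layer difficulty you anticipate never arises.
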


Lemma \ref{Lem:Upper bound by tangential jumps} allows us to quantify the distance between the coarse and fine $\CR_k$ spaces, and we use the operator $M_{k,0}^{\mathcal{S}}$ to construct another quasi-interpolation operator $\widehat{P}$ which maps the fine $\CR_k$ space into the intersection of the coarse and fine $\CR_k$ space. 

\begin{lemma}\label{Lem:Existence Phat}
For any $\widehat{\mathcal{T}} \in \mathbb{T} (\mathcal{T})$ there exists an operator $\widehat{P} : \CR_{k,0} ( \widehat{\mathcal{T}} ) \to \CR_{k,0} ( \mathcal{T} ) \cap \CR_{k,0} ( \widehat{\mathcal{T}} )$ such that
for any function $\widehat{v} \in \CR_{k,0} ( \widehat{\mathcal{T}} )$
\begin{subequations} \label{Eq:Conditions Phat}
\begin{align}
\left. ( \widehat{v}- \widehat{P}  \widehat{v}) \right\vert_K 
&= 
0
\qquad \qquad \qquad \qquad \qquad \quad \ \,
\forall K \in \mathcal{T} \setminus \mathcal{R}^{1/2},
\label{SubEq:Conditions Phat - Identiy}
\\
\Vert ( \operatorname{Id} - \widehat{P} ) \widehat{v} \Vert_{L^2 ( K )}
& \lesssim
h_K
\Vert \nabla \widehat{v} \Vert_{\mathbf{L}^2 ( Z_K )}
\qquad \qquad \quad  \ \;
\forall K \in \mathcal{R}^{1/2},
\label{SubEq:Conditions Phat - approx propterties vol}
\\
\Vert \llbrace (\operatorname{Id} - \widehat{P} ) \widehat{v} \rrbrace_E \Vert_{L^2 ( E )}
& \lesssim 
\sum_{K \in \mathcal{T}_E} 
h_K ^{1/2}
\Vert \nabla_{\widehat{\mathcal{T}}} \widehat{v} \Vert_{\mathbf{L}^2 ( Z_K )}
\qquad \forall E \in \mathcal{E} ( \mathcal{R} ),
\label{SubEq:Conditions Phat - approx propterties edge}
\end{align}
\end{subequations}
where $Z_K \subseteq \overline{\Omega}$ is some local triangle neighborhood of $K$.
\end{lemma}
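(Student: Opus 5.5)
The natural candidate is $\widehat{P}\widehat{v} := M_{k,0}^{\mathcal{S}} \widehat{v}$ with $\mathcal{S} := \mathcal{R}$, where $M_{k,0}^{\mathcal{S}}$ is the partially conforming operator of Lemma \ref{Lem:Partially conforming} built on the \emph{coarse} mesh $\mathcal{T}$. It is applied to $\widehat{v} \in \CR_{k,0}(\widehat{\mathcal{T}})$, which lies in $H_{k,0}^{\CR}(\mathcal{T})$. To see this, note that every coarse edge $E \in \mathcal{E}(\mathcal{T})$ is a union of fine edges $\widehat{E}$. For $q \in \mathbb{P}_{k-1}(E)$ the restriction $q|_{\widehat{E}}$ lies in $\mathbb{P}_{k-1}(\widehat{E})$, so $\int_E q \llbracket \widehat{v} \rrbracket_E = \sum_{\widehat{E}} \int_{\widehat{E}} q \llbracket \widehat{v} \rrbracket_{\widehat{E}} = 0$. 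The jumps across fine edges interior to a coarse triangle are irrelevant for $H^1(\mathcal{T})$-membership, but only in a broken sense. I would therefore first extend Lemma \ref{Lem:Partially conforming} to arguments that are merely piecewise $H^1$ on the fine mesh, with $\nabla_{\widehat{\mathcal{T}}}$ in place of $\nabla_{\mathcal{T}}$. This should go through because the construction only uses edge moments on coarse edges and volume moments on coarse triangles.

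The plan for the first property is as follows. By \eqref{ṢubEq:Mixed Operator properties - identity on TmSplus}, $M \widehat{v}$ coincides with the argument outside $\omega_{\mathcal{R}^{1/2}}$. That statement is for $u \in \CR_{k,0}(\mathcal{T})$, but outside $\omega_{\mathcal{R}}$ the fine and coarse meshes agree and $\widehat{v}|_K \in \mathbb{P}_k(K)$. Since the construction is local (through degrees of freedom), the identity holds elementwise on $\mathcal{T}\setminus\mathcal{R}^{1/2}$, which gives \eqref{SubEq:Conditions Phat - Identiy}.

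The target space needs two things. First, $M\widehat{v} \in \CR_{k,0}(\mathcal{T})$ holds by Lemma \ref{Lem:Partially conforming}, after adapting it so the output is coarse $\mathbb{P}_k$ also on $\mathcal{R}$. Second, $M\widehat{v} \in \CR_{k,0}(\widehat{\mathcal{T}})$. On $\omega_{\mathcal{R}}$ we have $M\widehat{v}|_{\omega_{\mathcal{R}}} \in S_k(\mathcal{R})$ by \eqref{SubEq:Mixed Operator properties - conforming on S}, so it is continuous there and has no jumps across the newly created fine edges inside $\omega_{\mathcal{R}}$. On all other edges, fine and coarse edges coincide and the coarse CR jump conditions are exactly the fine ones. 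Hence $\widehat{P}\widehat{v} \in \CR_{k,0}(\mathcal{T}) \cap \CR_{k,0}(\widehat{\mathcal{T}})$.

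For \eqref{SubEq:Conditions Phat - approx propterties vol}, I apply \eqref{SubEq:Mixed Operator properties - approx} (in its broken-fine version) with $Z_K := D_K$. For the edge estimate \eqref{SubEq:Conditions Phat - approx propterties edge}, I write $w := (\operatorname{Id}-\widehat{P})\widehat{v}$ and bound $\Vert\llbrace w\rrbrace_E\Vert_{L^2(E)} \le \sum_{K\in\mathcal{T}_E} \Vert w|_K\Vert_{L^2(E)}$. On each coarse $K$, $w|_K$ is piecewise $H^1$ on $\operatorname{succ}(K)$, so I apply a trace inequality on each fine triangle $\widehat{K} \subseteq K$ touching $E$:
\[
\Vert w\Vert_{L^2(\widehat E)}^2 \lesssim h_{\widehat K}^{-1}\Vert w\Vert_{L^2(\widehat K)}^2 + h_{\widehat K}\Vert\nabla w\Vert_{\mathbf{L}^2(\widehat K)}^2 .
\]

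This step is the main obstacle. The factor $h_{\widehat{K}}^{-1}$ can be much larger than $h_K^{-1}$, so the coarse $L^2$ bound $h_K\Vert\nabla\widehat{v}\Vert$ does not suffice on strongly refined triangles. To get around this, I would use a trace inequality on the coarse triangle $K$ valid for functions in $H^1(\operatorname{succ}(K))$ whose fine jumps have vanishing mean inside $K$. Such broken trace and Poincaré-type inequalities (for instance via a discrete Friedrichs argument) give $\Vert w\Vert_{L^2(\partial K)}^2 \lesssim h_K^{-1}\Vert w\Vert_{L^2(K)}^2 + h_K\Vert\nabla_{\widehat{\mathcal{T}}}w\Vert^2_{\mathbf{L}^2(K)}$. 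The alternative is to smooth $\widehat{v}$ by a fine conforming companion before invoking the trace theorem on $K$. Combining this with the volume estimate and the local stability \eqref{SubEq:Mixed Operator properties - loc bounded} yields the claim with $h_K^{1/2}$ weights.
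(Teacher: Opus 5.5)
You define $\widehat P := M_{k,0}^{\mathcal R}$ and apply it directly to $\widehat v$. However, $\widehat v \notin H_{k,0}^{\CR}(\mathcal T)$: the coarse-edge moment conditions hold, but $\widehat v|_K \notin H^1(K)$ for $K\in\mathcal R$. You therefore need Lemma \ref{Lem:Partially conforming} for fine-broken arguments, and you justify this ``because the construction only uses edge moments on coarse edges and volume moments on coarse triangles''. That reasoning gives well-definedness, the identity \eqref{SubEq:Conditions Phat - Identiy} and the target space, and those parts of your proposal are fine. It does not give the estimates. The proofs of \eqref{SubEq:Mixed Operator properties - loc bounded}--\eqref{SubEq:Mixed Operator properties - approx} rest on the functional bounds \eqref{A:Eq:Functional estimates}, on Corollary \ref{Cor:Boundary integral mean estimate} and on the patch Poincar\'e inequality. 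All of these use trace and Poincar\'e inequalities for $H^1$ functions on \emph{coarse} triangles. For $\widehat v$ broken on a strongly graded $\operatorname{succ}(K)$, already the bound $\vert F_{E,k-1}(\widehat v)\vert \lesssim h_K^{-1}\Vert \widehat v\Vert_{L^2(K)} + \Vert \nabla_{\widehat{\mathcal T}}\widehat v\Vert_{\mathbf L^2(K)}$ is exactly the broken coarse trace inequality you call the ``main obstacle'' in your last paragraph. So the volume estimate \eqref{SubEq:Conditions Phat - approx propterties vol}, which you treat as immediate, already contains the unresolved difficulty. As written, the proposal defers the entire analytic content of the lemma to an unproven extension.

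This can be repaired. A local averaging companion on $\operatorname{succ}(K)$ gives $\Vert w\Vert_{L^2(\partial K)}^2 \lesssim h_K^{-1}\Vert w\Vert_{L^2(K)}^2 + h_K\Vert \nabla_{\widehat{\mathcal T}} w\Vert_{\mathbf L^2(K)}^2$, with a constant independent of the grading, for functions whose fine jumps are orthogonal to constants; it also gives a matching broken Poincar\'e inequality. But then every estimate of Section \ref{Sec:PartConfOp} must be redone for broken inputs.

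The paper avoids this with the idea behind your ``alternative''. It sets $\widehat P := M_{k,0}^{\mathcal R}\circ M_{k,0}^{\widehat{\mathcal R}}$. The fine operator is applied to $\widehat v \in H_{k,0}^{\CR}(\widehat{\mathcal T})$ within the existing lemma on $\widehat{\mathcal T}$. Its image lies in $S_k(\widehat{\mathcal R})$ on $\omega_{\widehat{\mathcal R}}=\omega_{\mathcal R}$, hence in $H_{k,0}^{\CR}(\mathcal T)$ (Lemma \ref{Lem:FineToCoarse - successive application}), so the coarse lemma applies verbatim. The paper then splits $\operatorname{Id}-\widehat P = (\operatorname{Id}-M_{k,0}^{\widehat{\mathcal R}}) + (\operatorname{Id}-M_{k,0}^{\mathcal R})M_{k,0}^{\widehat{\mathcal R}}$. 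The first term is treated on the fine mesh, where the fine approximation bound $h_{\widehat K}\Vert\nabla_{\widehat{\mathcal T}}\widehat v\Vert$ exactly cancels the $h_{\widehat K}^{-1}$ of the fine trace inequality. The second term is $H^1$ on each coarse triangle, so the ordinary coarse trace inequality applies, together with coarse approximation and fine local stability. The price is the two-layer neighbourhood $Z_K=\omega_{\mathcal T_K^1}$; your route, once completed, would give the smaller $D_K$.
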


The following result is a consequence of the Lemmata \ref{Lem:Upper bound by tangential jumps} and \ref{Lem:Existence Phat} and is used in the proof of the main theorem.

\begin{lemma} \label{Lem:Conformity estimation}
Let $\widehat{\mathcal{T}} \in \mathbb{T} ( \mathcal{T} )$ and assume that $u_h \in \CR_{k,0} ( \mathcal{T} )$ and $\widehat{u}_h \in \CR_{k,0} (  \widehat{\mathcal{T}} )$ to be the coarse and fine solutions to the discrete Poisson problem \eqref{Eq:DiscretePoisson}. It holds that
\begin{align}
a_{\widehat{\mathcal{T}}} ( \widehat{u}_h - u_h, \widehat{u}_h - \widehat{u}_h^{\ast} )
& \lesssim
\mu ( u_h; \mathcal{R}^{1/2}, f)
\Vert \nabla_{\widehat{\mathcal{T}}} ( \widehat{u}_h - \widehat{u}_h^{\ast} )
\Vert_{\mathbf{L}^2 ( \Omega )},
\end{align}
where $\widehat{u}_h^{\ast} \in \CR_{k,0} ( \widehat{\mathcal{T}} )$ satisfies $I_k^{\mathcal{T}, \CR} \widehat{u}_h^{\ast} = u_h$ as in Lemma \ref{Lem:Upper bound by tangential jumps}.
\end{lemma}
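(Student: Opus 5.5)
Set $\widehat{e} := \widehat{u}_h - \widehat{u}_h^{\ast} \in \CR_{k,0} ( \widehat{\mathcal{T}} )$ and split it as $\widehat{e} = \widehat{P}\widehat{e} + (\operatorname{Id} - \widehat{P})\widehat{e}$, with $\widehat{P}$ from Lemma \ref{Lem:Existence Phat}. The aim is to show that $a_{\widehat{\mathcal{T}}}(\widehat{u}_h - u_h, \cdot)$ vanishes on $\widehat{P}\widehat{e}$. On the remainder $(\operatorname{Id} - \widehat{P})\widehat{e}$, which is supported on $\omega_{\mathcal{R}^{1/2}}$ by \eqref{SubEq:Conditions Phat - Identiy}, a residual argument then bounds the form by $\mu(u_h;\mathcal{R}^{1/2},f)$.

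\textbf{Step 1 (Galerkin orthogonality).} Since $\widehat{P}\widehat{e} \in \CR_{k,0}(\mathcal{T}) \cap \CR_{k,0}(\widehat{\mathcal{T}})$, it is an admissible test function in both discrete problems \eqref{Eq:DiscretePoisson}. The coarse gradient $\nabla_{\mathcal{T}} u_h$ coincides with $\nabla_{\widehat{\mathcal{T}}} u_h$ almost everywhere, and $\widehat{P}\widehat{e}$ is piecewise polynomial on the coarse mesh, so
\begin{align*}
a_{\widehat{\mathcal{T}}}(\widehat{u}_h - u_h, \widehat{P}\widehat{e}) = (f,\widehat{P}\widehat{e})_{L^2(\Omega)} - (f,\widehat{P}\widehat{e})_{L^2(\Omega)} = 0 .
\end{align*}
Hence $a_{\widehat{\mathcal{T}}}(\widehat{u}_h - u_h, \widehat{e}) = a_{\widehat{\mathcal{T}}}(\widehat{u}_h - u_h, w)$ with $w := (\operatorname{Id}-\widehat{P})\widehat{e} \in \CR_{k,0}(\widehat{\mathcal{T}})$.

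\textbf{Step 2 (removing $\widehat{u}_h$).} Testing the fine problem with $w$ gives $a_{\widehat{\mathcal{T}}}(\widehat{u}_h,w) = (f,w)_{L^2(\Omega)}$. The quantity to estimate is therefore
\begin{align*}
(f,w)_{L^2(\Omega)} - \sum_{K\in\mathcal{T}} (\nabla u_h, \nabla_{\widehat{\mathcal{T}}} w)_{\mathbf{L}^2(K)} .
\end{align*}
Integrate by parts elementwise on the coarse triangles. This is legitimate because $u_h|_K$ is smooth, and $w|_K \in H^1(\widehat{\mathcal{T}}|_K)$. The result splits into three parts:
\begin{itemize}
\item the volume terms $\sum_{K}(f+\Delta u_h, w)_{L^2(K)}$;
\item boundary terms on coarse edges;
\item interior terms on fine edges inside a coarse $K$, namely $\int_{\widehat E} \partial_{\mathbf{n}} u_h \,\llbracket w\rrbracket_{\widehat E}$.
\end{itemize}
The interior fine-edge terms vanish: $\partial_{\mathbf{n}}u_h|_{\widehat E} \in \mathbb{P}_{k-1}(\widehat E)$ and $w \in \CR_{k,0}(\widehat{\mathcal{T}})$, so the jump is orthogonal to it. On a coarse edge $E$, use the product formula \eqref{Eq:Jump of products}. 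This produces $\int_E \langle\llbracket\nabla u_h\rrbracket_E,\mathbf{n}_E\rangle \llbrace w\rrbrace_E$ plus $\int_E \llbrace\partial_{\mathbf{n}}u_h\rrbrace_E \llbracket w\rrbracket_E$. In the second term, $\llbracket w\rrbracket_E$ is orthogonal to $\mathbb{P}_{k-1}(E)$ only edgewise on the fine subedges of $E$. But $\llbrace\partial_{\mathbf{n}}u_h\rrbrace_E$ is a polynomial of degree at most $k-1$ on all of $E$, hence also on each subedge, so this term vanishes as well. On boundary edges the same argument works with the convention \eqref{Eq:Edge jump and mean}. Since $w=0$ on triangles in $\mathcal{T}\setminus\mathcal{R}^{1/2}$, only $K\in\mathcal{R}^{1/2}$ and edges of $\mathcal{R}^{1/2}$ remain.

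\textbf{Step 3 (estimates).} Apply Cauchy--Schwarz to the remaining terms. For the volume terms, use \eqref{SubEq:Conditions Phat - approx propterties vol} together with $h_K \approx |K|^{1/2}$. For the edge terms, use \eqref{SubEq:Conditions Phat - approx propterties edge} together with $h_K^{1/2}\approx |K|^{1/4}$. The finite overlap of the neighbourhoods $Z_K$ \eqref{Eq:Finite overlap and minimal angle constants} then yields the bound
\begin{align*}
\mu(u_h;\mathcal{R}^{1/2},f)\,\Vert\nabla_{\widehat{\mathcal{T}}}\widehat{e}\Vert_{\mathbf{L}^2(\Omega)} .
\end{align*}

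\textbf{Main obstacle.} The delicate step is the edge bookkeeping, in two respects. First, \eqref{SubEq:Conditions Phat - approx propterties edge} is stated only for $E\in\mathcal{E}(\mathcal{R})$, while edges of $\mathcal{R}^{1/2}$ not in $\mathcal{E}(\mathcal{R})$ also appear in the sum. For these I would argue that both neighbours lie in $\mathcal{T}\cap\widehat{\mathcal{T}}$, so $w$ restricted to them is a coarse $\CR_k$ function. I would then either use a trace inequality with \eqref{SubEq:Conditions Phat - approx propterties vol}, or note that $w$ vanishes on the outer neighbour. Second, the normal-jump terms must be attributed to interior edges $\mathcal{E}_\Omega(K)$ of $K\in\mathcal{R}^{1/2}$, so that they are controlled by $\mu$ and not by a larger patch.
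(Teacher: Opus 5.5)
Your proof is correct and follows the paper's route. You use Galerkin orthogonality against $\widehat{P}(\widehat{u}_h-\widehat{u}_h^{\ast})\in\CR_{k,0}(\mathcal{T})\cap\CR_{k,0}(\widehat{\mathcal{T}})$, then elementwise integration by parts in which the $\llbrace\partial_{\mathbf{n}}u_h\rrbrace\llbracket w\rrbracket$ and boundary terms vanish by the fine $\CR_k$ conditions, and finally Cauchy--Schwarz with the volume and edge estimates of Lemma \ref{Lem:Existence Phat} plus finite overlap. Your extra treatment of edges in $\mathcal{E}_\Omega(\mathcal{R}^{1/2})\setminus\mathcal{E}(\mathcal{R})$ is sound: both neighbours are unrefined, so a discrete trace inequality for $\mathbb{P}_k$ combined with the volume estimate suffices, with $w=0$ on neighbours outside $\mathcal{R}^{1/2}$. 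It also accounts for terms that the paper's $S_{\operatorname{edge}}$, summed only over $\mathcal{E}_\Omega(\widehat{\mathcal{R}})$, passes over without comment.
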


With these auxiliary results at hand, we are able to prove the main theorem.

\begin{proof}[Proof of Theorem \ref{Thm:Discrete reliabilty}]
Let the mesh $\widehat{\mathcal{T}} \in \mathbb{T} ( \mathcal{T} )$ be an admissible refinement of $\mathcal{T}$ and let $u_h \in \CR_{k,0} ( \mathcal{T} )$ and $\widehat{u}_h \in \CR_{k,0} ( \widehat{\mathcal{T}} )$ be the coarse and fine solution to the discrete Poisson problem \eqref{Eq:DiscretePoisson}. We denote the discrete error by $\widehat{e}_h := \widehat{u}_h - u_h$. Let $\widehat{u}_h^{\ast} \in \CR_{k,0} ( \widehat{\mathcal{T}} )$ be given by Lemma \ref{Lem:Upper bound by tangential jumps} and estimate that
\begin{align}
\Vert \nabla_{\widehat{\mathcal{T}}} \widehat{u}_h - \nabla_{\mathcal{T}} u_h \Vert_{\mathbf{L}^2 ( \Omega )}^2
&=
a_{\widehat{\mathcal{T}}} ( \widehat{e}_h, \widehat{u}_h - u_h )
=
a_{\widehat{\mathcal{T}}}  ( \widehat{e}_h , \widehat{u}_h - \widehat{u}_h^{\ast} )
+
a_{\widehat{\mathcal{T}}}  ( \widehat{e}_h , \widehat{u}_h^{\ast} - u_h )
\nonumber
\\
& \leq
a_{\widehat{\mathcal{T}}}  ( \widehat{e}_h , \widehat{u}_h - \widehat{u}_h^{\ast} )
+ 
\Vert \nabla_{\widehat{\mathcal{T}}} \widehat{e}_h \Vert_{\mathbf{L}^2 ( \Omega )}
\Vert \nabla_{\widehat{\mathcal{T}}} \widehat{u}_h^{\ast} - \nabla_{\mathcal{T}} u_h \Vert_{\mathbf{L}^2 ( \Omega )}.
\label{Eq:Discrete error - first estimate}
\end{align}
Applying Lemma \ref{Lem:Conformity estimation} to $a_{\widehat{\mathcal{T}}}  ( \widehat{e}_h , \widehat{u}_h - \widehat{u}_h^{\ast} )$ we obtain that
\begin{align*}
\Vert \nabla_{\widehat{\mathcal{T}}}  \widehat{e}_h \Vert_{\mathbf{L}^2 ( \Omega )}^2
\lesssim &
\mu ( u_h ; \mathcal{R}^{1/2}, f )
\Vert \nabla_{\widehat{\mathcal{T}}} ( \widehat{u}_h - \widehat{u}_h^{\ast} )
\Vert_{\mathbf{L}^2 ( \Omega )}
+
\Vert \nabla_{\widehat{\mathcal{T}}} \widehat{e}_h \Vert_{\mathbf{L}^2 ( \Omega )}
\Vert \nabla_{\widehat{\mathcal{T}}} \widehat{u}_h^{\ast} - \nabla_{\mathcal{T}} u_h \Vert_{\mathbf{L}^2 ( \Omega )}
\nonumber
\\
\lesssim &
\mu ( u_h ; \mathcal{R}^{1/2}, f )
\left(\Vert \nabla_{\widehat{\mathcal{T}}} \widehat{e}_h \Vert_{\mathbf{L}^2 ( \Omega )}
+
\Vert \nabla_{\widehat{\mathcal{T}}} \widehat{u}_h^{\ast} - \nabla_{\mathcal{T}} u_h \Vert_{\mathbf{L}^2 ( \Omega )} \right)
+
\Vert \nabla_{\widehat{\mathcal{T}}} \widehat{e}_h \Vert_{\mathbf{L}^2 ( \Omega )}
\Vert \nabla_{\widehat{\mathcal{T}}} \widehat{u}_h^{\ast} - \nabla_{\mathcal{T}} u_h \Vert_{\mathbf{L}^2 ( \Omega )}.
\end{align*}
By setting $a := \Vert \nabla_{\widehat{\mathcal{T}}} \widehat{e}_h \Vert_{\mathbf{L}^2 ( \Omega )}$, $b := \Vert \nabla_{\widehat{\mathcal{T}}} \widehat{u}_h^{\ast} - \nabla_{\mathcal{T}} u_h \Vert_{\mathbf{L}^2 ( \Omega )}$ and  $c := \mu ( u_h ; \mathcal{R}^{1/2} , f )$, we have shown in the previous estimate that $a^2 \lesssim ab + c(a+b)$, for $a,b,c \geq 0$. This implies $a \lesssim b+c$ by some elementary computation which is omitted here. Consequently, we have shown that
\begin{align}
\Vert \nabla_{\widehat{\mathcal{T}}} \widehat{e}_h \Vert_{\mathbf{L}^2 ( \Omega )}
&\lesssim
\mu ( u_h ; \mathcal{R}^{1/2} ,f )
+
\Vert \nabla_{\widehat{\mathcal{T}}} \widehat{u}_h^{\ast} - \nabla_{\mathcal{T}} u_h \Vert_{\mathbf{L}^2 ( \Omega )}.
\label{Eq:Almost final estimate}
\end{align}
Lemma \ref{Lem:Upper bound by tangential jumps} \eqref{SubEq:Non-Conf -  error estimate} is used to estimate $\Vert \nabla_{\widehat{\mathcal{T}}} \widehat{u}_h^{\ast} - \nabla_{\mathcal{T}} u_h \Vert_{\mathbf{L}^2 ( \Omega )}$ in \eqref{Eq:Almost final estimate} so that
\begin{align*}
\Vert \nabla_{\widehat{\mathcal{T}}} \widehat{e}_h \Vert_{\mathbf{L}^2 ( \Omega )}
&\lesssim
\mu ( u_h ; \mathcal{R}^{1/2} ,f )
+
\nu (u_h ; \mathcal{R}^1 ).
\end{align*}
Combining $\mathcal{R}^{1/2} \subseteq \mathcal{R}^1$, the fact that $\mu ( u_h ; \mathcal{R}^{1/2} ,f ), \nu (u_h ; \mathcal{R}^1 ) \geq 0$ by construction and the monotonicity of the square root function, we conclude that
$
\Vert \nabla_{\widehat{\mathcal{T}}} \widehat{u}_h - \nabla_{\mathcal{T}} u_h \Vert_{\mathbf{L}^2 ( \Omega )}
\lesssim
\eta ( u_h ; \mathcal{R}^1 , f)$ concluding the proof.
\end{proof}

Since the proof of Theorem \ref{Thm:Discrete reliabilty} is built upon Lemmata \ref{Lem:Non-Conf Approx Operator} -- \ref{Lem:Conformity estimation}, we provide proofs of these results in Sections \ref{Sec:NonConfApprox} -- \ref{Sec:Unseen}.

\section{The non-conforming approximation operator}
\label{Sec:NonConfApprox}

In this section, we provide the explicit construction of the non-conforming approximation operator $I_k^{\mathcal{T}, \CR} : H_k^{\CR} ( \mathcal{T} ) \to \CR_k ( \mathcal{T} )$. We prove Lemma \ref{Lem:Non-Conf Approx Operator} and derive some additional properties. In \cite[Sec. 6]{BohneSauterCiarletdD} a general construction of quasi-interpolation operators $I_{\mathcal{T},k} : H_0^1 ( \Omega ) + \CR_{k,0} ( \mathcal{T} ) \to \CR_{k,0} ( \mathcal{T} )$ of the form 
\begin{align}
I_{\mathcal{T},k} u
&=
\mathcal{I}_{\mathcal{T},k}^{\mathcal{E}} u 
+ 
\overset{\bullet \bullet}{\Pi}_{\mathcal{T},k} (u - \mathcal{I}_{\mathcal{T},k}^{\mathcal{E}} u )
\qquad \forall u \in H_0^1 ( \Omega ) + \CR_{k,0} ( \mathcal{T} )
\label{Eq:Class of apprximation}
\end{align}
is given.
The operator $\mathcal{I}_{\mathcal{T},k}^{\mathcal{E}}$ is explicitly constructed in \cite[(70)]{BohneSauterCiarletdD}, and $\overset{\bullet \bullet}{\Pi}_{\mathcal{T},k}: H_0^1 ( \Omega ) + \CR_{k,0} ( \mathcal{T} ) \to \overset{\bullet \bullet}{S}_k ( \mathcal{T})$ is some bounded linear operator with $\overset{\bullet \bullet}{S}_k ( \mathcal{T}) := \{v \in S_{k,0} ( \mathcal{T}) \; \vert \; \forall E \in \mathcal{E} ( \mathcal{T} ): \ v \vert_E = 0\}$. The operator $I_k^{\mathcal{T}, \CR}$ will be an instance of \eqref{Eq:Class of apprximation}.

For any $\mathbf{z} \in \mathcal{V} ( K )$ let $\varphi_{\mathbf{z}}^1 \in S_1 ( \mathcal{T} )$ be defined by $\varphi_{\mathbf{z}} ( \mathbf{y}) = \delta_{\mathbf{z},\mathbf{y}}$ for all $\mathcal{V} ( T )$ so that $ \varphi_{\mathbf{z}} \vert_K = \lambda_{K, \mathbf{z} }$ holds for $K \in \mathcal{T}_{\mathbf{z}}$ and $ \operatorname{supp} \varphi_{\mathbf{z}} = \omega_{\mathcal{T}_{\mathbf{z}}}$. 
Then for $K \in \mathcal{T}$ and $E \in \mathcal{E} ( \mathcal{T} )$, we define $W_K \in S_{3,0} ( \mathcal{T} )$ and $W_E \in S_2 ( \mathcal{T} )$ by $W_K
:=
\prod_{\mathbf{z} \in \mathcal{V} ( K )} \varphi_{\mathbf{z}}^1$ and $W_E 
:=
\prod_{\mathbf{z} \in \mathcal{V} ( E )} \varphi_{\mathbf{z}}^1$. We define the basis functions $b_{E,j}, b_{K, \boldsymbol{\alpha}} \in \CR_k ( \mathcal{T} )$ as
\begin{subequations} \label{Eq:Basis functions}
\begin{align}
\left. b_{E,k-1} \right\vert_K
&:=
\begin{cases}
P_k^{(0,0)} ( 1 - 2 \lambda_{K, E } )
&
\quad \; K \in \mathcal{T}_E,
\\
0
& \quad \;
K \in \mathcal{T} \setminus \mathcal{T}_E,
\end{cases}
\label{SubEq:Basis functions - nonconf}
\\
b_{E,j}
&:=
W_E
P_j^{(1,1)} ( 2 \varphi_{\mathbf{z}_0}^1 - 1)
\qquad
E = [\mathbf{z}_0, \mathbf{z}_1]\in \mathcal{E} ( \mathcal{T} ), \ 
j \in \range{k-2},
\label{SubEq:Basis functions - conf edge}
\\
b_{K, \boldsymbol{\alpha}} 
&:=
W_K
P_{K,\boldsymbol{\alpha}}^{(1,1,1)}
\qquad \qquad \quad \; \; \,
K \in \mathcal{T}, \ \boldsymbol{\alpha} \in \multirange{2}{k-3},
\label{SubEq:Basis functions - conf vol}
\end{align}
\end{subequations}
where $P_{K,\boldsymbol{\alpha}}^{( 1,1,1) }$ is given by \eqref{Eq:Bivariate ortho poly - physical} and $\lambda_{K, E } = \lambda_{K, \mathbf{z} }$ where $\mathbf{z} \in \mathcal{V} ( K )$ is the vertex of $K$ opposite of the edge $E$. We collect the functions \eqref{Eq:Basis functions} in the set
\begin{align*}
\mathcal{B}_k^{\CR} ( \mathcal{T} )
&:=
\{ \left. b_{K,\boldsymbol{\alpha}} 
\; \right\vert \;
K \in \mathcal{T}, \ \boldsymbol{\alpha} \in \multirange{2}{k-3}\}
\cup
\{ \left. b_{E,j} 
\; \right\vert \; 
E \in \mathcal{E} ( \mathcal{T} ), \ 
j \in \range{k-1}\}.
\end{align*}
From \cite[Lem. 19]{BohneSauterCiarletdD} we know that $\mathcal{B}_k^{\CR} ( \mathcal{T} )$ forms a basis of $\CR_k ( \mathcal{T} )$ and \eqref{SubEq:Basis functions - nonconf} implies $\operatorname{supp} b_{E,k-1} = \omega_{\mathcal{T}_E}$ for all edges $E \in \mathcal{E} ( \mathcal{T} )$. Since the edge and triangle bubbles are globally continuous, we observe the following properties of the functions $b_{E,j}$ and $b_{K,\boldsymbol{\alpha}}$: 
\begin{subequations} \label{Eq:Conf basis prop}
\begin{align}
\operatorname{supp} b_{E,j} 
= 
\omega_{\mathcal{T}_E} \quad
& \text{and} \quad
\operatorname{supp} b_{K, \boldsymbol{\alpha} }
= 
K
\label{SubEq:Conf basis prop - support}
\\
b_{E,j } 
\in S_k ( \mathcal{T} )
\quad &\text{and} \quad
b_{K, \boldsymbol{\alpha}} \in S_{k,0} ( \mathcal{T})
\label{SubEq:Conf basis prop - conf}
\end{align}
\end{subequations}
for all $E \in \mathcal{E}( \mathcal{T})$, $K \in \mathcal{T}$, $j \in \range{k-2}$ and $\boldsymbol{\alpha} \in \multirange{2}{k-3}$. Furthermore, the basis functions $b_{E,k-1}$ for $E \in \mathcal{E} ( \mathcal{T} )$ enjoy a useful orthogonality property.

\begin{lemma} \label{A:Lem:Basis volume orthogonality}
Let $\mathcal{T}$ and $k \geq 3$ odd be given. Then $b_{E,k-1} \perp_{L^2 (\Omega)} \mathbb{P}_{k-2} ( \mathcal{T} )$ for all $E \in \mathcal{E} ( \mathcal{T} )$.
\end{lemma}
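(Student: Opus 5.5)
Since $\operatorname{supp} b_{E,k-1} = \omega_{\mathcal{T}_E}$ and every $q \in \mathbb{P}_{k-2}(\mathcal{T})$ is a polynomial of degree at most $k-2$ on each triangle, it suffices to prove the local statement
\[
\int_K P_k^{(0,0)}(1-2\lambda_{K,E})\, q = 0 \qquad \forall q \in \mathbb{P}_{k-2}(K),\ K \in \mathcal{T}_E .
\]
Summing over the (at most two) triangles in $\mathcal{T}_E$ then gives $(b_{E,k-1},q)_{L^2(\Omega)}=0$.

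\textbf{Step 1: pull back to the reference triangle.} I pull back to $T$ with an affine map $\chi_K : T \to K$ chosen so that the vertex of $K$ opposite $E$ is the image of $\mathbf{Z}_1$. Then $\lambda_{K,E}\circ\chi_K = x$. The Jacobian is constant, and $q\circ\chi_K \in \mathbb{P}_{k-2}(T)$ because affine maps preserve polynomial degree. The claim therefore reduces to
\[
\int_T P_k^{(0,0)}(1-2x)\, \tilde q(x,y)\,d(x,y)=0 \qquad \forall \tilde q \in \mathbb{P}_{k-2}(T).
\]

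\textbf{Step 2: slice along level lines of $x$.} These level lines are the segments parallel to the edge opposite $\mathbf{Z}_1$. By Fubini,
\[
\int_T P_k^{(0,0)}(1-2x)\,\tilde q = \int_0^1 P_k^{(0,0)}(1-2x)\, Q(x)\,dx, \qquad Q(x):=\int_0^{1-x} \tilde q(x,y)\,dy .
\]
By linearity it suffices to treat monomials $\tilde q = x^i y^j$ with $i+j\le k-2$. For these, $Q(x) = x^i (1-x)^{j+1}/(j+1)$, a polynomial of degree $i+j+1 \le k-1$. Hence $Q \in \mathbb{P}_{k-1}([0,1])$ in general.

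\textbf{Step 3: apply Legendre orthogonality.} The functions $x \mapsto P_k^{(0,0)}(1-2x)$ are the shifted Legendre polynomials, which are $L^2(0,1)$-orthogonal to $\mathbb{P}_{k-1}([0,1])$ (see \cite[Table 18.3.1]{NIST:DLMF}). This gives the vanishing of the integral and concludes the proof.

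The only step needing care is the degree count in Step 2. Integrating over a slice raises the degree by one through the factor $(1-x)^{j+1}$, so orthogonality to $\mathbb{P}_{k-2}$, rather than $\mathbb{P}_{k-1}$, is exactly what is available. The parity of $k$ plays no role here; it only enters through the definition of the basis $\mathcal{B}_k^{\CR}(\mathcal{T})$.
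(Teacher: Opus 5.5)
Your proof is correct and follows essentially the same route as the paper. You restrict to $K\in\mathcal{T}_E$, pull back to $T$ so that $\lambda_{K,E}$ becomes $x$, integrate out $y$ to get a polynomial of degree at most $k-1$ in $x$, and conclude by orthogonality of $P_k^{(0,0)}(1-2x)$ on $[0,1]$; your monomial computation just makes explicit the degree count the paper states without detail.
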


\begin{proof}
Due to the support property of $b_{E,k-1}$ we only need to consider $K \in \mathcal{T}_E$. 
For any $p \in \mathbb{P}_{k-2} ( K )$, a pullback to the reference element $T$ via an affine linear transformation $\chi_K : T \to K$ leads to
\begin{align*}
\int_K p b_{E,k-1} 
&=
\frac{\vert K \vert}{\vert T \vert}
\int_{T} P_k^{( 0,0)} ( 1 - 2 \lambda_{T, \mathbf{Z}_1 } )
p_{K}
=
\frac{\vert K \vert}{\vert T \vert}
\int_0^1 
P_k^{(0,0)} ( 1-2x) 
 \left(\int_{0}^{1-x}
p_{K} (x,y) dy \right) dx,
\end{align*}
with $p_K := p \circ \chi_K$.
Since
$
\int_0^{1-x} p_{K} ( x, y) dy \in \mathbb{P}_{k-1} ( [0,1 ])
$,
we use an affine pullback to the interval $[-1,1]$ and the orthogonality property of $P_k^{( 0,0)}$, see \cite[18.2.1 \& Table 18.3.1]{NIST:DLMF}, to conclude.
\end{proof}

We exploit the splitting of $\mathcal{B}_k^{\CR} ( \mathcal{T} )$ into triangle and edge associated basis functions in our construction. For any $E \in \mathcal{E} ( \mathcal{T} )$ and $j \in \range{k-1}$, recall the definition of $g_{E,j} \in \mathbb{P}_{k-1} ( E )$ from \cite[(68b)]{BohneSauterCiarletdD}:
\begin{subequations}\label{Eq:Moment functions}
\begin{align}
g_{E,j} 
&:=
\frac{2 \gamma_j}{\vert E \vert} \left( P_j^{(1,1)} ( 2 \varphi_{\mathbf{z}_0,E}^1 -1 )- c_j P_{k-1}^{(1,1)} ( 2 \varphi_{\mathbf{z}_0,E}^1 -1 )  \right),
\label{SubEq:Moment functions - ege}
\end{align}
with constants $\gamma_j := (2j + 3) ( j +2 )/(8 j + 8)$, $c_j := (1 + (-1)^j) (k+1)/(2j + 4)$ for $j \in \range{k-2}$ and $c_{k-1} := 1/(2k+1)$ given in \cite[(68c)]{BohneSauterCiarletdD}. For $K \in \mathcal{T}$ and $\boldsymbol{\alpha} \in \multirange{2}{k-3}$, we define $g_{K,\boldsymbol{\alpha}} \in \mathbb{P}_{k-3} ( K )$ as
\begin{align}
g_{K,\boldsymbol{\alpha}}
&:=
\gamma_{K,\boldsymbol{\alpha}}
P_{K,\boldsymbol{\alpha}}^{( 1,1,1 )},
\label{SubEq:Moment functions - vol}
\end{align}
\end{subequations}
with $\gamma_{K,\boldsymbol{\alpha}}^{-1} = \Vert W_K^{1/2} P_{\boldsymbol{\alpha}}^{( 1,1,1 )} \Vert_{L^2 ( K )}^2$. 
The edge and volume functionals $F_{E,j}, F_{K,\boldsymbol{\alpha}} : H_k^{\CR} ( \mathcal{T} ) \to \mathbb{R}$ are given by
\begin{align}
\begin{matrix}
F_{E,j} (u) 
:=
( g_{E,j}  , u )_{L^2 ( E )}
& \text{and}&
F_{K,\boldsymbol{\alpha}} (u) 
:=
( g_{K,\boldsymbol{\alpha}}, u)_{L^2 ( K )}
\end{matrix}
\label{Eq:NC-Functionals}
\end{align}
for all $u \in H_k^{\CR} ( \mathcal{T} )$, $E \in \mathcal{E} ( \mathcal{T} )$, $j \in \range{k-1}$, $K \in \mathcal{T}$ and $\boldsymbol{\alpha} \in \multirange{2}{k-3}$. Remark \ref{Rem:Jump condition} implies that the edge functionals in \eqref{Eq:NC-Functionals} are well-defined and coincide with $J_{E,j}$ from \cite[Def. 28]{BohneSauterCiarletdD}. The following properties of the functionals in \eqref{Eq:NC-Functionals} and the basis $\mathcal{B}_k^{\CR} ( \mathcal{T})$ follow from \cite[Lem. 30 (69)]{BohneSauterCiarletdD}.

\begin{proposition}
\label{Prop:Quasi biduality properties}
The functionals $F_{E,j}, F_{K, \boldsymbol{\alpha}}: H_k^{\CR} ( \mathcal{T} ) \to \mathbb{R}$ satisfy
\begin{subequations} \label{Eq:Quasi biduality}
\begin{align}
F_{E,j} ( b_{E^{\prime},j^{\prime}} )
&=
\delta_{E,E^{\prime}} \delta_{j,j^{\prime}}
\qquad \ \;
\forall E, E^{\prime} \in \mathcal{E} ( \mathcal{T}), \
j \in \range{{k-1}},
\label{SubEq:Quasi biduality - edge}
\\
F_{E,j} ( b_{K,\boldsymbol{\alpha}} ) 
&= 
0
\qquad \qquad \qquad
\forall E \in \mathcal{E} ( \mathcal{T} ) , \ 
j \in \range{k-1}, \
K \in \mathcal{T}, \ 
\boldsymbol{\alpha} \in \multirange{2}{k-3},
\label{SubEq:Quasi biduality - mixed}
\\
F_{K,\boldsymbol{\alpha}} ( b_{K^{\prime}, \boldsymbol{\alpha}^{\prime}} )
&=
\delta_{K,K^{\prime}} \delta_{\boldsymbol{\alpha}, \boldsymbol{\alpha}^{\prime}}
\qquad
\forall K, K^{\prime} \in \mathcal{T}, \ \boldsymbol{\alpha}, \boldsymbol{\alpha}^{\prime} \in \multirange{2}{k-3}.
\label{SubEq:Quasi biduality - vol}
\end{align}
\end{subequations}
\end{proposition}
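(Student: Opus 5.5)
The plan is to verify the three identities \eqref{SubEq:Quasi biduality - edge}--\eqref{SubEq:Quasi biduality - vol} directly from the definitions \eqref{Eq:Basis functions} and \eqref{Eq:Moment functions}. The only tools are the support and trace properties \eqref{Eq:Conf basis prop} and the orthogonality of Jacobi polynomials. All edge integrals will be pulled back to $[-1,1]$ via $t = 2\varphi^1_{\mathbf{z}_0,E}-1$, which gives $ds = \tfrac{\vert E\vert}{2}dt$ and $W_E\vert_E = \tfrac14(1-t^2)$.

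\emph{Mixed and volume identities.} For \eqref{SubEq:Quasi biduality - mixed}, note that $b_{K,\boldsymbol{\alpha}}$ contains the factor $W_K$, which vanishes on $\partial K$. Since $\operatorname{supp} b_{K,\boldsymbol{\alpha}} = K$, its trace on every edge is zero, so $F_{E,j}(b_{K,\boldsymbol{\alpha}})=0$. For \eqref{SubEq:Quasi biduality - vol}, the case $K\neq K'$ is trivial by the supports. For $K=K'$ we have
\[
F_{K,\boldsymbol{\alpha}}(b_{K,\boldsymbol{\alpha}'}) = \gamma_{K,\boldsymbol{\alpha}}\bigl(W_K P^{(1,1,1)}_{K,\boldsymbol{\alpha}'},P^{(1,1,1)}_{K,\boldsymbol{\alpha}}\bigr)_{L^2(K)}.
\]
The weighted orthogonality \eqref{Eq:Pairwise orthogonality} makes this vanish for $\boldsymbol{\alpha}\neq\boldsymbol{\alpha}'$, and the choice $\gamma_{K,\boldsymbol{\alpha}}^{-1}=\Vert W_K^{1/2}P^{(1,1,1)}_{K,\boldsymbol{\alpha}}\Vert^2_{L^2(K)}$ gives $1$ for $\boldsymbol{\alpha}=\boldsymbol{\alpha}'$.

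\emph{Edge identity, $E\neq E'$.} The conforming functions $b_{E',j'}$ with $j'\le k-2$ carry the factor $W_{E'}$. This factor vanishes on every edge $E\neq E'$: on the edges of $\mathcal{T}_{E'}$ one endpoint barycentric coordinate is zero, and elsewhere the support condition applies. Hence these traces vanish. For the nonconforming function $b_{E',k-1}$, only edges $E\in\mathcal{E}(K)\setminus\{E'\}$ with $K\in\mathcal{T}_{E'}$ matter. On such an edge $\lambda_{K,E'}$ is affine, equal to $1$ at the vertex opposite $E'$ (an endpoint of $E$) and $0$ at the other endpoint. Therefore $b_{E',k-1}\vert_E = P^{(0,0)}_k(\pm t)$, which is $L^2(E)$-orthogonal to $\mathbb{P}_{k-1}(E)\ni g_{E,j}$. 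Because the trace from either side of $E$ has this form, the functional is well defined and vanishes.

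\emph{Edge identity, $E=E'$, and the main obstacle.} Here $b_{E,k-1}\vert_E = P_k^{(0,0)}(1)=1$ and $b_{E,j'}\vert_E = \tfrac14(1-t^2)P^{(1,1)}_{j'}(t)$. There are two cases.

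For $j'\le k-2$, the contribution of the term $c_jP^{(1,1)}_{k-1}$ in $g_{E,j}$ vanishes by $(1-t^2)$-weighted orthogonality, since $k-1>j'$. For $j\le k-2$, the remaining integral
\[
\tfrac{\gamma_j}{4}\int_{-1}^1(1-t^2)P^{(1,1)}_jP^{(1,1)}_{j'}\,dt
\]
is $\delta_{jj'}$ times $\tfrac{\gamma_j}{4}h^{(1,1)}_j$. One must check that $\gamma_j=(2j+3)(j+2)/(8j+8)$ inverts this, using the standard norm $h^{(1,1)}_j = 8(j+1)/((2j+3)(j+2))$. For $j=k-1$, only the $P^{(1,1)}_{k-1}$ part of $g_{E,k-1}$ survives, and it is orthogonal to $P^{(1,1)}_{j'}$ with $j'<k-1$.

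For $j'=k-1$ we need $\int_{-1}^1 g_{E,j}\,\tfrac{\vert E\vert}{2}\,dt=\delta_{j,k-1}$. For $j\le k-2$ this amounts to
\[
\int_{-1}^1 P^{(1,1)}_j\,dt = c_j\int_{-1}^1 P^{(1,1)}_{k-1}\,dt .
\]
Both sides vanish for odd $j$ by parity, since $k-1$ is even because $k$ is odd; this is where oddness of $k$ enters. For even $j$, one uses the closed formula $\int_{-1}^1 P^{(1,1)}_n\,dt = 4/(n+2)$ for even $n$, which reproduces $c_j=(k+1)/(j+2)$. For $j=k-1$, the same formula together with the value of $\gamma_{k-1}$ and $c_{k-1}$ must give exactly $1$.

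This last step is the main obstacle. It is pure bookkeeping of Jacobi constants from \cite[Table 18.3.1]{NIST:DLMF}: the $P^{(1,1)}_n$ norms and their integrals over $[-1,1]$ have to match the specific normalizations \cite[(68c)]{BohneSauterCiarletdD}. Alternatively, one can simply invoke \cite[Lem. 30 (69)]{BohneSauterCiarletdD} after identifying $F_{E,j}$ with $J_{E,j}$ via Remark \ref{Rem:Jump condition}.
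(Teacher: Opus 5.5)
Your direct computation takes a different route from the paper. The paper does no computation: it takes all three identities from \cite[Lem.~30 (69)]{BohneSauterCiarletdD}, after using the jump condition to identify $F_{E,j}$ with $J_{E,j}$, which is exactly your fallback. Most of your verification is sound:
\begin{itemize}
\item the trace and support arguments for the mixed and volume identities and for $E\neq E'$;
\item the weighted orthogonality that kills $F_{E,k-1}(b_{E,j'})$;
\item the moment identity $\int_{-1}^1 P^{(1,1)}_j\,dt=c_j\int_{-1}^1P^{(1,1)}_{k-1}\,dt$ with $c_j=(k+1)/(j+2)$ for even $j$.
\end{itemize}
However, the step you deferred, ``one must check that $\gamma_j$ inverts this'', fails with the constants printed in the paper. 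Since $\gamma_j h^{(1,1)}_j=\frac{(2j+3)(j+2)}{8(j+1)}\cdot\frac{8(j+1)}{(2j+3)(j+2)}=1$ exactly, your own pull-back gives
\[
F_{E,j}(b_{E,j})=\tfrac{\gamma_j}{4}\,h^{(1,1)}_j=\tfrac14\qquad (j\in\range{k-2}).
\]
Concretely, for $k=3$ one has $g_{E,0}=\frac{15}{4\vert E\vert}(1-3t^2)$ and $\frac{15}{32}\int_{-1}^1(1-3t^2)(1-t^2)\,dt=\frac14$. The $j=k-1$ entry, by contrast, is exactly $1$:
\[
\gamma_{k-1}(1-c_{k-1})\tfrac{4}{k+1}=\tfrac{(2k+1)(k+1)}{8k}\cdot\tfrac{2k}{2k+1}\cdot\tfrac{4}{k+1}=1 .
\]
For $k=1$ this is just $g_{E,0}=1/\vert E\vert$. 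So the printed normalizations of $b_{E,j}=W_EP^{(1,1)}_j(2\varphi^1_{\mathbf{z}_0}-1)$ and of $g_{E,j}$ are mismatched by a factor $4$ for $j\le k-2$. A uniform rescaling of $\gamma_j$ cannot fix this, because it would break the $j=k-1$ entry.

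Carried out honestly, your argument therefore proves the proposition only after a renormalization. For example, use the standard edge bubble $4W_E$ in $b_{E,j}$ for $j\in\range{k-2}$, or equivalently multiply $g_{E,j}$ by $4$ for those $j$. Either change leaves every other matrix entry, the supports and conformity untouched, so nothing downstream is affected. You must state the fix explicitly rather than assert that the diagonal equals $1$ with the given constants. The citation route avoids the issue only by inheriting the companion paper's normalization.

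One smaller correction: oddness of $k$ is not what handles odd $j$. There $c_j=0$, and only $\int_{-1}^1P^{(1,1)}_j\,dt=0$ is used. Oddness is needed for even $j$ and for $j=k-1$, where everything rests on $\int_{-1}^1P^{(1,1)}_{k-1}\,dt=4/(k+1)\neq0$; this integral would vanish for even $k$.
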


The functionals $F_{E,j}$ allow us to define three operators $\Pi_k^{\mathcal{E} (\mathcal{S}), \operatorname{nc}}, \Pi_k^{\mathcal{E} (\mathcal{S}), c}, \Pi_k^{\mathcal{E} ( \mathcal{S})} : H_k^{\CR} ( \mathcal{T} ) \to \CR_k ( \mathcal{T} )$ for any sub-mesh $\mathcal{S} \subseteq \mathcal{T}$:
\begin{subequations} \label{Eq:Edge Interpolation}
\begin{align}
\Pi_k^{\mathcal{E} ( \mathcal{S} ),\operatorname{nc}} u
&:=
\sum_{E \in \mathcal{E} ( \mathcal{S} )}
F_{E,k-1} ( u ) b_{E,k-1}
\qquad \forall u \in H_k^{\CR} ( \mathcal{T} ),
\label{SubEq:Edge Interpolation - nonconf}
\\
\Pi_k^{\mathcal{E} ( \mathcal{S} ),\operatorname{c}} u
&:=
\sum_{E \in \mathcal{E} ( \mathcal{S} )} \sum_{j = 0}^{k-2} 
F_{E,j} ( u ) b_{E,j}
\qquad \forall u \in H_k^{\CR} ( \mathcal{T} ),
\label{SubEq:Edge Interpolation - conf}
\\
\Pi_k^{\mathcal{E} ( \mathcal{S} )} u
&:=
\Pi_k^{\mathcal{E} ( \mathcal{S} ),\operatorname{nc}} u
+
\Pi_k^{\mathcal{E} ( \mathcal{S} ),\operatorname{c}} u
\qquad \forall u \in H_k^{\CR} ( \mathcal{T} ).
\label{SubEq:Edge Interpolation}
\end{align}
\end{subequations}

Furthermore, we set $\Pi_{k,0}^{\mathcal{S}} : H_k^{\CR} ( \mathcal{T} ) \to S_{k,0} ( \mathcal{T} )$, as
\begin{align}
\Pi_{k,0}^{\mathcal{S}}  u
&:=
\sum_{K \in \mathcal{S}}
\sum_{\boldsymbol{\alpha} \in \multirange{2}{k-3}}
F_{K,\boldsymbol{\alpha}} ( u ) b_{K,\boldsymbol{\alpha}}
\qquad \forall u \in H_k^{\CR} ( \mathcal{T} ).
\label{Eq:Volume Interpolation}
\end{align}
We have collected all operators needed to define the operator $I_k^{\mathcal{S}, \CR} : H_k^{\CR} ( \mathcal{T} ) \to \CR_k ( \mathcal{T} ) $ by combining \eqref{SubEq:Edge Interpolation} and \eqref{Eq:Volume Interpolation} into
\begin{align}
I_k^{\mathcal{S},\CR} u
:=
\Pi_k^{\mathcal{E} ( \mathcal{S} )} u
+
\Pi_{k,0}^{\mathcal{S}} \left( u - \Pi_k^{\mathcal{E} ( \mathcal{S})} u \right)
\qquad 
\forall u \in H_k^{\CR} ( \mathcal{T} ).  
\label{Eq:CR Interpolation}
\end{align}

\begin{notation} \label{Not:Boundary conditions}
The operators $\Pi_{k,0}^{\mathcal{E} (\mathcal{S}), \operatorname{nc}}, \ \Pi_{k,0}^{\mathcal{E} (\mathcal{S}), \operatorname{c}}, \ \Pi_{k,0}^{\mathcal{E} ( \mathcal{S})}$ and $I_{k,0}^{\mathcal{S}, \CR}$ are defined by restricting the sums in \eqref{Eq:Edge Interpolation} to \emph{inner} edges $\mathcal{E}_{\Omega} ( \mathcal{S})$.
\end{notation}

\begin{remark} \label{Rem:Non-conf quasi interpolation}
\begin{enumerate}
\item 
If $\mathcal{S} = \mathcal{T}$, the operator $\Pi_{k,0}^{\mathcal{T}}$ is a specific choice for $\overset{\bullet \bullet}{\Pi}_{\mathcal{T},k}$ in \eqref{Eq:Class of apprximation} and $\Pi_{k,0}^{\mathcal{E} ( \mathcal{T} )}$ coincides with $\mathcal{I}_{\mathcal{T},k}^{\mathcal{E}}$. Consequently, $I_{k,0}^{\mathcal{T},\CR}$ is an operator of type \eqref{Eq:Class of apprximation}.

\item In this section we only consider the case $\mathcal{S} = \mathcal{T}$. For the construction of the partially conforming operator $M_{k,0}^{\mathcal{S}}$ in Section \ref{Sec:PartConfOp}, we will consider the operator $I_{k,0}^{\mathcal{S},\CR}$ on (proper) sub-meshes $\mathcal{S} \subseteq \mathcal{T}$.
\end{enumerate}
\end{remark}
 
We use Proposition \ref{Prop:Quasi biduality properties} to derive some useful properties of the operator $I_k^{\mathcal{T}, \CR}$.
The following Lemma \ref{Lem:Functional preservation} provides some tools to prove Lemma \ref{Lem:Non-Conf Approx Operator}.
\begin{lemma} \label{Lem:Functional preservation}
The operator $I_k^{\mathcal{T},\CR} : H_k^{\CR} ( \mathcal{T} ) \to  \CR_k ( \mathcal{T} )$ in \eqref{Eq:CR Interpolation} satisfies
\begin{subequations} \label{Eq:Functional perservation}
\begin{align}
F_{E,j} ( I_k^{\mathcal{T}, \CR} u )
&=
F_{E,j} ( u )
\qquad \; \forall E \in \mathcal{E} ( \mathcal{T} ), \
j \in \range{k-1},
\label{SubEq:Functional preservation - edge}
\\
F_{K,\boldsymbol{\alpha}} ( I_k^{\mathcal{T}, \CR} u ) 
&=
F_{K,\boldsymbol{\alpha}} ( u )
\qquad \forall K \in \mathcal{T}, \
\boldsymbol{\alpha} \in \multirange{2}{k-3},
\label{SubEq:Functional perservation - vol}
\end{align}
\end{subequations}
for all $u \in H_k^{\CR} ( \mathcal{T} )$.
\end{lemma}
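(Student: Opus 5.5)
The plan is to prove both identities directly from the definition \eqref{Eq:CR Interpolation} with $\mathcal{S}=\mathcal{T}$, using linearity of the functionals and the quasi-biduality relations of Proposition \ref{Prop:Quasi biduality properties}. The edge identity \eqref{SubEq:Functional preservation - edge} comes first, since the volume identity uses the fact that $\Pi_k^{\mathcal{E}(\mathcal{T})}$ reproduces edge functionals.

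\emph{Edge functionals.} Fix $E\in\mathcal{E}(\mathcal{T})$ and $j\in\range{k-1}$. By \eqref{SubEq:Edge Interpolation} and linearity,
\[
F_{E,j}\bigl(\Pi_k^{\mathcal{E}(\mathcal{T})}u\bigr)=\sum_{E'\in\mathcal{E}(\mathcal{T})}\sum_{j'=0}^{k-1}F_{E',j'}(u)\,F_{E,j}(b_{E',j'}) .
\]
By \eqref{SubEq:Quasi biduality - edge}, only the term $E'=E$, $j'=j$ survives, so the sum equals $F_{E,j}(u)$. The correction term $\Pi_{k,0}^{\mathcal{T}}(u-\Pi_k^{\mathcal{E}(\mathcal{T})}u)$ is a linear combination of the bubbles $b_{K,\boldsymbol{\alpha}}$. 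By \eqref{SubEq:Quasi biduality - mixed}, $F_{E,j}$ annihilates every $b_{K,\boldsymbol{\alpha}}$, so the correction contributes nothing. This gives \eqref{SubEq:Functional preservation - edge}.

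\emph{Volume functionals.} Fix $K\in\mathcal{T}$ and $\boldsymbol{\alpha}\in\multirange{2}{k-3}$, and set $w:=u-\Pi_k^{\mathcal{E}(\mathcal{T})}u$. By \eqref{SubEq:Quasi biduality - vol},
\[
F_{K,\boldsymbol{\alpha}}\bigl(\Pi_{k,0}^{\mathcal{T}}w\bigr)=\sum_{K'}\sum_{\boldsymbol{\alpha}'}F_{K',\boldsymbol{\alpha}'}(w)\,\delta_{K,K'}\delta_{\boldsymbol{\alpha},\boldsymbol{\alpha}'}=F_{K,\boldsymbol{\alpha}}(w).
\]
Hence
\[
F_{K,\boldsymbol{\alpha}}(I_k^{\mathcal{T},\CR}u)=F_{K,\boldsymbol{\alpha}}(\Pi_k^{\mathcal{E}(\mathcal{T})}u)+F_{K,\boldsymbol{\alpha}}(u)-F_{K,\boldsymbol{\alpha}}(\Pi_k^{\mathcal{E}(\mathcal{T})}u)=F_{K,\boldsymbol{\alpha}}(u).
\]
Notably, this cancellation does not require $F_{K,\boldsymbol{\alpha}}(b_{E,j})=0$.

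\emph{Well-definedness.} The only point needing care is that every object above is well defined on $H_k^{\CR}(\mathcal{T})$. The edge functionals $F_{E,j}$ are single-valued on interior edges by Remark \ref{Rem:Jump condition}, because $g_{E,j}\in\mathbb{P}_{k-1}(E)$. All basis functions lie in $\CR_k(\mathcal{T})\subseteq H_k^{\CR}(\mathcal{T})$, so $w\in H_k^{\CR}(\mathcal{T})$ and the functionals may be applied to it. For $k=1$ the index set $\multirange{2}{k-3}$ is empty, so the volume statement is vacuous and the edge argument goes through unchanged. I expect no genuine obstacle: the proof is bookkeeping with Proposition \ref{Prop:Quasi biduality properties}.
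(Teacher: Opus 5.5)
Your proof is correct and follows the paper's argument. The edge identity uses \eqref{SubEq:Quasi biduality - mixed} to discard the bubble correction and \eqref{SubEq:Quasi biduality - edge} to recover $F_{E,j}(u)$; the volume identity uses \eqref{SubEq:Quasi biduality - vol} and linearity, so the two $F_{K,\boldsymbol{\alpha}}(\Pi_k^{\mathcal{E}(\mathcal{T})}u)$ terms cancel. Your remarks on well-definedness and the vacuous case $k=1$ are accurate additions.
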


\begin{proof}
\textbf{@\labelcref{SubEq:Functional preservation - edge}:} For any $u \in H_k^{\CR} ( \mathcal{T} )$, $E \in \mathcal{E} ( \mathcal{T} )$ and $j \in \range{k-1}$, we apply Proposition \ref{Prop:Quasi biduality properties} for
\begin{align*}
F_{E,j} \left( I_k^{\mathcal{T}, \CR} u \right)
& \overset{\eqref{SubEq:Quasi biduality - mixed}}{=}
F_{E,j} \left( \Pi_k^{\mathcal{E} ( \mathcal{T})} u \right)
\overset{\eqref{SubEq:Edge Interpolation}}{=} 
\sum_{E^{\prime} \in \mathcal{E} ( \mathcal{T} )}
\sum_{j^{\prime} = 0}^{k-1}
F_{E^{\prime}, j^{\prime}} (u ) 
F_{E,j} ( b_{E^{\prime}, j^{\prime}})
\overset{\eqref{SubEq:Quasi biduality - edge}}{=}
F_{E,j} ( u ).
\end{align*}

\textbf{@\labelcref{SubEq:Functional perservation - vol}:} By using Proposition \ref{Prop:Quasi biduality properties} \eqref{SubEq:Quasi biduality - vol} and a similar reasoning as in \eqref{SubEq:Functional preservation - edge}, we obtain that $F_{K,\boldsymbol{\alpha}} ( \Pi_k^{\mathcal{T}} u ) = F_{K,\boldsymbol{\alpha}} ( u )$ for all $K \in \mathcal{T}$, $\boldsymbol{\alpha} \in \multirange{2}{k-3}$ and $u \in H_k^{\CR} ( \mathcal{T} )$. Then the linearity of the functional $F_{K,\boldsymbol{\alpha}}$ applied to $I_k^{\mathcal{T}, \CR} u$ concludes the proof.
\end{proof}

\begin{proof}[Proof of Lemma \ref{Lem:Non-Conf Approx Operator}]
\textbf{@\labelcref{SubEq:Non-Conf Approx Operator - edge moment}:} For any edge $E \in \mathcal{E} ( \mathcal{T} )$ we collect the functions $g_{E,j}  \in \mathbb{P}_{k-1} ( E )$, $j \in \range{k-1}$ in the set $\mathcal{B}_{k-1} ( E ) := \{ \left. g_{E,j} \in \mathbb{P}_k ( E ) \; \right\vert \; j \in \range{k-1} \}$. It is easy to check that $\mathcal{B}_{k-1} ( E )$ is a basis for $\mathbb{P}_{k-1} ( E )$. Indeed, this is a consequence of Proposition \ref{Prop:Quasi biduality properties} \eqref{SubEq:Quasi biduality - edge} and a dimension counting argument. Consequently, for any polynomial $q \in \mathbb{P}_{k-1} ( E )$ there exist constants $q_j \in \mathbb{R}$ for $j \in \range{k-1}$ such that $q = \sum_{j = 0}^{k-1} q_j g_{E,j}$. For any $u \in H_k^{\CR} ( \mathcal{T} )$ we apply Lemma \ref{Lem:Functional preservation} \eqref{SubEq:Functional preservation - edge} and obtain
\begin{align*}
\int_E
q I_k^{\mathcal{T}, \CR} u
&=
\sum_{j = 0}^{k-1}
q_j 
\int_E
g_{E,j} I_k^{\mathcal{T}, \CR} u
=
\sum_{j = 0}^{k-1}
q_j 
F_{E,j} \left( I_k^{\mathcal{T}, \CR} u \right)
=
\sum_{j = 0}^{k-1}
q_j 
F_{E,j} ( u )
=
\sum_{j = 0}^{k-1}
q_j 
\int_E
g_{E,j} u
=
\int_E q u.
\end{align*}
\textbf{@\labelcref{SubEq:Non-Conf Approx Operator- vol moment}:} For any triangle $K \in \mathcal{T}$ and $\mathcal{B}_{k-3} ( K ) := \{ b_{K,\boldsymbol{\alpha}} \; \vert  \; \boldsymbol{\alpha} \in \multirange{2}{k-3}\}$, the orthogonality property of the polynomials $P_{K, \boldsymbol{\alpha}}^{( 1,1,1)}$ with respect to the weight function $W_K$ (cf., \eqref{Eq:Pairwise orthogonality}) combined with a counting argument implies that $\mathcal{B}_{k-3} ( K )$ is a basis of $\mathbb{P}_{k-3} ( K )$. Consequently, for every polynomial $p \in \mathbb{P}_{k-3} ( K )$ there exist constants $p_{\boldsymbol{\alpha}} \in \mathbb{R}$, for $\boldsymbol{\alpha} \in \multirange{2}{k-3}$, such that $p = \sum_{\boldsymbol{\alpha} \in \multirange{2}{k-3}} p_{\boldsymbol{\alpha}} g_{K, \boldsymbol{\alpha}}$.
As before, we compute 
\begin{align*}
\int_K
p I_k^{\mathcal{T}, \CR} u
&=
\sum_{\boldsymbol{\alpha} \in \multirange{2}{k-3}} 
p_{\boldsymbol{\alpha}}
\int_K 
g_{K,\boldsymbol{\alpha}}
I_k^{\mathcal{T},\CR} u
=
\sum_{\boldsymbol{\alpha} \in \multirange{2}{k-3}}
p_{\boldsymbol{\alpha}}
F_{K,\boldsymbol{\alpha}} \left( I_k^{\mathcal{T},\CR} u \right)
\overset{\eqref{SubEq:Functional perservation - vol}}{=}
\sum_{\boldsymbol{\alpha} \in \multirange{2}{k-3}}
p_{\boldsymbol{\alpha}}
F_{K,\boldsymbol{\alpha}} (  u)
=
\int_K
p  u
\end{align*}
for any $u \in H_k^{\CR} ( \mathcal{T} )$.

\textbf{@\labelcref{SubEq:Non-Conf Approx Operator - loc bound}:} 
According to \cite[Lem. 2.1]{ArnoldBrezzi-mixedNonConvFEM85} for $K \in \mathcal{T}$, the local operator $I_{K,k}^{\operatorname{loc}} : H^1 ( K) \to  \mathbb{P}_k ( K )$ is well defined by
\begin{align}
\begin{matrix}
\int_K p ( u - I_{K,k}^{\operatorname{loc}} u )
=
0
& \text{and} &
\int_E q (u - I_{K,k}^{\operatorname{loc}} u) 
=
0
\end{matrix}
\label{Eq:Local moment preservation}
\end{align}
for all $u \in H^1 ( K )$,  $p \in \mathbb{P}_{k-3} ( K )$ and $q \in \mathbb{P}_{k-1} ( E )$ where $E \in \mathcal{E} ( K )$. 
This has two immediate consequences. First, since \eqref{Eq:Local moment preservation} is unisolvent for every polynomial $p \in \mathbb{P}_k ( K )$ as shown in the proof of \cite[Lem. 2.1]{ArnoldBrezzi-mixedNonConvFEM85}, we have that $I_{K,k}^{\operatorname{loc}}$ is a projection onto $\mathbb{P}_k ( K )$ and second, we have an estimate of the $L^2$ norm, i.e.,
\begin{subequations} \label{Eq:Prop LkK}
\begin{align}
I_{K,k}^{\operatorname{loc}} p 
&= 
p
\qquad \qquad \qquad \qquad \qquad \qquad \qquad \quad \ \ \forall p \in \mathbb{P}_k ( K ),
\label{SubEq:Prop LkK - Local projection}
\\
\Vert I_{K,k}^{\operatorname{loc}} u \Vert_{L^2 ( K )} 
&\lesssim 
\Vert u \Vert_{L^2 ( K )} + h_K^{1/2} \sum_{E \in \mathcal{E} ( K )} \Vert u \Vert_{L^2 ( E )}
\qquad
\forall u \in H_{k,0}^{\CR} ( \mathcal{T} ).
\label{SubEq:Prop LkK - L^2 estimate}
\end{align}
\end{subequations}
Furthermore, \eqref{Eq:Local moment preservation} is the restriction of \eqref{SubEq:Non-Conf Approx Operator - edge moment} and \eqref{SubEq:Non-Conf Approx Operator- vol moment} to a single triangle $K \in \mathcal{T}$ which implies
\begin{align}
\left. ( I_k^{\mathcal{T}, \CR} u ) \right\vert_K 
&=
I_{K,k}^{\operatorname{loc}} (\left. u \right\vert_K)
\qquad \forall u \in H_k^{\CR} ( \mathcal{T} ).
\label{Eq:Local desciption}
\end{align}
Hence $I_{K,k}^{\operatorname{loc}}$ is considered as the local version of $I_k^{\mathcal{T}, \CR}$.

For any function $u \in H_k^{\CR} ( \mathcal{T} )$, let $\overline{u}_K$ denote the integral mean of $u$ over $K$ as in \eqref{Eq:Integreal mean}. We use an inverse- and a multiplicative trace inequality \cite[(12.1) \& (12.16)]{ErnGuermondI} so that
\begin{align*}
\Vert \nabla I_k^{\mathcal{T}, \CR} u \Vert_{\mathbf{L}^2 ( K )}
& \overset{\eqref{Eq:Local desciption}}{=}
\Vert \nabla I_{K,k}^{loc} u \Vert_{\mathbf{L}^2 ( K )}
\overset{\eqref{SubEq:Prop LkK - Local projection}}{=} \Vert \nabla I_{K,k}^{loc} ( u - \overline{u}_K ) \Vert_{\mathbf{L}^2 ( K )}
\lesssim
h_K^{-1} \Vert  I_{K,k}^{loc} ( u - \overline{u}_K ) \Vert_{L^2 ( K )}
\\
&\overset{\eqref{SubEq:Prop LkK - L^2 estimate}}{\lesssim} 
h_K^{-1} 
( \Vert u -  \overline{u}_K \Vert_{L^2 ( K )}
+
h_K^{1/2} 
\Vert u -  \overline{u}_K \Vert_{L^2 ( E )} ) 
\\
& \, \ \ \lesssim
h_K^{-1} 
( \Vert u -  \overline{u}_K \Vert_{L^2 ( K )}
+
h_K^{1/2} 
\Vert u -  \overline{u}_K \Vert_{L^2 ( K )}^{1/2}
\Vert \nabla u \Vert_{L^2 ( K )}^{1/2} ) .
\end{align*}
A final application of a Poincaré inequality (see, e.g., \cite[Thm. 3.3]{CarstensenHellwig-DiscretePoissonFreidrichs}) concludes the proof.
\end{proof}

From Lemma \ref{Lem:Non-Conf Approx Operator} and its proof we derive two additional properties of $I_k^{\mathcal{T},\CR}$.

\begin{corollary}
\label{Cor:Global Projection}
\begin{enumerate}
\item \label{CorItem:Projection} For all functions $u \in \CR_k ( \mathcal{T} )$, it holds that $I_k^{\mathcal{T}, \CR} u = u$.

\item \label{CorItem:Exact bc} For all $u_0 \in H_{k,0}^{\CR} ( \mathcal{T} )$, it holds that $I_k^{\mathcal{T}, \CR} u_0 \in \CR_{k,0} ( \mathcal{T} )$, i.e.,
$
\left. I_k^{\mathcal{T},\CR} \right\vert_{H_{k,0}^{\CR} ( \mathcal{T} )} 
=
I_{k,0}^{\mathcal{T}, \CR}
$.
\end{enumerate}
\end{corollary}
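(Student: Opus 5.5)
For item~\ref{CorItem:Projection} the plan is to use the local description \eqref{Eq:Local desciption} from the proof of Lemma~\ref{Lem:Non-Conf Approx Operator}. Let $u \in \CR_k ( \mathcal{T} )$ and $K \in \mathcal{T}$. Then $u\vert_K \in \mathbb{P}_k ( K )$, and \eqref{Eq:Local desciption} combined with \eqref{SubEq:Prop LkK - Local projection} gives
\[
( I_k^{\mathcal{T},\CR} u ) \vert_K = I_{K,k}^{\operatorname{loc}} ( u\vert_K ) = u\vert_K .
\]
Since $K$ is arbitrary, $I_k^{\mathcal{T},\CR} u = u$ in $\mathbb{P}_k ( \mathcal{T} )$. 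An alternative route avoids the local operator. Expand $u$ in the basis $\mathcal{B}_k^{\CR} ( \mathcal{T} )$ and apply Proposition~\ref{Prop:Quasi biduality properties} to see that $\Pi_k^{\mathcal{E} ( \mathcal{T} )} u$ reproduces the edge part of the expansion. The remainder $u - \Pi_k^{\mathcal{E}(\mathcal{T})} u$ is then a combination of the $b_{K,\boldsymbol{\alpha}}$, which $\Pi_{k,0}^{\mathcal{T}}$ reproduces by \eqref{SubEq:Quasi biduality - vol}. I would present the first argument, since it is a single line given the earlier facts.

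For item~\ref{CorItem:Exact bc}, let $u_0 \in H_{k,0}^{\CR} ( \mathcal{T} )$. By Lemma~\ref{Lem:Non-Conf Approx Operator}, $I_k^{\mathcal{T},\CR} u_0 \in \CR_k ( \mathcal{T} )$. It therefore suffices to check the boundary moment conditions. For a boundary edge $E \in \mathcal{E}_{\partial\Omega} ( \mathcal{T} )$ and $q \in \mathbb{P}_{k-1} ( E )$, \eqref{SubEq:Non-Conf Approx Operator - edge moment} gives
\[
\int_E q \, I_k^{\mathcal{T},\CR} u_0 = \int_E q \, u_0 = 0 .
\]
Hence $I_k^{\mathcal{T},\CR} u_0 \in \CR_{k,0} ( \mathcal{T} )$.

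For the identity $I_k^{\mathcal{T},\CR}\vert_{H_{k,0}^{\CR}} = I_{k,0}^{\mathcal{T},\CR}$, compare the definitions \eqref{Eq:CR Interpolation} and Notation~\ref{Not:Boundary conditions}. The two operators differ only by the boundary edge terms $F_{E,j} ( u_0 ) b_{E,j}$ with $E \in \mathcal{E}_{\partial\Omega} ( \mathcal{T} )$. Since $g_{E,j} \in \mathbb{P}_{k-1} ( E )$, each such coefficient $F_{E,j} ( u_0 ) = ( g_{E,j}, u_0 )_{L^2(E)}$ vanishes by the definition of $H_{k,0}^{\CR} ( \mathcal{T} )$. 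Therefore $\Pi_k^{\mathcal{E}(\mathcal{T})} u_0 = \Pi_{k,0}^{\mathcal{E}(\mathcal{T})} u_0$. Substituting this into the second term of \eqref{Eq:CR Interpolation} shows that the volume parts agree as well, which yields the claimed equality.

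I do not expect a real obstacle. The only point needing care is the notational mismatch: $I_{k,0}^{\mathcal{T},\CR}$ must be read as \eqref{Eq:CR Interpolation} with $\Pi_{k,0}^{\mathcal{E}(\mathcal{T})}$ substituted in both occurrences. Once that is fixed, the identity follows directly from the vanishing of the boundary coefficients.
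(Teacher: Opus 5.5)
Your proposal is correct and follows the paper's own proof essentially verbatim. Item 1 uses the local description \eqref{Eq:Local desciption} together with the projection property \eqref{SubEq:Prop LkK - Local projection}, and item 2 uses the edge-moment property \eqref{SubEq:Non-Conf Approx Operator - edge moment} on boundary edges plus the vanishing of $F_{E,j}(u_0)$ for $E \in \mathcal{E}_{\partial\Omega}(\mathcal{T})$; you place these vanishing coefficients on the boundary edges correctly, whereas the paper's written proof says $\mathcal{E}_{\Omega}(\mathcal{T})$ at that point, which is a typo.
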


\begin{proof}
\textbf{@1.:} This property follows from \eqref{SubEq:Prop LkK - Local projection}.

\textbf{@2.:} By definition of the space $H_{k,0}^{\CR} ( \mathcal{T} )$, we know that $F_{E,j} ( u_0) = 0$ for all $E \in \mathcal{E}_{\Omega} ( \mathcal{T} )$, $j \in \range{k-1}$ and $u_0 \in H_{k,0}^{\CR} ( \mathcal{T} )$. This yields $\Pi_k^{\mathcal{E} ( \mathcal{T} )} u_0 = \Pi_{k, 0}^{\mathcal{E} ( \mathcal{T} )}$ and therefore $I_k^{\mathcal{T}, \CR} u_0 = I_{k,0}^{\mathcal{T},\CR} u_0$. Furthermore, \eqref{SubEq:Non-Conf Approx Operator - edge moment} gives $\int_E q I_k^{\mathcal{T}, \CR} u_0 = \int_E q u_0 = 0$ for all boundary edges $E \in \mathcal{E}_{\partial \Omega} ( \mathcal{T} )$ and $q \in \mathbb{P}_{k-1} ( E )$ which implies $I_k^{\mathcal{T}, \CR} u_0 \in \CR_{k,0} ( \mathcal{T} )$.
\end{proof}

\section{The partially conforming operator}
\label{Sec:PartConfOp}

This section is devoted to the proof of Lemma \ref{Lem:Partially conforming}. As for the non-conforming quasi-interpolation operator $I_k^{\mathcal{T}, \CR}: H_k^{\CR} ( \mathcal{T})   \to \CR_k ( \mathcal{T} )$ from the previous section, 
we provide an explicit construction of the partially conforming operator $M_{k,0}^{\mathcal{S}} : H_{k,0}^{\CR} ( \mathcal{T} ) \to \CR_{k,0} ( \mathcal{T} )$ for some sub-mesh $\mathcal{S} \subseteq \mathcal{T}$.

In the previous section, we used the fact that the basis $\mathcal{B}_k^{\CR} ( \mathcal{T} )$ can be decomposed into edge associated basis functions and triangle associated basis functions. However, for the present construction, we exploit a different decomposition of $\CR_{k,0} ( \mathcal{T} )$. From \cite[Thm. 13]{BohneSauterCiarletdD}, we know that
\begin{align}
\begin{matrix}
\CR_k ( \mathcal{T} )
=
\overset{\bullet}{S}_k  ( \mathcal{T} )
\oplus 
V_k^{\operatorname{nc}} ( \mathcal{E} ( \mathcal{T}) )
& \text{and} &
\CR_{k,0}( \mathcal{T} )
=
\overset{\bullet}{S}_{k,0}  ( \mathcal{T} )
\oplus 
V_k^{\operatorname{nc}} ( \mathcal{E}_{\Omega} ( \mathcal{T} ) ),
\end{matrix}
\label{Eq:CR space decomposition}
\end{align}
where the spaces $\overset{\bullet}{S}_k  ( \mathcal{T} )$, $\overset{\bullet}{S}_{k,0}  ( \mathcal{T} )$ and $V_k^{\operatorname{nc}} ( \mathcal{E} ( \mathcal{T} ) )$ and $V_k^{\operatorname{nc}} ( \mathcal{E}_{\Omega} ( \mathcal{T} ) )$ are given by
\begin{subequations} \label{Eq:CR space decomposition - spaces}
\begin{align}
\overset{\bullet}{S}_k  ( \mathcal{T} )
&:=
\{ \left. v \in S_k ( \mathcal{T} )
\; \right\vert \; 
\forall \mathbf{z} \in \mathcal{V} ( \mathcal{T} ) : \
v ( \mathbf{z}) = 0 \}
\quad \text{and} \quad
\overset{\bullet}{S}_{k,0} ( \mathcal{T} )
:=
\overset{\bullet}{S}_k  ( \mathcal{T} ) 
\cap
S_{k,0} ( \mathcal{T} ),
\\
V_k^{\operatorname{nc}} ( \mathcal{E} ( \mathcal{T} ) )
&:=
\operatorname{span} 
\{ \left. b_{E,k-1} 
\; \right\vert \; 
E \in \mathcal{E} ( \mathcal{T} ) \}
\quad \text{and} \quad
V_k^{\operatorname{nc}} ( \mathcal{E}_{\Omega} ( \mathcal{T} ) )
:=
\operatorname{span} 
\{ \left. b_{E,k-1} 
\; \right\vert \; 
E \in \mathcal{E}_{\Omega}( \mathcal{T} ) \}.
\label{SubEq:CR space decomposition - spaces - non-conf}
\end{align}
\end{subequations}
From the proof of \cite[Lem. 19]{BohneSauterCiarletdD} we know that the sets
\begin{subequations} \label{Eq:Conf vertex basis}
\begin{align}
\overset{\bullet}{\mathcal{B}}_k ( \mathcal{T} )
&:=
\{ \left. b_{K,\boldsymbol{\alpha}} 
\; \right\vert \;
K \in \mathcal{T}, \
\boldsymbol{\alpha} \in \multirange{2}{k-3} \}
\cup
\{ \left. b_{E,j} 
\; \right\vert \;
E \in \mathcal{E} ( \mathcal{T} ), \
j \in \range{k-2} \}
\subseteq
\overset{\bullet}{S}_k ( \mathcal{T} ),
\label{SubEq:Conf vertex basis - full}
\\
\overset{\bullet}{\mathcal{B}}_{k,0} ( \mathcal{T} )
&:=
\{ \left. b_{K,\boldsymbol{\alpha}} 
\; \right\vert \;
K \in \mathcal{T}, \
\boldsymbol{\alpha} \in \multirange{2}{k-3} \}
\cup
\{ \left. b_{E,j} 
\; \right\vert \;
E \in \mathcal{E}_{\Omega} ( \mathcal{T} ), \
j \in \range{k-2} \}
\subseteq 
\overset{\bullet}{S}_{k,0}  ( \mathcal{T} )
\label{SubEq:Conf vertex basis - 0bc}
\end{align}
\end{subequations}
form a basis for $\overset{\bullet}{S}_k ( \mathcal{T} )$ and $\overset{\bullet}{S}_{k,0} ( \mathcal{T} )$ respectively. Since both $\Pi_{k,0}^{\mathcal{E} (\mathcal{T}), \operatorname{c}}$ and $\Pi_{k,0}^{\mathcal{T}}$ are linear combinations of the functions in $\overset{\bullet}{\mathcal{B}}_{k,0} ( \mathcal{T} )$, they map $H_{k,0}^{\CR} ( \mathcal{T} )$ to $\overset{\bullet}{S}_{k,0} ( \mathcal{T})$. We therefore combine the two operators into the operator $\overset{\bullet}{\Pi}_{k,0} : H_k^{\CR} ( \mathcal{T} ) \to \overset{\bullet}{S}_{k,0} ( \mathcal{T} )$, which reads
\begin{align}
\overset{\bullet}{\Pi}_{k,0} u
&:=
\Pi_{k,0}^{\mathcal{E} ( \mathcal{T} ),\operatorname{c}} u
+
\Pi_{k,0}^{\mathcal{T}} \left( u - \Pi_{k,0}^{\mathcal{E} ( \mathcal{T} ),c} u \right)
\qquad \forall u \in H_{k,0}^{\CR} ( \mathcal{T} ).
\label{Eq:SV part}
\end{align}

\begin{lemma}\label{Lem:Prop Pi bullet}
For every $u \in H_{k,0}^{\CR} ( \mathcal{T} )$, $E \in \mathcal{E}_{\Omega} ( \mathcal{T} )$, $K \in \mathcal{T}$, $j \in \range{k-1}$ and $\boldsymbol{\alpha} \in \multirange{2}{k-3}$
\begin{align}
\begin{matrix}
F_{E,j} (\overset{\bullet}{\Pi}_{k,0} u)
=
\begin{cases}
F_{E,j} (u)
& 
j \in \range{k-2},
\\
0 
& j= k-1
\end{cases}
& \text{and} &
F_{K, \boldsymbol{\alpha}} ( \overset{\bullet}{\Pi}_{k,0} u )
= F_{K, \boldsymbol{\alpha}} ( u )
\end{matrix}
\label{Eq:Prop Pibullet}
\end{align}
holds.
Furthermore,
\begin{align}
\overset{\bullet}{\Pi}_{k,0} v
&= 
v
\qquad 
\forall v \in \overset{\bullet}{S}_{k,0} ( \mathcal{T} ).
\label{Eq:Pibullet projection}
\end{align}
\end{lemma}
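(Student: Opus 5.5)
The plan is to compute the functionals directly from the definition \eqref{Eq:SV part}, using the quasi-biduality relations of Proposition \ref{Prop:Quasi biduality properties} and linearity of $F_{E,j}$ and $F_{K,\boldsymbol{\alpha}}$, exactly as in the proof of Lemma \ref{Lem:Functional preservation}. Fix $u \in H_{k,0}^{\CR}(\mathcal{T})$ and write $w := u - \Pi_{k,0}^{\mathcal{E}(\mathcal{T}),c} u$, so that $\overset{\bullet}{\Pi}_{k,0} u = \Pi_{k,0}^{\mathcal{E}(\mathcal{T}),c} u + \Pi_{k,0}^{\mathcal{T}} w$.

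\emph{Edge functionals.} Let $E \in \mathcal{E}_{\Omega}(\mathcal{T})$ and $j \in \range{k-1}$. The term $\Pi_{k,0}^{\mathcal{T}} w$ is a linear combination of bubbles $b_{K,\boldsymbol{\alpha}}$, so \eqref{SubEq:Quasi biduality - mixed} gives $F_{E,j}(\Pi_{k,0}^{\mathcal{T}} w) = 0$. For the first term, the edge-biduality \eqref{SubEq:Quasi biduality - edge} yields
\[
F_{E,j}\bigl(\Pi_{k,0}^{\mathcal{E}(\mathcal{T}),c} u\bigr) = \sum_{E' \in \mathcal{E}_{\Omega}(\mathcal{T})} \sum_{j'=0}^{k-2} F_{E',j'}(u)\, \delta_{E,E'}\delta_{j,j'},
\]
which equals $F_{E,j}(u)$ if $j \le k-2$ and $0$ if $j = k-1$, since the index $j' = k-1$ does not occur in the sum. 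This proves the first identity.

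\emph{Volume functionals.} For $K \in \mathcal{T}$ and $\boldsymbol{\alpha} \in \multirange{2}{k-3}$, the relation \eqref{SubEq:Quasi biduality - vol} gives $F_{K,\boldsymbol{\alpha}}(\Pi_{k,0}^{\mathcal{T}} w) = F_{K,\boldsymbol{\alpha}}(w)$. Adding $F_{K,\boldsymbol{\alpha}}(\Pi_{k,0}^{\mathcal{E}(\mathcal{T}),c} u)$ and using linearity, the sum telescopes to $F_{K,\boldsymbol{\alpha}}(u)$. This needs no biduality between volume functionals and edge bubbles; that relation is not available and is not needed. (If $k=1$, the volume index set is empty, and for $k \le 2$ the conforming edge sum is empty as well, so the statements are trivial.)

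\emph{Projection property.} Since $\overset{\bullet}{\mathcal{B}}_{k,0}(\mathcal{T})$ is a basis of $\overset{\bullet}{S}_{k,0}(\mathcal{T})$, write $v = \sum_{E,j\le k-2} c_{E,j} b_{E,j} + \sum_{K,\boldsymbol{\alpha}} c_{K,\boldsymbol{\alpha}} b_{K,\boldsymbol{\alpha}}$ with $E \in \mathcal{E}_{\Omega}(\mathcal{T})$. Note that $v \in H_{k,0}^{\CR}(\mathcal{T})$, since it lies in $S_{k,0}(\mathcal{T})$. By \eqref{SubEq:Quasi biduality - edge} and \eqref{SubEq:Quasi biduality - mixed}, $F_{E,j}(v) = c_{E,j}$ for $j \le k-2$, hence $\Pi_{k,0}^{\mathcal{E}(\mathcal{T}),c} v = \sum c_{E,j} b_{E,j}$. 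Consequently $v - \Pi_{k,0}^{\mathcal{E}(\mathcal{T}),c} v = \sum c_{K,\boldsymbol{\alpha}} b_{K,\boldsymbol{\alpha}}$, and by \eqref{SubEq:Quasi biduality - vol} the operator $\Pi_{k,0}^{\mathcal{T}}$ reproduces it. Summing the two parts gives $\overset{\bullet}{\Pi}_{k,0} v = v$.

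The main point needing care is the bookkeeping of index ranges: the conforming edge operator only carries $j \le k-2$ (which produces the $0$ in the case $j=k-1$), and it runs only over interior edges, which matches the claim being restricted to $E \in \mathcal{E}_{\Omega}(\mathcal{T})$. Otherwise the argument is a direct application of Proposition \ref{Prop:Quasi biduality properties}.
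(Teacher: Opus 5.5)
Your proof is correct. The two functional identities are proved exactly as in the paper, which simply refers back to the proof of Lemma~\ref{Lem:Functional preservation}. You also correctly observe that no relation $F_{K,\boldsymbol{\alpha}}(b_{E,j})=0$ is needed: the correction structure of \eqref{Eq:SV part} makes the volume functionals telescope.

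For \eqref{Eq:Pibullet projection} you take a different route from the paper.
\begin{itemize}
\item The paper deduces it from \eqref{Eq:Prop Pibullet} together with the projection property $I_k^{\mathcal{T},\CR}v=v$ on $\CR_k(\mathcal{T})$ (Corollary~\ref{Cor:Global Projection}). In effect it compares $v$ and $\overset{\bullet}{\Pi}_{k,0}v$ through their functionals and the unisolvence underlying $I_k^{\mathcal{T},\CR}$.
\item You instead expand $v$ in the basis $\overset{\bullet}{\mathcal{B}}_{k,0}(\mathcal{T})$ and apply biduality term by term.
\end{itemize}

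Your version is more elementary and self-contained. It uses only Proposition~\ref{Prop:Quasi biduality properties} and the basis property, not the Arnold--Brezzi unisolvence behind the Corollary. It also yields automatically an input that the paper's one-line argument tacitly requires. By \eqref{Eq:Prop Pibullet} one has $F_{E,k-1}(\overset{\bullet}{\Pi}_{k,0}v)=0$, so matching the functionals at $j=k-1$ needs $F_{E,k-1}(v)=0$ for $v\in\overset{\bullet}{S}_{k,0}(\mathcal{T})$, or alternatively the directness of the sum \eqref{Eq:CR space decomposition}. The paper obtains this vanishing only later, by the same basis argument, in the proof of Lemma~\ref{Lem:Properties conforming companion}. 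The paper's route is shorter given the machinery already in place. It also emphasises the link between $\overset{\bullet}{\Pi}_{k,0}$ and $I_k^{\mathcal{T},\CR}$ that reappears in \eqref{Eq:Local Interpolation - equivalent descrp}.

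One harmless slip is in your parenthetical remark. For $k=2$ the conforming edge sum is not empty, since $\range{0}=\{0\}$; only $k=1$ is degenerate. That is the only relevant case for odd $k$, and your general argument covers it anyway, since empty sums need no special treatment.
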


\begin{proof}
Property \eqref{Eq:Prop Pibullet} follows from the same the arguments given in the proof of Lemma \ref{Lem:Functional preservation}, while \eqref{Eq:Pibullet projection} is a consequence of \eqref{Eq:Prop Pibullet} and Corollary \ref{Cor:Global Projection} \eqref{CorItem:Projection}.
\end{proof}

Note that properties \eqref{Eq:Prop Pibullet} and \eqref{Eq:Pibullet projection} are not needed for the proof of Lemma \ref{Lem:Partially conforming} but will be used for construction of a conforming companion operator in Section \ref{Sec:RightInverse}.

Before we define the operator $M_{k,0}^{\mathcal{S}}$ we introduce an auxiliary operator $\Pi_{k,0}^{\mathcal{V} ( \mathcal{S} )} : H_{k,0}^{\CR} ( \mathcal{T} ) \to \CR_{k,0} ( \mathcal{T} )$. For any vertex $\mathbf{z} \in \mathcal{V} ( \mathcal{S} )$ let $\psi_{\mathcal{S},k}^{\mathbf{z}} \in \CR_k ( \mathcal{T} )$ be given by
\begin{align}
\psi_{\mathcal{S},k}^{\mathbf{z}}
&:=
\frac{1}{2}
\sum_{E \in \mathcal{E}_{\mathbf{z}} ( \mathcal{S} )}
b_{E,k-1} .
\label{Eq:S-conforming vertex basis}
\end{align}
Recall that $\mathcal{E}_{\mathbf{z}} ( \mathcal{S} )$ is the set of all edges in $\mathcal{S}$ that have $\mathbf{z}$ as an endpoint (cf. \eqref{Eq:S contained edge spider}) and that the summation of discontinuous functions is defined in Convention \ref{Not:Sumation convention in H1 broken}.
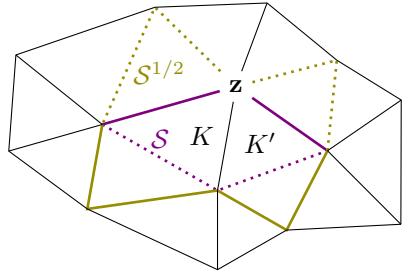
\begin{figure}
\centering
\begin{tikzpicture}[scale=.7]
\coordinate (zero) at (0,0);
\coordinate (ref) at (1,0);
\draw ($(zero)!2!80:(ref)$) node[] (z2) {$\mathbf{z}$};
\coordinate (z3) at ($(zero)!2.5!150:(ref)$);
\coordinate (z4) at ($(zero)!2.2!20:(ref)$);
\coordinate (z5) at ($(z2)!0.93!50:(z4)$);
\coordinate (z6) at ($(z3)!0.93!50:(z2)$);
\coordinate (z7) at ($(z3)!0.65!290:(zero)$);
\coordinate (z8) at ($(zero)!1.5!330:(ref)$);

\coordinate (z41) at ($(z4)!0.93!170:(z2)$);
\coordinate (z42) at ($(z4)!.8!250:(z2)$);
\coordinate (z21) at ($(z2)!.8!105:(z4)$);

\coordinate (z31) at ($(z3)!.8!125:(z2)$);
\coordinate (z32) at ($(z3)!.7!180:(z2)$);

\coordinate (zero1) at ($(zero)!1.5!270:(ref)$);

\draw[line width = 1, violet]  (z4) -- (z2) -- (z3);
\draw[line width = 1, violet, dotted]  (z3) -- (zero) -- (z4);
\draw [line width = 1, olive]  (z3) -- (z7) -- (zero) --(z8) --(z4);
\draw [line width = 1, olive, dotted] (z4) -- (z5) -- (z2) -- (z6) -- (z3);
\draw (z41) -- (z42) -- (z5) -- (z21) -- (z6) -- (z31) -- (z32) -- (z7) -- (zero1) -- (z8) -- (z41);

\draw (z2) -- (zero);
\draw (z4) -- (z41);
\draw (z4) -- (z42);
\draw (z2) -- (z21);
\draw (z3) -- (z31);
\draw (z3) -- (z32);
\draw (zero) -- (zero1);

\node at (barycentric cs:zero=1,z2=1,z3=.5) {$K$};
\node at (barycentric cs:zero=1,z2=1,z4=1) {$K^{\prime}$};

\node at (barycentric cs:z6=1,z2=.8,z3=1) {$\mathcolor{olive}{\mathcal{S}^{1/2}}$};
\node at (barycentric cs:z2=.31,zero=.6,z3=1) {$\mathcolor{violet}{\mathcal{S}}$};

\end{tikzpicture}
\caption{We set $\mathcal{S} := \{ K, K^{\prime} \}$. Then the purple line denotes the internal boundary $\Gamma$. The set $\mathcal{S}^{1/2}$ is outlined by the olive line. Then $\operatorname{supp} \psi_{\mathcal{S},k}^{\mathbf{z}}$ is encompassed by the dotted line.}
\label{Fig:S-conforming supprot}
\end{figure}
The following proposition is a consequence of \cite[Lem. 8]{BohneSauterCiarletdD}.

\begin{proposition}[{\cite[Lem. 8]{BohneSauterCiarletdD}}] \label{Prop:S-conforming prop}
For any vertex $\mathcal{V} ( \mathcal{S} )$ consider $\psi_{\mathcal{S},k}^{\mathbf{z}} \in \CR_k ( \mathcal{T} )$. Then for all vertices $\mathbf{z}, \mathbf{y} \in \mathcal{V} ( \mathcal{S} )$ the following three properties hold:
\begin{align}
\begin{matrix}
\supp \psi_{\mathcal{S},k}^{\mathbf{z}} 
= 
\omega_{(\mathcal{S}^{1/2})_{\mathbf{z}}}
,
&
\left. \psi_{\mathcal{S},k}^{\mathbf{z}} \right\vert_{\omega_{\mathcal{S}}} 
\in 
S_k ( \mathcal{S} )
& \text{and}&
\left. \psi_{\mathcal{S},k}^{\mathbf{z}} \right\vert_{\omega_{\mathcal{S}}} ( \mathbf{y} ) 
= 
\delta_{\mathbf{z}, \mathbf{y}}.
\end{matrix}
\label{Eq:S-conforming prop}
\end{align}
\end{proposition}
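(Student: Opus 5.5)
The plan is to compute everything elementwise from the explicit formula \eqref{SubEq:Basis functions - nonconf}. We rely on two facts about the Legendre polynomial $P_k = P_k^{(0,0)}$: $P_k(1)=1$, and, since $k$ is odd, $P_k(-s)=-P_k(s)$, in particular $P_k(-1)=-1$. Fix $\mathbf{z}\in\mathcal{V}(\mathcal{S})$ and a triangle $K\in\mathcal{T}$. By Convention \ref{Not:Sumation convention in H1 broken},
\begin{align*}
\left.\psi_{\mathcal{S},k}^{\mathbf{z}}\right\vert_K=\frac12\sum_{E\in\mathcal{E}_{\mathbf{z}}(\mathcal{S})\cap\mathcal{E}(K)}P_k(1-2\lambda_{K,E}),
\end{align*}
so the sum has at most two terms, and it is empty if $\mathbf{z}\notin\mathcal{V}(K)$.

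\textbf{Support.} Each $E\in\mathcal{E}_{\mathbf{z}}(\mathcal{S})$ has $\operatorname{supp} b_{E,k-1}=\omega_{\mathcal{T}_E}$. Both triangles of $\mathcal{T}_E$ contain $\mathbf{z}$, and each either lies in $\mathcal{S}$ or meets an element of $\mathcal{S}$ in the edge $E\in\mathcal{E}(\mathcal{S})$, so each belongs to $(\mathcal{S}^{1/2})_{\mathbf{z}}$. Conversely, on every $K$ reached in this way the restriction above is a nonzero polynomial. With one term this is clear; with two terms, $P_k(1-2\lambda_1)$ and $P_k(1-2\lambda_2)$ are polynomials in different barycentric coordinates of degree exactly $k$, so they cannot cancel. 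Hence the support is the union of the $\omega_{\mathcal{T}_E}$, $E\in\mathcal{E}_{\mathbf{z}}(\mathcal{S})$. The step I expect to be the main obstacle is identifying this union with $\omega_{(\mathcal{S}^{1/2})_{\mathbf{z}}}$. A triangle could touch $\mathcal{S}$ through an edge opposite to $\mathbf{z}$ while having $\mathbf{z}$ as a vertex. I would first try to exclude this, or absorb it, by reading $(\mathcal{S}^{1/2})_{\mathbf{z}}$ as in Figure \ref{Fig:S-conforming supprot}, i.e.\ as the triangles adjacent to $\mathcal{S}$ through an edge in $\mathcal{E}_{\mathbf{z}}(\mathcal{S})$, and I would check the case distinction against the definition \eqref{SubEq:Layers - Hedgehog} carefully.

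\textbf{Conformity on $\omega_{\mathcal{S}}$.} Let $K\in\mathcal{S}$. If $\mathbf{z}\notin\mathcal{V}(K)$, the restriction vanishes. Otherwise write $\mathcal{V}(K)=\{\mathbf{z},\mathbf{a},\mathbf{b}\}$. Both edges $[\mathbf{z},\mathbf{a}]$ and $[\mathbf{z},\mathbf{b}]$ lie in $\mathcal{E}_{\mathbf{z}}(\mathcal{S})$, so
\begin{align*}
\left.\psi_{\mathcal{S},k}^{\mathbf{z}}\right\vert_K=\tfrac12\bigl(P_k(1-2\lambda_{K,\mathbf{b}})+P_k(1-2\lambda_{K,\mathbf{a}})\bigr).
\end{align*}
Next I compute the traces on the edges of $K$.
\begin{itemize}
\item On $[\mathbf{z},\mathbf{a}]$ we have $\lambda_{K,\mathbf{b}}=0$, so the trace is $\tfrac12(1+P_k(1-2\lambda_{K,\mathbf{a}}))$. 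This is determined intrinsically by the edge parametrization, so any neighbour in $\mathcal{S}$ sharing that edge, which also contains $\mathbf{z}$, produces the same trace. The edge $[\mathbf{z},\mathbf{b}]$ is symmetric.
\item On $[\mathbf{a},\mathbf{b}]$ we have $\lambda_{K,\mathbf{a}}=1-\lambda_{K,\mathbf{b}}$, so the trace is $\tfrac12\bigl(P_k(s)+P_k(-s)\bigr)=0$ with $s=1-2\lambda_{K,\mathbf{b}}$, by oddness of $P_k$.
\end{itemize}
The neighbour in $\mathcal{S}$ across $[\mathbf{a},\mathbf{b}]$ does not have $\mathbf{z}$ as a vertex, so its restriction, and hence its trace, is zero. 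Traces therefore agree across every interior edge of $\mathcal{S}$, and $\left.\psi_{\mathcal{S},k}^{\mathbf{z}}\right\vert_{\omega_{\mathcal{S}}}\in S_k(\mathcal{S})$.

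\textbf{Vertex values.} On any $K\in\mathcal{S}_{\mathbf{z}}$, evaluating at $\mathbf{z}$ (where $\lambda_{K,\mathbf{a}}=\lambda_{K,\mathbf{b}}=0$) gives $\tfrac12(1+1)=1$. At $\mathbf{a}$ we have $\lambda_{K,\mathbf{a}}=1$ and $\lambda_{K,\mathbf{b}}=0$, giving $\tfrac12(P_k(1)+P_k(-1))=0$, and similarly at $\mathbf{b}$. At vertices $\mathbf{y}\in\mathcal{V}(\mathcal{S})$ not adjacent to $\mathbf{z}$ within $\mathcal{S}$, the function vanishes because every triangle of $\mathcal{S}$ containing $\mathbf{y}$ either avoids $\mathbf{z}$ or has $\mathbf{y}$ as one of $\mathbf{a},\mathbf{b}$. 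By the continuity just shown these values are well defined, so $\left.\psi_{\mathcal{S},k}^{\mathbf{z}}\right\vert_{\omega_{\mathcal{S}}}(\mathbf{y})=\delta_{\mathbf{z},\mathbf{y}}$.
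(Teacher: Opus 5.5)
Your computations for conformity on $\omega_{\mathcal{S}}$ and for the nodal values are correct. The paper gives no argument of its own here beyond citing \cite[Lem.~8]{BohneSauterCiarletdD}, and your elementwise check is the natural direct proof: traces on edges through $\mathbf{z}$ depend only on the edge, the trace on the edge opposite $\mathbf{z}$ vanishes because $P_k^{(0,0)}$ is odd, and $P_k^{(0,0)}(\pm 1)=\pm 1$ gives the vertex values. One small correction concerns a vertex $\mathbf{y}$ where $\omega_{\mathcal{S}}$ is pinched, i.e.\ two triangles of $\mathcal{S}$ share only that vertex. There, single-valuedness does not follow from continuity across edges. It does follow from your case-by-case evaluation, which gives the same value on every triangle of $\mathcal{S}$ containing $\mathbf{y}$. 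Argue it that way, since the paper later uses $S_k(\mathcal{S})\subseteq C^0(\omega_{\mathcal{S}})$.

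The real issue is the support, and the case you flag cannot be excluded: with $\mathcal{S}^{1/2}$ as in \eqref{SubEq:Layers - Hedgehog}, the asserted equality is false. Let $\mathbf{z}$ be an interior vertex with patch $T_i=[\mathbf{z},\mathbf{p}_i,\mathbf{p}_{i+1}]$, $i=1,\dots,6$ (indices mod $6$). Let $K'\neq T_1$ be the second triangle containing the interior edge $[\mathbf{p}_1,\mathbf{p}_2]$, and put $\mathcal{S}:=\{K',T_4\}$. Then $\mathbf{z}\in\mathcal{V}(\mathcal{S})$, and $T_1\in(\mathcal{S}^{1/2})_{\mathbf{z}}$ because $T_1\cap K'=[\mathbf{p}_1,\mathbf{p}_2]\in\mathcal{E}(\mathcal{S})$. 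However, $\mathcal{E}_{\mathbf{z}}(\mathcal{S})=\{[\mathbf{z},\mathbf{p}_4],[\mathbf{z},\mathbf{p}_5]\}$ contains no edge of $T_1$. Hence $\psi_{\mathcal{S},k}^{\mathbf{z}}$ vanishes on $T_1$, and its support is $T_3\cup T_4\cup T_5$. (Read literally, \eqref{SubEq:Layers - Hedgehog} also omits triangles of $\mathcal{S}$ with no edge-neighbour in $\mathcal{S}$, such as $T_4$ here, so $\mathcal{S}\subseteq\mathcal{S}^{1/2}$ must be imposed as in Figure \ref{Fig:Layers}.)

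What your argument proves, and what should be stated, is $\operatorname{supp}\psi_{\mathcal{S},k}^{\mathbf{z}}=\bigcup_{E\in\mathcal{E}_{\mathbf{z}}(\mathcal{S})}\omega_{\mathcal{T}_E}\subseteq\omega_{(\mathcal{S}^{1/2})_{\mathbf{z}}}$. Equality holds if and only if every $K\in(\mathcal{S}^{1/2})_{\mathbf{z}}\setminus\mathcal{S}$ shares an edge through $\mathbf{z}$ with a triangle of $\mathcal{S}$, as in Figure \ref{Fig:S-conforming supprot}. Redefining $(\mathcal{S}^{1/2})_{\mathbf{z}}$ to be this union, as you suggest, is a correction of the statement rather than an interpretation of it; say so explicitly. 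The places where the paper invokes this support property only need the inclusion, e.g.\ the proofs of Lemmas \ref{Lem:Interaction PiV PiEnc Pbullet} and \ref{Lem:L2 boundedness partially conf}.
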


The support property of $\psi_{\mathcal{S},k}^{\mathbf{z}}$ is visualized in Figure \ref{Fig:S-conforming supprot}. Next, we define the functional $F_{\mathcal{S}, \mathbf{z}} : H_k^{\CR} ( \mathcal{T} ) \to \mathbb{R}$ for any vertex $\mathbf{z} \in \mathcal{V} ( \mathcal{T})$ as
\begin{align}
F_{\mathcal{S},\mathbf{z}} ( u )
&:=
\frac{1}{\operatorname{card} \mathcal{S}_{\mathbf{z}}}
\sum_{K \in \mathcal{S}_{\mathbf{z}}} 
( g_K^{\mathbf{z}}, u )_{L^2 ( K )}
\qquad \forall u \in H_k^{\CR} ( \mathcal{T} ),
\label{Eq:S-conforming vertex average}
\end{align}
where $g_K^{\mathbf{z}} \in  \mathbb{P}_k ( K )$ for $K \in \mathcal{S}_{\mathbf{z}}$ is 
given by
\begin{align}
g_K^{\mathbf{z}}
:= \frac{-1}{\vert K \vert}
\binom{k+2}{2}
P_k^{(0,2)} ( 1 - 2 \lambda_{K, \mathbf{z} } ).
\end{align}
Note that $g_k^{\mathbf{z}}$ is the same polynomial used in \cite[Def. 4 (27)]{GBS-PWStokesNumMat} leading to the following Proposition.

\begin{proposition}[{\cite[Lem 6. (30)]{GBS-PWStokesNumMat}, \cite[Prop 3.4 (3.16b)]{BohneGrässleSauter-PressureImprovedStokes}}] \label{Prop:Vertex value}
For every vertex $\mathbf{z} \in \mathcal{V} ( \mathcal{S} )$, it holds that,
\begin{subequations} \label{Eq:Point eval}
\begin{align}
(g_K^{\mathbf{z}}, p )_{L^2 ( K )}
&=
\left. q \right\vert_K ( \mathbf{z} )
\qquad \forall p \in \mathbb{P} ( \mathcal{T} ),
\label{SubEq:Point eval}
\\
\Vert g_K^{\mathbf{z} } \Vert_{L^2 ( K )} 
& =
h_K^{-1}.
\label{SubEq:Point eval - wheight estimate}
\end{align}
\end{subequations}
\end{proposition}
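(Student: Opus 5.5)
The plan is to pull everything back to the reference triangle and reduce \eqref{SubEq:Point eval} to a one-dimensional Jacobi orthogonality statement. I read the function on the right-hand side of \eqref{SubEq:Point eval} as $p$ itself, with $p$ ranging over $\mathbb{P}_k(K)$.

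\textbf{Pullback.} Fix $K \in \mathcal{S}_{\mathbf{z}}$ and an affine bijection $\chi_K : T \to K$ with $\chi_K(\mathbf{Z}_0) = \mathbf{z}$. Then $\lambda_{K,\mathbf{z}} \circ \chi_K = \lambda_{T,\mathbf{Z}_0} = 1-x-y$. For $p \in \mathbb{P}_k(K)$ set $\hat p := p \circ \chi_K$. This gives
\[
(g_K^{\mathbf{z}}, p)_{L^2(K)} = -\binom{k+2}{2}\frac{1}{\vert T\vert}\int_T P_k^{(0,2)}(1-2\lambda_{T,\mathbf{Z}_0})\,\hat p ,
\]
so the factor $\vert K\vert$ cancels against the Jacobian. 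It therefore suffices to prove the identity on $T$ with $\mathbf{z} = \mathbf{Z}_0$.

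\textbf{Level-set decomposition.} Foliate $T$ by the segments $\{\lambda_{T,\mathbf{Z}_0} = t\}$, $t \in [0,1]$. Each segment has length proportional to $(1-t)$ and is parametrised affinely in $t$. Let $m(t)$ be the average of $\hat p$ over the segment at level $t$.

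Since $\hat p$ has degree $\le k$ and the segment endpoints depend affinely on $t$, $m \in \mathbb{P}_k([0,1])$. At $t = 1$ the segment degenerates to the vertex $\mathbf{Z}_0$, so $m(1) = \hat p(\mathbf{Z}_0) = p(\mathbf{z})$. The coarea formula then gives
\[
\int_T P_k^{(0,2)}(1-2\lambda_{T,\mathbf{Z}_0})\,\hat p = c\int_0^1 P_k^{(0,2)}(1-2t)\,(1-t)\,m(t)\,dt
\]
with an absolute constant $c$.

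\textbf{Splitting off the vertex value.} Write $m(t) = m(1) + (1-t)\,r(t)$ with $r \in \mathbb{P}_{k-1}$. After the substitution $s = 1-2t$, the remainder term becomes $\int_{-1}^1 P_k^{(0,2)}(s)\,(1+s)^2\,\tilde r(s)\,ds$ with $\deg \tilde r \le k-1$. This vanishes by the orthogonality of $P_k^{(0,2)}$ with respect to the weight $(1-s)^0(1+s)^2$ \cite[18.2.1 \& Table 18.3.1]{NIST:DLMF}.

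What remains is $p(\mathbf{z})\cdot c\int_0^1 P_k^{(0,2)}(1-2t)(1-t)\,dt$. The only genuinely computational step, and the main obstacle, is to evaluate this constant and check that, combined with $c$, $\vert T\vert$ and the prefactor, it equals exactly $-\binom{k+2}{2}^{-1}$. I would do this in one of two ways:
\begin{enumerate}
\item expand $1+s$ in the basis $\{P_0^{(0,1)}, P_1^{(0,1)}\}$, or
\item use the Rodrigues formula, integrating by parts $k$ times so that only boundary terms at $s = -1$ survive,
\end{enumerate}
and in both cases cross-check against \cite[Lem.~6]{GBS-PWStokesNumMat}, where the same polynomial appears.

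\textbf{Norm estimate.} For \eqref{SubEq:Point eval - wheight estimate}, the same pullback gives
\[
\Vert g_K^{\mathbf{z}}\Vert_{L^2(K)}^2 = \vert K\vert^{-2}\binom{k+2}{2}^2 \frac{\vert K\vert}{\vert T\vert}\,\Vert P_k^{(0,2)}(1-2\lambda_{T,\mathbf{Z}_0})\Vert_{L^2(T)}^2 .
\]
The last factor is a $k$-dependent constant. Hence $\Vert g_K^{\mathbf{z}}\Vert_{L^2(K)} \approx \vert K\vert^{-1/2} \approx h_K^{-1}$ by \eqref{Eq:Equivalence local mesh size and volume}. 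This is the sense, within Convention \ref{Conv:Hidden constants}, in which I would read the stated equality.
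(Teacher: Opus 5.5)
Your reduction is sound, and since the paper gives no proof of its own (it only cites the two references), it would work as a self-contained verification. The following steps are all correct, and you also read the typos in the statement correctly ($q\to p$, $p\in\mathbb{P}_k(K)$):
\begin{itemize}
\item the pullback with $\chi_K(\mathbf{Z}_0)=\mathbf{z}$;
\item the level-set averaging, with coarea constant $c=1$, $m\in\mathbb{P}_k([0,1])$ and $m(1)=p(\mathbf z)$;
\item the annihilation of the term $(1-t)r(t)$ by the $(1+s)^2$-orthogonality of $P_k^{(0,2)}$.
\end{itemize}

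\textbf{The gap.} The proof is not finished, because the step you defer is exactly where the content of the normalization lies. It is not pure bookkeeping: the constant depends on the parity of $k$, which your proposal never mentions.

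\textbf{Closing it.} Your option (b) works, although in fact no boundary terms survive at all. By Rodrigues, $(1+s)^2P_k^{(0,2)}(s)=\frac{(-1)^k}{2^k k!}\frac{d^k}{ds^k}\bigl[(1-s)^k(1+s)^{k+2}\bigr]$. Integrate by parts $k$ times against $(1+s)^{-1}$. Every boundary term vanishes: at $s=1$ because of the factor $(1-s)^k$, and at $s=-1$ because the $j$-th term is $(1+s)^{-j}$ times a function vanishing to order $j+2$. This leaves a Beta integral:
\[
\int_{-1}^{1}P_k^{(0,2)}(s)\,(1+s)\,ds=\frac{(-1)^k}{2^k}\int_{-1}^{1}(1-s)^k(1+s)\,ds=\frac{4\,(-1)^k}{(k+1)(k+2)} .
\]
Hence $\frac{1}{\vert T\vert}\int_0^1 P_k^{(0,2)}(1-2t)(1-t)\,dt=(-1)^k\binom{k+2}{2}^{-1}$, and therefore $(g_K^{\mathbf z},p)_{L^2(K)}=(-1)^{k+1}p(\mathbf z)$ for all $p\in\mathbb{P}_k(K)$.

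So the identity \eqref{SubEq:Point eval} holds precisely because $k$ is odd. The minus sign in $g_K^{\mathbf z}$ compensates $P_k^{(0,2)}(-1)=(-1)^k\binom{k+2}{2}$, and for even $k$ the identity fails. Your target value $-\binom{k+2}{2}^{-1}$ is right only under this standing hypothesis, and you should say so explicitly.

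Your option (a), as phrased, does not evaluate the integral. $\int_{-1}^1 P_k^{(0,2)}(s)(1+s)\,ds$ is not an orthogonality integral for the $(0,1)$ family. You would instead need the $P_0^{(0,1)}$-coefficient of $P_k^{(0,2)}$, or a contiguous relation.

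\textbf{The norm.} Once \eqref{SubEq:Point eval} is established, the norm can be computed exactly rather than only up to an unspecified $k$-dependent constant. Since $g_K^{\mathbf z}\in\mathbb{P}_k(K)$, taking $p=g_K^{\mathbf z}$ gives
\[
\Vert g_K^{\mathbf z}\Vert_{L^2(K)}^2=g_K^{\mathbf z}(\mathbf z)=-\frac{1}{\vert K\vert}\binom{k+2}{2}P_k^{(0,2)}(-1)=\frac{1}{\vert K\vert}\binom{k+2}{2}^2 .
\]
Thus $\Vert g_K^{\mathbf z}\Vert_{L^2(K)}=\binom{k+2}{2}\vert K\vert^{-1/2}\approx h_K^{-1}$. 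This confirms your reading that \eqref{SubEq:Point eval - wheight estimate} can only hold as an equivalence, and only the upper bound is used later in the paper.
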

 
The combination of \eqref{SubEq:Point eval} and \eqref{Eq:S-conforming prop} leads to
\begin{align}
F_{\mathcal{S}, \mathbf{z}} ( \psi_{\mathcal{S},k}^{\mathbf{y}})
&=
\delta_{\mathbf{z}, \mathbf{y}}
\qquad \forall \mathbf{z}, \mathbf{y} \in \mathcal{V}  ( \mathcal{S} ).
\label{Eq:Biuality psi}
\end{align}
The nodal operator $\Pi_{k,0}^{\mathcal{V} ( \mathcal{S} )} : H_{k,0}^{\CR} ( \mathcal{T} ) \to \CR_{k,0} ( \mathcal{T} )$ is defined by
\begin{align}
\Pi_{k,0}^{\mathcal{V}( \mathcal{S} ) } u
&:=
\sum_{\mathbf{z} \in \mathcal{V}_{\Omega} ( \mathcal{S} ) }
F_{\mathcal{S},\mathbf{z}} ( u )
\psi_{\mathcal{S},k}^{\mathbf{z}}
\qquad \forall u \in H_{k,0}^{\CR} ( \mathcal{T} ).
\label{Eq:S-conforming vertex operator}
\end{align}
Finally, we need the operator $\Pi_{k,0}^{\mathcal{E} ( \mathcal{T} \setminus \mathcal{S}^{1/2}), \operatorname{nc}} : H_{k,0}^{\CR} ( \mathcal{T} ) \to \CR_{k,0} ( \mathcal{T} )$ given by \eqref{SubEq:Edge Interpolation - nonconf}.

\begin{lemma} \label{Lem:Interaction PiV PiEnc Pbullet}
For any sub-mesh $\mathcal{S} \subseteq \mathcal{T}$ it holds that:
\begin{subequations} \label{Eq:Interaction PiV PiEnc Pbullet}
\begin{align}
\Pi_{k,0}^{\mathcal{V} ( \mathcal{S} )} ( H_{k,0}^{\CR} ( \mathcal{T})),
\Pi_k^{\mathcal{E} ( \mathcal{T} \setminus \mathcal{S}^{1/2}),\operatorname{nc}} ( H_{k,0}^{\CR} ( \mathcal{T} ))
& \subseteq 
\operatorname{ker} \overset{\bullet}{\Pi}_{k,0},
\label{SubEq:Interaction PiV PiEnc Pbullet - conf}
\\
\Pi_k^{\mathcal{E} ( \mathcal{T} \setminus \mathcal{S}^{1/2}),\operatorname{nc}} ( H_{k,0}^{\CR} ( \mathcal{T} )) 
\subseteq  
\operatorname{ker} \Pi_{k,0}^{\mathcal{V} ( \mathcal{S} )} 
\text{ and } \
\Pi_{k,0}^{\mathcal{V} ( \mathcal{S} )} ( H_{k,0}^{\CR} ( \mathcal{T} )) 
&\subseteq  
\operatorname{ker} \Pi_k^{\mathcal{E} ( \mathcal{T} \setminus \mathcal{S}^{1/2}),\operatorname{nc}},
\label{SubEq:Interaction PiV PiEnc Pbullet - nc}
\end{align}
\end{subequations}
where $\operatorname{ker}$ denotes the kernel.
\end{lemma}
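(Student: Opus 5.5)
The images of $\Pi_{k,0}^{\mathcal{V}(\mathcal{S})}$ and $\Pi_k^{\mathcal{E}(\mathcal{T}\setminus\mathcal{S}^{1/2}),\operatorname{nc}}$ lie in $V_k^{\operatorname{nc}}(\mathcal{E}(\mathcal{T}))$. Indeed, every $\psi_{\mathcal{S},k}^{\mathbf{z}}$ is by \eqref{Eq:S-conforming vertex basis} a linear combination of the non-conforming edge functions $b_{E,k-1}$. So all four inclusions reduce to computing how the relevant functionals act on the functions $b_{E,k-1}$, using Proposition \ref{Prop:Quasi biduality properties}, Lemma \ref{A:Lem:Basis volume orthogonality} and the support properties.

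\textbf{First inclusion \eqref{SubEq:Interaction PiV PiEnc Pbullet - conf}.} It suffices to show $\overset{\bullet}{\Pi}_{k,0} b_{E,k-1}=0$ for every edge $E$. By \eqref{Eq:SV part}, $\overset{\bullet}{\Pi}_{k,0}b_{E,k-1}$ is built from $F_{E',j}(b_{E,k-1})$ with $j\le k-2$, which vanish by \eqref{SubEq:Quasi biduality - edge}, so $\Pi_{k,0}^{\mathcal{E}(\mathcal{T}),c}b_{E,k-1}=0$. It remains to check $\Pi_{k,0}^{\mathcal{T}}b_{E,k-1}=0$, i.e. $F_{K,\boldsymbol{\alpha}}(b_{E,k-1})=0$. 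This holds because $g_{K,\boldsymbol{\alpha}}\in\mathbb{P}_{k-3}(K)\subseteq\mathbb{P}_{k-2}(K)$ and Lemma \ref{A:Lem:Basis volume orthogonality} applies for $k\ge3$; for $k=1$ the volume sum is empty. Linearity then gives the claim for both images.

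\textbf{Second pair \eqref{SubEq:Interaction PiV PiEnc Pbullet - nc}.} I would treat the two inclusions separately.

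For $\Pi_k^{\mathcal{E}(\mathcal{T}\setminus\mathcal{S}^{1/2}),\operatorname{nc}}\circ\Pi_{k,0}^{\mathcal{V}(\mathcal{S})}=0$, the image of $\Pi_{k,0}^{\mathcal{V}(\mathcal{S})}$ is spanned by $b_{E',k-1}$ with $E'\in\mathcal{E}_{\mathbf{z}}(\mathcal{S})\subseteq\mathcal{E}(\mathcal{S})$. Applying $F_{E,k-1}$ for $E\in\mathcal{E}(\mathcal{T}\setminus\mathcal{S}^{1/2})$ gives $\delta_{E,E'}$ by \eqref{SubEq:Quasi biduality - edge}. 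So I must show $\mathcal{E}(\mathcal{S})\cap\mathcal{E}(\mathcal{T}\setminus\mathcal{S}^{1/2})=\emptyset$. Suppose an edge $E'\in\mathcal{E}(\mathcal{S})$ belongs to some $K\in\mathcal{T}\setminus\mathcal{S}^{1/2}$. Then $K$ either lies in $\mathcal{S}$ or shares the edge $E'$ with an element of $\mathcal{S}$; in both cases $K\in\mathcal{S}^{1/2}$ by \eqref{SubEq:Layers - Hedgehog}, a contradiction. (Here I use $\mathcal{S}\subseteq\mathcal{S}^{1/2}$, taking $K'=K$ in the definition.)

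For $\Pi_{k,0}^{\mathcal{V}(\mathcal{S})}\circ\Pi_k^{\mathcal{E}(\mathcal{T}\setminus\mathcal{S}^{1/2}),\operatorname{nc}}=0$, I need $F_{\mathcal{S},\mathbf{z}}(b_{E,k-1})=0$ for $E\in\mathcal{E}(\mathcal{T}\setminus\mathcal{S}^{1/2})$ and $\mathbf{z}\in\mathcal{V}_\Omega(\mathcal{S})$. The functional $F_{\mathcal{S},\mathbf{z}}$ only integrates over $K\in\mathcal{S}_{\mathbf{z}}\subseteq\mathcal{S}$, while $\operatorname{supp}b_{E,k-1}=\omega_{\mathcal{T}_E}$. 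So each term $(g_K^{\mathbf{z}},b_{E,k-1})_{L^2(K)}$ vanishes unless $K\in\mathcal{T}_E$, i.e. unless $E\in\mathcal{E}(K)\subseteq\mathcal{E}(\mathcal{S})$. That case is excluded by the disjointness just shown. One should note that the overlap of $K$ and $\omega_{\mathcal{T}_E}$ is then at most of measure zero, so the integral is zero.

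\textbf{Main obstacle.} The main point is the combinatorial disjointness $\mathcal{E}(\mathcal{S})\cap\mathcal{E}(\mathcal{T}\setminus\mathcal{S}^{1/2})=\emptyset$, which hinges on reading definition \eqref{SubEq:Layers - Hedgehog} correctly. Everything else is a direct consequence of biduality and the volume orthogonality of $b_{E,k-1}$.
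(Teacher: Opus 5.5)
Your proposal is correct and follows the paper's proof. Both images lie in the span of the $b_{E,k-1}$; the first inclusion then follows from \eqref{SubEq:Quasi biduality - edge} together with Lemma \ref{A:Lem:Basis volume orthogonality}, and the second pair from biduality and the support of $b_{E,k-1}$. Your explicit check that $\mathcal{E}(\mathcal{S})\cap\mathcal{E}(\mathcal{T}\setminus\mathcal{S}^{1/2})=\emptyset$ makes precise what the paper only asserts through its support inclusions; it needs the convention $\mathcal{S}\subseteq\mathcal{S}^{1/2}$, which the paper intends and uses but which does not follow literally from taking $K'=K$ in the definition, since $K\cap K=K$ is not an edge.
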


\begin{proof}
\textbf{@\labelcref{SubEq:Interaction PiV PiEnc Pbullet - conf}:} By construction we have
\begin{align}
\begin{matrix}
\Pi_{k,0}^{\mathcal{V} ( \mathcal{S})} u 
\in 
\operatorname{span} \{b_{E,k-1} \; \vert \; E \in \mathcal{E}_{\Omega} ( \mathcal{S} )\}
& \text{and} &
\Pi_k^{\mathcal{E} ( \mathcal{T} \setminus \mathcal{S}^{1/2}),\operatorname{nc}} u
\in
\operatorname{span}
\{b_{E,k-1} \; \vert \; E \in \mathcal{E}_{\Omega} ( \mathcal{T} \setminus \mathcal{S}^{1/2} )\}
\end{matrix}
\label{Eq:PiV PiEnc image span}
\end{align}
for all $u \in H_{k,0}^{\CR} ( \mathcal{T} )$.
Thus, Proposition \ref{Prop:Quasi biduality properties} \eqref{SubEq:Quasi biduality - edge} and Lemma \ref{A:Lem:Basis volume orthogonality} imply 
\begin{align*}
\overset{\bullet}{\Pi}_{k,0} (\Pi_{k,0}^{\mathcal{V} ( \mathcal{T} )} u )
&=
\Pi_{k,0}^{\mathcal{E} ( \mathcal{T}), c} ( \Pi_{k,0}^{\mathcal{V} ( \mathcal{T} )} u )
+
\Pi_{k,0}^{\mathcal{T}} ( \Pi_{k,0}^{\mathcal{V} ( \mathcal{T} )} u - \Pi_{k,0}^{\mathcal{E} ( \mathcal{T}), c} ( \Pi_{k,0}^{\mathcal{V} ( \mathcal{T} )} u ))
\overset{\eqref{SubEq:Quasi biduality - edge}}{=}
\Pi_{k,0}^{\mathcal{T}} ( \Pi_{k,0}^{\mathcal{V} ( \mathcal{T} )} u )
\overset{\text{Lem.} \ref{A:Lem:Basis volume orthogonality}}{=}
0
\end{align*}
and similarly for $\Pi_k^{\mathcal{E} ( \mathcal{T} \setminus \mathcal{S}^{1/2}),\operatorname{nc}} u$.

\textbf{@\labelcref{SubEq:Interaction PiV PiEnc Pbullet - nc}:} Recall that $\operatorname{supp} b_{E,k-1} = \omega_{\mathcal{T}_E}$ for all $E \in \mathcal{E}_{\Omega} ( \mathcal{T} \setminus \mathcal{S}^{1/2})$ and $\operatorname{supp} \psi_{\mathcal{S},k}^{\mathbf{z}} \subseteq \omega_{S^{1/2}}$ for every $\mathbf{z} \in \mathcal{V}_{\Omega} ( \mathcal{S} )$ (cf. \eqref{Eq:S-conforming prop}). We therefore conclude that
\begin{align}
\begin{matrix}
\displaystyle{\operatorname{supp} \Pi_{k,0}^{\mathcal{E} ( \mathcal{T} \setminus \mathcal{S}^{1/2}),\operatorname{nc}} u
\subseteq \bigcup_{E \in \mathcal{E} ( \mathcal{T} \setminus \mathcal{S}^{1/2} )} \omega_{\mathcal{T}_E}
= \omega_{\mathcal{T} \setminus \mathcal{S}}}
& \text{and} & 
\displaystyle{\operatorname{supp} \Pi_{k,0}^{V (\mathcal{T})} u 
\subseteq \bigcup_{\mathbf{z} \in \mathcal{V}_{\Omega} ( \mathcal{S} )} \operatorname{supp} \psi_{\mathcal{S},k}^{\mathbf{z}}
= \omega_{\mathcal{S}^{1/2}}}
\end{matrix}.
\label{Eq:Vert&NC - support}
\end{align}
Observe that the operator $\Pi_{k,0}^{\mathcal{V}( \mathcal{S})}$ only uses information from  $u \vert_{\omega_{\mathcal{S}}}$ through the functionals $F_{\mathcal{S},\mathbf{z}}$. This and \eqref{Eq:Vert&NC - support} yields $\Pi_{k,0}^{\mathcal{V} ( \mathcal{S} )} (\Pi_{k,0}^{\mathcal{E}( \mathcal{T} \setminus \mathcal{S}^{1/2})} u) =0$. Similarly, as the operator $\Pi_{k,0}^{\mathcal{E} ( \mathcal{T} \setminus \mathcal{S}^{1/2}), \operatorname{nc}}$ solely considers information on the edges $\mathcal{E} ( \mathcal{T} \setminus \mathcal{S}^{1/2} )$. Thus \eqref{Eq:PiV PiEnc image span} and Proposition \ref{Prop:Quasi biduality properties} \eqref{SubEq:Quasi biduality - edge} yield $\Pi_{k,0}^{\mathcal{E}( \mathcal{T} \setminus \mathcal{S}^{1/2}), \operatorname{nc}} \circ \Pi_{k,0}^{\mathcal{V} ( \mathcal{S} )} \equiv 0$.
\end{proof}

We define partially-conforming operator $M_{k,0}^{\mathcal{S}}: H_{k,0}^{\CR} ( \mathcal{T} ) \to \CR_{k,0} ( \mathcal{T} )$ by combining the operators $\Pi_{k, 0}^{\mathcal{V} ( \mathcal{S})}, \Pi_{k,0}^{\mathcal{E} ( \mathcal{T} \setminus \mathcal{S}^{1/2}), \operatorname{nc}}$ and $\overset{\bullet}{\Pi}_{k,0}$:
\begin{align}
M_{k,0}^{\mathcal{S}} u 
&:=
\Pi_{k,0}^{\mathcal{V}( \mathcal{S})} u
+
\Pi_{k,0}^{\mathcal{E}( \mathcal{T} \setminus \mathcal{S}^{1/2}),\operatorname{nc}} \left(u - \Pi_{k,0}^{\mathcal{V}( \mathcal{S})} u \right)
+
\overset{\bullet}{\Pi}_{k,0} \left(u - \Pi_{k,0}^{\mathcal{V}( \mathcal{S})} u  - \Pi_{k,0}^{\mathcal{E}( \mathcal{T} \setminus \mathcal{S}^{1/2}),\operatorname{nc}} \left(u - \Pi_{k,0}^{\mathcal{V}( \mathcal{S})} u \right) \right)
\nonumber
\\
&\overset{\eqref{Eq:Interaction PiV PiEnc Pbullet}}{=}
\Pi_{k,0}^{\mathcal{V}( \mathcal{S})} u
+
\Pi_{k,0}^{\mathcal{E}( \mathcal{T} \setminus \mathcal{S}^{1/2}),\operatorname{nc}} u
+
\overset{\bullet}{\Pi}_{k,0} u
\label{Eq:Mixed interpolation operator}
\end{align}
for all $u \in H_{k,0}^{\CR} ( \mathcal{T} )$.

Our approach for the proof of Lemma \ref{Lem:Partially conforming} \eqref{SubEq:Mixed Operator properties - loc bounded} is similar as for the non-conforming quasi-interpolation operator $I_k^{\mathcal{T}}$. We derive an $L^2$ estimate of the form $\Vert M_{k,0}^{\mathcal{S}} u \Vert_{L^2 ( K )} \lesssim \Vert u \Vert_{L^2 ( D_K )} + h_K \Vert \nabla_{\mathcal{T}} u \Vert_{\mathbf{L}^2 ( D_K )}$ for all $u \in H_{k,0}^{\CR} ( \mathcal{T} )$ and that $M_{k,0}^{\mathcal{S}}$ locally reproduces constant functions for triangles that do not touch the boundary $\partial \Omega$. Those two properties, combined with an inverse and a Poincaré-type inequality implies \eqref{SubEq:Mixed Operator properties - loc bounded} for interior triangles. The boundary case will be treated separately. We need four auxiliary results, beginning with the $L^2$ estimate.

\begin{lemma}\label{Lem:L2 boundedness partially conf}
For every $u \in H_{k,0}^{\CR} ( \mathcal{T})$ we have that $\Vert M_{k,0}^{\mathcal{S}} u \Vert_{L^2 ( K )} \lesssim \Vert u \Vert_{L^2 ( D_K )} + h_K \Vert \nabla_{\mathcal{T}} u \Vert_{\mathbf{L}^2 ( K )}$, where 
\begin{align}
D_K
:=
\begin{cases}
\mathcal{T}_K 
&
K \in \mathcal{S}^{1/2},
\\
K
& 
K \in \mathcal{T} \setminus \mathcal{S}^{1/2}.
\end{cases}
\label{Eq:L2 stability region}
\end{align}
\end{lemma}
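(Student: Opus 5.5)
We use the simplified representation \eqref{Eq:Mixed interpolation operator}, $M_{k,0}^{\mathcal{S}} u = \Pi_{k,0}^{\mathcal{V}(\mathcal{S})} u + \Pi_{k,0}^{\mathcal{E}(\mathcal{T}\setminus\mathcal{S}^{1/2}),\operatorname{nc}} u + \overset{\bullet}{\Pi}_{k,0} u$, and bound each of the three terms on a fixed $K \in \mathcal{T}$ separately via the triangle inequality. Every term is a finite sum of the form (functional applied to $u$) times (basis function). The generic estimate is
\[
\vert F(u)\vert \, \Vert b \Vert_{L^2(K)},
\]
combined with a scaling argument on the basis functions. Since the basis functions in \eqref{Eq:Basis functions} and $\psi_{\mathcal{S},k}^{\mathbf{z}}$ are pullbacks of fixed reference polynomials (sums of at most $N_{\max}$ of them), we have $\Vert b \Vert_{L^2(K)} \lesssim \vert K\vert^{1/2} \approx h_K$. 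By the support properties \eqref{SubEq:Conf basis prop - support} and \eqref{Eq:S-conforming prop}, only $O(1)$ terms contribute on $K$: those with $E \in \mathcal{E}(K)$, those with the triangle $K$ itself, and those with $\mathbf{z} \in \mathcal{V}(K)$.

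\textbf{Edge and volume functionals.} For $F_{E,j}$ with $E \in \mathcal{E}(K)$, scaling gives $\Vert g_{E,j}\Vert_{L^2(E)} \lesssim h_E^{-1/2}$, hence $\vert F_{E,j}(u)\vert \lesssim h_K^{-1/2}\Vert u|_K\Vert_{L^2(E)}$. The moment is well defined using either side of $E$ by Remark \ref{Rem:Jump condition}, so we may take the trace from $K$ itself. The multiplicative trace inequality then yields $h_K^{1/2}\Vert u|_K\Vert_{L^2(E)} \lesssim \Vert u\Vert_{L^2(K)} + h_K\Vert\nabla u\Vert_{\mathbf{L}^2(K)}$. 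For $F_{K,\boldsymbol{\alpha}}$, scaling gives $\Vert g_{K,\boldsymbol{\alpha}}\Vert_{L^2(K)} \lesssim h_K^{-1}$, so $\vert F_{K,\boldsymbol{\alpha}}(u)\vert\, h_K \lesssim \Vert u\Vert_{L^2(K)}$. Together these bound $\overset{\bullet}{\Pi}_{k,0}u$ and the nonconforming edge part on $K$ by $\Vert u\Vert_{L^2(K)} + h_K\Vert\nabla u\Vert_{\mathbf{L}^2(K)}$.

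\textbf{The nested structure of $\overset{\bullet}{\Pi}_{k,0}$.} One point needs care: in \eqref{Eq:SV part} the volume functionals act on $u - \Pi_{k,0}^{\mathcal{E}(\mathcal{T}),c}u$. We handle this by bounding $\Vert \Pi_{k,0}^{\mathcal{E}(\mathcal{T}),c}u\Vert_{L^2(K)}$ first, which is done with the edge functionals of $K$ only, and then applying the volume bound to the difference.

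\textbf{The vertex part and the choice of $D_K$.} The term $\Pi_{k,0}^{\mathcal{V}(\mathcal{S})}u$ is the only one that is not local to $K$. By Proposition \ref{Prop:Vertex value}, $\vert F_{\mathcal{S},\mathbf{z}}(u)\vert \le \max_{K'\in\mathcal{S}_{\mathbf{z}}}h_{K'}^{-1}\Vert u\Vert_{L^2(K')}$, and $h_{K'}\approx h_K$ by \eqref{Eq:Equivalence local mesh size and volume}. Since $\psi^{\mathbf{z}}_{\mathcal{S},k}$ is supported in $\omega_{(\mathcal{S}^{1/2})_{\mathbf{z}}}$, this term vanishes on $K \in \mathcal{T}\setminus\mathcal{S}^{1/2}$, which gives $D_K = K$ there. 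For $K\in\mathcal{S}^{1/2}$, the relevant $K'\in\mathcal{S}_{\mathbf{z}}$ with $\mathbf{z}\in\mathcal{V}(K)$ all lie in $\mathcal{T}_K$, which gives $D_K=\mathcal{T}_K$. The nc edge term $\Pi_{k,0}^{\mathcal{E}(\mathcal{T}\setminus\mathcal{S}^{1/2}),\operatorname{nc}}$ is handled entirely by the edge estimate on $K$.

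We expect the main obstacle to be the bookkeeping needed to verify that every contributing functional only sees data in $D_K$. This concerns in particular the gradient term, which the statement requires only on $K$ (not $D_K$): it must be checked that the trace-inequality terms come only from edges of $K$. The vertex functionals are pure volume moments, so they contribute no gradient term, which is consistent with the statement.
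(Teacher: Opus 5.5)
Your proposal is correct and matches the paper's proof. Both arguments use the same three-term split via \eqref{Eq:Mixed interpolation operator}, the same functional-times-basis-function estimates with $\Vert b\Vert_{L^2(K)}\lesssim h_K$, the same support argument producing $D_K$, and the same treatment of the nested volume functional in $\overset{\bullet}{\Pi}_{k,0}$, namely bounding $\Pi_{k,0}^{\mathcal{E}(\mathcal{T}),c}u$ on $K$ first. The only minor difference is that the paper uses the sharper gradient-only bound for $F_{E,j}$ with $j\le k-2$, which gives $\Vert\Pi_{k,0}^{\mathcal{E}(\mathcal{T}),c}u\Vert_{L^2(K)}\lesssim h_K\Vert\nabla u\Vert_{\mathbf{L}^2(K)}$; your uniform trace-inequality bound for all $j$ is weaker but fully sufficient for this lemma.
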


The proof of this lemma relies on the estimates of the functionals $F_{E,j}$, $F_{K, \boldsymbol{\alpha}}$ and $F_{\mathcal{S}, \mathbf{z}}$, namely
\begin{subequations} \label{A:Eq:Functional estimates}
\begin{align}
\vert F_{E,j} ( u ) \vert
& \lesssim
\begin{cases}
h_K^{-1}
\Vert u \Vert_{L^2 ( K )} 
+ 
 \Vert \nabla u \Vert_{\mathbf{L}^2( K)} 
&
j = k-1,
\\
 \Vert \nabla u \Vert_{\mathbf{L}^2 ( K )}
&
j \in \range{k-2},
\end{cases}
\label{A:SubEq:Functional estimates - edge}
\\
\vert F_{K, \boldsymbol{\alpha}} ( u ) \vert
& \lesssim
h_K^{-1}
\Vert u \Vert_{L^2 ( K )} ,
\label{A:SubEq:Functional estimates - volume}
\\
\vert F_{\mathcal{S},\mathbf{z}} ( u ) \vert
&\lesssim
h_{\mathcal{T}_{\mathbf{z}}}^{-1} 
\Vert u \Vert_{L^2 ( \omega_{\mathcal{T}_{\mathbf{z}}} )},
\label{SubEq:Point eval - functional estimate}
\end{align}
\end{subequations}
for all $u \in H_{k,0}^{\CR} ( \mathcal{T} )$.
Since the proof of \eqref{A:Eq:Functional estimates} follows from standard arguments and it is postponed to Lemma \ref{A:Lem:Functional estimates} in the appendix. Recall the definition of the basis functions $b \in \CR_{k,0} ( \mathcal{T} )$ from \eqref{Eq:Basis functions}. By construction, we have that $\Vert b \Vert_{L^{\infty} (  K )} \lesssim 1$ for all $K \in \mathcal{T}$ with $K \subseteq \operatorname{supp} b$. Hence, by using affine pullbacks and an inverse inequality, e.g., \cite[(12.1)]{ErnGuermondI}, we conclude that
\begin{align}
\begin{matrix}
\Vert b \Vert_{L^2 ( K )} 
 \lesssim 
h_K
& \text{and} &
\Vert \nabla b \Vert_{\mathbf{L}^2 ( K )} 
& \lesssim 
1
\end{matrix}.
\label{A:Eq:Upperbound on basis functions}
\end{align}
for all $b \in \mathcal{B}_k^{\CR} ( \mathcal{T} )$ and $K \in \mathcal{T}$ such that $K \subseteq \operatorname{supp} b$.

\begin{proof}[Proof of Lemma \ref{Lem:L2 boundedness partially conf}]
From the definition of the partially-conforming operator $M_{k,0}^{\mathcal{S}}$ in \eqref{Eq:Mixed interpolation operator} we estimate that
\begin{align*}
\Vert M_{k,0}^{\mathcal{S}} u \Vert_{L^2 ( K )}
& \leq
\Vert \Pi_{k,0}^{\mathcal{V} ( \mathcal{S})} u \Vert_{L^2 ( K )}
+
\Vert \Pi_{k,0}^{\mathcal{E} ( \mathcal{T} \setminus \mathcal{S}^{1/2}), \operatorname{nc}} u \Vert_{L^2 ( K )}
+
\Vert \overset{\bullet}{\Pi}_{k,0} u \Vert_{L^2 ( K )}.
\end{align*}
We treat these contributions individually and start with $\Vert \Pi_{k,0}^{\mathcal{V} ( \mathcal{S})} u \Vert_{L^2 ( K )}$. Recalling the support properties of the operator $\Pi_{k,0}^{\mathcal{V} ( \mathcal{S} )}$ from \eqref{Eq:Vert&NC - support}, it is sufficient to consider $K \in \mathcal{S}^{1/2}$. Using \eqref{SubEq:Point eval - functional estimate} reveals
\begin{align*}
\Vert \Pi_{k,0}^{\mathcal{V} ( \mathcal{S})} u \Vert_{L^2 ( K )}
& \leq
\sum_{\mathbf{z} \in \mathcal{V} ( K ) \cap \mathcal{V} ( \mathcal{S} )}
\vert F_{\mathcal{S} , \mathbf{z}} ( u ) \vert
\Vert \psi_{\mathcal{S},k}^{\mathbf{z}} \Vert_{L^2 ( K )}
\lesssim
\Vert u \Vert_{L^2 ( \mathcal{T}_K )}
\sum_{\mathbf{z} \in \mathcal{V} ( K ) \cap \mathcal{V} ( \mathcal{S} )}
h_{\mathcal{T}_{\mathbf{z}}}^{-1}
\Vert \psi_{\mathcal{S},k}^{\mathbf{z}} \Vert_{L^2 ( K )}.
\end{align*}
Since $\psi_{\mathcal{S},k}^{\mathbf{z}} \vert_K$ is a linear combination of the functions $b_{E,k-1} \in \mathcal{B}_k^{\CR} ( \mathcal{T} )$ for at most two edges $E \in \mathcal{E} ( K )$, we combine \eqref{A:Eq:Upperbound on basis functions} and the equivalence of local mesh sizes and obtain 
\begin{align}
\Vert \Pi_{k,0}^{\mathcal{V} ( \mathcal{S})} u \Vert_{L^2 ( K )} 
&\lesssim 
\Vert u \Vert_{L^2 ( \mathcal{T}_K )}
=
\Vert u \Vert_{L^2 ( D_K )}.
\label{Eq:L2 estimate PiV}
\end{align}
For $\Vert \Pi_{k,0}^{\mathcal{E} ( \mathcal{T} \setminus \mathcal{S}^{1/2}), \operatorname{nc}} u \Vert_{L^2 ( K )}$ we use the support property of the operator $ \Pi_{k,0}^{\mathcal{E} ( \mathcal{T} \setminus \mathcal{S}^{1/2}), \operatorname{nc}}$ from \eqref{Eq:Vert&NC - support}. For $K \in \mathcal{T} \setminus \mathcal{S}$ it holds that
\begin{align}
\Vert \Pi_{k,0}^{\mathcal{E} ( \mathcal{T} \setminus \mathcal{S}^{1/2}), \operatorname{nc}} u \Vert_{L^2 ( K )}
& \leq
\sum_{E \in \mathcal{E} ( K ) \cap \mathcal{E} ( \mathcal{T} \setminus \mathcal{S}^{1/2)}}
\vert F_{E,k-1} ( u ) \vert
\Vert b_{E,k-1} \Vert_{L^2 ( K )} 
\nonumber
\\
&\overset{\eqref{A:SubEq:Functional estimates - edge}}{\lesssim}
(h_K^{-1} 
 \Vert u \Vert_{L^2 ( K )} 
+
\Vert \nabla u \Vert_{\mathbf{L}^2 ( K )})
\sum_{E \in \mathcal{E} ( K )}
\Vert b_{E,k-1} \Vert_{L^2 ( K )}
\nonumber
\\
& 
\overset{\eqref{A:Eq:Upperbound on basis functions}}{\lesssim}
\Vert u \Vert_{L^2 ( D_K )} 
+
h_K 
\Vert \nabla u \Vert_{\mathbf{L}^2 ( K )},
\label{Eq:L^2 estimate PiEnc}
\end{align}
using $K \subseteq D_K$ in the last step. 
Recalling the definition of $\overset{\bullet}{\Pi}_{k,0}$ from \eqref{Eq:SV part} we observe that
$
\Vert \overset{\bullet}{\Pi}_{k,0} u \Vert_{L^2 ( K )}
\leq 
\Vert \Pi_{k,0}^{\mathcal{E} ( \mathcal{T}), c} u\Vert_{L^2 ( K )}
+
\Vert \Pi_{k,0}^{\mathcal{T}} ( u -  \Pi_{k,0}^{\mathcal{E} ( \mathcal{T} ),c} u  ) \Vert_{L^2 ( K )}.
$ 
Therefore, a combination of \eqref{A:SubEq:Functional estimates - edge} and \eqref{A:Eq:Upperbound on basis functions} leads to
\begin{align}
\Vert \Pi_{k,0}^{\mathcal{E} ( \mathcal{T}),c} u\Vert_{L^2 ( K )}
& \leq
\sum_{E \in \mathcal{E} ( K )}
\sum_{j = 0}^{k-2}
\vert F_{E,j} ( u ) \vert
\Vert b_{E,j} \Vert_{L^2 ( K )}
\lesssim
h_K
\Vert \nabla u \Vert_{\mathbf{L}^2 ( K )}.
\label{Eq:L^2 estimate PE}
\end{align}
Similarly, combining \eqref{SubEq:Point eval - functional estimate} with \eqref{A:Eq:Upperbound on basis functions}, \eqref{Eq:L2 estimate PiV} and \eqref{Eq:L^2 estimate PE} reveals that
\begin{align*}
\Vert \Pi_{k,0}^{\mathcal{T}} ( u - \Pi_{k,0}^{\mathcal{E} ( \mathcal{T} ),c} u  ) \Vert_{L^2 ( K )}
& \leq
\sum_{\boldsymbol{\alpha} \in \multirange{2}{k-3}}
\vert F_{K, \boldsymbol{\alpha}} ( u - \Pi_{k,0}^{\mathcal{E} ( \mathcal{T} ),c} u ) \vert
\Vert b_{K,\boldsymbol{\alpha}}  \Vert_{L^2 ( K )}
\lesssim
\Vert u - \Pi_{k,0}^{\mathcal{E} ( \mathcal{T} ),c } u   \Vert_{L^2 ( K )}
\\
&\lesssim
\Vert u \Vert_{L^2 ( D_K )} 
+
h_K \Vert \nabla u \Vert_{\mathbf{L}^2 ( K )}.
\end{align*}
Consequently we have shown that $\Vert \overset{\bullet}{\Pi}_{k,0} u\Vert_{L^2 ( K )} \lesssim \Vert u \Vert_{L^2 ( D_K )} 
+
h_K \Vert \nabla u \Vert_{\mathbf{L}^2 ( K )}$ and thus combining this with \eqref{Eq:L2 estimate PiV} and \eqref{Eq:L^2 estimate PiEnc} concludes the proof.
\end{proof}

We us turn our attention to the preservation of locally constant functions.

\begin{lemma} \label{A:Lem:Mixed constatnt identity}
For all triangles $K \in \mathcal{T}$ such that $K \cap \partial \Omega = \emptyset$, we have that
\begin{align}
\left. M_{k,0}^{\mathcal{S}} ( c ) \right\vert_K 
&=
\left. c \right\vert_{K}
\label{Eq:Mixed constant identity}
\end{align}
for all functions $c \in H_{k,0}^{\CR} ( \mathcal{T} )$ such that $c \vert_{\omega_{\mathcal{T}_K}} \in \mathbb{R}$ is constant.
\end{lemma}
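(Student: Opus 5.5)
Since $K \cap \partial\Omega = \emptyset$, all edges of $K$ are interior edges and all vertices of $K$ are interior vertices, so the restrictions to $\mathcal{E}_{\Omega}$ and $\mathcal{V}_{\Omega}$ in the definitions of $\Pi_{k,0}^{\mathcal{V}(\mathcal{S})}$, $\Pi_{k,0}^{\mathcal{E}(\mathcal{T}\setminus\mathcal{S}^{1/2}),\operatorname{nc}}$ and $\overset{\bullet}{\Pi}_{k,0}$ drop no basis function whose support meets $K$. Moreover, by the support properties \eqref{SubEq:Conf basis prop - support} and \eqref{Eq:S-conforming prop}, the restriction $(M_{k,0}^{\mathcal{S}} c)|_K$ only involves functionals $F_{E,j}$ with $E \in \mathcal{E}(K)$, $F_{K,\boldsymbol{\alpha}}$, and $F_{\mathcal{S},\mathbf{z}}$ with $\mathbf{z} \in \mathcal{V}(K)$. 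These functionals only see $c$ on $\omega_{\mathcal{T}_K}$, where $c$ equals the constant $c_0$. Hence
\[
(M_{k,0}^{\mathcal{S}} c)|_K = (M_{k,0}^{\mathcal{S}} \tilde c)|_K ,
\]
where $\tilde c$ is any function in $H_{k,0}^{\CR}(\mathcal{T})$ (or a local surrogate) that equals $c_0$ near $K$. Rather than building such a $\tilde c$, I would work directly with the local coefficient representation. In that representation $F_{\mathcal{S},\mathbf{z}}(c) = c_0$ by \eqref{SubEq:Point eval}, since each $g_{K'}^{\mathbf{z}}$ with $K' \in \mathcal{T}_{\mathbf{z}} \subseteq \mathcal{T}_K$ is integrated against the constant $c_0$. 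Likewise $F_{E,j}(c) = F_{E,j}(c_0)$ and $F_{K,\boldsymbol{\alpha}}(c)=F_{K,\boldsymbol{\alpha}}(c_0)$.

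Next I distinguish the edges $E\in\mathcal{E}(K)$ according to whether $E \in \mathcal{E}(\mathcal{T}\setminus\mathcal{S}^{1/2})$. On $K$ I compare $M_{k,0}^{\mathcal{S}} c$ with $I_k^{\mathcal{T},\CR} c_0$, which equals $c_0$ on $K$ by \eqref{Eq:Local desciption} and \eqref{SubEq:Prop LkK - Local projection}. In the basis $\mathcal{B}_k^{\CR}$, the function $I_k^{\mathcal{T},\CR} c_0$ restricted to $K$ has the edge coefficients $F_{E,j}(c_0)$ for $j \le k-2$ and the volume coefficients coming from $\Pi_{k,0}^{\mathcal{T}}$. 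These coincide with those produced by $\overset{\bullet}{\Pi}_{k,0}$, because both formulas have the same structure and share the functionals $F_{E,j}$, $j\le k-2$, and $F_{K,\boldsymbol{\alpha}}$. The only remaining difference lies in the nonconforming part $\sum_{E\in\mathcal{E}(K)} F_{E,k-1}(c_0)\, b_{E,k-1}|_K$. So the goal is to show that
\[
\Pi_{k,0}^{\mathcal{V}(\mathcal{S})} c + \Pi_{k,0}^{\mathcal{E}(\mathcal{T}\setminus\mathcal{S}^{1/2}),\operatorname{nc}} c
\quad\text{and}\quad
\sum_{E\in\mathcal{E}(K)} F_{E,k-1}(c_0)\, b_{E,k-1}
\]
agree on $K$. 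Here Lemma \ref{A:Lem:Basis volume orthogonality} ensures that the $b_{E,k-1}$ parts do not feed into the volume coefficients, so the volume coefficients indeed coincide.

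The key step is to compute $F_{E,k-1}(c_0)$. From \eqref{SubEq:Moment functions - ege} with $j=k-1$ one has $g_{E,k-1} = \frac{2\gamma_{k-1}}{|E|}(1-c_{k-1})P_{k-1}^{(1,1)}$, and its integral over $E$ is proportional to $\int_{-1}^1 P_{k-1}^{(1,1)}$. Since $k$ is odd, $k-1$ is even, so this integral does not vanish. Using \eqref{SubEq:Quasi biduality - edge} together with the observation that the constant $1$ has a representation in $\CR_k$, namely $1 = \sum_{\mathbf z}\psi^{\mathbf z}$-type, i.e. $\sum_{E} F_{E,k-1}(1) b_{E,k-1} + (\text{conforming part})$, I would show $F_{E,k-1}(1)=1/2$. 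This identity should follow from \cite[Lem. 8]{BohneSauterCiarletdD}, where $\tfrac12\sum_{E\ni\mathbf z} b_{E,k-1}$ equals $1$ at $\mathbf z$; alternatively it can be checked directly. Granting this, each $E\in\mathcal{E}(K)$ falls into one of two cases:

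\textbf{Case $E\in\mathcal{E}(\mathcal{T}\setminus\mathcal{S}^{1/2})$:} the term $F_{E,k-1}(c_0)b_{E,k-1}$ appears directly in $\Pi_{k,0}^{\mathcal{E}(\mathcal{T}\setminus\mathcal{S}^{1/2}),\operatorname{nc}} c$.

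\textbf{Case $E\in\mathcal{E}(\mathcal{S})$:} both endpoints of $E$ lie in $\mathcal{V}(\mathcal{S})$, so the coefficient $c_0(\tfrac12+\tfrac12)$ of $b_{E,k-1}$ is supplied by $\sum_{\mathbf z} F_{\mathcal{S},\mathbf z}(c)\psi^{\mathbf z}_{\mathcal{S},k}$, which gives $c_0\cdot \tfrac12\cdot 2 = c_0$. This matches $F_{E,k-1}(c_0)$ only if $F_{E,k-1}(1)=1$, so the normalization must be checked carefully.

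The main obstacle is this consistency check: the nonconforming coefficients of the constant must match exactly the vertex-value contributions. I would also have to verify that the two cases exhaust $\mathcal{E}(K)$. They may not: an edge of $K\in\mathcal{S}^{1/2}\setminus\mathcal{S}$ lies in neither $\mathcal{E}(\mathcal{S})$ nor $\mathcal{E}(\mathcal{T}\setminus\mathcal{S}^{1/2})$ when it touches $\mathcal{S}$ only at a vertex. In that situation I would still use the vertex contributions from the $\psi^{\mathbf z}$ of its $\mathcal{S}$-vertices, exploiting the fact that $\psi^{\mathbf z}$ is supported on $(\mathcal{S}^{1/2})_{\mathbf z}$ and contains the $b_{E,k-1}$ only for $E\in\mathcal{E}_{\mathbf z}(\mathcal{S})$. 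I expect that careful bookkeeping, via the span property \eqref{Eq:PiV PiEnc image span}, resolves these edges and yields \eqref{Eq:Mixed constant identity}.
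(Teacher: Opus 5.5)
Your overall strategy matches the paper's. You compare $(M_{k,0}^{\mathcal S}c)|_K$ with $I_{K,k}^{\operatorname{loc}}(c|_K)=c_0$, observe that the conforming and volume coefficients come from the same functionals (using Lemma~\ref{A:Lem:Basis volume orthogonality}), and match the coefficients of the $b_{E,k-1}|_K$. But the decisive normalization is wrong as you state it. You plan to prove $F_{E,k-1}(1)=1/2$; in fact $F_{E,k-1}(1)=1$, which is Lemma~\ref{A:Lem:Equivalence Jz JEk-1 for constants}. This needs no Jacobi-parity argument. Since $\lambda_{K,E}$ vanishes on $E$, we have $b_{E,k-1}|_E=P_k^{(0,0)}(1)=1$ from both sides. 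Since $F_{E,k-1}$ only sees traces on $E$, it follows that $F_{E,k-1}(1)=F_{E,k-1}(b_{E,k-1})=1$ by \eqref{SubEq:Quasi biduality - edge}. Your appeal to \cite[Lem.~8]{BohneSauterCiarletdD} miscounts. For $K\in\mathcal S$ one has $\sum_{\mathbf z\in\mathcal V(K)}\psi^{\mathbf z}_{\mathcal S,k}|_K=\sum_{E\in\mathcal E(K)}b_{E,k-1}|_K$, because every edge has two endpoints. So the coefficient of $b_{E,k-1}$ in the representation of $1$ is $1$, not $\tfrac12$. With the correct value, your $\tfrac12+\tfrac12$ count for $E\in\mathcal E(\mathcal S)$ closes; this is the paper's Case~2. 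As written, however, the proposal asserts a false identity at the key step and leaves the resulting inconsistency unresolved.

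The second problem is the one you raise at the end, and your proposed remedy cannot work. By \eqref{Eq:PiV PiEnc image span}, $\Pi^{\mathcal V(\mathcal S)}_{k,0}c$ contains $b_{E,k-1}$ only for $E\in\mathcal E(\mathcal S)$. The nonconforming part contains it only for $E\in\mathcal E(\mathcal T\setminus\mathcal S^{1/2})$, and $\overset{\bullet}{\Pi}_{k,0}c$ contains none at all. So suppose some $E\in\mathcal E(K)$ is shared by two triangles of $\mathcal S^{1/2}\setminus\mathcal S$. Then the coefficient of $b_{E,k-1}|_K$ in $M^{\mathcal S}_{k,0}c$ is $0$ instead of $c_0$. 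Since the nonzero restrictions of $\mathcal B^{\CR}_k(\mathcal T)$ to $K$ are linearly independent, $(M^{\mathcal S}_{k,0}c)|_K\neq c|_K$ whenever $c_0\neq 0$. No bookkeeping with the $\psi^{\mathbf z}_{\mathcal S,k}$ can produce the missing term.

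Such edges do occur. Take $k=1$, an interior vertex $\mathbf z$ far from $\partial\Omega$, two edge-adjacent $K,K'\in\mathcal T_{\mathbf z}$, and $\mathcal S:=\mathcal T_{\mathbf z}\setminus\{K,K'\}$. The edge $K\cap K'$ is of this type, and $(M^{\mathcal S}_{1,0}c)|_K$ is not even constant.

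The paper's Case~3 does not cover this configuration either. In \eqref{Eq:Mixed in mixing zone - complete description} it sums the nonconforming part over $\mathcal E(K)\setminus\mathcal E(\Gamma)$, which tacitly assumes that every edge of $K$ off $\Gamma$ belongs to $\mathcal E(\mathcal T\setminus\mathcal S^{1/2})$. Under that assumption your two edge cases are exhaustive, and with $F_{E,k-1}(1)=1$ your argument is complete and equivalent to the paper's. For arbitrary $\mathcal S$ the operator itself has to be modified. One option is to run the nonconforming sum over $\mathcal E_\Omega(\mathcal T)\setminus\mathcal E(\mathcal S)$. This preserves conformity on $\omega_{\mathcal S}$ and the identity on $\mathcal T\setminus\mathcal S^{1/2}$, and it makes your case split exhaustive.
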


The idea of the proof is as follows: We show that $\left. (M_{k,0}^{\mathcal{S}} c )\right\vert_K = I_{K,k}^{\operatorname{loc}} (c \vert_K) $ for all $c \in H_{k,0}^{\CR} ( \mathcal{T} )$ with $c \vert_{\omega_{\mathcal{T}_K}} \in \mathbb{R}$ constant. The local projection property \eqref{SubEq:Prop LkK - Local projection} of $I_{K,k}^{\operatorname{loc}}$ then implies \eqref{Eq:Mixed constant identity}. This approach requires two ingredients:

First, recall from \eqref{Eq:Local desciption} that the operator $I_{K,k}^{\operatorname{loc}} : H^1 ( K ) \to \mathbb{P}_k ( K )$ satisfies $\left. ( I_k^{\mathcal{T}, CR} u ) \right\vert_K = I_{K,k}^{loc} ( u \vert_K)$ for all $u \in H_k^{\CR} ( \mathcal{T} )$. From this we deduce that $I_{K,k}^{\operatorname{loc}}$ can be written as
\begin{align}
I_{K,k}^{\operatorname{loc}} u 
&:=
\Pi_{K,k}^{\operatorname{edg}} u 
+
\Pi_{K,k}^{\operatorname{vol}} ( u - \Pi_{K,k}^{\operatorname{edg}} u),
\label{Eq:Local Interpolation - explicit}
\end{align}
where the two operators $\Pi_{K,k}^{\operatorname{edg}}, \Pi_{K,k}^{\operatorname{vol}} : H^1 ( K ) \to \mathbb{P}_k ( K )$ are given by 
\begin{align*}
\begin{matrix}
\Pi_{K,k}^{\operatorname{edg}} u 
:= 
\displaystyle{\sum_{E \in \mathcal{E} ( K )} 
\sum_{j = 0}^{k-1}
F_{E,j}  (u )
\left. b_{E,j} \right\vert_K}
& \text{and} &
\Pi_{K,k}^{\operatorname{vol}} u 
:=
\displaystyle{\sum_{\boldsymbol{\alpha} \in \multirange{2}{k-3}}
F_{K, \boldsymbol{\alpha}} ( u )
\left. b_{K,\boldsymbol{\alpha}} \right\vert_K}
\end{matrix}
\end{align*}
for all $u \in H^1 ( K )$. Rearranging the terms in \eqref{Eq:Local Interpolation - explicit} and applying Proposition \ref{Prop:Quasi biduality properties} \eqref{SubEq:Quasi biduality - edge} and Lemma \ref{A:Lem:Basis volume orthogonality} leads to
\begin{align}
I_{K,k}^{\operatorname{loc}} (u \vert_K)
&=
\Pi_{K,k}^{\operatorname{edg}} (u\vert_K )
+
\Pi_{K,k}^{\operatorname{vol}} \left( u \vert_K - \Pi_{K,k}^{\operatorname{edg}} (u\vert_K ) \right)
=
( \Pi_k^{\mathcal{E} ( \mathcal{T} ) ,\operatorname{nc}} u ) \vert_K
+
( \overset{\bullet}{\Pi}_{k,0} u ) \vert_K,
\label{Eq:Local Interpolation - equivalent descrp}
\end{align}
for $u \in H_k^{\CR} ( \mathcal{T} )$.

Second, we use that the functions $\psi_{\mathcal{S},k}^{\mathbf{z}}$ are linear combinations of the non-conforming basis functions $b_{E,k-1}$ and the fact that $F_{\mathcal{S},\mathbf{z}} ( c ) = F_{E,k-1} ( c )$ for all edges $E \in \mathcal{E}_{\mathbf{z}} ( \mathcal{S} )$. A proof of the latter equality is given in Lemma \ref{A:Lem:Equivalence Jz JEk-1 for constants} of the appendix.

\begin{proof}[Proof of Lemma \ref{A:Lem:Mixed constatnt identity}]
Assume $K \in \mathcal{T}$ satisfies $K \cap \partial \Omega = \emptyset$. For any $u \in H_{k,0}^{\CR} ( \mathcal{T} )$, restricting $M_{k,0}^{\mathcal{S}} u$ to $K$ reveals
\begin{align}
( M_{k,0}^{\mathcal{S}} v ) \vert_K
=&
( \Pi_{k,0}^{\mathcal{V}( \mathcal{S})} v  ) \vert_K 
+ 
( \Pi_{k,0}^{\mathcal{E} ( \mathcal{T} \setminus \mathcal{S}^{1/2}), \operatorname{nc}} v ) \vert_K
+
( \overset{\bullet}{\Pi}_{k,0} ) \vert_K.
\label{Eq:Mixed operator - complete local description}
\end{align}
Recalling the definitions of the operators $ \Pi_{k,0}^{\mathcal{V}( \mathcal{S})}$ and $\Pi_{k,0}^{\mathcal{E} ( \mathcal{T} \setminus \mathcal{S}^{1/2}) ,\operatorname{nc}}$ we compute that
\begin{subequations} \label{Eq:Vert&NC - local}
\begin{align}
( \Pi_{k,0}^{\mathcal{V}( \mathcal{S})} v  ) \vert_K 
&=
\sum_{\mathbf{z} \in \mathcal{V} ( K ) \cap \mathcal{V} ( \mathcal{S} )}
F_{\mathcal{S}, \mathbf{z}} ( v )
\psi_{\mathcal{S},k}^{\mathbf{z}} \vert_K,
\label{SubEq:Vert&NC - local- vert}
\\
( \Pi_{k,0}^{\mathcal{E} ( \mathcal{T} \setminus \mathcal{S}^{1/2}) , \operatorname{nc}} v ) \vert_K
&=
\sum_{E \in \mathcal{E} ( K ) \cap \mathcal{E} ( \mathcal{T} \setminus \mathcal{S}^{1/2} )}
F_{E,k-1} ( v )
b_{E,k-1}  \vert_K.
\label{SubEq:Vert&NC - local - nc}
\end{align}
\end{subequations}
Thus  $c \in H_{k,0}^{\CR} ( \mathcal{T} )$ such that $c \vert_{\omega_{\mathcal{T}_K}} \in \mathbb{R}$ is constant, we distinguish between three case based on the location of the triangle $K \in \mathcal{T}$.

\textbf{Case 1: $K \in \mathcal{T} \setminus \mathcal{S}^{1/2}$.} The support property \eqref{Eq:Vert&NC - support} of the operator $\Pi_{k,0}^{\mathcal{V} ( \mathcal{S}) }$ implies that
\begin{align}
( M_{k,0}^{\mathcal{S}} u ) \vert_K
&=
( \Pi_{k,0}^{\mathcal{E} ( \mathcal{T} \setminus \mathcal{S}^{1/2})} u ) \vert_K
+
( \overset{\bullet}{\Pi}_{k,0}u ) \vert_K
\overset{\eqref{Eq:Local Interpolation - equivalent descrp}}{=}
I_{K,k}^{loc} u
\overset{\eqref{SubEq:Prop LkK - Local projection}}{=}
u \vert_K
\label{Eq:Recover poly}
\end{align}
for every function $u \in \CR_{k,0} (\mathcal{T} )$.

\textbf{Case 2: $K \in \mathcal{S}$.} Applying the same arguments as in Case 1 to the operator $\Pi_{k,0}^{\mathcal{E} ( \mathcal{T} \setminus \mathcal{S}^{1/2}) , \operatorname{nc}}$ yields
\begin{align}
 ( M_{k,0}^{\mathcal{S}} c ) \vert_K
&=
( \Pi_{k,0}^{\mathcal{V}( \mathcal{S})} c  ) \vert_K 
+
( \overset{\bullet}{\Pi}_{k,0} c ) \vert_K.
\label{Eq:Recover constants - conforming first comp}
\end{align}
By assumption we know that $\mathcal{V} ( K ) \cap \mathcal{V} ( \mathcal{S} ) = \mathcal{V} ( K )$. Combining \eqref{SubEq:Vert&NC - local- vert} with Lemma \ref{A:Lem:Equivalence Jz JEk-1 for constants} and recalling the definition of $\psi_{\mathcal{S},k}^{\mathbf{z}}$ from \eqref{Eq:S-conforming vertex basis} yields
\begin{align*}
( \Pi_{k,0}^{\mathcal{V}( \mathcal{S})} c  ) \vert_K 
& \overset{\text{Lem. \ref{A:Lem:Equivalence Jz JEk-1 for constants}}}{=}
c
\sum_{\mathbf{z} \in \mathcal{V} ( K )}
\psi_{\mathcal{S},k} \vert_K
=
c
\sum_{\mathbf{z} \in \mathcal{V} ( K )}
\sum_{E \in \mathcal{E} ( K ) \atop
\mathbf{z} \in E }
\frac{1}{2} 
b_{E,k-1} \vert_K
=
c
\sum_{E \in \mathcal{E} ( K ) }
\sum_{\mathbf{z} \in \mathcal{V} ( K ) \atop
\mathbf{z} \in E}
\frac{1}{2}
b_{E,k-1} \vert_K
\\
&=
c
\sum_{E \in \mathcal{E} ( K )}
b_{E,k-1} \vert_K
\overset{\text{Lem. \ref{A:Lem:Equivalence Jz JEk-1 for constants}}}{=}
\sum_{E \in \mathcal{E} ( K )}
F_{E,k-1} ( c )
b_{E,k-1} \vert_K
\overset{\eqref{SubEq:Vert&NC - local - nc}}{=}
( \Pi_{k,0}^{\mathcal{E} ( \mathcal{T})} c ) \vert_K.
\end{align*}
We therefore have reduced  \eqref{Eq:Recover constants - conforming first comp} to \eqref{Eq:Recover poly}, concluding {Case 2}.
\begin{figure}
\centering
\begin{tikzpicture}[scale=.8]
\coordinate (zero) at (0,0);
\coordinate (ref) at (1,0);
\coordinate (z2) at ($(zero)!2!80:(ref)$);
\coordinate (z3) at ($(zero)!2.5!150:(ref)$);
\coordinate (z2+) at ($(zero)!1.2!0:(z2)$);
\coordinate (zero+) at ($(zero)!1!0:(ref)$);
\coordinate (zero-) at ($(zero)!-1!0:(ref)$);
\draw[dashed] (zero) -- (z2) --(z3) -- (zero);

\node at (barycentric cs:zero=-0.4,z2=.7,z3=1) {$( \mathbf{a} )$};
\node at (barycentric cs:zero=1,z2=1,z3=1) {$K$};
\node at (barycentric cs:zero=1,z2=1,z3=-.2)  {$\Gamma$};
\end{tikzpicture}
\hfil
\begin{tikzpicture}[scale=.8]
\coordinate (zero) at (0,0);2
\coordinate (ref) at (1,0);
\coordinate (z2) at ($(zero)!2!80:(ref)$);
\coordinate (z3) at ($(zero)!2.5!150:(ref)$);
\coordinate (z3+) at ($(zero)!2.7!150:(ref)$);
\coordinate (z2+) at ($(zero)!1.2!0:(z2)$);
\coordinate (zero+) at ($(zero)!1!0:(ref)$);
\coordinate (zero-) at ($(zero)!-1!0:(ref)$);
\draw  (z2) --(z3);
\draw[dashed] (z2+) -- (zero) -- (z3+);

\node at (barycentric cs:zero=-0.4,z2=.7,z3=1) {$( \mathbf{b} )$};
\node at (barycentric cs:zero=1,z2=1,z3=1) {$K$};
\node at (barycentric cs:zero=-0.2,z2=1,z3=.5) {$E$};
\node at (barycentric cs:zero=.6,z2=-.2,z3=1)  {$E^{\prime}$};
\node at (barycentric cs:zero=1,z2=-.2,z3=.6)  {$\Gamma$};
\node at (barycentric cs:zero=.6,z2=1,z3=-.2)  {$E^{\prime}$};
\node[anchor=north east] at (z3)  {$\mathbf{z}$};
\node[anchor=north] at (zero)  {$\mathbf{z}_E$};
\node[anchor=west] at (z2)  {$\mathbf{z}$};
\end{tikzpicture}
\hfil
\begin{tikzpicture}[scale=.8]
\coordinate (zero) at (0,0);2
\coordinate (ref) at (1,0);
\coordinate (z2) at ($(zero)!2!80:(ref)$);
\coordinate (z3) at ($(zero)!2.5!150:(ref)$);
\coordinate (z2+) at ($(zero)!1.2!0:(z2)$);
\coordinate (zero-) at ($(zero)!-0.2!0:(z2)$);

\draw  (z2) --(z3) -- (zero);
\draw[dashed] (z2+) -- (zero-);
\node at (barycentric cs:zero=-0.4,z2=.7,z3=1) {$( \mathbf{c} )$};
\node (T1) at (barycentric cs:zero=1,z2=1,z3=1) {$K$};
\node at (barycentric cs:z3=-.2,z2=1,zero=1)  {$\Gamma$};
\end{tikzpicture}
\caption{The three ways a triangle $K \in \mathcal{S}^{1/2} \setminus \mathcal{S}$ with $K \cap \partial \Omega = \emptyset$ intersects the interface $\Gamma$ (dashed line).}
\label{A:Fig:Gamma interaction}
\end{figure}
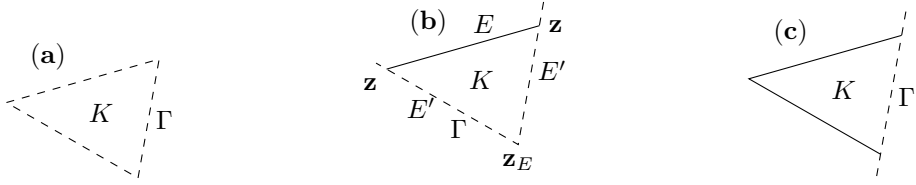 

\textbf{Case 3: $K \in \mathcal{S}^{1/2} \setminus \mathcal{S}$.} By construction, the supports of $ ( \Pi_{k,0}^{\mathcal{V}( \mathcal{S})} c  )$ and $ ( \Pi_{k,0}^{\mathcal{E} ( \mathcal{T} \setminus \mathcal{S}^{1/2})} c ) $ overlap on $K$. Furthermore we know by assumption that $\mathcal{E} ( K ) \cap \mathcal{E} ( \Gamma ) \neq \emptyset$. This transforms \eqref{Eq:Mixed operator - complete local description} into
\begin{align}
( \Pi_{k,0}^{\mathcal{V}( \mathcal{S})} c  ) \vert_K 
+
( \Pi_{k,0}^{\mathcal{E} ( \mathcal{T} \setminus \mathcal{S}^{1/2})} c ) \vert_K
&=
\sum_{\mathbf{z} \in \mathcal{V} ( K ) \cap\mathcal{V} (\Gamma)}
F_{\mathcal{S}, \mathbf{z}} ( c )
\psi_{\mathcal{S},k}^{\mathbf{z}} \vert_K
+
\sum_{E \in \mathcal{E} ( K ) \setminus \mathcal{E} ( \Gamma )}
F_{E,k-1} ( c )
 b_{E,k-1}  \vert_K.
\label{Eq:Mixed in mixing zone - complete description}
\end{align}
There are only three ways that $K$ interacts with the interface $\Gamma$ (see Figure \ref{A:Fig:Gamma interaction}). Consequently, we distinguish between three further cases.

\textit{Case 3(a): $\partial K \subseteq \Gamma$.} This implies that $\mathcal{E} ( K ) \setminus \mathcal{E} ( \Gamma ) = \emptyset$ and thus 
$  ( \Pi_{k,0}^{\mathcal{V}( \mathcal{S})} c  ) \vert_K 
+
 ( \Pi_{k,0}^{\mathcal{E} ( \mathcal{T} \setminus \mathcal{S}^{1/2})} c ) \vert_K
= \sum_{\mathbf{z} \in \mathcal{V} ( K ) }
F_{\mathcal{S}, \mathbf{z}} (c )
\psi_{\mathcal{S},k}^{\mathbf{z}} \vert_K$. Plugging this identity into \eqref{Eq:Mixed operator - complete local description} reveals that $ ( M_{k,0}^{\mathcal{S}} c ) \vert_K
=
( \Pi_{k,0}^{\mathcal{V}( \mathcal{S})} c  ) \vert_K 
+
(\overset{\bullet}{\Pi}_{k,0} c) \vert_K$. 
Following the arguments of {Case 2} concludes \textit{Case 3(a)}.

\textit{Case 3(b): there exists one and only one edge $E \in \mathcal{E} ( K )$ such that $K \cap \Gamma = \partial K \setminus \overset{\circ}{E}$.} This implies $\mathcal{E} ( K ) \setminus \mathcal{E} ( \Gamma ) = \{ E \}$ which yields $ ( \Pi_{k,0}^{\mathcal{E} ( \mathcal{T} \setminus \mathcal{S}^{1/2}), \operatorname{nc}} c ) \vert_K = F_{E,k-1} (c ) \left. b_{E,k-1} \right\vert_K$. Furthermore, we know that $\mathcal{V} ( K ) \cap \mathcal{V} ( \Gamma ) = \mathcal{V} ( K )$ as in \textit{Case 3(a)} and therefore we compute that $ ( \Pi_{k,0}^{\mathcal{V}( \mathcal{S})} c  ) \vert_K  = c 
\sum_{\mathbf{z} \in \mathcal{V} ( K ) }
\psi_{\mathcal{S},k}^{\mathbf{z}} \vert_K$ by an application of Lemma \ref{A:Lem:Equivalence Jz JEk-1 for constants}. Let $\mathbf{z}_E \in \mathcal{V} ( K )$ be the vertex opposite to $E$. Using \eqref{Eq:S-conforming vertex basis}, we observe that 
\begin{align*}
\begin{matrix}
\displaystyle{\left. \psi_{\mathcal{S},k}^{\mathbf{z}_E} \right\vert_K
=
\frac{1}{2} 
\sum_{E^{\prime} \in \mathcal{E} ( K ) \setminus \{E\}}
\left. b_{E^{\prime},k-1} \right\vert_K}
& \text{and} &
\displaystyle{\left. \psi_{\mathcal{S},k}^{\mathbf{z}} \right\vert_K = \frac{1}{2} \left. b_{E^{\prime},k-1}  \right\vert_K}
\end{matrix}
\end{align*}
where $E^{\prime} = [\mathbf{z}, \mathbf{z}_E]$. By combining everything and applying Lemma \ref{A:Lem:Equivalence Jz JEk-1 for constants}, we conclude 
\begin{align*}
( \Pi_{k,0}^{\mathcal{V}( \mathcal{S})} c  ) \vert_K 
+
( \Pi_{k,0}^{\mathcal{E} ( \mathcal{T} \setminus \mathcal{S}^{1/2})} c ) \vert_K
=
\sum_{E \in \mathcal{E} ( K )} 
F_{E, k-1} ( c) 
b_{E,k-1} \vert_K
\end{align*}
and therefore $( M_{k,0}^{\mathcal{S}} c ) \vert_K = I_{K,k}^{\operatorname{loc}} (c \vert_K) = c \vert_K$ follows.

\textit{Case 3(c): there exists a unique edge $E \in \mathcal{E} ( K )$ such that $E \subseteq \Gamma$.} The assertion for this case follows by similar arguments as in \textit{Case 3(b)}.
\end{proof}

As mentioned in the beginning of this section, we also need a Poincaré inequality on triangle patches for functions in $H_{k,0}^{\CR} ( \mathcal{T} )$. 

\begin{proposition}[{\cite[Thm 8.1]{Vohralik-DiscretePoincare}}] \label{A:Prop:Discrete Poincare}
Let $\mathcal{T}$ and $k \geq 1$ odd be given. Then the following estimates holds for all triangles $K \in \mathcal{T}$:
\begin{align} \label{A:Eq:Discrete Poincare}
\Vert u - \overline{u}_{\mathcal{T}_K}\Vert_{\mathbf{L}^2 ( \omega_{\mathcal{T}_K})}
& \lesssim
h_{\mathcal{T}_K} 
\Vert \nabla_{\mathcal{T}} u \Vert_{\mathbf{L}^2 ( \omega_{\mathcal{T}_K})}
\qquad \forall u \in H_k^{\CR} ( \mathcal{T} )
\end{align}
where $\overline{u}_{\mathcal{T}_K}$ is as in \eqref{Eq:Integreal mean}.
\end{proposition}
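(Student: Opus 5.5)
The plan is to reduce the patch Poincaré inequality to elementwise Poincaré inequalities plus control of the jumps of element means across interior edges of the patch. The jumps are handled through the weak continuity built into $H_k^{\CR} ( \mathcal{T} )$. Since $k \geq 1$, every interior edge $E$ satisfies $\llbracket u \rrbracket_E \perp_{L^2(E)} \mathbb{P}_0 ( E )$, so the edge mean $\frac{1}{h_E}\int_E u$ is single-valued. Only this lowest-order moment is used; the higher moments, and the oddness of $k$, play no role. With $\overline{u}_K$ denoting the element mean, the first step is to write
\begin{align*}
\Vert u - \overline{u}_{\mathcal{T}_K} \Vert_{L^2(\omega_{\mathcal{T}_K})}
\le
\Bigl(\sum_{K' \in \mathcal{T}_K} \Vert u - \overline{u}_{K'} \Vert_{L^2(K')}^2\Bigr)^{1/2}
+
\Bigl(\sum_{K' \in \mathcal{T}_K} \vert K' \vert \, \vert \overline{u}_{K'} - \overline{u}_{\mathcal{T}_K} \vert^2\Bigr)^{1/2}.
\end{align*}
The first sum is bounded by $h_{\mathcal{T}_K} \Vert \nabla_{\mathcal{T}} u \Vert$ using the elementwise Poincaré inequality on triangles, with constants uniform in the shape regularity.

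For the second sum, $\overline{u}_{\mathcal{T}_K}$ is a volume-weighted average of the $\overline{u}_{K'}$. It therefore suffices to bound $\vert \overline{u}_{K'} - \overline{u}_{K''} \vert$ for all pairs $K', K'' \in \mathcal{T}_K$. The key step is to connect $K'$ and $K''$ through a chain of elements in $\mathcal{T}_K$ in which consecutive elements share an edge. For a shared edge $E = K_1 \cap K_2$, I would use the single-valued edge mean $m_E$ and estimate
\begin{align*}
\vert \overline{u}_{K_i} - m_E \vert
= \Bigl\vert \tfrac{1}{h_E} \int_E (u\vert_{K_i} - \overline{u}_{K_i}) \Bigr\vert
\lesssim h_E^{-1/2} \Vert u - \overline{u}_{K_i} \Vert_{L^2(E)}
\lesssim \Vert \nabla u \Vert_{\mathbf{L}^2(K_i)} .
\end{align*}
The last inequality combines the trace inequality with the elementwise Poincaré inequality, as in the proof of \eqref{SubEq:Non-Conf Approx Operator - loc bound}. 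Summing along the chain gives $\vert \overline{u}_{K'} - \overline{u}_{K''} \vert \lesssim \Vert \nabla_{\mathcal{T}} u \Vert_{\mathbf{L}^2(\omega_{\mathcal{T}_K})}$. The chain length is bounded by $\operatorname{card} \mathcal{T}_K \lesssim N_{\max}$, so the constant stays controlled. Multiplying by $\vert K' \vert^{1/2} \lesssim h_{\mathcal{T}_K}$ then finishes the argument.

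The main obstacle is the connectivity of $\mathcal{T}_K$ through edges. Elements touching $K$ only at a vertex $\mathbf{z}$ must be reached by walking around the vertex patch $\mathcal{T}_{\mathbf{z}}$. For an interior vertex, $\mathcal{T}_{\mathbf{z}}$ is a closed cycle of edge-neighbours. For a boundary vertex it is a fan, which is still edge-connected as long as the Lipschitz polygon is locally a single sector. If a boundary vertex had two separate fans, the patch would not be edge-connected, and the inequality would genuinely fail for broken functions, so the proof relies on this not happening. Connecting the pieces: each such fan contains $K$, so every $K' \in \mathcal{T}_K$ is reachable from $K$ inside $\mathcal{T}_{\mathbf{z}} \subseteq \mathcal{T}_K$. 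Alternatively, one can cite \cite[Thm 8.1]{Vohralik-DiscretePoincare} directly, after checking that its hypothesis — continuity of edge means — is implied by the definition of $H_k^{\CR} ( \mathcal{T} )$.
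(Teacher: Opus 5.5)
Your proof is correct, but it does not follow the paper. The paper gives no argument of its own and imports the estimate from \cite[Thm 8.1]{Vohralik-DiscretePoincare}; that is, it takes what you list as the alternative route.

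Your direct argument has three ingredients:
\begin{enumerate}
\item elementwise Poincaré inequalities;
\item control of the differences of element means through the single-valued edge means $\frac{1}{h_E}\int_E u$ combined with a scaled trace inequality;
\item chaining through edge-neighbours inside the vertex patches $\mathcal{T}_{\mathbf{z}} \subseteq \mathcal{T}_K$, $\mathbf{z} \in \mathcal{V}(K)$.
\end{enumerate}
This yields several things the bare citation leaves implicit:
\begin{itemize}
\item Only the $\mathbb{P}_0$-part of the jump condition in $H_k^{\CR}(\mathcal{T})$ is used, so the hypothesis that $k$ is odd is superfluous.
\item The hidden constant visibly depends only on shape regularity, through $N_{\max}$ and the local trace and Poincaré constants. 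Applying a discrete Poincaré inequality stated for a general domain to the patch $\omega_{\mathcal{T}_K}$ tacitly needs a scaling argument showing the constant is uniform over all patch shapes allowed by $\gamma_{\mathcal{T}}$.
\item It locates exactly where the Lipschitz assumption on $\Omega$ enters: it rules out boundary vertices whose patch splits into several fans.
\end{itemize}
The citation is shorter and needs no connectivity bookkeeping. Your version makes the proposition self-contained.
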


Proposition \ref{A:Prop:Discrete Poincare} allows us to provide an estimate for the integral mean of a function $u \in H_{k,0}^{\CR} ( \mathcal{T} )$ in a neighborhood of the boundary.

\begin{corollary} \label{Cor:Boundary integral mean estimate}
Let $K \in \mathcal{T}$ be given such that $K \cap \partial \Omega \neq \emptyset$. Then for all functions $u \in H_{k,0}^{\CR} ( \mathcal{T} )$
\begin{align}
\left\vert \overline{u}_{\mathcal{T}_K} \right\vert
& \lesssim
\Vert \nabla_{\mathcal{S}} u \Vert_{\mathbf{L}^2 ( \omega_{\mathcal{T}_K} )}.
\label{Eq:Boundary integral mean estimate}
\end{align}
\end{corollary}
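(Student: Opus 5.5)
Since $K$ touches $\partial\Omega$, the patch $\mathcal{T}_K$ contains a triangle $K'\in\mathcal{T}_K$ with a boundary edge $E\in\mathcal{E}_{\partial\Omega}(K')$, or at least a vertex $\mathbf{z}\in\mathcal{V}(K)\cap\partial\Omega$. In the latter case, some $K'\in\mathcal{T}_{\mathbf{z}}\subseteq\mathcal{T}_K$ has an edge on $\partial\Omega$, because the vertex patch of a boundary vertex contains boundary edges. We fix such a pair $(K',E)$. The plan is to exploit the boundary condition encoded in $H_{k,0}^{\CR}(\mathcal{T})$. Since $k\geq1$, the constant function $1$ lies in $\mathbb{P}_{k-1}(E)$, so $\int_E u\vert_{K'}=0$. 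Hence
\begin{align*}
\vert \overline{u}_{\mathcal{T}_K}\vert\, h_E
=\Bigl\vert \int_E (\overline{u}_{\mathcal{T}_K}-u\vert_{K'})\Bigr\vert
\le h_E^{1/2}\,\Vert u-\overline{u}_{\mathcal{T}_K}\Vert_{L^2(E)}.
\end{align*}

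Next we bound the edge term by a multiplicative trace inequality on $K'$ (as used in the proof of Lemma \ref{Lem:Non-Conf Approx Operator}):
\begin{align*}
\Vert w\Vert_{L^2(E)}^2\lesssim h_{K'}^{-1}\Vert w\Vert_{L^2(K')}^2+\Vert w\Vert_{L^2(K')}\Vert\nabla w\Vert_{\mathbf{L}^2(K')},
\end{align*}
applied with $w=u\vert_{K'}-\overline{u}_{\mathcal{T}_K}$, so that $\nabla w=\nabla u\vert_{K'}$. We enlarge $\Vert w\Vert_{L^2(K')}$ to the norm over $\omega_{\mathcal{T}_K}$ and invoke the discrete Poincaré inequality, Proposition \ref{A:Prop:Discrete Poincare}. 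This gives $\Vert w\Vert_{L^2(K')}\lesssim h_{\mathcal{T}_K}\Vert\nabla_{\mathcal{T}}u\Vert_{\mathbf{L}^2(\omega_{\mathcal{T}_K})}$, and therefore $\Vert w\Vert_{L^2(E)}\lesssim h_{\mathcal{T}_K}^{1/2}\Vert\nabla_{\mathcal{T}}u\Vert_{\mathbf{L}^2(\omega_{\mathcal{T}_K})}$, using $h_{K'}\approx h_{\mathcal{T}_K}$ from \eqref{Eq:Equivalence local mesh size and volume}.

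Combining the two steps yields $\vert\overline{u}_{\mathcal{T}_K}\vert\lesssim h_E^{-1/2}h_{\mathcal{T}_K}^{1/2}\Vert\nabla_{\mathcal{T}}u\Vert_{\mathbf{L}^2(\omega_{\mathcal{T}_K})}$. Local quasi-uniformity gives $h_E\approx h_{\mathcal{T}_K}$, which yields \eqref{Eq:Boundary integral mean estimate}, reading the $\nabla_{\mathcal{S}}$ there as the piecewise gradient $\nabla_{\mathcal{T}}$ on $\omega_{\mathcal{T}_K}$.

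The main point needing care is the geometric claim that $\mathcal{T}_K$ contains a triangle with a full boundary edge whenever $K$ merely touches $\partial\Omega$ at a vertex. This holds because the boundary vertex $\mathbf{z}$ is an endpoint of two boundary edges of $\mathcal{T}$, each belonging to a triangle of $\mathcal{T}_{\mathbf{z}}\subseteq\mathcal{T}_K$. All remaining constants depend only on shape regularity and $k$.
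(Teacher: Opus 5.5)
Your proof is correct and follows essentially the same route as the paper. You choose $K' \in \mathcal{T}_K$ with a boundary edge $E$, use $\int_E u = 0$ from the $\CR_k$ boundary condition, apply a (multiplicative) trace inequality on $K'$ together with the patch-wise discrete Poincar\'e inequality (Proposition \ref{A:Prop:Discrete Poincare}), and conclude with the local equivalence of mesh sizes. Your explicit justification that a boundary vertex of $K$ forces a boundary edge in $\mathcal{T}_{\mathbf{z}} \subseteq \mathcal{T}_K$, and your reading of $\nabla_{\mathcal{S}}$ as $\nabla_{\mathcal{T}}$, are exactly what the paper's argument uses implicitly.
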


\begin{proof}
Since $K \cap \partial \Omega \neq \emptyset$, we know that there exists a triangle $K^{\prime} \in \mathcal{T}_K$ with an edge $E \in \mathcal{E} ( K )$ that lies on the boundary, i.e., $E \in \mathcal{E}_{\partial \Omega} (\mathcal{T} )$. Since $\llbracket u \rrbracket_E \perp_{L^2 (E)} \mathbb{P}_{k-1} ( E )$ by assumption, a trace inequality leads to
\begin{align*}
\vert \overline{u}_{\mathcal{T}_K}  \vert
&=
\frac{1}{\vert E \vert} 
\left\vert\int_E \overline{u}_{\mathcal{T}_K} \right\vert
=
\frac{1}{\vert E \vert} 
\left\vert \int_E u - \overline{u}_{\mathcal{T}_K} \right\vert
\lesssim
h_{K^{\prime}}^{-1}
\Vert u  - \overline{u}_{\mathcal{T}_K} \Vert_{L^2 ( K^{\prime} )} 
+ 
\Vert \nabla u \Vert_{\mathbf{L}^2 ( K^{\prime} )} .
\end{align*}
Enlarging the support of the norms to $\omega_{\mathcal{T}_K}$ and applying Proposition \ref{A:Prop:Discrete Poincare} \eqref{A:Eq:Discrete Poincare} reveals
\begin{align*}
\vert \overline{u}_{\mathcal{T}_K} \vert
&\lesssim
\left(\frac{h_{\mathcal{T}_K}}{h_{K^{\prime}}} +1 \right)
\Vert \nabla_{\mathcal{T}} u \Vert_{\mathbf{L}^2 ( \omega_{\mathcal{T}_K} )}
\overset{\eqref{Eq:Equivalence local mesh size and volume}}{\lesssim}
\Vert \nabla_{\mathcal{T}} u \Vert_{\mathbf{L}^2 ( \omega_{\mathcal{T}_K} )}.
\qedhere
\end{align*}
\end{proof}

With these four results, we are able to proof Lemma \ref{Lem:Partially conforming}.

\begin{proof}[Proof of Lemma \ref{Lem:Partially conforming}]
\textbf{@\labelcref{SubEq:Mixed Operator properties - conforming on S}:}
Following the same arguments as for {Case 1} of the proof of Lemma \ref{A:Lem:Mixed constatnt identity} we compute that
\begin{align*}
( M_{k,0}^{\mathcal{S}} u ) \vert_{\omega_{\mathcal{S}}} 
&=
( \Pi_{k,0}^{\mathcal{V}( \mathcal{S})} u  ) \vert_{\omega_{\mathcal{S}}} 
+
(\overset{\bullet}{\Pi}_{k,0} u) \vert_{\omega_{\mathcal{S}}}
\qquad 
\forall u \in H_{k,0}^{\CR} ( \mathcal{T} ).
\end{align*}
We know that $(\overset{\bullet}{\Pi}_{k,0} u )\vert_{\omega_{\mathcal{S}}} \in S_k ( \mathcal{S} )$ by definition. Since $ \Pi_{k,0}^{\mathcal{V}( \mathcal{S})} u $ consists of a linear combination of the functions $\{ \psi_{\mathcal{S},k}^{\mathbf{z}} \; \vert \; \mathbf{z} \in \mathcal{V}_{\Omega} ( \mathcal{S} ) \}$, it follows that $( \Pi_{k,0}^{\mathcal{V}( \mathcal{S})} u  ) \vert_{\omega_{\mathcal{S}}} \in S_k ( \mathcal{S}) $ via an application of Proposition \ref{Prop:S-conforming prop} \eqref{Eq:S-conforming prop} and therefore that $( M_{k,0}^{\mathcal{S}} u ) \vert_{\omega_{\mathcal{S}}} \in S_k ( \mathcal{S} ) $.

\textbf{@\labelcref{ṢubEq:Mixed Operator properties - identity on TmSplus}:} This was already proven with \eqref{Eq:Recover poly}.

\textbf{@\labelcref{SubEq:Mixed Operator properties - loc bounded}:} Let $K \in \mathcal{T}$ be any triangle. We distinguish two cases based on the interaction of $K$ with $\partial \Omega$.

\textbf{Case 1: $K \cap \partial \Omega = \emptyset$.} Let $\overline{u}_{D_K}$ be the integral mean of $u$ over the region $D_K$ which is given by \eqref{Eq:L2 stability region}. Using an inverse inequality \cite[(12.1)]{ErnGuermondI} yields
\begin{align*}
\Vert \nabla M_{k,0}^{\mathcal{S}} u \Vert_{\mathbf{L}^2 ( K )}
&  \overset{\text{Lem.} \ref{A:Lem:Mixed constatnt identity}}{=}
\Vert \nabla M_{k,0}^{\mathcal{S}} ( u - \overline{u}_{D_K}) \Vert_{\mathbf{L}^2 ( K )}
\lesssim 
h_K^{-1} 
\Vert  M_{k,0}^{\mathcal{S}} ( u - \overline{u}_{D_K}) \Vert_{L^2 ( K )}
\\
& \overset{\text{Lem.} \ref{Lem:L2 boundedness partially conf}}{\lesssim}
h_K^{-1}
\Vert u - \overline{u}_{D_K} \Vert_{L^2 ( D_K )} 
+
\Vert \nabla_{\mathcal{T}} u \Vert_{\mathbf{L}^2 ( K )}.
\end{align*}
If $K \in \mathcal{T} \setminus \mathcal{S}^{1/2}$ then we know that $D_K = K$ and we employ the Poincaré inequality on triangles, see e.g. \cite[Thm. 3.3]{CarstensenHellwig-DiscretePoissonFreidrichs}, and obtain that $\Vert \nabla M_{k,0}^{\mathcal{S}} u \Vert_{\mathbf{L}^2 ( K )} \lesssim \Vert \nabla u \Vert_{\mathbf{L}^2 ( K )}$. Otherwise, we have that $D_K = \mathcal{T}_K$ and we apply Proposition \ref{A:Prop:Discrete Poincare} and the equivalence of locals mesh-sizes \eqref{Eq:Equivalence local mesh size and volume}, resulting in $\Vert \nabla M_{k,0}^{\mathcal{S}} u \Vert_{\mathbf{L}^2 ( K )} \lesssim \Vert \nabla_{\mathcal{T}} u \Vert_{\mathbf{L}^2 ( \omega_{\mathcal{T}_K} )}$.

\textbf{Case 2: $K \cap \partial \Omega \neq \emptyset$.} We distinguish three sub-cases, based on the location of $K$.

\textit{Case 2.1: $K \in \mathcal{T} \setminus \mathcal{S}^{1/2}$.} From Corollary \ref{Cor:Global Projection} \eqref{CorItem:Exact bc} and the explicit description of $I_{K,k}^{\operatorname{loc}}$ in \eqref{Eq:Local Interpolation - equivalent descrp}, it follows that $ \llbracket I_{K,k}^{\operatorname{loc}} u \rrbracket_E \perp_{L^2 (E)} \mathbb{P}_{k-1} ( E )$, for all $u \in H_{k,0}^{\CR} ( \mathcal{T} )$ and $E \in \mathcal{E}_{\partial \Omega} ( K )$. This implies that $( M_{k,0}^{\mathcal{S}} u ) \vert_K = I_{K,k}^{\operatorname{loc}} u$ by following the same reasoning as in \eqref{Eq:Recover poly} and therefore conclude that
$\Vert \nabla M_{k,0}^{\mathcal{S}} u \Vert_{\mathbf{L}^2 ( K )} = \Vert I_k^{\mathcal{T}, \CR} u \Vert_{\mathbf{L}^2 ( K )} \lesssim \Vert \nabla u \Vert_{\mathbf{L}^2 ( K )}$ through an application of Lemma \ref{Lem:Non-Conf Approx Operator} \eqref{SubEq:Non-Conf -  error estimate}.
\begin{figure}
\centering
\begin{tikzpicture}[scale=.8]
\coordinate (zero) at (0,0);2
\coordinate (ref) at (1,0);
\coordinate (z2) at ($(zero)!2!80:(ref)$);
\coordinate (z3) at ($(zero)!2.5!150:(ref)$);
\coordinate (z2+) at ($(zero)!1.2!0:(z2)$);
\coordinate (zero+) at ($(zero)!1!0:(ref)$);
\coordinate (zero-) at ($(zero)!-1!0:(ref)$);
\draw  (z2) --(z3) -- (zero);
\draw[dashed] (z2+) -- (zero);
\draw (zero+) -- (zero-);
\fill[pattern=north east lines, ] (zero+) -- (zero-) --([yshift=-.5em]zero-) -- ([yshift=-.5em]zero+);
\node at (barycentric cs:zero=-0.4,z2=1,z3=1) {$( \mathbf{a} )$};
\node at (barycentric cs:zero=1,z2=1,z3=1) {$K$};
\node at (barycentric cs:zero+=1,z2=-0.4,zero-=1)  {$\partial \Omega$};
\node at (barycentric cs:zero+=1,z2=1,zero=1)  {$\Gamma$};
\end{tikzpicture}
\hfil
\begin{tikzpicture}[scale=.8]
\coordinate (zero) at (0,0);2
\coordinate (ref) at (1,0);
\coordinate (z2) at ($(zero)!2!80:(ref)$);
\coordinate (z3) at ($(zero)!2.5!150:(ref)$);
\coordinate (z3+) at ($(zero)!2.7!150:(ref)$);
\coordinate (z2+) at ($(zero)!1.2!0:(z2)$);
\coordinate (zero+) at ($(zero)!1!0:(ref)$);
\coordinate (zero-) at ($(zero)!-1!0:(ref)$);
\draw  (z2) --(z3);
\draw[dashed] (z2+) -- (zero) -- (z3+);
\draw (zero+) -- (zero-);
\fill[pattern=north east lines, ] (zero+) -- (zero-) --([yshift=-.5em]zero-) -- ([yshift=-.5em]zero+);

\node at (barycentric cs:zero=-0.4,z2=1,z3=1) {$( \mathbf{b} )$};
\node at (barycentric cs:zero=1,z2=1,z3=1) {$K$};
\node at (barycentric cs:zero+=1,z2=-0.4,zero-=1)  {$\partial \Omega$};
\node at (barycentric cs:zero+=1,z2=1,zero=1)  {$\Gamma$};
\end{tikzpicture}
\hfil
\begin{tikzpicture}[scale=.8]
\coordinate (zero) at (0,0);
\coordinate (ref) at (1,0);
\coordinate (z2) at ($(zero)!2!80:(ref)$);
\coordinate (z3) at ($(zero)!2.5!150:(ref)$);
\coordinate (z2+) at ($(zero)!1.2!0:(z2)$);
\coordinate (zero-) at ($(zero)!-0.2!0:(z2)$);

\draw  (z2) --(z3) -- (zero);
\draw[dashed] (z2+) -- (zero);
\fill[pattern=north east lines, ] (zero) -- (z3) --([yshift=-.5em]z3.west) -- ([yshift=-.5em]zero);
\node at (barycentric cs:zero=-0.4,z2=1,z3=1) {$( \mathbf{c} )$};
\node at (barycentric cs:zero=-0.2,z2=1,z3=.5) {$E$};
\node (T1) at (barycentric cs:zero=1,z2=1,z3=1) {$K$};
\node at (barycentric cs:zero=1,z2=1,z3=-.2)  {$E_{\Gamma}$};
\node (T1) at (barycentric cs:zero=1,z2=-0.3,z3=1) {$E_{
\partial \Omega}$};

\node[anchor=west] (T1) at (zero) {$\mathbf{z}_{\partial \Omega}$};
\node[anchor=west] (T1) at (z2) {$\mathbf{z}$};
\node[anchor=south] (T1) at (z2+) {$\Gamma$};
\end{tikzpicture}
\hfil
\begin{tikzpicture}[scale=.8]
\coordinate (zero) at (0,0);2
\coordinate (ref) at (1,0);
\coordinate (z2) at ($(zero)!2!80:(ref)$);
\coordinate (z3) at ($(zero)!2.5!150:(ref)$);
\coordinate (z2+) at ($(zero)!1.2!0:(z2)$);
\coordinate (zero-) at ($(zero)!-0.2!0:(z2)$);

\draw  (z3) -- (zero);
\draw[dashed] (z3) -- (z2) -- (zero);
\fill[pattern=north east lines, ] (zero) -- (z3) --([yshift=-.5em]z3.west) -- ([yshift=-.5em]zero);
\node at (barycentric cs:zero=-0.4,z2=1,z3=1) {$( \mathbf{d} )$};
\node (T1) at (barycentric cs:zero=1,z2=1,z3=1) {$K$};
\node at (barycentric cs:zero+=1,z2=1,zero=1)  {$\Gamma$};
\node (T1) at (barycentric cs:zero=1,z2=-0.4,z3=1) {$\partial \Omega$};
\end{tikzpicture}
\caption{The four different ways a triangle $K \in \mathcal{S}^{1/2} \setminus \mathcal{S}$ can interact with the boundary. Here, the dashed line represents the interface $\Gamma$ between $\mathcal{S}$ and $\mathcal{T} \setminus \mathcal{S}$. The hached line segments represent the domain boundary $\partial \Omega$.}
\label{Fig:S+ boundary interaction}
\end{figure}
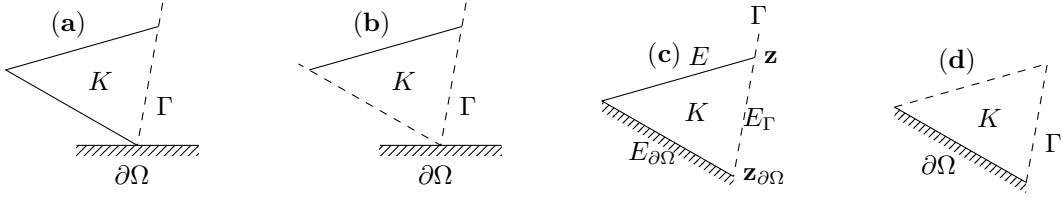

\textit{Case 2.2: $K \in \mathcal{S}^{1/2} \setminus \mathcal{S}$.} Since $K$ intersects with the interface $\Gamma$ (cf. \eqref{Eq:Gamma}), there are four ways that $K$ can interact with the boundary $\partial \Omega$, as illustrated in Figure \ref{Fig:S+ boundary interaction}. Since the proofs in all four cases are very similar, we only provide details for one of the four configurations. We choose Figure \ref{Fig:S+ boundary interaction}$( \mathbf{c})$ for this purpose. We write $\mathcal{E} ( K ) = \{ E, E_{\Gamma}, E_{\partial \Omega} \}$, where $E_{\Gamma} \subseteq \Gamma$ and $E_{\partial \Omega} \subseteq \partial \Omega$. Furthermore we consider $\mathbf{z}, \mathbf{z}_{\partial \Omega} \in \mathcal{V} ( K )$ where $\mathbf{z} , \mathbf{z}_{\partial \Omega} \in \Gamma$ and $\mathbf{z}_{\partial \Omega} \in \mathcal{V}_{\partial \Omega} ( K )$. Using this notation and recalling the definition of $M_{k,0}^{\mathcal{S}}$, we compute that
\begin{align}
\left. ( M_{k,0}^{\mathcal{S}}  u ) \right\vert_K
=
F_{E,k-1} ( u )
\left. b_{E,k-1} \right\vert_K
+
F_{\mathcal{S},\mathbf{z}} ( u )
\left. \psi_{\mathcal{S},k}^{\mathbf{z}} \right\vert_K
+
(\overset{\bullet}{\Pi}_{k,0} u) \vert_K.
\label{Eq:Mixed operator local boundary}
\end{align}
Let $\overline{u}_{\mathcal{T}_K}$ be the integral mean of $u$ over $\omega_{\mathcal{T}_K}$. The projection property \eqref{SubEq:Prop LkK - Local projection} of the local interpolation operator $I_{K,k}^{\operatorname{loc}}$ implies that $\overline{u}_{\mathcal{T}_K}  \vert_K = I_{K,k}^{\operatorname{loc}} \overline{u}_{\mathcal{T}_K}$. From Proposition \ref{Prop:Quasi biduality properties} \eqref{SubEq:Quasi biduality - edge}, we know that $F_{E^{\prime},j} ( \overline{u}_{\mathcal{T}} ) = 0$ for all $E \in \mathcal{E} ( K )$ and $j \in \range{k-2}$. Combining $F_{E^{\prime},k-1} ( \overline{u}_{\mathcal{T}_K}) = F_{\mathcal{S}, \mathbf{z}^{\prime}} ( \overline{u}_{\mathcal{T}_K})$ for all $E^{\prime}  \in \mathcal{E} ( K )$ and $\mathbf{z}^{\prime} \in \mathcal{V} ( K ) \cap \mathcal{V} ( \mathcal{S} )$ (cf. Lemma \ref{A:Lem:Equivalence Jz JEk-1 for constants}) with Lemma \ref{A:Lem:Basis volume orthogonality}, we rearrange $\overline{u}_{\mathcal{T}_K}  \vert_K = I_{K,k}^{\operatorname{loc}} \overline{u}_{\mathcal{T}_K}$ into
\begin{align}
\left. \overline{u}_{\mathcal{T}_K} \right\vert_K
&=
\sum_{\mathbf{z}^{\prime} \in \{ \mathbf{z}, \mathbf{z}_{\partial \Omega}\} }
F_{\mathcal{S},\mathbf{z}} ( \overline{u}_{\mathcal{T}_K} )
 \psi_{\mathcal{S},k}^{\mathbf{z}} \vert_K
+
\sum_{E^{\prime} \in \{ E , E_{\partial \Omega}\}}
F_{E^{\prime},k-1} ( \overline{u}_{\mathcal{T}_K} )
\left. b_{E^{\prime},k-1} \right\vert_K
+
\Pi_{K,k}^{\operatorname{vol}} \overline{u}_{\mathcal{T}_K},
\nonumber
\\
&=
F_{\mathcal{S}, \mathbf{z}_{\partial \Omega}} ( \overline{u}_{\mathcal{T}_K} ) 
\psi_{\mathcal{S},k}^{\mathbf{z}_{\partial \Omega}} \vert_K
+
F_{E_{\partial \Omega},k-1} ( \overline{u}_{\mathcal{T}_K})
b_{E_{\partial \Omega},k-1} \vert_K
+
(M_{k,0}^{\mathcal{S}} \overline{u}_{\mathcal{T}_K} ) \vert_K.
\label{Eq:Boundary mixing zone - description constant}
\end{align}
The combination of \eqref{Eq:Mixed operator local boundary}, \eqref{Eq:Boundary mixing zone - description constant}, and Lemma \ref{A:Lem:Equivalence Jz JEk-1 for constants} yields
\begin{align*}
\Vert \nabla M_{k,0}^{\mathcal{S}} u \Vert_{\mathbf{L}^2 ( K )}
& \leq
\Vert \nabla M_{k,0}^{\mathcal{S}} ( u - \overline{u}_{\mathcal{T}_K}) \Vert_{\mathbf{L}^2 ( K )}
+
\vert  \overline{u}_{\mathcal{T}_K}  \vert
\left(\Vert \nabla \psi_{\mathcal{S},k}^{\mathbf{z}_{\partial \Omega}} \Vert_{\mathbf{L}^2 ( K )}
+
\Vert \nabla b_{E_{\partial \Omega},k-1} \Vert_{\mathbf{L}^2 ( K )} \right).
\end{align*}
The term $\Vert \nabla M_{k,0}^{\mathcal{S}} ( u - \overline{u}_{\mathcal{T}_K}) \Vert_{\mathbf{L}^2 ( K )}$ is treated by following the arguments of {Case 1} and therefore, we have $\Vert \nabla M_{k,0}^{\mathcal{S}} ( u - \overline{u}_{\mathcal{T}_K}) \Vert_{\mathbf{L}^2 ( K )} \lesssim \Vert \nabla_{\mathcal{T}} u \Vert_{\mathbf{L}^2 ( \omega_{\mathcal{T}_K} )}$. 
Corollary \ref{Cor:Boundary integral mean estimate} leads to $\vert  \overline{u}_{\mathcal{T}_K}  \vert \lesssim \Vert \nabla_{\mathcal{T}} u \Vert_{\mathbf{L}^2 ( \omega_{\mathcal{T}_K})}$ and \eqref{A:Eq:Upperbound on basis functions} implies Do $\Vert \nabla \psi_{\mathcal{S},k}^{\mathbf{z}_{\partial \Omega}} \Vert_{\mathbf{L}^2 ( K )}
+
\Vert \nabla b_{E_{\partial \Omega},k-1} \Vert_{\mathbf{L}^2 ( K )} \lesssim 1$. Combining these two estimates yields
\begin{align*}
\vert  \overline{u}_{\mathcal{T}_K}  \vert 
(\Vert \nabla \psi_{\mathcal{S},k}^{\mathbf{z}_{\partial \Omega}} \Vert_{\mathbf{L}^2 ( K )}
+
\Vert \nabla b_{E_{\partial \Omega},k-1} \Vert_{\mathbf{L}^2 ( K )}) 
&\lesssim 
\Vert \nabla_{\mathcal{T}} u \Vert_{\mathbf{L}^2 ( \omega_{\mathcal{T}_K})}
\end{align*}
and, in turn, $\Vert \nabla M_{k,0}^{\mathcal{S}} u \Vert_{\mathbf{L}^2 ( K )} \lesssim \Vert \nabla_{\mathcal{T}} u \Vert_{\mathbf{L}^2 ( \omega_{\mathcal{T}_K} )}$ if $K \in \mathcal{S}^{1/2} \setminus \mathcal{S}$.

\textit{Case 2.3: $K \in \mathcal{S}$.} This case can be treated identically as \textit{Case 2.2} with some minor modification in the description of $I_{K,k}^{\operatorname{loc}} \overline{u}_{\mathcal{T}_K}$.
It follows that $\Vert \nabla M_{k,0}^{\mathcal{S}} u \Vert_{\mathbf{L}^2 ( K )} \lesssim \Vert \nabla_{\mathcal{T}} u \Vert_{\mathbf{L}^2 ( D_K )}$, where $D_K$ is as in \eqref{Eq:L2 stability region}.

\textbf{@\labelcref{SubEq:Mixed Operator properties - approx}:} For any $K \in \mathcal{T}$ let $\overline{u}_{D_K}$ be the integral mean over $D_K$. Based on the interaction of $K$ with the boundary, we distinguish two cases.

\textbf{Case 1: $K \cap \partial \Omega = \emptyset$.} Combining Lemmata \ref{Lem:L2 boundedness partially conf} and \ref{A:Lem:Mixed constatnt identity} with Proposition \ref{A:Prop:Discrete Poincare} leads to
\begin{align*}
\Vert ( \operatorname{Id} - M_{k,0}^{\mathcal{S}} ) u \Vert_{L^2 ( K )}
& \lesssim
\Vert u - \overline{u}_{D_K} \Vert_{L^2 ( D_K )}
+
h_K 
\Vert \nabla u \Vert_{\mathbf{L}^2 ( K )}
\lesssim
h_K 
\Vert \nabla u \Vert_{\mathbf{L}^2 ( D_K )}.
\end{align*}

\textbf{Case 2: $K \cap \partial \Omega \neq \emptyset$.} The approach is the same as in {Case 1} but one needs to incorporate the case distinctions from the proof of local boundedness \eqref{SubEq:Mixed Operator properties - loc bounded} {Case 2}. The details are omitted.
\end{proof}

\section{Right-inverse operator and non-conformity estimate}
\label{Sec:RightInverse}

In this section, we explicitly construct a conforming companion operator $J_{k,0} : H_{k,0}^{\CR} ( \mathcal{T} ) \to H_0^1 ( \Omega )$, such that $I_{k,0}^{\mathcal{T}, \CR} \circ J_{k,0} = \operatorname{Id}$ on $\CR_{k,0} ( \mathcal{T} )$. For this purpose, we adapt the ideas in \cite[Sec. 3.5]{CarstensenPuttkammer-howToProofDiscreteReliabiltiyNonConf} to any odd polynomial degree $k \geq 1$. 
The construction relies on two important properties: 
\begin{enumerate} 
\item \label{Item:Perservation DOF}  $J_{k,0}$ preserves the polynomial moments \eqref{SubEq:Non-Conf Approx Operator - edge moment} -- \eqref{SubEq:Non-Conf Approx Operator- vol moment} since they form a unisolvent set of degrees of freedom for $\CR_{k,0} ( \mathcal{T} )$.

\item \label{Item:Conforming identity} $J_{k,0} v = v$ for all conforming functions $v \in S_{k,0} ( \mathcal{T} )$.
\end{enumerate}

Given $E \in \mathcal{E} ( \mathcal{T} )$ we define the auxiliary function $\Phi_E \in S_{k+1} ( \mathcal{T} )$ as
\begin{align}
\Phi_E
&:= 
\gamma_E
W_E  P_{k-1}^{(1,1)} ( 2 \varphi_{\mathbf{z}_0}^1 - 1 ) 
\label{Eq:NonConf information carrier}
\end{align}
with the constant $\gamma_E  = 1 / F_{E,k-1} ( W_E  P_{k-1}^{(1,1)} ( 2 \varphi_{\mathbf{z}_0}^1 - 1 ) )$. 
Note that $\Phi_E$ is a scaled version of $b_{E,j}$ in \eqref{SubEq:Basis functions - conf edge} with $j \leftarrow k-1$.
Some elementary computations reveal that 
$
1/ \gamma_E
\gtrsim
\Vert W_E^{1/2} P_{k-1}^{( 1,1 )} (2 \varphi_{\mathbf{z}_0}^1 - 1 ) \Vert_{L^2 ( E )}^2
> 0,
$ 
and that $\operatorname{supp} \Phi_E = \omega_{\mathcal{T}_E}$. These two properties imply
\begin{align}
F_{E^{\prime},k-1} ( \Phi_E) 
&= 
\delta_{E,E^{\prime}}
\qquad \forall E^{\prime} \in \mathcal{E} ( \mathcal{T} )
\label{Eq:Biduality PhiE}
\end{align} 
and  $\Phi_E ( \mathbf{z} ) = 0$ for all $\mathbf{z} \in \mathcal{V} ( \mathcal{T} )$. 
The operator $\Pi_{k,0}^{\Phi} : \CR_{k,0} ( \mathcal{T} ) \to ·\overset{\bullet}{S}_{k+1,0} ( \mathcal{T} )$ is given by
\begin{align}
\Pi_{k,0}^{\Phi} u 
&:=
\sum_{E \in \mathcal{E}_{\Omega} ( \mathcal{T} )}
F_{E,k-1} ( u )
\Phi_E.
\label{Eq:NonConf information carrier - operator}
\end{align}
For every $u \in H_{k,0}^{\CR} ( \mathcal{T} )$ and $E \in  \mathcal{E}_{\Omega} ( \mathcal{T} )$ we compute that
\begin{align}
F_{E,j} ( \Pi_{k,0}^{\Phi} u  )
&=
\sum_{E^{\prime} \in \mathcal{E}_{\Omega} ( \mathcal{T} )}
F_{E^{\prime},k-1} ( u )
F_{E,k-1} ( \Phi_{E^{\prime}} )
\overset{\eqref{Eq:Biduality PhiE}}{=}
\sum_{E^{\prime} \in \mathcal{E}_{\Omega} ( \mathcal{T} )}
F_{E^{\prime},k-1} ( u )
\delta_{E,E^{\prime}}
=
F_{E,k-1} ( u ).
\label{Eq:Prop PiPhi}
\end{align}
We observe that both $\overset{\bullet}{\Pi}_{k,0}$ and $\Pi_{k,0}^{\Phi}$ map $H_{k,0}^{\CR} ( \mathcal{T} )$ into $\overset{\bullet}{S}_{k+1,0} ( \mathcal{T} )$, by construction. This means that, if we want \ref{Item:Conforming identity}.) to be satisfied, we need to preserve the vertex values of $v \in S_{k,0} ( \mathcal{T} )$.
To this end choose $\mathcal{S} = \mathcal{T}$ and recall the operator $\Pi_{k, 0}^{\mathcal{V} ( \mathcal{T} )} $ from \eqref{Eq:S-conforming vertex operator}. Thus $\psi_{\mathcal{T},k}^{\mathbf{z}} \in S_{k,0} ( \mathcal{T} )$ for all vertices $\mathbf{z} \in \mathcal{V}_{\Omega} ( \mathcal{T} )$ via an application of Proposition \ref{Prop:S-conforming prop}. Hence $\Pi_{k,0}^{\mathcal{V} ( \mathcal{T} )}$ maps $H_{k,0}^{\CR} ( \mathcal{T} )$ into $S_{k,0} ( \mathcal{T} )$. We combine the three operators $\Pi_{k,0}^{\mathcal{V} ( \mathcal{T} )}$, $\Pi_{k,0}^{\Phi}$, and $\overset{\bullet}{\Pi}_{k,0}$ in the definition of the conforming companion operator $J_{k,0} : H_{k,0}^{\CR} ( \mathcal{T} ) \to S_{k+1,0} ( \mathcal{T} ) \subseteq H_0^1 (\Omega)$:
\begin{align}
J_{k,0} u
&:=
\Pi_{k,0}^{\mathcal{V} ( \mathcal{T} )} u 
+ 
\Pi_{k,0}^{\Phi} ( u - \Pi_{k,0}^{\mathcal{V} ( \mathcal{T} )} u )
+
\overset{\bullet}{\Pi}_{k,0}  \left( u - \Pi_{k,0}^{\mathcal{V} ( \mathcal{T} )} u -  \Pi_{k,0}^{\Phi} ( u - \Pi_{k,0}^{\mathcal{V} ( \mathcal{T} )} u ) \right)
\nonumber
\\ &
\overset{\eqref{Eq:Interaction PiV PiEnc Pbullet}}{=}
\Pi_{k,0}^{\mathcal{V} ( \mathcal{T} )} u 
+ 
\Pi_{k,0}^{\Phi} ( u - \Pi_{k,0}^{\mathcal{V} ( \mathcal{T} )} u )
+
\overset{\bullet}{\Pi}_{k,0}  \left( u -  \Pi_{k,0}^{\Phi} ( u - \Pi_{k,0}^{\mathcal{V} ( \mathcal{T} )} u ) \right)
\label{Eq:Conforming companion}
\end{align}
for all $u \in H_{k,0}^{\CR} ( \mathcal{T} )$.

\begin{lemma} \label{Lem:Properties conforming companion}
The conforming companion operator $J_{k,0} : H_{k,0}^{\CR} ( \mathcal{T} ) \to S_{k+1,0} ( \mathcal{T} )$ satisfies
\begin{align}
J_{k,0} v 
&= 
v
\qquad \forall v \in S_{k,0} ( \mathcal{T} )
\label{Eq:Jkast conforming identity}
\end{align}
and for every function $u \in H_{k,0}^{\CR} ( \mathcal{T} )$, it holds that
\begin{subequations} \label{Eq:Jkast moment transport}
\begin{align}
\int_E q J_{k,0} u 
&=
\int_E q u 
\qquad \forall q \in \mathbb{P}_{k-1} ( E ), \ E \in \mathcal{E}_{\Omega} ( \mathcal{T} ),
\label{SubEq:Jkast moment transport - edge}
\\
\int_K p  J_{k,0} u
&=
\int_K p u
\qquad \forall q \in \mathbb{P}_{k-3} ( K ), \ K \in \mathcal{T}.
\label{SubEq:Jkast moment transport - vol}
\end{align}
\end{subequations}
\end{lemma}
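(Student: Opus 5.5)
The plan is to reduce everything to statements about the functionals $F_{E,j}$, $F_{K,\boldsymbol{\alpha}}$ and $F_{\mathcal{T},\mathbf{z}}$, and to use the biorthogonality relations collected so far. For $u \in H_{k,0}^{\CR}(\mathcal{T})$ set
\begin{align*}
r := u - \Pi_{k,0}^{\mathcal{V}(\mathcal{T})} u
\qquad \text{and} \qquad
w := u - \Pi_{k,0}^{\Phi} r ,
\end{align*}
so that $J_{k,0} u = \Pi_{k,0}^{\mathcal{V}(\mathcal{T})} u + \Pi_{k,0}^{\Phi} r + \overset{\bullet}{\Pi}_{k,0} w$. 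First I check that $w \in H_{k,0}^{\CR}(\mathcal{T})$. This holds because $\Pi_{k,0}^{\mathcal{V}(\mathcal{T})} u \in \CR_{k,0}(\mathcal{T})$ and $\Phi_E \in S_{k+1,0}(\mathcal{T})$ for interior edges. Consequently Lemma \ref{Lem:Prop Pi bullet} may be applied to $w$.

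\textbf{Moment preservation \eqref{Eq:Jkast moment transport}.} As in the proof of Lemma \ref{Lem:Non-Conf Approx Operator}, the families $\{g_{E,j}\}_{j}$ and $\{g_{K,\boldsymbol{\alpha}}\}_{\boldsymbol{\alpha}}$ are bases of $\mathbb{P}_{k-1}(E)$ and $\mathbb{P}_{k-3}(K)$. It therefore suffices to prove $F_{E,j}(J_{k,0}u)=F_{E,j}(u)$ for all $E \in \mathcal{E}_\Omega(\mathcal{T})$ and $j \in \range{k-1}$, and $F_{K,\boldsymbol{\alpha}}(J_{k,0}u)=F_{K,\boldsymbol{\alpha}}(u)$.

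\emph{Edge functionals with $j = k-1$.} Lemma \ref{Lem:Prop Pi bullet} gives $F_{E,k-1}(\overset{\bullet}{\Pi}_{k,0} w) = 0$, and \eqref{Eq:Prop PiPhi} gives $F_{E,k-1}(\Pi_{k,0}^{\Phi} r) = F_{E,k-1}(u) - F_{E,k-1}(\Pi_{k,0}^{\mathcal{V}(\mathcal{T})}u)$. Adding the three contributions, the terms with $\Pi_{k,0}^{\mathcal{V}(\mathcal{T})}u$ cancel and we obtain $F_{E,k-1}(u)$.

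\emph{Edge functionals with $j \le k-2$.} Lemma \ref{Lem:Prop Pi bullet} gives $F_{E,j}(\overset{\bullet}{\Pi}_{k,0} w) = F_{E,j}(w) = F_{E,j}(u) - F_{E,j}(\Pi_{k,0}^{\Phi} r)$. The $\Pi_{k,0}^{\Phi}$-terms cancel, so there is no need to compute $F_{E,j}(\Phi_{E'})$. What remains is $F_{E,j}(u) + F_{E,j}(\Pi_{k,0}^{\mathcal{V}(\mathcal{T})}u)$. The last term vanishes: $\Pi_{k,0}^{\mathcal{V}(\mathcal{T})}u$ is a combination of the functions $b_{E',k-1}$, and \eqref{SubEq:Quasi biduality - edge} gives $F_{E,j}(b_{E',k-1}) = 0$ for $j \ne k-1$.

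\emph{Volume functionals.} The same telescoping yields $F_{K,\boldsymbol{\alpha}}(J_{k,0}u) = F_{K,\boldsymbol{\alpha}}(u) + F_{K,\boldsymbol{\alpha}}(\Pi_{k,0}^{\mathcal{V}(\mathcal{T})}u)$. The last term vanishes by Lemma \ref{A:Lem:Basis volume orthogonality}, since $g_{K,\boldsymbol{\alpha}} \in \mathbb{P}_{k-3}(K) \subseteq \mathbb{P}_{k-2}(K)$. For $k=1$ there are no volume moments and nothing is to be shown.

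\textbf{Conforming identity \eqref{Eq:Jkast conforming identity}.} Let $v \in S_{k,0}(\mathcal{T})$. Since $v|_K \in \mathbb{P}_k(K)$ and $v$ is continuous, Proposition \ref{Prop:Vertex value} gives $(g_K^{\mathbf{z}}, v)_{L^2(K)} = v(\mathbf{z})$ for every $K \in \mathcal{T}_{\mathbf{z}}$. Averaging in \eqref{Eq:S-conforming vertex average} then yields $F_{\mathcal{T},\mathbf{z}}(v) = v(\mathbf{z})$. By Proposition \ref{Prop:S-conforming prop} with $\mathcal{S}=\mathcal{T}$, each $\psi_{\mathcal{T},k}^{\mathbf{z}}$ lies in $S_k(\mathcal{T})$ and has nodal values $\delta_{\mathbf{z},\mathbf{y}}$. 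Moreover $v(\mathbf{z})=0$ at boundary vertices, so the omission of boundary vertices in \eqref{Eq:S-conforming vertex operator} costs nothing. Hence $r = v - \Pi_{k,0}^{\mathcal{V}(\mathcal{T})}v$ lies in $\overset{\bullet}{S}_{k,0}(\mathcal{T})$, which is spanned by $\overset{\bullet}{\mathcal{B}}_{k,0}(\mathcal{T})$.

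By \eqref{SubEq:Quasi biduality - edge} and \eqref{SubEq:Quasi biduality - mixed}, $F_{E,k-1}(r)=0$ for every $E$, so $\Pi_{k,0}^{\Phi} r = 0$ and $J_{k,0} v = \Pi_{k,0}^{\mathcal{V}(\mathcal{T})} v + \overset{\bullet}{\Pi}_{k,0} v$. Lemma \ref{Lem:Interaction PiV PiEnc Pbullet} gives $\overset{\bullet}{\Pi}_{k,0}\Pi_{k,0}^{\mathcal{V}(\mathcal{T})}v = 0$, and then \eqref{Eq:Pibullet projection} gives
\begin{align*}
\overset{\bullet}{\Pi}_{k,0} v = \overset{\bullet}{\Pi}_{k,0} r = r .
\end{align*}
Therefore $J_{k,0}v = \Pi_{k,0}^{\mathcal{V}(\mathcal{T})}v + r = v$.

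The step needing most care is the vertex reproduction $F_{\mathcal{T},\mathbf{z}}(v)=v(\mathbf{z})$, together with the claim that $r$ lies exactly in $\overset{\bullet}{S}_{k,0}(\mathcal{T})$. This requires $\psi_{\mathcal{T},k}^{\mathbf{z}}$ to be globally conforming with the correct nodal values, which is the content of Proposition \ref{Prop:S-conforming prop} for $\mathcal{S}=\mathcal{T}$. The remaining steps are bookkeeping with the biorthogonality relations.
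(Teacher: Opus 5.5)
Your proof is correct and follows the paper's own proof essentially step by step. For the moment preservation you use the same telescoping of $F_{E,j}$ and $F_{K,\boldsymbol{\alpha}}$ through Lemma~\ref{Lem:Prop Pi bullet}, \eqref{Eq:Prop PiPhi}, \eqref{SubEq:Quasi biduality - edge} and Lemma~\ref{A:Lem:Basis volume orthogonality}. For the conforming identity you use the same chain: $v-\Pi_{k,0}^{\mathcal{V}(\mathcal{T})}v\in\overset{\bullet}{S}_{k,0}(\mathcal{T})$, then $\Pi_{k,0}^{\Phi}$ annihilates this difference, then Lemma~\ref{Lem:Interaction PiV PiEnc Pbullet} and \eqref{Eq:Pibullet projection}. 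Your direct use of $F_{\mathcal{T},\mathbf{z}}(v)=v(\mathbf{z})$ only replaces the paper's splitting $v=v_0+v_1$.

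One point needs tightening. To get $r\in\overset{\bullet}{S}_{k,0}(\mathcal{T})$ you also need $r|_{\partial\Omega}=0$, i.e.\ $\psi_{\mathcal{T},k}^{\mathbf{z}}\in S_{k,0}(\mathcal{T})$ for interior $\mathbf{z}$, and not merely $\psi_{\mathcal{T},k}^{\mathbf{z}}\in S_k(\mathcal{T})$. This does hold: on each boundary edge, $\psi_{\mathcal{T},k}^{\mathbf{z}}$ is a polynomial of degree $k$ that is $L^2$-orthogonal to $\mathbb{P}_{k-1}$ by \eqref{SubEq:Quasi biduality - edge}, and it vanishes at both endpoints, so it is zero there.
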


\begin{proof}
\textbf{@\labelcref{Eq:Jkast conforming identity}:} Let $v \in S_{k,0} ( \mathcal{T} )$ be given. Recall from \eqref{Eq:Biuality psi} that $\psi_{\mathcal{T},k}^{\mathbf{z}}$ satisfies $F_{\mathcal{T}, \mathbf{y}} (\psi_{\mathcal{T},k}^{\mathbf{z}} ) = \delta_{\mathbf{z}, \mathbf{y}}$. Therefore, we can decompose $v$ into $v = v_0 + v_1$ where $v_1 = \sum_{\mathbf{z} \in \mathcal{V}_{\Omega} ( K )} v ( \mathbf{z} ) \psi_k^{\mathbf{z}}$ and $v_0 := v - v_1$. Now by construction we have that $v_1 ( \mathbf{z} ) = v ( \mathbf{z} )$ for all vertices $\mathbf{z} \in \mathcal{V}_{\Omega} ( \mathcal{T} )$ and thus $v_0 \in  \overset{\bullet}{S}_{k,0}  ( \mathcal{T} )$. Hence we obtain
\begin{align*}
\Pi_{k,0}^{\mathcal{V} ( \mathcal{T} )} v
&=
\Pi_{k,0}^{\mathcal{V} ( \mathcal{T} )} v_1
= 
\sum_{\mathbf{z} \in \mathcal{V}_{\Omega} ( \mathcal{T} ) }
F_{\mathcal{T}, \mathbf{z}} ( v_1 )
\psi_{\mathcal{T},k}^{\mathbf{z}}
=
\sum_{\mathbf{z}, \mathbf{y} \in \mathcal{V}_{\Omega} ( \mathcal{T} ) }
v ( \mathbf{y} )
F_{\mathcal{T}, \mathbf{z}} ( \psi_{\mathcal{T},k}^{\mathbf{y}})
\psi_{\mathcal{T},k}^{\mathbf{z}}
\overset{\eqref{Eq:Biuality psi}}{=}
\sum_{\mathbf{z} \in \mathcal{V}_{\Omega} ( \mathcal{T} ) }
v ( \mathbf{z} )
\psi_{\mathcal{T},k}^{\mathbf{z}}
=
v_1
\end{align*}
and therefore conclude that $v - \Pi_{k,0}^{\mathcal{V} ( \mathcal{T} )} v =  v_0 $ holds. Since $\overset{\bullet}{\mathcal{B}}_{k,0} ( \mathcal{T} )$ is a basis of $\overset{\bullet}{S}_{k,0} ( \mathcal{T} )$,  $F_{E,k-1} ( v_0 ) = 0$ for all inner edges $E \in \mathcal{E}_{\Omega} ( \mathcal{T} )$ via an application of Proposition \ref{Prop:Quasi biduality properties} \eqref{SubEq:Quasi biduality - edge} we have that $\Pi_{k,0}^{\Phi} (v_0) = 0$.
This allows us to compute that
\begin{align*}
J_{k,0} v
&= 
\Pi_{k,0}^{\mathcal{V} ( \mathcal{T} )} v 
+ 
\Pi_{k,0}^{\Phi} v_0
+
\overset{\bullet}{\Pi}_{k,0}  ( v - \Pi_{k,0}^{\Phi} v_0 )
=
\Pi_{k,0}^{\mathcal{V} ( \mathcal{T} )} v 
+ 
\overset{\bullet}{\Pi}_{k,0}  v 
\overset{\text{Lem. \ref{Lem:Interaction PiV PiEnc Pbullet}}}{=}
\Pi_{k,0}^{\mathcal{V} ( \mathcal{T} )} v 
+
\overset{\bullet}{\Pi}_{k,0} ( v
-
\Pi_{k,0}^{\mathcal{V} ( \mathcal{T} )} v ).
\end{align*}
Since $\overset{\bullet}{\Pi}_{k,0}$ is a projection onto $ \overset{\bullet}{S}_{k,0} (\mathcal{T})$ (cf. \eqref{Eq:Pibullet projection}) we conclude that 
\begin{align*}
J_{k,0} v 
&= 
\Pi_{k,0}^{\mathcal{V} ( \mathcal{T} )} v 
+
\overset{\bullet}{\Pi}_{k,0} ( v
-
\Pi_{k,0}^{\mathcal{V} ( \mathcal{T} )} v ) = \Pi_{k,0}^{\mathcal{V} ( \mathcal{T} )} v +  v
-
\Pi_{k,0}^{\mathcal{V} ( \mathcal{T} )} v = v.
\end{align*}

\textbf{@\labelcref{Eq:Jkast moment transport}:}  For any $u \in H_{k,0}^{\CR} ( \mathcal{T} )$ consider some $E \in \mathcal{E}_{\Omega} ( \mathcal{T} )$ and $j \in \range{k-1}$. For $j = k-1$ we combine \eqref{Eq:Prop Pibullet} and \eqref{Eq:Prop PiPhi} for
\begin{align*}
F_{E,k-1} ( J_{k,0} u ) 
&=
F_{E,k-1} ( \Pi_{k,0}^{\mathcal{V} ( \mathcal{T} )} u ) 
+
F_{E,k-1} ( \Pi_{k,0}^{\Phi} ( u - \Pi_{k,0}^{\mathcal{V} ( \mathcal{T} )} u ))
+
F_{E,k-1} ( \overset{\bullet}{\Pi}_{k,0} ( u - \Pi_{k,0}^{\Phi} ( u - \Pi_{k,0}^{\mathcal{V} ( \mathcal{T} )} u )))
\\
&=
F_{E,k-1} ( \Pi_{k,0}^{\mathcal{V} ( \mathcal{T} )} u ) 
+
F_{E,k-1} ( u - \Pi_{k,0}^{\mathcal{V} ( \mathcal{T} )} u )
= F_{E,k-1} ( u ).
\end{align*}
For $j \in \range{k-2}$ we apply Proposition \ref{Prop:Quasi biduality properties} \eqref{SubEq:Quasi biduality - edge} and \eqref{Eq:Prop Pibullet} to get
\begin{align*}
F_{E,j} ( J_{k,0} u ) 
&=
F_{E,j} ( \Pi_{k,0}^{\mathcal{V} ( \mathcal{T} )} u ) 
+
F_{E,j} ( \Pi_{k,0}^{\Phi} ( u - \Pi_{k,0}^{\mathcal{V} ( \mathcal{T} )} u ))
+
F_{E,j} ( \overset{\bullet}{\Pi}_{k,0} ( u - \Pi_{k,0}^{\Phi} ( u - \Pi_{k,0}^{\mathcal{V} ( \mathcal{T} )} u )))
\\
&=
F_{E,j} ( \Pi_{k,0}^{\Phi} ( u - \Pi_{k,0}^{\mathcal{V} ( \mathcal{T} )} u ))
+
F_{E,j} (u - \Pi_{k,0}^{\Phi} ( u - \Pi_{k,0}^{\mathcal{V} ( \mathcal{T} )} u ))
= F_{E,j} ( u ).
\end{align*}
For $K \in \mathcal{T}$ and $\boldsymbol{\alpha} \in \multirange{2}{k-3}$, we combine Lemma \ref{A:Lem:Basis volume orthogonality} and \eqref{Eq:Prop Pibullet} to deduce that
\begin{align*}
F_{K, \boldsymbol{\alpha}} ( J_{k,0} u ) 
&=
F_{K, \boldsymbol{\alpha}}  ( \Pi_{k,0}^{\mathcal{V} ( \mathcal{T} )} u ) 
+
F_{K, \boldsymbol{\alpha}}  ( \Pi_{k,0}^{\Phi} ( u - \Pi_{k,0}^{\mathcal{V} ( \mathcal{T} )} u ))
+
F_{K, \boldsymbol{\alpha}}  ( \overset{\bullet}{\Pi}_{k,0} ( u - \Pi_{k,0}^{\Phi} ( u - \Pi_{k,0}^{\mathcal{V} ( \mathcal{T} )} u )))
\\
&=
F_{K, \boldsymbol{\alpha}} ( \Pi_{k,0}^{\Phi} ( u - \Pi_{k,0}^{\mathcal{V} ( \mathcal{T} )} u ))
+
F_{K, \boldsymbol{\alpha}}  (u - \Pi_{k,0}^{\Phi} ( u - \Pi_{k,0}^{\mathcal{V} ( \mathcal{T} )} u ))
=
F_{K, \boldsymbol{\alpha}} ( u ).
\end{align*}
We thus have shown that the conforming companion operator $J_{k,0}$ satisfies both 
\begin{subequations} \label{Eq:Functional preservation Jk0}
\begin{align}
F_{E,j} ( J_{k,0} v ) 
&=
F_{E,j} ( u )
\qquad \forall E  \in \mathcal{E} ( \mathcal{T} ), \
j \in \range{k-1},
\label{SubEq:Functional preservation Jk0 - edge}
\\
F_{K,\boldsymbol{\alpha}} ( J_{k,0} u )
&=
F_{K,\boldsymbol{\alpha}} ( u )
\qquad \forall K \in \mathcal{T}, \
\boldsymbol{\alpha} \in \mathbb{N}_{k-3}^2.
\label{SubEq:Functional preservation Jk0 - vol}
\end{align}
\end{subequations}
Following the proof of Lemma \ref{Lem:Non-Conf Approx Operator} \eqref{SubEq:Non-Conf Approx Operator - edge moment} and \eqref{SubEq:Non-Conf Approx Operator- vol moment} verbatim concludes the proof.
\end{proof}

A consequence of Lemma \ref{Lem:Properties conforming companion} is that the conforming companion operator $J_{k,0}$ is a right-inverse operator to $I_{k,0}^{\mathcal{T}, \CR}$, if applied to a $\CR_k$ function.

\begin{lemma} \label{Lem:Jkast right invers}
For every $\CR_k$ function $u \in \CR_{k,0} ( \mathcal{T} )$ we have that
\begin{align}
I_{k,0}^{\mathcal{T}, \CR} ( J_{k,0} u ) = u
\qquad \forall u \in \CR_{k,0} ( \mathcal{T} ).
\label{Eq:Jkast right inverse}
\end{align}
\end{lemma}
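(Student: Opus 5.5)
The plan is to combine the moment preservation of $J_{k,0}$ from Lemma \ref{Lem:Properties conforming companion} with the local unisolvence of the moments that define $I_k^{\mathcal{T},\CR}$. Fix $u \in \CR_{k,0} ( \mathcal{T} )$ and set $w := J_{k,0} u \in S_{k+1,0} ( \mathcal{T} ) \subseteq H_0^1 ( \Omega )$.

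\emph{Step 1: $w$ lies in the domain of $I_{k,0}^{\mathcal{T},\CR}$.} Since $w$ is conforming with zero trace, all its jumps vanish, so $w \in H_{k,0}^{\CR} ( \mathcal{T} )$. Corollary \ref{Cor:Global Projection} \eqref{CorItem:Exact bc} then gives $I_{k,0}^{\mathcal{T},\CR} w = I_k^{\mathcal{T},\CR} w$. It therefore suffices to show $I_k^{\mathcal{T},\CR} w = u$.

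\emph{Step 2: localize.} By \eqref{Eq:Local desciption}, $(I_k^{\mathcal{T},\CR} w)\vert_K = I_{K,k}^{\operatorname{loc}} ( w\vert_K )$ for every $K \in \mathcal{T}$. The operator $I_{K,k}^{\operatorname{loc}}$ is determined by the moments \eqref{Eq:Local moment preservation}, and these moments are unisolvent on $\mathbb{P}_k ( K )$. Hence $I_{K,k}^{\operatorname{loc}} ( w\vert_K )$ depends only on the moments of $w\vert_K$ against $\mathbb{P}_{k-3} ( K )$ and against $\mathbb{P}_{k-1} ( E )$ for $E \in \mathcal{E} ( K )$.

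\emph{Step 3: these moments coincide with those of $u$.}
\begin{itemize}
\item Volume moments: these agree by \eqref{SubEq:Jkast moment transport - vol}.
\item Interior edges $E \in \mathcal{E}_{\Omega} ( \mathcal{T} )$: the moments agree by \eqref{SubEq:Jkast moment transport - edge}. Here $w$ is continuous, and $u\vert_K$ has the same moments from either side by Remark \ref{Rem:Jump condition}, so the comparison is unambiguous.
\item Boundary edges $E \in \mathcal{E}_{\partial \Omega} ( \mathcal{T} )$: Lemma \ref{Lem:Properties conforming companion} does not cover these. However, $w\vert_E = 0$ because $w \in H_0^1 ( \Omega )$, and $\int_E q u = 0$ for all $q \in \mathbb{P}_{k-1} ( E )$ because $u \in \CR_{k,0} ( \mathcal{T} )$. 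So both moments vanish and again coincide.
\end{itemize}

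\emph{Step 4: conclude.} By Steps 2 and 3, $I_{K,k}^{\operatorname{loc}} ( w\vert_K ) = I_{K,k}^{\operatorname{loc}} ( u\vert_K )$ on every $K$. The projection property \eqref{SubEq:Prop LkK - Local projection} gives $I_{K,k}^{\operatorname{loc}} ( u\vert_K ) = u\vert_K$, since $u\vert_K \in \mathbb{P}_k ( K )$. This yields \eqref{Eq:Jkast right inverse}.

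The only point needing care is the boundary edges in Step 3, together with checking that the interior edge-moment identity is read consistently from each adjacent triangle. As an alternative route, one can argue entirely via the functionals: Lemma \ref{Lem:Functional preservation} and \eqref{Eq:Functional preservation Jk0} show that $F_{E,j}$ and $F_{K,\boldsymbol{\alpha}}$ take the same values on $I_k^{\mathcal{T},\CR} w$ and on $u$. Both functions lie in $\CR_k ( \mathcal{T} )$, and the biduality of Proposition \ref{Prop:Quasi biduality properties} with the basis $\mathcal{B}_k^{\CR} ( \mathcal{T} )$ then forces them to be equal.
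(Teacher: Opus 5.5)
Your proof is correct. Your alternative route at the end is essentially the paper's own proof. The paper uses that $I_{k,0}^{\mathcal{T},\CR}$ sees its argument only through $F_{E,j}$ for $E \in \mathcal{E}_{\Omega}(\mathcal{T})$ and through $F_{K,\boldsymbol{\alpha}}$. Hence \eqref{Eq:Functional preservation Jk0} gives $\Pi_{k,0}^{\mathcal{E}(\mathcal{T})}(J_{k,0}u) = \Pi_{k,0}^{\mathcal{E}(\mathcal{T})}u$ and $\Pi_{k,0}^{\mathcal{T}}(J_{k,0}u) = \Pi_{k,0}^{\mathcal{T}}u$, so $I_{k,0}^{\mathcal{T},\CR}(J_{k,0}u) = I_{k,0}^{\mathcal{T},\CR}u = u$ by Corollary~\ref{Cor:Global Projection}.

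Your main route instead localizes to each triangle. It uses the moment characterization of $I_{K,k}^{\operatorname{loc}}$ through \eqref{Eq:Local desciption} together with Arnold--Brezzi unisolvence.

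\emph{What it buys:} it does not depend on the particular functionals $F_{E,j}, F_{K,\boldsymbol{\alpha}}$ or on the basis $\mathcal{B}_k^{\CR}(\mathcal{T})$. You only need the moment identities \eqref{Eq:Jkast moment transport}, so the argument works for any conforming companion that preserves those moments.

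\emph{What it costs:} $I_{K,k}^{\operatorname{loc}}$ uses all three edges of $K$, so boundary edges must be treated separately. You do this correctly: $w\vert_E = 0$ and the $\mathbb{P}_{k-1}(E)$-moments of $u$ vanish. The paper's global route avoids this, because $I_{k,0}^{\mathcal{T},\CR}$ involves only interior-edge functionals. Your boundary check also covers a point the paper passes over: \eqref{Eq:Functional preservation Jk0} is stated for all $E \in \mathcal{E}(\mathcal{T})$ but proved only for interior edges.

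One small precision on your alternative route: Proposition~\ref{Prop:Quasi biduality properties} is not a full biduality, since $F_{K,\boldsymbol{\alpha}}(b_{E,j})$ with $j \le k-2$ need not vanish. Its block-triangular structure still gives unisolvence on $\CR_k(\mathcal{T})$, so your conclusion there is valid.
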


\begin{proof}
From \eqref{Eq:Functional preservation Jk0}
$\Pi_{k,0}^{\mathcal{E} ( \mathcal{T} )} ( J_{k,0} v )
= \Pi_{k,0}^{\mathcal{E} ( \mathcal{T} )} v
$
and 
$\Pi_{k,0}^{\mathcal{T}} ( J_{k,0} v )
=
\Pi_{k,0}^{\mathcal{T}} v
$ follows directly. We therefore obtain
\begin{align*}
I_{k,0}^{\mathcal{T}, \CR} ( J_{k,0} v )
&=
\Pi_{k,0}^{\mathcal{E} ( \mathcal{T} )} ( J_{k,0} v )
+
\Pi_{k,0}^{\mathcal{T}} ( J_{k,0}v - \Pi_{k,0}^{\mathcal{E} ( \mathcal{T} )} ( J_{k,0} v ) )
=
\Pi_{k,0}^{\mathcal{E} ( \mathcal{T} )} v
+
\Pi_{k,0}^{\mathcal{T}} ( v - \Pi_{k,0}^{\mathcal{E} ( \mathcal{T} )} ( v ) )
=
I_{k,0}^{\mathcal{T}, \CR} v.
\end{align*}
Since the operator $I_{k,0}^{\mathcal{T}, \CR}$ is a projection onto $\CR_{k,0} ( \mathcal{T} )$ (cf. Corollary \ref{Cor:Global Projection}), we conclude that $I_{k,0}^{\mathcal{T},\CR} ( J_{k,0} v ) = v$ for all $v \in \CR_{k,0} ( \mathcal{T} )$.
\end{proof}

We now have collected all ingredients to prove Lemma \ref{Lem:Upper bound by tangential jumps}.

\begin{proof}[Proof of Lemma \ref{Lem:Upper bound by tangential jumps}]
For any given $u \in \CR_{k,0} ( \mathcal{T} )$ consider $\widehat{u}^{\ast} \in \CR_{k,0} ( \widehat{\mathcal{T}} )$, given by 
\begin{align}
\widehat{u}^{\ast}
:=
I_{k,0}^{\widehat{\mathcal{T}}, \CR} ( J_{k,0} u ),
\label{Eq:Fine right inverse}
\end{align}
where $I_{k,0}^{\widehat{\mathcal{T}}, \CR} : H_{k,0}^{\CR} ( \widehat{\mathcal{T}} ) \to \CR_{k,0} ( \widehat{\mathcal{T}} )$ is the standard non-conforming quasi-interpolation operator on the fine mesh $\widehat{\mathcal{T}}$ given by \eqref{Eq:CR Interpolation}.

\textbf{@\labelcref{SubEq:Non-Conf - right inverse}:}
By construction, we know $\widehat{u}^{\ast}$ satisfies \eqref{SubEq:Non-Conf Approx Operator - edge moment} and \eqref{SubEq:Non-Conf Approx Operator- vol moment} on $ \widehat{\mathcal{T}}$. This together with \eqref{Eq:Jkast moment transport} yields 
\begin{align*}
\int_E q \widehat{u}^{\ast}
&= 
\sum_{\widehat{E} \in \operatorname{succ}( E )}
\int_{\widehat{E}} q \widehat{u}^{\ast}
=
\sum_{\widehat{E} \in \operatorname{succ}( E )}
\int_{\widehat{E}} q 
I_{k,0}^{\widehat{\mathcal{T}}, \CR} ( J_{k,0} u )
=
\sum_{\widehat{E} \in \operatorname{succ}( E )}
\int_{\widehat{E}} q  J_{k,0} u
=
\int_E q  J_{k,0} u
=
\int_E q u
\end{align*}
and
\begin{align*}
\int_K p \widehat{u}^{\ast}
&= 
\sum_{\widehat{K} \in \operatorname{succ}( K )}
\int_{\widehat{K}} p \widehat{u}^{\ast}
=
\sum_{\widehat{K} \in \operatorname{succ}( K )}
\int_{\widehat{K}} p
I_{k,0}^{\widehat{\mathcal{T}}, \CR} ( J_{k,0} u )
=
\sum_{\widehat{K} \in \operatorname{succ}( K )}
\int_{\widehat{K}} p  J_{k,0} u
=
\int_K p  J_{k,0} u
=
\int_K p u
\end{align*}
for any $K \in \mathcal{T}$, $E \in \mathcal{E}_{\Omega} ( \mathcal{T})$ and $p \in \mathbb{P}_{k-3} ( K )$, $q \in \mathbb{P}_{k-1} ( E )$.
From this, it follows that $F_{E,j} ( \widehat{u}^{\ast} ) = F_{E,j} ( u)$ for $E \in \mathcal{E}_{\Omega} ( \mathcal{T} )$ and $j \in \range{k-1}$ as well as $F_{K,\boldsymbol{\alpha}} ( \widehat{u}^{\ast} ) = F_{K,\boldsymbol{\alpha}} ( u )$ for all $K \in \mathcal{T}$ and $\boldsymbol{\alpha} \in \multirange{2}{k-3}$. Therefore we have that $\Pi_k^{\mathcal{E} ( \mathcal{T} )}  \widehat{u}^{\ast} 
=
\Pi_k^{\mathcal{E} ( \mathcal{T} )} u$ and $\Pi_{k,0}^{\mathcal{T}}  \widehat{u}^{\ast}
=
\Pi_{k,0}^{\mathcal{T}} u$.
This and the linearity of the operator $\Pi_{k,0}^{\mathcal{T}}$ reveals
\begin{align*}
I_k^{\mathcal{T}, \CR} \widehat{u}^{\ast}
&=
\Pi_k^{\mathcal{E} ( \mathcal{T} )} \widehat{u}^{\ast} 
+
\Pi_{k,0}^{\mathcal{T}} ( \widehat{u}^{\ast}
- 
\Pi_k^{\mathcal{E} ( \mathcal{T} )} \widehat{u}^{\ast})
=
\Pi_k^{\mathcal{E} ( \mathcal{T} )} u
+
\Pi_{k,0}^{\mathcal{T}} ( \widehat{u}^{\ast}
- 
\Pi_k^{\mathcal{E} ( \mathcal{T} )} u)
\\
&=
\Pi_k^{\mathcal{E} ( \mathcal{T} )} u
+
\Pi_{k,0}^{\mathcal{T}} \widehat{u}^{\ast}
-  
\Pi_{k,0}^{\mathcal{T}} (\Pi_k^{\mathcal{E} ( \mathcal{T} )} u)
=
\Pi_k^{\mathcal{E} ( \mathcal{T} )} u
+
\Pi_{k,0}^{\mathcal{T}} u
-  
\Pi_{k,0}^{\mathcal{T}} (\Pi_k^{\mathcal{E} ( \mathcal{T} )}  u )
\\
&=
\Pi_k^{\mathcal{E} ( \mathcal{T} )} u
+
\Pi_{k,0}^{\mathcal{T}} ( u - \Pi_k^{\mathcal{E} ( \mathcal{T} )} u )
=
I_k^{\mathcal{T},\CR} u
\overset{\text{Cor. } \ref{Cor:Global Projection}}{=}
u. 
\end{align*}

\textbf{@\labelcref{SubEq:Non-Conf -  error estimate}:} This property is proved by following the arguments in \cite[Sec. 3.5 \& 3.6]{CarstensenPuttkammer-howToProofDiscreteReliabiltiyNonConf} which require three more properties. 

First, we verify that for any $u \in \CR_{k,0} ( \mathcal{T} )$, that
\begin{align}
\Vert \nabla_{\widehat{\mathcal{T}}} \widehat{u}^{\ast} - \nabla_{\mathcal{T}} u \Vert_{\mathbf{L}^2 ( \Omega )}
& \lesssim 
\Vert \nabla_{\mathcal{T}} ( J_{k,0}^{\ast} u - u ) \Vert_{\mathbf{L}^2 ( \omega_{\mathcal{T} \setminus \widehat{\mathcal{T}}} )}.
\label{Eq:Localized companion estimate}
\end{align}
Note that \eqref{Eq:Localized companion estimate} corresponds to \cite[(3.3)]{CarstensenPuttkammer-howToProofDiscreteReliabiltiyNonConf}. Since the proof follows from elementary properties of the operator $I_{k,0}^{\widehat{\mathcal{T}},\CR}$ it is postponed to  Appendix \ref{A:Sec:Check prop}.

Second, for every triangle $K \in \mathcal{T}$ let $J_{k,0}^{K} : H_{k,0}^{\CR} ( \mathcal{T}) \vert_{\omega_{\mathcal{T}_K}} \to \mathbb{P}_k ( K )$ be the localized version of $J_{k,0}$. 
We need to verify that
\begin{align}
J_{k,0}^K ( w ) 
& =
w \vert_K
\qquad \forall w \in \left. (\CR_k ( \mathcal{T} ) \cap H_0^1 ( \Omega )) \right\vert_{\omega_{\mathcal{T}_K}}.
\tag{C6}
\label{Eq:C6}
\end{align}
Since $J_{k,0}$ restricted to $S_{k,0} ( \mathcal{T} )$ is the identity operator (cf. Lemma \ref{Lem:Properties conforming companion} \eqref{Eq:Jkast conforming identity}) and $(\CR_k ( \mathcal{T} ) \cap H_0^1 ( \Omega )) \vert_{\omega_{\mathcal{T}_K}} = S_{k,0} ( \mathcal{T} ) \vert_{\omega_{\mathcal{T}_K}}$, property \eqref{Eq:C6} is inherited.

Third, let $\varPsi_K : H_{k,0}^{\CR} ( \mathcal{T} ) \to \mathbb{R}_{\geq 0}$ be given by
\begin{align}
\varPsi_K ( u )
&:=
\sum_{E \in  \mathcal{E} (\mathcal{T}_K)}
h_E
\Vert \langle \llbracket \nabla_{\mathcal{T}} u \rrbracket_E , \mathbf{t}_E  \rangle \Vert_{L^2 ( E )}^2.
\label{Eq:Local tangetial jumps}
\end{align}
We need to verify the following implication:
\begin{align}
\forall w \in \left. \CR_k ( \mathcal{T} ) \right\vert_{\omega_{\mathcal{T}_K}}, \
\varPsi_K ( w ) = 0
\implies
w \in \left. \left(\CR_k ( \mathcal{T}_K ) \cap H_0^1 ( \Omega )\right) \right\vert_{\omega_{\mathcal{T}_K}}.
\tag{C7}
\label{Eq:C7}
\end{align}
The proof of \eqref{Eq:C7} is technical but follows from standard arguments and is therefore postponed to Appendix \ref{A:Sec:Check prop}. We are now able to apply \cite[Thm. 3.2]{CarstensenPuttkammer-howToProofDiscreteReliabiltiyNonConf} in the following form:

If $\widehat{u}^{\ast}$ from \eqref{Eq:Fine right inverse} satisfies \eqref{SubEq:Non-Conf - right inverse}, \eqref{Eq:Localized companion estimate}, \eqref{Eq:C6}, \eqref{Eq:C7} and $\widehat{\mathcal{T}}$ was obtained using NVB, then \eqref{SubEq:Non-Conf -  error estimate} holds true.
\end{proof}

\section{A mapping into intersections of Crouzeix--Raviart spaces}
\label{Sec:Unseen}

Next, we present the explicit construction of an operator $\widehat{P} : H_{k,0}^{\CR} ( \widehat{\mathcal{T}} ) \to \CR_{k,0} ( \widehat{\mathcal{T}} ) \cap \CR_{k,0} ( \mathcal{T} )$, where $\widehat{\mathcal{T}} \in \mathbb{T} (\mathcal{T})$ is some admissible refinement. 
Recall the sets $\mathcal{R} = \mathcal{T} \setminus \widehat{\mathcal{T}}$ and $\widehat{\mathcal{R}} = \widehat{\mathcal{T}} \setminus \mathcal{T}$ from \eqref{Eq:Refined elements}, and the numbering convention from \eqref{Eq:Layers}. Let $M_{k,0}^{\mathcal{R}} : H_{k,0}^{\CR} ( \mathcal{T} ) \to \CR_{k,0} ( \mathcal{T} )$ and $M_{k,0}^{\widehat{\mathcal{R}}}: H_{k,0}^{\CR} ( \widehat{\mathcal{T}} ) \to \CR_{k,0} ( \widehat{\mathcal{T}} )$ be as in \eqref{Eq:Mixed interpolation operator}. Our goal is to investigate the composition $M_{k,0}^{\mathcal{R}} \circ M_{k,0}^{\widehat{\mathcal{R}}}$. First we need to derive that the range of the fine partially conforming operator $M_{k,0}^{\widehat{\mathcal{R}}}$ belongs to the domain of the coarse partially-conforming operator $M_{k,0}^{\mathcal{R}}$.
  
\begin{lemma} \label{Lem:FineToCoarse - successive application}
It holds that
$
M_{k,0}^{\widehat{\mathcal{R}}} ( H_{k,0}^{\CR} ( \widehat{\mathcal{T}} ) )
\subseteq
H_{k,0}^{\CR} ( \mathcal{T} ).
$
\end{lemma}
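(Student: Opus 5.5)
The plan is to verify the three defining conditions of $H_{k,0}^{\CR} ( \mathcal{T} )$ from \eqref{Eq:Broken H1 jumps} for $w := M_{k,0}^{\widehat{\mathcal{R}}} \widehat{v}$, where $\widehat{v} \in H_{k,0}^{\CR} ( \widehat{\mathcal{T}} )$ is arbitrary. These conditions are: piecewise $H^1$ regularity on the coarse triangles, the orthogonality of jumps on coarse interior edges, and the orthogonality of traces on coarse boundary edges. By Lemma \ref{Lem:Partially conforming} (applied on the fine mesh with $\mathcal{S} = \widehat{\mathcal{R}}$) we already know two facts. First, $w \in \CR_{k,0} ( \widehat{\mathcal{T}} )$. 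Second, by \eqref{SubEq:Mixed Operator properties - conforming on S}, $w \vert_{\omega_{\widehat{\mathcal{R}}}} \in S_k ( \widehat{\mathcal{R}} )$, i.e.\ $w$ is continuous on $\omega_{\widehat{\mathcal{R}}} = \omega_{\mathcal{R}}$ (cf.\ \eqref{Eq:Refined domain}).

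\textbf{Step 1 (piecewise $H^1$ on $\mathcal{T}$).} Let $K \in \mathcal{T}$. If $K \in \mathcal{T} \setminus \mathcal{R} = \mathcal{T} \cap \widehat{\mathcal{T}}$, then $w \vert_K$ is a polynomial and there is nothing to show. If $K \in \mathcal{R}$, then $K = \bigcup_{\widehat{K} \in \operatorname{succ}(K)} \widehat{K}$ with all successors belonging to $\widehat{\mathcal{R}}$. Hence $w \vert_{\overset{\circ}{K}}$ is a piecewise polynomial that is continuous across all fine edges interior to $K$, since these edges lie in $\omega_{\widehat{\mathcal{R}}}$ where $w \in S_k ( \widehat{\mathcal{R}} )$. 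The standard gluing result for continuous piecewise $H^1$ functions then gives $w \vert_{\overset{\circ}{K}} \in H^1 ( \overset{\circ}{K} )$. This step is the only place where partial conformity is genuinely needed, and it is the step I expect to require the most care. The point to check is that every fine edge in the interior of a refined coarse triangle is an edge of $\widehat{\mathcal{R}}$, so that continuity of $w$ holds across it and not merely weak continuity in the Crouzeix--Raviart sense.

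\textbf{Step 2 (jump conditions).} Let $E \in \mathcal{E}_{\Omega} ( \mathcal{T} )$ with $\mathcal{T}_E = \{ K_{\operatorname{L}}, K_{\operatorname{R}} \}$, and write $E = \bigcup_{\widehat{E} \in \operatorname{succ}(E)} \widehat{E}$ with $\widehat{E} \in \mathcal{E}_{\Omega} ( \widehat{\mathcal{T}} )$. Each fine edge $\widehat{E}$ has one neighbour in $\operatorname{succ}(K_{\operatorname{L}})$ and one in $\operatorname{succ}(K_{\operatorname{R}})$. It follows that the coarse jump of $w$ across $E$, restricted to $\widehat{E}$, equals the fine jump across $\widehat{E}$ (up to a fixed orientation sign). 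For $q \in \mathbb{P}_{k-1} ( E )$ we have $q \vert_{\widehat{E}} \in \mathbb{P}_{k-1} ( \widehat{E} )$, so the CR condition of $w$ on $\widehat{\mathcal{T}}$ gives
\begin{align*}
\int_E q \llbracket w \rrbracket_E
=
\pm \sum_{\widehat{E} \in \operatorname{succ}(E)} \int_{\widehat{E}} q \llbracket w \rrbracket_{\widehat{E}}
= 0 .
\end{align*}

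\textbf{Step 3 (boundary condition).} The same summation over $\operatorname{succ}(E)$ applies to $E \in \mathcal{E}_{\partial \Omega} ( \mathcal{T} )$. Using $w \vert_{\widehat{E}} \perp_{L^2(\widehat{E})} \mathbb{P}_{k-1} ( \widehat{E} )$ for each boundary fine edge $\widehat{E}$ yields $w \vert_E \perp_{L^2(E)} \mathbb{P}_{k-1} ( E )$. Together with Step 1 and Step 2 this gives $w \in H_{k,0}^{\CR} ( \mathcal{T} )$.
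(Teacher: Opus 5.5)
Your proof is correct, and Steps 1 and 3 match the paper's argument: continuity of $w$ on $\omega_{\widehat{\mathcal{R}}} = \omega_{\mathcal{R}}$ gives the elementwise $H^1$ regularity, and summation over $\operatorname{succ}(E)$ gives the boundary moments.

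The difference lies in the jump conditions. The paper sorts the coarse interior edges into two groups:
\begin{itemize}
\item edges of $\mathcal{T}\setminus\mathcal{R}$ and of the interface $\Gamma_{\mathcal{R}}$, which it argues are also fine edges, so the fine $\CR_k$ condition applies verbatim;
\item edges interior to $\omega_{\mathcal{R}}$, where the jump vanishes identically by continuity.
\end{itemize}
Your Step 2 instead applies the successor-edge summation to every coarse interior edge; the paper uses that argument only for boundary edges.

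Your route is shorter and needs no separate argument that interface edges are not bisected. It also shows that partial conformity enters only through the $H^1$ regularity on refined coarse triangles: you effectively prove $H_{k,0}^{\CR}(\widehat{\mathcal{T}}) \cap H^1(\mathcal{T}) \subseteq H_{k,0}^{\CR}(\mathcal{T})$. The paper's case distinction, in exchange, records the stronger fact that coarse jumps inside $\omega_{\mathcal{R}}$ vanish pointwise, not only in the weak $\CR_k$ sense.
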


\begin{proof}
Consider any $\widehat{u} \in H_{k,0}^{\CR} ( \widehat{\mathcal{T}} )$. We now prove that $(M_{k,0}^{\widehat{\mathcal{R}}} \widehat{u}) \vert_K \in H^1 ( K )$ for every coarse triangle $K \in \mathcal{T}$ and that $\llbracket  M_{k,0}^{\widehat{\mathcal{R}}} \widehat{u} \rrbracket_E \perp_{L^2 (E)} \mathbb{P}_{k-1} ( E )$ for all $E \in \mathcal{E} ( \mathcal{T} )$. To this end we consider some coarse triangle $K \in \mathcal{T}$ and let $\Gamma_{\mathcal{R}}$ be the interface between $\omega_{\mathcal{R}}$ and $\omega_{\mathcal{T} \setminus \mathcal{R}}$ as in \eqref{Eq:Gamma}. We distinguish between two cases, based on the location of $K$. 

\textbf{Case 1: $K \in \mathcal{T} \setminus \mathcal{R}$.} In this case we know that $K \in \mathcal{T} \cap \widehat{\mathcal{T}}$ and thus $( M_{k,0}^{\widehat{\mathcal{R}}} \widehat{u} ) \vert_K \in \mathbb{P}_k ( K ) \subseteq H^1 ( K )$. Furthermore, we observe that, for all edges $E \in \mathcal{E} ( K )$ we also have that $E \in \mathcal{E} ( \mathcal{T} ) \cap \mathcal{E} ( \widehat{\mathcal{T}} )$. Thus $\llbracket M_{k,0}^{\widehat{\mathcal{R}}} \widehat{u} \rrbracket_E \perp_{L^2(E)} \mathbb{P}_{k-1} ( E )$ holds for all $E \in\mathcal{E} ( \mathcal{T} \setminus \mathcal{R} )$ by construction.

\textbf{Case 2: $K \in \mathcal{R}$:} Recall that $\omega_{\mathcal{R}} = \omega_{\widehat{\mathcal{R}}}$ from \eqref{Eq:Refined domain}. We now know from Lemma \ref{Lem:Partially conforming} \eqref{SubEq:Mixed Operator properties - conforming on S} that $( M_{k,0}^{\widehat{\mathcal{R}}} \widehat{u} ) \vert_{\omega_{\widehat{\mathcal{R}}}} \in S_k ( \widehat{\mathcal{R}} ) \subseteq C^0 ( \omega_{\widehat{\mathcal{R}}}) $ . This directly implies that $ ( M_{k,0}^{\widehat{\mathcal{R}}} \widehat{u} ) \vert_K \in H^1 ( K )$. Furthermore the continuity of $( M_{k,0}^{\widehat{\mathcal{R}}} \widehat{u} ) \vert_{\omega_{\widehat{\mathcal{R}}}}$ also implies that 
$
\llbracket M_{k,0}^{\widehat{\mathcal{R}}} \widehat{u} \rrbracket_E
= 
0$ for all $E \in \mathcal{E}_{\Omega} ( \mathcal{R} ) \setminus \mathcal{E} ( \Gamma_{\mathcal{R}} )$. Since $\Gamma_{\mathcal{R}}$ is the interface between $\omega_{\mathcal{R}}$ and $\omega_{\mathcal{T} \setminus \mathcal{R}}$, it is clear that $\mathcal{E} ( \Gamma_{\mathcal{R}}) = \mathcal{E} ( \mathcal{R} ) \cap \mathcal{E} ( \mathcal{T} \setminus \mathcal{R})$. However, since $\mathcal{R} = \mathcal{T} \setminus \widehat{\mathcal{T}}$, $\mathcal{E} ( \Gamma_{\mathcal{R}} ) \subseteq \mathcal{E} ( \mathcal{T} ) \cap \mathcal{E} ( \widehat{\mathcal{T}} )$ follows. Therefore $\llbracket M_{k,0}^{\widehat{\mathcal{R}}} \widehat{u} \rrbracket_E
\perp_{L^2 (E)}
\mathbb{P}_{k-1} ( E )$ for all $E \in \mathcal{E} ( \Gamma_{\mathcal{R}} )$ as shown in \textbf{Case 1} and thus we have shown that $M_{k,0}^{\widehat{\mathcal{R}}} \widehat{u} \in H_k^{\CR} ( \mathcal{T} )$.

Finally, since $M_{k,0}^{\widehat{\mathcal{R}}} \widehat{u} \in \CR_{k,0} ( \widehat{\mathcal{T}} )$ by construction, it is easy to see, that for all coarse boundary edges $E \in \mathcal{E}_{\partial \Omega} ( \mathcal{T} )$, we have that
$
( M_{k,0}^{\widehat{\mathcal{R}}} \widehat{u}, q )_{L^2 ( E )}
=
\sum_{\widehat{E} \in \operatorname{succ} ( E )}
( M_{k,0}^{\widehat{\mathcal{R}}} \widehat{u}, q )_{L^2 ( \widehat{E} )}
=
0
$ for all $q \in \mathbb{P}_{k-1} ( E )$ and hence $ M_{k,0}^{\widehat{\mathcal{R}}} \widehat{u} \in H_{k,0}^{\CR} ( \mathcal{T} )$.
\end{proof}

\begin{corollary} \label{Cor:Well-definedness Phat}
Let $\widehat{\mathcal{T}} \in \mathbb{T} (\mathcal{T})$ be an admissible refinement of the coarse mesh $\mathcal{T}$. Then the operator $\widehat{P} : H_{k,0}^{\CR} ( \widehat{\mathcal{T}} ) \to \CR_{k,0} ( \mathcal{T})$, given by
\begin{align}
\widehat{P} &:=
M_{k,0}^{\mathcal{R}} 
\circ
M_{k,0}^{\widehat{\mathcal{R}}},
\label{Eq:Def Phat}
\end{align}
is well-defined and it holds that
\begin{align}
\widehat{P} ( H_{k,0}^{\CR}  ( \widehat{\mathcal{T}}) ) 
&\subseteq 
\CR_{k,0} ( \widehat{\mathcal{T}} )  
\cap 
\CR_{k,0} ( \mathcal{T} ).
\label{Eq:PHat image}
\end{align}
\end{corollary}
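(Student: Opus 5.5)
Well-definedness comes straight from Lemma \ref{Lem:FineToCoarse - successive application}. For $\widehat{u} \in H_{k,0}^{\CR} ( \widehat{\mathcal{T}} )$, the function $M_{k,0}^{\widehat{\mathcal{R}}} \widehat{u}$ lies in $H_{k,0}^{\CR} ( \mathcal{T} )$. This is exactly the domain of $M_{k,0}^{\mathcal{R}}$, so the composition \eqref{Eq:Def Phat} makes sense. By Lemma \ref{Lem:Partially conforming}, its image lies in $\CR_{k,0} ( \mathcal{T} )$. It remains to show that $w := \widehat{P} \widehat{u}$ also belongs to $\CR_{k,0} ( \widehat{\mathcal{T}} )$. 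Write $v := M_{k,0}^{\widehat{\mathcal{R}}} \widehat{u} \in \CR_{k,0} ( \widehat{\mathcal{T}} ) \cap H_{k,0}^{\CR} ( \mathcal{T} )$.

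To check $w \in \CR_{k,0} ( \widehat{\mathcal{T}} )$, I would split $\Omega$ into $\omega_{\mathcal{R}}$ and $\omega_{\mathcal{T} \setminus \mathcal{R}}$.
\begin{enumerate}
\item On $\omega_{\mathcal{R}}$, property \eqref{SubEq:Mixed Operator properties - conforming on S} with $\mathcal{S} = \mathcal{R}$ gives $w \vert_{\omega_{\mathcal{R}}} \in S_k ( \mathcal{R} )$. Since $\omega_{\mathcal{R}} = \omega_{\widehat{\mathcal{R}}}$ by \eqref{Eq:Refined domain}, and every fine triangle in $\widehat{\mathcal{R}}$ is contained in a coarse triangle of $\mathcal{R}$, $w$ is a continuous piecewise polynomial of degree $k$ on $\widehat{\mathcal{R}}$. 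Hence all interior fine edges inside $\omega_{\mathcal{R}}$ carry zero jump.
\item On $\omega_{\mathcal{T} \setminus \mathcal{R}}$, every triangle is also a fine triangle. So $w$ is piecewise $\mathbb{P}_k$ with respect to $\widehat{\mathcal{T}}$ there, and the edges there are common to both meshes. The coarse CR jump condition on these edges is then exactly the fine one.
\item The remaining fine edges are those on the interface $\Gamma_{\mathcal{R}}$ and the boundary edges. Since $\mathcal{E} ( \Gamma_{\mathcal{R}} ) \subseteq \mathcal{E} ( \mathcal{T} ) \cap \mathcal{E} ( \widehat{\mathcal{T}} )$, as shown in the proof of Lemma \ref{Lem:FineToCoarse - successive application}, each interface edge is simultaneously a coarse and a fine edge. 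The coarse orthogonality $\llbracket w \rrbracket_E \perp \mathbb{P}_{k-1} ( E )$ from $w \in \CR_{k,0} ( \mathcal{T} )$ is therefore precisely the fine condition.
\end{enumerate}

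The boundary edges are where some care is needed. Fine boundary edges inside $\omega_{\mathcal{T} \setminus \mathcal{R}}$ coincide with coarse ones, so there the condition is inherited directly. Fine boundary edges $\widehat{E} \subsetneq E$ belonging to a refined coarse boundary edge $E$ need more. Orthogonality of $w \vert_E$ to $\mathbb{P}_{k-1} ( E )$ on $E$ does not automatically give orthogonality on each piece $\widehat{E}$.

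I would handle this using the structure of $M_{k,0}^{\mathcal{R}}$. On $\omega_{\mathcal{R}}$ we have $w = \Pi_{k,0}^{\mathcal{V} ( \mathcal{R} )} v + \overset{\bullet}{\Pi}_{k,0} v$. The vertex sum runs only over interior vertices $\mathcal{V}_{\Omega} ( \mathcal{R} )$. For a boundary edge $E$, the contribution of $\psi^{\mathbf{z}}_{\mathcal{R},k}$ on $E$ comes only from $b_{E,k-1}$ with $E \in \mathcal{E}_{\mathbf{z}} ( \mathcal{R} )$. The contribution of $\overset{\bullet}{\Pi}_{k,0}$ uses only interior-edge and volume bubbles, which vanish on $\partial\Omega$. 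Because $w$ is continuous on $\omega_{\mathcal{R}}$ and lies in $\CR_{k,0} ( \mathcal{T} )$, I would argue that $w \vert_E$ is a degree-$k$ polynomial vanishing at both endpoints of $E$ (they are boundary vertices carrying no vertex contribution) and orthogonal to $\mathbb{P}_{k-1} ( E )$. This forces $w \vert_E = c P_k^{(0,0)}$ along $E$, with $P_k^{(0,0)}$ taken in the coordinate of $E$. Since $k$ is odd, $P_k^{(0,0)}(\pm 1) = \pm 1 \neq 0$, so $c = 0$ and $w \vert_E = 0$. The fine boundary moments then vanish trivially.

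This boundary-edge argument, specifically showing $w \vert_E \equiv 0$ on refined boundary edges via continuity and vanishing endpoint values, is the main obstacle. If it fails directly, I would instead read off from \eqref{Eq:S-conforming prop} and the explicit traces that the $b_{E,k-1}$-terms on boundary edges cancel in $w$.
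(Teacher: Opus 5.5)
Your proof is correct, and its case structure matches the paper's. Well-definedness comes from Lemma~\ref{Lem:FineToCoarse - successive application}; then unrefined triangles, interface edges in $\mathcal{E}(\mathcal{T})\cap\mathcal{E}(\widehat{\mathcal{T}})$, and interior fine edges of $\omega_{\mathcal{R}}$ via $S_k(\mathcal{R})$-continuity are handled as in the paper.

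The only real difference is on the refined boundary edges. The paper argues term by term. The $\overset{\bullet}{\Pi}_{k,0}$-part lies in $\overset{\bullet}{S}_{k,0}(\mathcal{T})$ and so vanishes on $\partial\Omega$. The paper then asserts $\psi^{\mathbf z}_{\mathcal R,k}\vert_E=0$ for interior $\mathbf z$, citing Proposition~\ref{Prop:S-conforming prop}. Strictly, the properties listed there (conformity on $\omega_{\mathcal S}$, nodal values) only give vanishing at the endpoints of $E$. Identical vanishing needs one more step: either the explicit cancellation $P_k^{(0,0)}(1-2\lambda)+P_k^{(0,0)}(2\lambda-1)=0$ of the two $b_{E',k-1}$-traces on $E$ for odd $k$, or precisely your orthogonality argument applied to $\psi^{\mathbf z}_{\mathcal R,k}$.

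You instead apply that argument once to the whole function $w$:
\begin{itemize}
\item $w\vert_E\in\mathbb{P}_k(E)$;
\item $w\vert_E\perp\mathbb{P}_{k-1}(E)$, because $w\in\CR_{k,0}(\mathcal{T})$;
\item $w$ vanishes at both endpoints, by the nodal property $\psi^{\mathbf z}_{\mathcal R,k}\vert_{\omega_{\mathcal R}}(\mathbf y)=\delta_{\mathbf z,\mathbf y}$ and $\overset{\bullet}{\Pi}_{k,0}v\in\overset{\bullet}{S}_{k,0}(\mathcal{T})$.
\end{itemize}
This uses only properties the paper actually records, so it is slightly more self-contained than the paper's version. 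The fallback you mention is therefore not needed.

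Two minor remarks:
\begin{itemize}
\item $P_k^{(0,0)}(1)=1$ holds for every $k$, so oddness is not used at that step.
\item Your sentence that the contribution of $\psi^{\mathbf z}_{\mathcal R,k}$ on $E$ ``comes only from $b_{E,k-1}$ with $E\in\mathcal{E}_{\mathbf z}(\mathcal{R})$'' is inaccurate. For an interior vertex $\mathbf z$, a boundary edge $E$ never belongs to $\mathcal{E}_{\mathbf z}(\mathcal R)$; the trace on $E$ comes from $b_{E',k-1}$ with $E'\neq E$. Nothing in your argument depends on this sentence.
\end{itemize}
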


\begin{proof}
Lemma \ref{Lem:FineToCoarse - successive application} yields that $\widehat{P}  \widehat{u}   =
M_{k,0}^{\mathcal{R}}  ( M_{k,0}^{\widehat{\mathcal{R}}} \widehat{u} )$ is well-defined for all functions $\widehat{u} \in H_{k,0}^{\CR} ( \widehat{\mathcal{T}} )$.

\textbf{@\labelcref{Eq:PHat image}:} Consider any $\widehat{u} \in H_{k,0}^{\CR} ( \widehat{\mathcal{T}} )$. Since $\widehat{P} \widehat{u} \in \CR_{k,0} ( \mathcal{T} )$ by construction, we only need to verify that $\widehat{P} \widehat{u} \in \CR_{k,0} ( \widehat{\mathcal{T}} )$. 
To this end, consider some $\widehat{K} \in \widehat{\mathcal{T}}$. We distinguish two cases based on the location of $\widehat{K}$.

\textbf{Case 1: $\widehat{K} \in \widehat{\mathcal{T}} \setminus \widehat{\mathcal{R}}$.} The definition of the set of refined triangles $\widehat{\mathcal{R}}$, implies $\widehat{K} \in \widehat{\mathcal{T}} \cap \mathcal{T}$ and 
consequently, $( \widehat{P} \widehat{u} ) \vert_{\widehat{K}} \in \mathbb{P}_k ( \widehat{K} )$.
Furthermore, $\widehat{K} \in \mathcal{T} \cap \widehat{\mathcal{T}}$ also implies that that $\mathcal{E} ( \widehat{K} ) \subseteq \mathcal{E} ( \mathcal{T} )$ and 
thus $\llbracket \widehat{P} \widehat{u} \rrbracket_E
\perp_{L^2 (E)} 
\mathbb{P}_{k-1} ( E )$ for all edges $E \in \mathcal{E} ( \widehat{K} )$ 
since $\widehat{P} \widehat{u} \in \CR_{k,0} ( \mathcal{T} )$.

\textbf{Case 2: $\widehat{K} \in \widehat{\mathcal{R}}$.} Then there exists a predecessor triangle $K \in \mathcal{R}$ such that $\widehat{K} \in \operatorname{succ} ( K )$. Now since $\widehat{P} \widehat{u} \in \CR_{k,0} ( \mathcal{T} )$, we have that $( \widehat{P} \widehat{u} ) \vert_K \in \mathbb{P}_k ( K )$ so that $( \widehat{P} \widehat{u} ) \vert_{\widehat{K}} \in \mathbb{P}_k ( \widehat{K} )$. Next, we investigate the behavior across fine edges $\widehat{E} \in \mathcal{E} ( \widehat{K} )$ and consider three sub-cases.

\textit{Case 2.1: $\widehat{E} \in \mathcal{E}_{\Omega} ( \widehat{K} )$ and $\widehat{E} \in \mathcal{E} ( \Gamma_{\mathcal{R}} )$.} This case is already covered by {Case 1} since $\mathcal{E} ( \Gamma_{\mathcal{R}} ) \subseteq \mathcal{E} ( \widehat{\mathcal{T}} \setminus \widehat{\mathcal{R}} )$.

\textit{Case 2.2: $\widehat{E} \in \mathcal{E}_{\Omega} ( \widehat{K} ) \setminus \mathcal{E} ( \Gamma_{\mathcal{R}} )$.} This implies that $\widehat{E} \subseteq \omega_{\widehat{\mathcal{R}}}$ so that either $\widehat{E} \cap \partial \omega_{\widehat{\mathcal{R}}} \in \mathcal{V} ( \widehat{\mathcal{T}} )$ or $\widehat{E} \cap \partial \omega_{\widehat{\mathcal{R}}} = \emptyset$. Lemma \ref{Lem:Partially conforming} \eqref{SubEq:Mixed Operator properties - conforming on S} implies $( \widehat{P} \widehat{u} ) \vert_{\omega_{\mathcal{R}}} 
\in 
S_k ( \mathcal{R} ) 
\subseteq 
C^0 ( \omega_{\mathcal{R}} )$  and $\llbracket \widehat{P} \widehat{u} \rrbracket_{\widehat{E}} = 0$ follows. 

\textit{Case 2.3: $\widehat{E} \in \mathcal{E}_{\partial \Omega} ( \widehat{K} )$.} This implies the existence of a coarse predecessor edge $E \in \mathcal{E}_{\partial \Omega} ( \mathcal{T} )$. Consequently the predecessor triangle $K \in \mathcal{T}$ of $\widehat{K}$ intersects with the boundary and we recall that
\begin{align*}
( \widehat{P} \widehat{u} ) \vert_K
\overset{\eqref{Eq:Def Phat}}{=}
\sum_{\mathbf{z} \in \mathcal{V}_{\Omega} ( K ) }
F_{\mathcal{R},\mathbf{z}}  ( M_{k,0}^{\widehat{\mathcal{R}}} \widehat{u} )
\psi_{\mathcal{R},k}^{\mathbf{z}}
+
\overset{\bullet}{\Pi}_{k,0} ( M_{k,0}^{\widehat{\mathcal{R}}} \widehat{u})
\end{align*}
By construction $\overset{\bullet}{\Pi}_{k,0} : H_{k,0}^{\CR} ( \mathcal{T} ) \to \overset{\bullet}{S}_{k,0} ( \mathcal{T} )$ so that $( \overset{\bullet}{\Pi}_{k,0} ( M_{k,0}^{\widehat{\mathcal{R}}} \widehat{u}) ) \vert_E = 0$ which in turn implies that $( \overset{\bullet}{\Pi}_{k,0} ( M_{k,0}^{\widehat{\mathcal{R}}} \widehat{u}) ) \vert_{\widehat{E}} = 0$. Since $\psi_{\mathcal{R},k}^{\mathbf{z}} \vert_{\omega_{\mathcal{R}}} \in S_k ( \mathcal{R} )$ and $\mathbf{z} \in \mathcal{V}_{\Omega} ( K )$, Proposition \ref{Prop:S-conforming prop} \eqref{Eq:S-conforming prop} yields that $\psi_{\mathcal{R},k}^{\mathbf{z}} \vert_E = 0$ and thus $\psi_{\mathcal{R},k}^{\mathbf{z}} \vert_{\widehat{E}} = 0$ for all $\mathbf{z} \in \mathcal{V}_{\Omega} ( K )$. Hence we have shown that $( \widehat{P} \widehat{u} ) \vert_{\widehat{E}} = 0$.

In summary, we have shown that for all refined triangles $\widehat{K} \in \widehat{\mathcal{T}}$, $( \widehat{P} \widehat{v} ) \vert_{\widehat{K}} \in \mathbb{P}_k ( \widehat{K} )$ and for all refined edges $\widehat{E} \in \mathcal{E} ( \widehat{\mathcal{T}} )$ the $\CR_k$ jump and boundary conditions are fulfilled, i.e., $\widehat{P} \widehat{u} \in \CR_{k,0}  ( \widehat{\mathcal{T}} )$.
\end{proof}

Next, we prove that the operator $\widehat{P}$ is locally bounded on the coarse mesh $\mathcal{T}$.
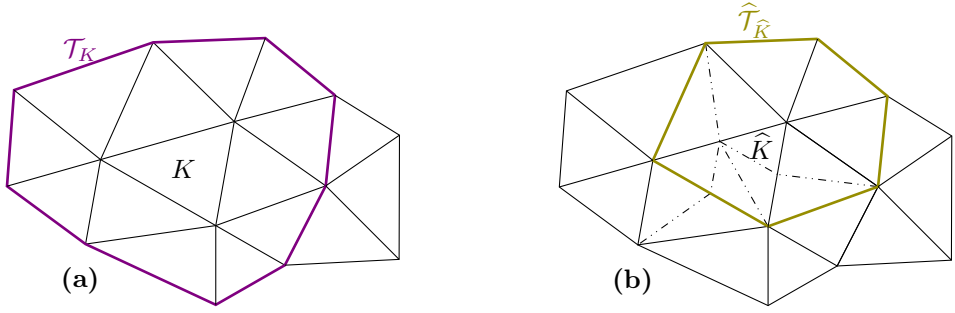
\begin{figure}
\centering
\begin{tikzpicture}[scale=.7]
\coordinate (zero) at (0,0);
\coordinate (ref) at (1,0);
\coordinate (z2) at ($(zero)!2!80:(ref)$);
\coordinate (z3) at ($(zero)!2.5!150:(ref)$);
\coordinate (z4) at ($(zero)!2.2!20:(ref)$);
\coordinate (z5) at ($(z2)!0.93!50:(z4)$);
\coordinate (z6) at ($(z3)!0.93!50:(z2)$);
\coordinate (z7) at ($(z3)!0.65!290:(zero)$);
\coordinate (z8) at ($(zero)!1.5!330:(ref)$);

\coordinate (z2mid) at ($(z2)!0.5!0:(z3)$);
\coordinate (z3mid) at ($(z3)!0.5!0:(zero)$);
\coordinate (z0mid) at ($(zero)!0.5!0:(z4)$);
\coordinate (z4mid) at ($(z4)!0.5!0:(z2)$);
\coordinate (mid) at ($(zero)!0.5!0:(z2)$);

\coordinate (z41) at ($(z4)!0.93!170:(z2)$);
\coordinate (z42) at ($(z4)!.8!250:(z2)$);
\coordinate (z21) at ($(z2)!.8!105:(z4)$);

\coordinate (z31) at ($(z3)!.8!125:(z2)$);
\coordinate (z32) at ($(z3)!.7!180:(z2)$);

\coordinate (zero1) at ($(zero)!1.5!270:(ref)$);

\draw  (z2)--(z3) -- (zero);
\draw (zero) -- (z8) -- (z4) -- (z5) -- (z2);
\draw   (z3) -- (z7) -- (zero) --(z4) -- (z2) -- (z6) -- (z3);
\draw[line width=1,violet]  (z5) -- (z21) -- (z6) -- (z31) -- (z32) -- (z7) -- (zero1) -- (z8) -- (z4) --(z5);
\draw (z8) -- (z41) -- (z42) -- (z5);

\draw (z2) -- (zero);
\draw (z4) -- (z41);
\draw (z4) -- (z42);
\draw (z2) -- (z21);
\draw (z3) -- (z31);
\draw (z3) -- (z32);
\draw (zero) -- (zero1);

\node at (barycentric cs:z31=1,z6=1,z3=-.3){$\mathcolor{violet}{\mathcal{T}_K}$};
\node at (barycentric cs:zero=1,z2=1,z3=1) {$K$};

\node at (barycentric cs:z32=1,zero1=1,z3mid=-.8) {\textbf{(a)}};
\end{tikzpicture}
\hfil
\begin{tikzpicture}[scale=.7]
\coordinate (zero) at (0,0);
\coordinate (ref) at (1,0);
\coordinate (z2) at ($(zero)!2!80:(ref)$);
\coordinate (z3) at ($(zero)!2.5!150:(ref)$);
\coordinate (z4) at ($(zero)!2.2!20:(ref)$);
\coordinate (z5) at ($(z2)!0.93!50:(z4)$);
\coordinate (z6) at ($(z3)!0.93!50:(z2)$);
\coordinate (z7) at ($(z3)!0.65!290:(zero)$);
\coordinate (z8) at ($(zero)!1.5!330:(ref)$);

\coordinate (z2mid) at ($(z2)!0.5!0:(z3)$);
\coordinate (z3mid) at ($(z3)!0.5!0:(zero)$);
\coordinate (z0mid) at ($(zero)!0.5!0:(z4)$);
\coordinate (z4mid) at ($(z4)!0.5!0:(z2)$);
\coordinate (mid) at ($(zero)!0.5!0:(z2)$);

\coordinate (z41) at ($(z4)!0.93!170:(z2)$);
\coordinate (z42) at ($(z4)!.8!250:(z2)$);
\coordinate (z21) at ($(z2)!.8!105:(z4)$);

\coordinate (z31) at ($(z3)!.8!125:(z2)$);
\coordinate (z32) at ($(z3)!.7!180:(z2)$);

\coordinate (zero1) at ($(zero)!1.5!270:(ref)$);

\draw  (z4) -- (z2) -- (z3);
\draw (zero) -- (z8) -- (z4);
\draw (z5) -- (z2);
\draw   (z3) -- (z7) -- (zero);
\draw (z4) -- (z2) -- (z6) ;
\draw (z6) -- (z31) -- (z32) -- (z7) -- (zero1) -- (z8) -- (z4);
\draw[dash dot dot] (z6) -- (z2mid);
\draw[dash dot dot] (z3mid) -- (z7);
\draw[dash dot dot] (z2mid) --(mid);
\draw[dash dot dot] (zero) -- (z2mid) -- (z3mid);
\draw[dash dot dot] (z4) -- (mid);
\draw (z8) -- (z41) -- (z42) -- (z5);
\draw[line width=1,olive] (z21) -- (z6) -- (z3) -- (zero) -- (z4) -- (z5) -- (z21);

\draw (z2) -- (zero);
\draw (z4) -- (z41);
\draw (z4) -- (z42);
\draw (z2) -- (z21);
\draw (z3) -- (z31);
\draw (z3) -- (z32);
\draw (zero) -- (zero1);

\node at (barycentric cs:z21=1,z6=1,z2=-.4){$\mathcolor{olive}{\widehat{\mathcal{T}}_{\widehat{K}}}$};
\node at (barycentric cs:z2mid=1,z2=1,mid=1) {$\widehat{K}$};

\node at (barycentric cs:z32=1,zero1=1,z3mid=-.8) {\textbf{(b)}};
\end{tikzpicture}
\caption{\textbf{(a):} We consider a coarse mesh $\mathcal{T}$ with a triangle $K$ and its patch $\mathcal{T}_K$ marked by the purple line. \textbf{(b):} We consider a refinement $\widehat{\mathcal{T}}$ of the coarse mesh, with the newly added edges marked by the $- \cdot \cdot$ lines. A simplex $\widehat{K} \in \operatorname{succ} ( K )$ is chosen and its fine simplex patch $\widehat{\mathcal{T}}_{\widehat{K}}$ is included by the olive line.}
\label{Fig:Refined zones}
\end{figure}

\begin{lemma} \label{Lem:Prop Phat}
For all $\widehat{u} \in H_{k,0}^{\CR} ( \widehat{\mathcal{T}} )$  it holds that
\begin{align}
\Vert \nabla \widehat{P} \widehat{u} \Vert_{\mathbf{L}^2 ( K )}
& \lesssim 
\Vert \nabla_{\widehat{\mathcal{T}}} \widehat{u} \Vert_{\mathbf{L}^2 ( Z_K )}
\qquad 
\forall 
K \in \mathcal{T},
\label{SubEq:Prop Phat - stability}
\end{align}
where $Z_K$ is given by
\begin{align}
Z_K =
\begin{cases}
\omega_{\mathcal{T}_K^1}
& K \in \mathcal{R}^{1/2},
\\
K & K \in \mathcal{T} \setminus \mathcal{R}^{1/2}.
\end{cases}
\label{Eq:Double layer simplex patch}
\end{align}
\end{lemma}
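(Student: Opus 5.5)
The plan is to chain the local bound \eqref{SubEq:Mixed Operator properties - loc bounded} of Lemma \ref{Lem:Partially conforming} twice: once for the coarse operator $M_{k,0}^{\mathcal{R}}$ and once for the fine operator $M_{k,0}^{\widehat{\mathcal{R}}}$. In between we need the identity property \eqref{ṢubEq:Mixed Operator properties - identity on TmSplus} to handle triangles away from the refinement. Set $\widehat{w} := M_{k,0}^{\widehat{\mathcal{R}}} \widehat{u}$. By Lemma \ref{Lem:FineToCoarse - successive application}, $\widehat{w} \in H_{k,0}^{\CR}(\mathcal{T}) \cap \CR_{k,0}(\widehat{\mathcal{T}})$, and $\widehat{P}\widehat{u} = M_{k,0}^{\mathcal{R}} \widehat{w}$.

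\textbf{Step 1 (coarse level).} Lemma \ref{Lem:Partially conforming} \eqref{SubEq:Mixed Operator properties - loc bounded} with $\mathcal{S} = \mathcal{R}$ gives $\Vert \nabla \widehat{P}\widehat{u}\Vert_{\mathbf{L}^2(K)} \lesssim \Vert \nabla_{\mathcal{T}} \widehat{w}\Vert_{\mathbf{L}^2(D_K)}$. Here $D_K = \omega_{\mathcal{T}_K}$ if $K\in\mathcal{R}^{1/2}$ and $D_K = K$ otherwise, by \eqref{Eq:L2 stability region}.

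The piecewise gradient $\nabla_{\mathcal{T}}\widehat{w}$ coincides almost everywhere with $\nabla_{\widehat{\mathcal{T}}}\widehat{w}$, for two reasons. On $\omega_{\mathcal{R}}$ the function $\widehat{w}$ is continuous by \eqref{SubEq:Mixed Operator properties - conforming on S}. On $\mathcal{T}\setminus\mathcal{R}$ the coarse and fine elements agree. Hence $\Vert \nabla_{\mathcal{T}} \widehat{w}\Vert_{\mathbf{L}^2(D_K)} = \sum_{\widehat{K}\in\widehat{\mathcal{T}},\,\widehat{K}\subseteq D_K} \Vert\nabla \widehat{w}\Vert_{\mathbf{L}^2(\widehat{K})}$ in the squared sense.

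\textbf{Step 2 (fine level).} For each such $\widehat{K}$, apply Lemma \ref{Lem:Partially conforming} \eqref{SubEq:Mixed Operator properties - loc bounded} on $\widehat{\mathcal{T}}$ with $\mathcal{S}=\widehat{\mathcal{R}}$. This gives $\Vert \nabla\widehat{w}\Vert_{\mathbf{L}^2(\widehat{K})} \lesssim \Vert \nabla_{\widehat{\mathcal{T}}}\widehat{u}\Vert_{\mathbf{L}^2(\omega_{\widehat{\mathcal{T}}_{\widehat{K}}})}$. Summing over the fine triangles in $D_K$ requires a finite-overlap argument. The fine patches $\widehat{\mathcal{T}}_{\widehat{K}}$ overlap boundedly because of shape regularity of NVB refinements (Remark \ref{Rem:Shape-regularity} and \eqref{Eq:Finite overlap and minimal angle constants} for $\widehat{\mathcal{T}}$). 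Moreover, $\omega_{\widehat{\mathcal{T}}_{\widehat{K}}} \subseteq \omega_{\mathcal{T}_{K'}}$ for the coarse parent $K'$ of $\widehat{K}$. Thus for $K\in\mathcal{R}^{1/2}$ all these patches lie in $\omega_{\mathcal{T}_K^1}$, which yields the bound with $Z_K = \omega_{\mathcal{T}_K^1}$.

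This step hides a subtlety: the fine estimate counts the overlap multiplicity through $N_{\operatorname{ov}}$ of $\widehat{\mathcal{T}}$, and that multiplicity may grow with the number of fine triangles in $D_K$. The remedy is to observe that the sum over $\widehat{K}\subseteq D_K$ of $\Vert\cdot\Vert^2_{L^2(\omega_{\widehat{\mathcal{T}}_{\widehat{K}}})}$ is bounded by $N_{\operatorname{ov}}(\widehat{\mathcal{T}})\,\Vert\cdot\Vert^2_{L^2(\omega_{\mathcal{T}^1_K})}$. This bound is uniform because $\gamma_{\operatorname{spr}}\lesssim 1$.

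\textbf{Step 3 (triangles away from the refinement).} Let $K\in\mathcal{T}\setminus\mathcal{R}^{1/2}$. Then $D_K = K \in \mathcal{T}\cap\widehat{\mathcal{T}}$, and we must show $\Vert\nabla\widehat{w}\Vert_{\mathbf{L}^2(K)}\lesssim\Vert\nabla\widehat{u}\Vert_{\mathbf{L}^2(K)}$. The key check is that $K\notin\widehat{\mathcal{R}}^{1/2}$. Indeed, if $K$ shared a fine edge with some $\widehat{K}'\in\widehat{\mathcal{R}}$, that edge would be a coarse edge of $K$ shared with the parent of $\widehat{K}'$, which lies in $\mathcal{R}$. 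This would place $K$ in $\mathcal{R}^{1/2}$, a contradiction. Hence on $\widehat{\mathcal{T}}$ we have $D_K = K$, and \eqref{SubEq:Mixed Operator properties - loc bounded} gives the claim directly.

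\textbf{Main obstacle.} The step I expect to be delicate is the bookkeeping in Step 2 near the refinement. One must confirm that the fine neighbourhood $D_{\widehat{K}}$, taken over the fine triangles inside $\omega_{\mathcal{T}_K}$, stays within $\omega_{\mathcal{T}_K^1}$, so that the composed neighbourhood is indeed $Z_K$. One must also confirm that the hidden constants depend only on the shape regularity of $\mathcal{T}$ via Remark \ref{Rem:Shape-regularity}. A further point to check is that Lemma \ref{Lem:Partially conforming} applied on $\mathcal{T}$ to $\widehat{w}$, a function that is only in $H^{\CR}_{k,0}(\mathcal{T})$ and not piecewise polynomial, is legitimate. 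It is, since that lemma is stated for all of $H_{k,0}^{\CR}(\mathcal{T})$.
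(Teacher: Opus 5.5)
Your proposal is correct and follows the paper's proof. You apply the coarse local bound for $M_{k,0}^{\mathcal{R}}$ to $M_{k,0}^{\widehat{\mathcal{R}}}\widehat{u}\in H_{k,0}^{\CR}(\mathcal{T})$, then the fine local bound on each $\widehat{K}\in\operatorname{succ}(K')$ with $K'\in\mathcal{T}_K$, and close with the inclusion $\omega_{\widehat{\mathcal{T}}_{\widehat{K}}}\subseteq\omega_{\mathcal{T}_{K'}}$ and finite overlap. Your explicit check that $K\notin\widehat{\mathcal{R}}^{1/2}$ for $K\in\mathcal{T}\setminus\mathcal{R}^{1/2}$ fills in what the paper dismisses as ``repeat the same argument''. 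Your overlap worry is unfounded: $N_{\operatorname{ov}}(\widehat{\mathcal{T}})$ depends only on $\gamma_{\operatorname{spr}}$, not on how many fine triangles lie in $D_K$, which is exactly the remedy you give.
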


\begin{proof}
\textbf{@\labelcref{SubEq:Prop Phat - stability}:} Let $K \in \mathcal{T}$ be some coarse triangle and consider any $\widehat{u} \in H_{k,0}^{\CR} ( \widehat{\mathcal{T}} )$. Since $M_{k,0}^{\CR} \widehat{v} \in H_{k,0}^{\CR} ( \mathcal{T} )$ for all $\widehat{v} \in H_{k,0}^{\CR} ( \widehat{\mathcal{T}} )$ (see Lemma \ref{Lem:FineToCoarse - successive application}), we may apply \eqref{SubEq:Mixed Operator properties - loc bounded} on the coarse level and obtain that
\begin{align}
\Vert \nabla \widehat{P} \widehat{u} \Vert_{\mathbf{L}^2 ( K )}
=
\Vert \nabla M_{k,0}^{\mathcal{R}}  ( M_{k,0}^{\widehat{\mathcal{R}}} \widehat{u} ) \Vert_{\mathbf{L}^2 ( K )}
\overset{\eqref{SubEq:Mixed Operator properties - loc bounded}}{\lesssim}
\Vert \nabla_{\widehat{\mathcal{T}}}  M_{k,0}^{\widehat{\mathcal{R}}} \widehat{u} \Vert_{\mathbf{L}^2 ( D_K )}.
\label{Eq:Local continuity Phat - coarse estimate}
\end{align}
If $K \in \mathcal{T} \setminus \mathcal{R}^{1/2}$ we know from \eqref{Eq:L2 stability region} that $D_K = K$. Therefore, we repeat the same argument for $\Vert \nabla_{\widehat{\mathcal{T}}}  M_{k,0}^{\widehat{\mathcal{R}}} \widehat{u} \Vert_{\mathbf{L}^2 ( D_K )} = 
\Vert \nabla_{\widehat{\mathcal{T}}}  M_{k,0}^{\widehat{\mathcal{R}}} \widehat{u} \Vert_{\mathbf{L}^2 ( K )}$ and obtain that $\Vert \nabla \widehat{P} \widehat{u} \Vert_{\mathbf{L}^2 ( K )} \lesssim \Vert \nabla_{\widehat{\mathcal{T}}}  \widehat{u} \Vert_{\mathbf{L}^2 ( K )}$.

If $K \in \mathcal{R}^{1/2}$, we have $D_K = \omega_{\mathcal{T}_K}$ from \eqref{Eq:L2 stability region}. Arguing as in \eqref{Eq:Local continuity Phat - coarse estimate} for $\Vert \nabla_{\widehat{\mathcal{T}}}  M_{k,0}^{\widehat{\mathcal{R}}} \widehat{u} \Vert_{\mathbf{L}^2 ( D_K )} = \Vert \nabla_{\widehat{\mathcal{T}}}  M_{k,0}^{\widehat{\mathcal{R}}} \widehat{u} \Vert_{\mathbf{L}^2 ( \omega_{\mathcal{T}_K} )}$, we see that
\begin{align*}
\Vert \nabla_{\widehat{\mathcal{T}}}  M_{k,0}^{\widehat{\mathcal{R}}} \widehat{u} \Vert_{\mathbf{L}^2 ( Z_K )}^2
&=
\sum_{K \in \mathcal{T}_K}
\sum_{\widehat{K} \in \operatorname{succ} ( K )}
\Vert \nabla M_{k,0}^{\widehat{\mathcal{R}}} \widehat{u} \Vert_{\mathbf{L}^2 ( \widehat{K} )}^2
\lesssim
\sum_{K \in \mathcal{T}_K}
\sum_{\widehat{K} \in \operatorname{succ} ( K )}
\Vert \nabla_{\widehat{\mathcal{T}}}  \widehat{u} \Vert_{\mathbf{L}^2 ( \omega_{\widehat{\mathcal{T}}_{\widehat{K}}} )}^2.
\end{align*}
We use that for all $\widehat{K} \in \operatorname{succ} ( K^{\prime} )$ where $K^{\prime} \in \mathcal{T}_K$ the inclusion $\omega_{\widehat{\mathcal{T}}_{\widehat{\mathcal{T}}}} \subseteq \omega_{\mathcal{T}_{K^{\prime}}}$ holds. This is illustrated by the meshes \textbf{(a)} and \textbf{(b)} in Figure \ref{Fig:Refined zones}. Consequently
\begin{align}
\bigcup_{K^{\prime} \in \mathcal{T}_K} 
\bigcup_{\widehat{K} \in \operatorname{succ} ( K^{\prime} )}
\omega_{\widehat{\mathcal{T}}_{\widehat{K}}}
&\subseteq
\bigcup_{K^{\prime} \in \mathcal{T}_K} 
\omega_{\mathcal{T}_{K^{\prime}}}
= 
\omega_{\mathcal{T}_K^1}
=
Z_K
\label{Eq:Fine inclusion TK1}
\end{align}
holds true. Combining the finite overlap property \eqref{Eq:Finite overlap and minimal angle constants} with the inclusion in \eqref{Eq:Fine inclusion TK1}, we deduce that
\begin{align*}
\sum_{K \in \mathcal{T}_K}
\sum_{\widehat{K} \in \operatorname{succ} ( K )}
\Vert \nabla_{\widehat{\mathcal{T}}}  \widehat{u} \Vert_{\mathbf{L}^2 ( \omega_{\widehat{\mathcal{T}}_{\widehat{K}}} )}^2
&\lesssim
\Vert \nabla_{\widehat{\mathcal{T}}} \widehat{u} \Vert_{\mathbf{L}^2 ( Z_K)}^2.
\end{align*}
By inserting this and the previous estimate into \eqref{Eq:Local continuity Phat - coarse estimate} we conclude that $ \Vert \nabla \widehat{P} \widehat{u} \Vert_{\mathbf{L}^2 ( K )} \lesssim \Vert \nabla_{\widehat{\mathcal{T}}} \widehat{u} \Vert_{\mathbf{L}^2 ( Z_K)}$.
\end{proof}

%
%

\begin{proof}[Proof of Lemma \ref{Lem:Existence Phat}]
Consider any $\widehat{u} \in \CR_{k,0} ( \widehat{\mathcal{T}} )$.

\textbf{@\labelcref{SubEq:Conditions Phat - Identiy}:} From $\mathcal{T} \setminus \mathcal{R}^{1/2} = \widehat{\mathcal{T}} \setminus\widehat{\mathcal{R}}^{1/2}$ and Proposition \ref{Prop:S-conforming prop} it follows that $( M_{k,0}^{\widehat{\mathcal{R}}}  \widehat{u} ) \vert_K = \widehat{u} \vert_K$. Repeating the same arguments on the coarse level for $M_{k,0}^{\mathcal{R}}$ and $M_{k,0}^{\widehat{\mathcal{R}}}  \widehat{v}$ reveals that
$
(\widehat{P} \widehat{u} ) \vert_K
=
M_{k,0}^{\mathcal{R}} 
(
M_{k,0}^{\widehat{\mathcal{R}}}  \widehat{u} )   \vert_K
=
( M_{k,0}^{\widehat{\mathcal{R}}}  \widehat{u} ) \vert_K
=
\widehat{u} \vert_K
$ 
for all triangles $K \in \mathcal{T} \setminus \mathcal{R}^{1/2}$. 

\textbf{@\labelcref{SubEq:Conditions Phat - approx propterties vol}:} The proof of this property is based on the approximation properties of both $M_{k,0}^{\mathcal{R}}$ and $M_{k,0}^{\widehat{\mathcal{R}}}$. We observe that
\begin{align*}
\Vert ( \operatorname{Id} - \widehat{P} ) \widehat{u} \Vert_{L^2 ( K )}
\leq
\Vert ( \operatorname{Id} - M_{k,0}^{\widehat{\mathcal{R}}}) \widehat{u} \Vert_{L^{2}(  K )}
+
\Vert ( \operatorname{Id} - M_{k,0}^{\mathcal{R}} ) M_{k,0}^{\widehat{\mathcal{R}}} \widehat{u} \Vert_{L^{2}( K )}.
\end{align*}
Applying Lemma \ref{Lem:Partially conforming} \eqref{SubEq:Mixed Operator properties - approx} and the finite overlap property \eqref{Eq:Finite overlap and minimal angle constants} to $\Vert ( \operatorname{Id} - M_{k,0}^{\widehat{\mathcal{R}}}) \widehat{u} \Vert_{L^{2}(  K )}$ reveals
\begin{align*}
\Vert ( \operatorname{Id} - M_{k,0}^{\widehat{\mathcal{R}}}) \widehat{u} \Vert_{L^{2}(  K )}^{2}
&=
\sum_{\widehat{K} \in \operatorname{succ} ( K )}
\Vert ( \operatorname{Id} - M_{k,0}^{\widehat{\mathcal{R}}}) \widehat{u} \Vert_{L^{2}(  \widehat{K} )}
\overset{\eqref{SubEq:Mixed Operator properties - approx}}{\lesssim}
\sum_{\widehat{K} \in \operatorname{succ} ( K )}
h_{\widehat{K}}^2
\Vert \nabla_{\widehat{\mathcal{T}}} \widehat{u} \Vert_{\mathbf{L}^2 ( D_{\widehat{K}} )}^2
\\
&\overset{\eqref{Eq:Finite overlap and minimal angle constants}}{\lesssim}
h_K^2 \Vert \nabla_{\widehat{\mathcal{T}}} \widehat{u} \Vert_{\mathbf{L}^2 ( D_K )}^2 
\overset{D_K \subseteq Z_K}{\leq} h_K^2 \Vert \nabla_{\widehat{\mathcal{T}}} \widehat{u} \Vert_{\mathbf{L}^2 ( Z_K )}^2 .
\end{align*}
The same arguments together with the local boundedness of $M_{k,0}^{\widehat{\mathcal{R}}}$ leads to
\begin{align*}
\Vert ( \operatorname{Id} - M_{k,0}^{\mathcal{R}} ) M_{k,0}^{\widehat{\mathcal{R}}} \widehat{u} \Vert_{L^{2}( K )}
&
\overset{\eqref{SubEq:Mixed Operator properties - approx}}{\lesssim}
h_K \Vert \nabla_{\mathcal{T}} M_{k,0}^{\widehat{\mathcal{R}}} \widehat{u} \Vert_{L^2 ( D_K )}
\overset{\eqref{SubEq:Mixed Operator properties - loc bounded} , \eqref{Eq:Fine inclusion TK1}}{\lesssim}
h_K \Vert \nabla_{\widehat{\mathcal{T}}} \widehat{u} \Vert_{L^2 ( Z_K )}.
\end{align*}


\textbf{@\labelcref{SubEq:Conditions Phat - approx propterties edge}:} 
As for \eqref{SubEq:Conditions Phat - approx propterties vol} observe that
\begin{align}
\Vert  \llbrace ( 1 - \widehat{P} ) \widehat{u} \rrbrace_E \Vert_{L^2 ( E)}
& \leq
\Vert \llbrace( 1 - M_{k,0}^{\widehat{\mathcal{R}}} ) \widehat{u} \rrbrace_E \Vert_{L^2 ( E)}
+
\Vert \llbrace (1 - M_{k,0}^{\mathcal{R}} ) M_{k,0}^{\widehat{\mathcal{R}}} \widehat{u} \rrbrace_E \Vert_{L^2 ( E)}
\label{Eq:Phat approx - edge estimate}
\end{align}
by using the definition of $\widehat{P}$. The first term in \eqref{Eq:Phat approx - edge estimate} is estimated through the application of a scaled trace inequalities in a standard way on the fine mesh $\widehat{\mathcal{T}}$ and the local approximation properties of $M_{k,0}^{\widehat{\mathcal{R}}}$. The second summand is treated similarly.
\end{proof}

Combining Lemmata \ref{Lem:Upper bound by tangential jumps} and \ref{Lem:Existence Phat} allows us to estimate the term $a_{\widehat{\mathcal{T}}} ( \widehat{u}_h - u_h, \widehat{u}_h - \widehat{u}_h^{\ast})$ from the proof of the main theorem as stated in Lemma \ref{Lem:Conformity estimation}.

\begin{proof}[Proof of Lemma \ref{Lem:Conformity estimation}]
For a given right-hand side function $f \in L^2 ( \Omega )$, let $u_h \in \CR_{k,0} ( \mathcal{T} )$ and $\widehat{u}_h \in \CR_{k,0} ( \mathcal{T} )$ be the coarse and fine $\CR_k$ solutions to \eqref{Eq:DiscretePoisson}. Furthermore let $\widehat{u}_h^{\ast} := I_{k,0}^{\widehat{\mathcal{T}}, \CR} ( J_{k,0} u_h ) \in \CR_{k,0} ( \widehat{\mathcal{T}} )$ be given by Lemma \ref{Lem:Upper bound by tangential jumps}. We denote the discrete error by $\widehat{e}_h :=  \widehat{u}_h - u_h$ and let $\widehat{v}_h \in \CR_{k,0} ( \widehat{\mathcal{T}} )$ be given by $\widehat{v}_h := \widehat{u}_h - \widehat{u}_h^{\ast}$, so that $ a_{\widehat{\mathcal{T}}} ( \widehat{u}_h - u_h, \widehat{u}_h - \widehat{v}_h^{\ast} ) = a_{\widehat{\mathcal{T}}} ( \widehat{e}_h , \widehat{v}_h^{\ast} )$. Recall that $\widehat{P} \widehat{v}_h \in \CR_{k,0} ( \mathcal{T} ) \cap \CR_{k,0} ( \widehat{\mathcal{T}} )$ (cf., \eqref{Eq:PHat image}). Then the solution properties of both the coarse solution $u_h$ and the fine solution $\widehat{u}_h$, imply Galerkin orthogonality $a_{\widehat{\mathcal{T}}} ( \widehat{e}_h, \widehat{P} \widehat{v}_h ) =0$ and therefore $a_{\widehat{\mathcal{T}}}  ( \widehat{e}_h ,  \widehat{v}_h^{\ast} )
=
a_{\widehat{\mathcal{T}}}  ( \widehat{e}_h , (1 - \widehat{P} ) \widehat{v}_h^{\ast} )$. Since $\widehat{u}_h$ solves \eqref{Eq:DiscretePoisson} for all $\widehat{v} \in \CR_{k,0} ( \widehat{\mathcal{T}} )$ and $( 1 - \widehat{P} ) \widehat{v}_h^{\ast} \in \CR_{k,0} ( \widehat{\mathcal{T}} )$ by construction, we employ integration by parts and the $\CR_k$ jump conditions over all fine edges $\widehat{E} \in \mathcal{E} ( \widehat{\mathcal{T}} )$ which results in
\begin{align*}
a_{\widehat{\mathcal{T}}}  ( \widehat{e}_h , (1 - \widehat{P} ) \widehat{v}_h^{\ast} )
=&
\sum_{\widehat{K} \in \widehat{\mathcal{T}}}
\int_{\widehat{K}}
( f + \Delta u_h )
( ( 1 - \widehat{P} ) \widehat{v}_h^{\ast} ) 
+
\sum_{\widehat{E} \in \mathcal{E} ( \widehat{\mathcal{T}} )} 
\int_{\widehat{E}}
 \llbracket \langle \nabla_{\widehat{\mathcal{T}}} u_h , \mathbf{n}_{\widehat{E}} \rangle 
( ( 1 -\widehat{P} ) \widehat{v}_h^{\ast} )  \rrbracket_{\widehat{E}}
\\
\overset{\eqref{Eq:Jump of products}}{=} &
\sum_{\widehat{K} \in \widehat{\mathcal{T}}}
\int_{\widehat{K}}
( f + \Delta u_h )
( ( 1 - \widehat{P} ) \widehat{v}_h^{\ast}) 
+
\sum_{\widehat{E} \in \mathcal{E} ( \widehat{\mathcal{T}} )} 
\int_{\widehat{E}}
\langle \llbracket \nabla_{\widehat{\mathcal{T}}} u_h \rrbracket_{\widehat{E}} , \mathbf{n}_{\widehat{E}} \rangle
\llbrace ( 1 -\widehat{P} ) \widehat{v}_h^{\ast} \rrbrace_{\widehat{E}}.
\end{align*}
Recall $(( 1 - \widehat{P} ) \widehat{v}_h^{\ast}) \vert_K = 0$ for all $K \in \mathcal{T} \setminus \mathcal{R}^{1/2}$ from \eqref{SubEq:Conditions Phat - Identiy} which implies that $( 1 - \widehat{P} ) \widehat{v}_h^{\ast}  \in \CR_{k,0} ( \mathcal{R}^{1/2} )$, we write
\begin{align*}
S_{\operatorname{vol}} := 
\sum_{\widehat{K} \in \widehat{\mathcal{R}}^{1/2}}
\int_{\widehat{K}}
( f + \Delta u_h )
( ( 1 - \widehat{P} ) \widehat{v}_h^{\ast} )
& \quad \text{and} \quad
S_{\operatorname{edge}} 
:= 
\sum_{\widehat{E} \in \mathcal{E}_{\Omega} ( \widehat{\mathcal{R}} )} 
\int_{\widehat{E}}
\langle \llbracket \nabla_{\widehat{\mathcal{T}}} u_h \rrbracket_{\widehat{E}} , \mathbf{n}_{\widehat{E}} \rangle
\llbrace ( 1 -\widehat{P} ) \widehat{v}_h^{\ast} \rrbrace_{\widehat{E}}
\end{align*}
and observe that the previous identity can be written as
\begin{align}
a_{\widehat{\mathcal{T}}}  ( \widehat{e}_h , (1 - \widehat{P} ) \widehat{v}_h^{\ast} )
&=
S_{\operatorname{vol}} + S_{\operatorname{edge}}.
\label{Eq:Svol + Sedge}
\end{align}

First $S_{\operatorname{vol}}$ is considered. Rearranging the sum and applying the Cauchy-Schwarz inequality and the volume approximation properties \eqref{SubEq:Conditions Phat - approx propterties vol} of $\widehat{P}$, we observe that
\begin{align}
S_{\operatorname{vol}}
&=
\sum_{K \in \mathcal{R}^{1/2}} 
\int_{K}
( f + \Delta u_h )
( ( 1 - \widehat{P} ) \widehat{v}_h^{\ast} )
\nonumber
\leq
\sum_{K \in \mathcal{R}^{1/2}} 
\Vert f + \Delta u_h \Vert_{L^2 ( K )}
\Vert ( 1 - \widehat{P} ) \widehat{v}_h^{\ast} \Vert_{L^2 ( K )}
\\
&\lesssim
\sum_{K \in \mathcal{R}^{1/2}} 
h_K
\Vert f + \Delta u_h \Vert_{L^2 ( K )}
\Vert \nabla_{\widehat{\mathcal{T}}} \widehat{v}_h^{\ast} \Vert_{\mathbf{L}^2 ( \omega_{\mathcal{T}_K^1} )}.
\label{Eq:Svol estimate}
\end{align}

For $S_{\operatorname{edge}}$, we first observe that for all edges $\widehat{E} \in \mathcal{E}_{\Omega} ( \widehat{\mathcal{R}} )$ which lie in the interior of some coarse triangle $K \in \mathcal{T}$, the coarse solution $u_h$ is continuous over $\widehat{E}$ since $u_h \vert_K \in \mathbb{P}_k ( K )$. Consequently $\llbracket u_h \rrbracket_{\widehat{E}} = 0$ and we rewrite $S_{\operatorname{edge}}$ as
\begin{align*}
S_{\operatorname{edge}}
&=
\sum_{E \in \mathcal{E}_{\Omega} ( \mathcal{R} )}
\sum_{\widehat{E} \in \operatorname{succ} ( E )}
\int_{\widehat{E}}
\langle \llbracket \nabla_{\mathcal{T}} u_h \rrbracket_{\widehat{E}} , \mathbf{n}_{\widehat{E}} \rangle
\llbrace ( 1 -\widehat{P} ) \widehat{v}_h^{\ast} \rrbrace_{\widehat{E}}
\nonumber
=
\sum_{E \in \mathcal{E}_{\Omega} ( \mathcal{R} )}
\int_E \langle \llbracket \nabla_{\mathcal{T}} u_h \rrbracket_E , \mathbf{n}_E \rangle
\llbrace ( 1 -\widehat{P} ) \widehat{v}_h^{\ast} \rrbrace_E.
\end{align*}
Applying the Cauchy-Schwarz inequality and the edge interpolation properties \eqref{SubEq:Conditions Phat - approx propterties edge} of $\widehat{P}$, we obtain that
\begin{align}
S_{\operatorname{edge}}
&\leq
\sum_{E \in \mathcal{E}_{\Omega} ( \mathcal{R} )}
\Vert \langle \llbracket \nabla_{\mathcal{T}} u_h \rrbracket_E , \mathbf{n}_E\rangle \Vert_{L^2 ( E )}
\Vert \llbrace ( 1 - \widehat{P} ) \widehat{v}_h^{\ast} \rrbrace_E \Vert_{L^2 ( E)}
\nonumber
\\ 
&\lesssim
\sum_{E \in \mathcal{E}_{\Omega} ( \mathcal{R} )}
\sum_{K^{\prime} \in \mathcal{T}_E}
h_{K^{\prime}}^{1/2}
\Vert \langle \llbracket \nabla_{\mathcal{T}} u_h \rrbracket_E, \mathbf{n}_E \rangle \Vert_{L^2 ( E )}
\Vert \nabla_{\widehat{\mathcal{T}}} \widehat{v}_h^{\ast} \Vert_{\mathbf{L}^2 ( \omega_{\mathcal{T}_{K^{\prime}}^1} )}.
\label{Eq:Sedge complete estimate}
\end{align}
By combining \eqref{Eq:Svol estimate} -- \eqref{Eq:Sedge complete estimate} and inserting them into \eqref{Eq:Svol + Sedge}, we find that $a_{\widehat{\mathcal{T}}}  ( \widehat{e}_h , (1 - \widehat{P} ) \widehat{v}_h^{\ast} )$ is estimated by
\begin{align*}
a_{\widehat{\mathcal{T}}} ( \widehat{e}_h, ( 1 - \widehat{P}) \widehat{v}_h^{\ast} )
\lesssim &
\sum_{K \in \mathcal{R}^{1/2}}  
h_K
\Vert f + \Delta u_h \Vert_{L^2 ( K )}
\Vert \nabla_{\widehat{\mathcal{T}}} \widehat{v}_h^{\ast} \Vert_{\mathbf{L}^2 ( \omega_{\mathcal{T}_K^1} )}
+ 
\\
&+
\sum_{E \in \mathcal{E}_{\Omega} ( \mathcal{R} )}
\sum_{K^{\prime} \in \mathcal{T}_E}
h_{K^{\prime}}^{1/2}
\Vert  \langle \llbracket \nabla_{\mathcal{T}} u_h \rrbracket_E , \mathbf{n}_{E} \rangle \Vert_{L^2 ( E )}
\Vert \nabla_{\widehat{\mathcal{T}}} \widehat{v}_h^{\ast}
\Vert_{\mathbf{L}^2 ( \omega_{\mathcal{T}_{K^{\prime}}^1} )}.
\end{align*}
Rearranging the terms and using the fact that $\mathcal{R} \subseteq \mathcal{R}^{1/2}$ and $\mathcal{E}_{\Omega} ( \mathcal{R} ) \subseteq \mathcal{E}_{\Omega} ( \mathcal{R}^{1/2})$, we further observe that
\begin{align*}
a_{\widehat{\mathcal{T}}} ( \widehat{e}_h, ( 1 - \widehat{P}) \widehat{v}_h^{\ast} )
\lesssim &
\sum_{K \in \mathcal{R}^{1/2}}
\left(
h_K
\Vert f + \Delta u_h \Vert_{L^2 ( K )}
+ 
\sum_{E \in \mathcal{E}_{\Omega} ( K )}
h_K^{1/2}
\Vert  \langle \llbracket \nabla_{\mathcal{T}} u_h \rrbracket_E , \mathbf{n}_E \rangle \Vert_{L^2 ( E )}
\right)
\Vert \nabla_{\widehat{\mathcal{T}}} \widehat{v}_h^{\ast}
\Vert_{\mathbf{L}^2 ( \omega_{\mathcal{T}_K^1} )}.
\end{align*}
Since $\operatorname{card} \mathcal{E}_{\Omega} ( K ) \leq 3$, the term in brackets in the previous estimate consists of at most four terms. This and an application of the discrete Cauchy-Schwarz inequality to the right-hand side of previous estimate, yield
\begin{align*}
&\sum_{K \in \mathcal{R}^{1/2}}
\left(
h_K
\Vert f + \Delta u_h \Vert_{L^2 ( K )}
+
\sum_{E \in \mathcal{E}_{\Omega} ( K )}
h_K^{1/2}
\Vert  \langle \llbracket \nabla_{\mathcal{T}} u_h \rrbracket_E , \mathbf{n}_{E} \rangle \Vert_{L^2 ( E )}
\right)
\Vert \nabla_{\widehat{\mathcal{T}}} \widehat{v}_h^{\ast}
\Vert_{\mathbf{L}^2 ( \omega_{\mathcal{T}_K^1} )}
\\
&\leq 
2 \left( \sum_{K \in \mathcal{R}^{1/2}}
h_K^2
\Vert f + \Delta u_h \Vert_{L^2 ( K )}^2
+
\sum_{E \in \mathcal{E}_{\Omega} ( K )}
h_K
\Vert  \langle \llbracket \nabla_{\mathcal{T}} u_h \rrbracket_E , \mathbf{n}_{E} \rangle \Vert_{L^2 ( E )}^2
\right)^{1/2}
\left(
\sum_{K \in \mathcal{R}^{1/2}}
\Vert  \nabla_{\widehat{\mathcal{T}}} \widehat{v}_h^{\ast} \Vert_{\mathbf{L}^2 ( \omega_{\mathcal{T}_K^1} )}^2 \right)^{1/2}.
\end{align*}
The finite overlap property and the inclusion $\omega_{\mathcal{R}^2} \subseteq \Omega$ lead to $\sum_{K \in \mathcal{R}^{1/2}} \Vert  \nabla_{\widehat{\mathcal{T}}} \widehat{v}_h^{\ast} \Vert_{\mathbf{L}^2 ( \omega_{\mathcal{T}_K^1} )}^2  \lesssim 
\Vert \nabla_{\widehat{\mathcal{T}}} \widehat{v}_h \Vert_{\mathbf{L}^{2}( \Omega )}^2$.
We arrive at the final estimate 
\begin{align*} 
a_{\widehat{\mathcal{T}}} ( \widehat{e}_h, ( 1 - \widehat{P}) \widehat{v}_h^{\ast} )
 \lesssim
\mu ( u_h ; \mathcal{R}^{1/2}, f)
\Vert \nabla_{\widehat{\mathcal{T}}}  \widehat{v}_h^{\ast} \Vert_{\mathbf{L}^2 ( \Omega )},
\end{align*}
by using the equivalence of local mesh-sizes in \eqref{Eq:Equivalence local mesh size and volume}.
\end{proof}

\section{Stability and reduction}
\label{Sec:StabilityReductionEfficiency}

In this section we prove that the estimator $\eta ( \cdot ; \mathcal{T},f)$ from \eqref{SubEq:A posteriori - global} satisfies the axioms \eqref{Eq:Stability} and \eqref{Eq:Reduction}. Both proofs rely on standard techniques for residual estimators since $\eta ( \cdot; \mathcal{T}, f)$ is of this type.

\begin{lemma}[Stability] \label{Lem:Stability}
For any $k \geq 1$ odd and admissible refinement $\widehat{\mathcal{T}} \in \mathbb{T} ( \mathcal{T} )$ it holds that 
\begin{align*}
\vert \eta ( v ; \mathcal{T} \cap \widehat{\mathcal{T}}, f) 
- 
\eta ( \widehat{v}; \widehat{\mathcal{T}} \cap \mathcal{T}, f) \vert
& \lesssim
\Vert \nabla_{\mathcal{T}} v 
-  
\nabla_{\widehat{\mathcal{T}}} \widehat{v} \Vert_{\mathbf{L}^{2}( \Omega )}
\qquad \forall v \in \CR_{k,0} ( \mathcal{T}), \
\widehat{v} \in \CR_{k,0} ( \widehat{\mathcal{T}} ).
\end{align*}
\end{lemma}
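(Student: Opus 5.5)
We will prove the stability estimate by the standard residual-estimator argument of \cite{CFPP-AxiomsOfAdaptivity}, working triangle by triangle on the unrefined set $\mathcal{T} \cap \widehat{\mathcal{T}}$. For $K \in \mathcal{T} \cap \widehat{\mathcal{T}}$ we regard $\eta ( \cdot ; K, f )$ as the Euclidean norm of the vector of weighted local contributions: the volume residual $\vert K \vert^{1/2} \Vert f + \Delta \cdot \Vert_{L^2(K)}$, the normal jumps $\vert K \vert^{1/4} \Vert \langle \llbracket \nabla \cdot \rrbracket_E , \mathbf{n}_E \rangle \Vert_{L^2(E)}$ and the tangential jumps $\vert K \vert^{1/4} \Vert \langle \llbracket \nabla \cdot \rrbracket_E , \mathbf{t}_E \rangle \Vert_{L^2(E)}$. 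Since $K$ and all its edges belong to both meshes, the weights coincide for the coarse and fine mesh, the edge jumps are computed on the same edges, and $\nabla_{\mathcal{T}}$ and $\nabla_{\widehat{\mathcal{T}}}$ agree on $K$. Applying the reverse triangle inequality in $\mathbb{R}^{\operatorname{card}\mathcal{T}\cap\widehat{\mathcal{T}}}$ twice (locally and in the $\ell^2$-sum \eqref{SubEq:A posteriori - global}), the data $f$ cancels. It therefore suffices to bound $\eta$-type quantities of $w := v - \widehat{v}$ on $\mathcal{T} \cap \widehat{\mathcal{T}}$ without $f$, namely
\begin{align*}
\vert \eta ( v ; \mathcal{T} \cap \widehat{\mathcal{T}}, f) - \eta ( \widehat{v}; \widehat{\mathcal{T}} \cap \mathcal{T}, f) \vert^2
\lesssim
\sum_{K \in \mathcal{T} \cap \widehat{\mathcal{T}}}
\Big( \vert K \vert \Vert \Delta w \Vert_{L^2 ( K )}^2
+ \vert K \vert^{1/2} \sum_{E \in \mathcal{E} ( K )} \Vert \llbracket \nabla_{\widehat{\mathcal{T}}} w \rrbracket_E \Vert_{L^2 ( E )}^2 \Big).
\end{align*}

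On each $K \in \mathcal{T}\cap\widehat{\mathcal{T}}$ the function $w|_K$ is a polynomial of degree $k$, so an inverse inequality \cite[(12.1)]{ErnGuermondI} gives $\vert K \vert^{1/2} \Vert \Delta w \Vert_{L^2(K)} \lesssim \Vert \nabla w \Vert_{\mathbf{L}^2(K)}$. For the jump terms we estimate $\Vert \llbracket \nabla_{\widehat{\mathcal{T}}} w \rrbracket_E \Vert_{L^2(E)} \le \sum_{K' \in \widehat{\mathcal{T}}_E} \Vert \nabla w|_{K'} \Vert_{L^2(E)}$ and apply the discrete trace inequality $h_{K'}^{1/2}\Vert p \Vert_{L^2(E)} \lesssim \Vert p \Vert_{L^2(K')}$ for polynomials $p$. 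Here lies the only subtle point: the neighbour $K'$ of $K$ across $E$ may be a refined fine triangle, but $E$ is an edge of the fine mesh $\widehat{\mathcal{T}}$, so $K'\in\widehat{\mathcal{T}}$ with $w|_{K'}$ polynomial, and by shape regularity of NVB refinements (Remark \ref{Rem:Shape-regularity}) and \eqref{Eq:Equivalence local mesh size and volume} we have $h_{K'} \approx h_E \approx \vert K\vert^{1/2}$. Hence $\vert K \vert^{1/4} \Vert \llbracket \nabla_{\widehat{\mathcal{T}}} w \rrbracket_E \Vert_{L^2(E)} \lesssim \Vert \nabla_{\widehat{\mathcal{T}}} w \Vert_{\mathbf{L}^2(\omega_{\widehat{\mathcal{T}}_E})}$.

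Summing over $K \in \mathcal{T}\cap\widehat{\mathcal{T}}$ and using the finite overlap \eqref{Eq:Finite overlap and minimal angle constants} of the edge patches on the fine mesh yields the bound $\Vert \nabla_{\widehat{\mathcal{T}}} w \Vert_{\mathbf{L}^2(\Omega)}$. Since $\nabla_{\widehat{\mathcal{T}}} v = \nabla_{\mathcal{T}} v$ almost everywhere (the coarse function is polynomial on each successor), this equals $\Vert \nabla_{\mathcal{T}} v - \nabla_{\widehat{\mathcal{T}}}\widehat{v}\Vert_{\mathbf{L}^2(\Omega)}$, which concludes the argument. The main obstacle I expect is purely bookkeeping: checking that the jump of $v$ across an unrefined edge whose other side is refined is evaluated consistently in $\eta(v;\cdot)$ and $\eta(\widehat v;\cdot)$ and that the mesh-size equivalence across such edges holds uniformly, which is ensured by $\gamma_{\operatorname{spr}} \lesssim 1$. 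The non-conformity of the $\CR_k$ spaces plays no role.
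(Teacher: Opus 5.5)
Your proposal is correct and follows essentially the same route as the paper. In both, the reverse triangle inequality cancels $f$, an inverse inequality handles the volume residual, a discrete trace inequality onto the neighbouring fine triangle handles the normal and tangential jumps, and finite overlap finishes the argument. The only difference is in the mesh-size comparison across an edge whose neighbour is refined: you compare $h_{K'}$ with $\vert K\vert^{1/2}$ directly via $h_K \lesssim h_E \le h_{K'}$, whereas the paper splits into interior and interface edges and uses the NVB-specific fact that the fine neighbour has half the area of its coarse parent.
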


\begin{proof}
Recall from \eqref{Eq:A posteriori}, that $\eta( \cdot ; \mathcal{T} \cap \widehat{\mathcal{T}} , f)^2 = \sum_{K \in \mathcal{T} \cap \widehat{\mathcal{T}}} \mu (\cdot ; K, f)^2 + \nu ( \cdot; K)^2$. This leads to
\begin{align}
\vert \eta ( v ; \mathcal{T} \cap \widehat{\mathcal{T}}, f) 
- 
\eta ( \widehat{v}; \widehat{\mathcal{T}} \cap \mathcal{T}, f) \vert
& \leq
\left(
\sum_{K \in \mathcal{T} \cap \widehat{\mathcal{T}}}
\vert \mu (v ; K, f) - \mu ( \widehat{v} ; K, f) \vert^2
+
\vert \nu (v ; K) - \nu ( \widehat{v} ; K ) \vert^2
\right)^{1/2}.
\label{Eq:Stability - first estimate}
\end{align}
Choosing any $K \in \mathcal{T} \cap \widehat{\mathcal{T}}$, we first consider the term $\vert \mu (v ; K, f) - \mu ( \widehat{v} ; K, f) \vert$. Using the definition of $\mu ( \cdot ; K , f)$ from \eqref{SubEq:A posteriori - conformity estimator} we obtain
\begin{align*}
\vert \mu (v ; K, f) - \mu ( \widehat{v} ; K, f) \vert
\leq &
\vert K \vert^{1/2}
\left\vert \Vert f + \Delta v \Vert_{L^2 ( K )} 
-
\Vert f + \Delta \widehat{v} \Vert_{L^{2}( K )} \right\vert
+
\\
&+
\vert K \vert^{1/4}
\sum_{E \in \mathcal{E}_{\Omega} ( K )}
\left\vert \Vert \langle \llbracket \nabla_{\mathcal{T}} v \rrbracket_E , \mathbf{n}_E \rangle \Vert_{L^{2}( E )}
-
 \Vert \langle \llbracket \nabla_{\widehat{\mathcal{T}}} \widehat{v} \rrbracket_E , \mathbf{n}_E \rangle \Vert_{L^{2}( E )}
\right\vert
\\
\leq &
h_K \Vert \Delta ( v - \widehat{v}) \Vert_{L^{2}(  K )}
+
h_K^{1/2} \sum_{E \in \mathcal{E}_{\Omega} ( K )} \Vert \left\langle \llbracket \nabla_{\mathcal{T}}  v - \nabla_{\widehat{\mathcal{T}}}  \widehat{v} \rrbracket_E , \mathbf{n}_E \right\rangle \Vert_{L^{2}( E )}
=: S_1 + S_2.
\end{align*}
We use an inverse inequality \cite[(12.1)]{ErnGuermondI} to estimate $
S_1 
=
h_K \Vert \Delta ( v - \widehat{v}) \Vert_{L^{2}(  K )}
\lesssim \Vert \nabla ( v -  \widehat{v}) \Vert_{\mathbf{L}^2 ( K )}
$.

Similarly, we use the trace inequality and the orthogonality of the normal vector $\mathbf{n}_E$ and the tangent vector $\mathbf{t}_E$ and obtain 
\begin{align*}
S_2
&=
h_K^{1/2} 
\sum_{E \in \mathcal{E}_{\Omega} ( K )} 
\Vert \langle \llbracket \nabla_{\mathcal{T}}  v -  \nabla_{\widehat{\mathcal{T}}} \widehat{v} \rrbracket_E , \mathbf{n}_E \rangle \Vert_{L^{2}( E )}
\leq
h_K^{1/2} 
\sum_{E \in \mathcal{E}_{\Omega} ( K )} 
\Vert  \llbracket \nabla_{\mathcal{T}}  v -  \nabla_{\widehat{\mathcal{T}}} \widehat{v} \rrbracket_E \Vert_{L^{2}(  E )}
\\
&\lesssim
\sum_{E \in \mathcal{E}_{\Omega} ( K )} 
\sum_{K^{\prime} \in \mathcal{S}_E}
\frac{h_K^{1/2}}{h_{K^{\prime}}^{1/2}}
\Vert \nabla_{\mathcal{T}} v -  \nabla_{\widehat{\mathcal{T}}}  \widehat{v} \Vert_{\mathbf{L}^2 ( K^{\prime} )},
\end{align*} 
where $\mathcal{S}_E$ consists of the two adjacent triangles of $E$. Based on the location of the edge $E \in \mathcal{E}_{\Omega} ( \mathcal{T} \cap \widehat{\mathcal{T}})$ we distinguish two cases. For this let $\Gamma_{\mathcal{R}}$ be the interface between $\omega_{\mathcal{T} \cap \widehat{\mathcal{T}}}$ and $\omega_{\mathcal{R}}$.

\textbf{Case 1: $E \in \mathcal{E}_{\Omega} ( \mathcal{T} \cap \widehat{\mathcal{T}}) \setminus \mathcal{E} ( \Gamma_{\mathcal{R}} )$.} This implies that $\mathcal{S}_E = \mathcal{T}_E \subseteq \mathcal{T} \cap \widehat{\mathcal{T}}$. Therefore we know from the equivalence of local mesh sizes \eqref{Eq:Equivalence local mesh size and volume} that $h_K^{1/2} / h_{K^{\prime}}^{1/2} \lesssim 1$.

\textbf{Case 2: $E \in \mathcal{E} ( \Gamma_{\mathcal{R}} )$.} In this case we know that there exists a triangle $\widehat{K} \in \widehat{\mathcal{R}}$, such that $\mathcal{S}_E = \{ K, \widehat{K} \}$. Consider any $K^{\prime} \in \mathcal{S}_E$. If $K^{\prime} = K$ then  $h_K^{1/2} / h_{K^{\prime}}^{1/2} = 1$. If $K^{\prime} = \widehat{K}$, then there exists exactly one triangle $K^{\prime \prime} \in \mathcal{T}$ with $K^{\prime} \in \operatorname{succ} ( K^{\prime \prime} )$. From $E \in \mathcal{E} (\Gamma_{\mathcal{R}})$ follows $ \vert K^{\prime} \vert = \vert K^{\prime \prime}\vert / 2$. This reduces the estimate of $h_K^{1/2} / h_{K^{\prime}}^{1/2} \lesssim h_K^{1/2} / h_{K^{\prime \prime}}^{1/2}$ to \textbf{Case 1}.


Therefore, we have shown that $S_2 \lesssim \sum_{E \in \mathcal{E}_{\Omega} ( K )} \sum_{K^{\prime} \in \mathcal{T}_E} \Vert \nabla_{\mathcal{T}} v - \nabla_{\widehat{\mathcal{T}}} \widehat{v} \Vert_{\mathbf{L}^2 ( K^{\prime} )}$. Notice that the upper bound in this inequality consists of at most six terms. In summary, we get
\begin{align}
\vert \mu (v ; K, f) - \mu ( \widehat{v} ; K, f) \vert^2
\lesssim &
\left(
\Vert \nabla ( v -  \widehat{v}) \Vert_{\mathbf{L}^2 ( K )}
+ 
\sum_{E \in \mathcal{E}_{\Omega} ( K )} 
\sum_{K^{\prime} \in \mathcal{T}_E }
\Vert \nabla_{\mathcal{T}} v -  \nabla_{\widehat{\mathcal{T}}}  \widehat{v} \Vert_{\mathbf{L}^2 ( K^{\prime} )}
\right)^2
\nonumber
\\
\lesssim&
\Vert \nabla_{\mathcal{T}} v - \nabla_{\widehat{\mathcal{T}}} \widehat{v} \Vert_{\mathbf{L}^2 ( \omega_{\mathcal{T}_K} )}^2.
\label{Eq:Stability - mu}
\end{align}

The estimation of the term $\vert \nu (v ; K) - \nu ( \widehat{v} ; K ) \vert$ is identical with the estimation of the normal jump in the previous case, by replacing $\mathbf{n}_E$ with $\mathbf{t}_E$, hence further details are omitted. We conclude that
\begin{align}
\vert \nu (v ; K) - \nu ( \widehat{v} ; K ) \vert^2
& \lesssim
\Vert \nabla_{\mathcal{T}} v - \nabla_{\widehat{\mathcal{T}}} \widehat{v} \Vert_{\mathbf{L}^2 ( \omega_{\mathcal{T}_K} )}^2.
 \label{Eq:Stability -nu}
\end{align}
Combining \eqref{Eq:Stability - mu} and \eqref{Eq:Stability -nu} into \eqref{Eq:Stability - first estimate} and using the finite overlap property \eqref{Eq:Finite overlap and minimal angle constants} concludes the proof.
\end{proof}

\begin{lemma}[Reduction] \label{Lem:Reduction}
There exist constants $0 < \rho_2 < 1$ and $\Lambda_2 > 0$ such that
\begin{align}
\eta ( \widehat{v}, \widehat{\mathcal{T}} \setminus \mathcal{T}, f)^2
& \leq
\rho_2 \eta ( v, \mathcal{T} \setminus \widehat{\mathcal{T}}, f)^2 
+
\Lambda_2 
\Vert \nabla_{\mathcal{T}} v 
-  
\nabla_{\widehat{\mathcal{T}}} \widehat{v} \Vert_{\mathbf{L}^{2}( \Omega )}^2.
\end{align}
\end{lemma}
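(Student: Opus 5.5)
We follow the standard argument for residual estimators from \cite[Sec. 5]{CFPP-AxiomsOfAdaptivity}. The plan has three steps: split $\eta(\widehat{v};\widehat{K},f)$ into a part depending on $v$ and a perturbation, exploit the volume reduction under NVB, and absorb the perturbation into $\Vert \nabla_{\mathcal{T}} v - \nabla_{\widehat{\mathcal{T}}}\widehat{v}\Vert$ with an inverse estimate.

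\textbf{Step 1 (splitting).} Fix $\widehat{K} \in \widehat{\mathcal{T}}\setminus\mathcal{T} = \widehat{\mathcal{R}}$. Every such $\widehat{K}$ has a unique parent $K \in \mathcal{R}$ with $\widehat{K} \in \operatorname{succ}(K)$. Write $\widehat{v} = v + (\widehat{v}-v)$ on $\widehat{K}$. By the triangle inequality, exactly as in the proof of Lemma \ref{Lem:Stability}, we get
\begin{align*}
\eta(\widehat{v};\widehat{K},f) \leq \eta(v;\widehat{K},f)_{\widehat{\mathcal{T}}} + C\,\Vert \nabla_{\mathcal{T}} v - \nabla_{\widehat{\mathcal{T}}}\widehat{v}\Vert_{\mathbf{L}^2(\omega_{\widehat{\mathcal{T}}_{\widehat{K}}})}.
\end{align*}
Here $\eta(v;\widehat{K},f)_{\widehat{\mathcal{T}}}$ denotes the fine-mesh indicator evaluated at the coarse function $v$. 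The perturbation is bounded by an inverse estimate for $h_{\widehat{K}}\Vert\Delta(v-\widehat{v})\Vert$ and by discrete trace inequalities for the normal and tangential jumps of $\nabla(v-\widehat{v})$. Young's inequality with parameter $\delta>0$ then gives
\begin{align*}
\eta(\widehat{v};\widehat{K},f)^2 \leq (1+\delta)\,\eta(v;\widehat{K},f)_{\widehat{\mathcal{T}}}^2 + C(1+\delta^{-1})\,\Vert\nabla_{\mathcal{T}} v-\nabla_{\widehat{\mathcal{T}}}\widehat{v}\Vert^2_{\mathbf{L}^2(\omega_{\widehat{\mathcal{T}}_{\widehat{K}}})}.
\end{align*}

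\textbf{Step 2 (reduction of the coarse-function indicator).} Since $v|_K \in \mathbb{P}_k(K)$, $v$ is smooth inside $K$. Hence both jumps $\llbracket\nabla v\rrbracket$ vanish on fine edges interior to $K$, and on fine edges contained in coarse edges $E$ they coincide with the coarse jumps restricted to the sub-edge. NVB gives $|\widehat{K}| \leq |K|/2$. The weights are $|\cdot|$ for the volume residual and $|\cdot|^{1/2}$ for the jump terms, so summing over $\widehat{K}\in\operatorname{succ}(K)$ yields
\begin{align*}
\sum_{\widehat{K}\in\operatorname{succ}(K)}\eta(v;\widehat{K},f)^2_{\widehat{\mathcal{T}}} \leq 2^{-1/2}\,\eta(v;K,f)^2.
\end{align*}
The factor $2^{-1/2}$ comes from the weaker jump scaling $|K|^{1/2}$. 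The volume part $\Vert f+\Delta v\Vert^2$ splits additively over successors. Each coarse edge segment is counted once per fine neighbour inside $K$, just as on the coarse level.

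\textbf{Step 3 (conclusion).} Sum over $\widehat{\mathcal{R}}$ and use finite overlap \eqref{Eq:Finite overlap and minimal angle constants} of the fine patches; uniform shape regularity under NVB is available by Remark \ref{Rem:Shape-regularity}. Choose $\delta$ with $\rho_2 := (1+\delta)2^{-1/2} < 1$, for instance $\delta = 0.2$, and set $\Lambda_2 := C(1+\delta^{-1})N_{\operatorname{ov}}$.

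\textbf{Main obstacle.} I expect the bookkeeping of jumps across edges of $\Gamma_{\mathcal{R}}$ to be the delicate point. A fine triangle $\widehat{K}\in\widehat{\mathcal{R}}$ can share such an edge with an unrefined neighbour. There the jump of $\nabla v$ involves the neighbour's restriction, which still belongs to $v$, so Step 2 remains valid. In the perturbation step, the neighbour's mesh size must be compared with $h_{\widehat{K}}$, which is done as in Case 2 of Lemma \ref{Lem:Stability}. A second point to check is that the tangential jumps on boundary edges (the term $\nu$ sums over all of $\mathcal{E}(K)$) also scale by the same factor. They do, since they carry the same $|K|^{1/2}$ weight.
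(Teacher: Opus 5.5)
Your proposal is correct and follows essentially the same argument as the paper: you split off the perturbation $\eta(\widehat v)-\eta(v)$ via the stability argument on the fine mesh, and you use that $\nabla v$ has no jumps on fine edges interior to a coarse $K$, together with $|\widehat K|\le |K|/2$, to obtain the factor $2^{-1/2}$, then conclude with Young's inequality. The only difference is cosmetic: the paper applies the triangle inequality to the global sums ($R_1+R_2$, with $R_1\le 2^{-1/4}\eta(v;\mathcal{T}\setminus\widehat{\mathcal{T}},f)$) and uses Young's inequality once at the end, whereas you apply Young's inequality element-wise and then sum using finite overlap.
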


\begin{proof}
The proof splits into two parts. First we prove
\begin{align}
\eta ( \widehat{v}, \widehat{\mathcal{T}} \setminus \mathcal{T}, f) 
&\leq 
2^{-1/4} 
\eta ( v, \mathcal{T} \setminus \widehat{\mathcal{T}}, f)
+ 
\widetilde{\Lambda}_2 
\Vert \nabla_{\mathcal{T}} v 
-  
\nabla_{\widehat{\mathcal{T}}} \widehat{v} \Vert_{\mathbf{L}^{2}( \Omega )},
\label{Eq:Reduction - sqrt estimate}
\end{align} 
for some $\widetilde{\Lambda}_2  > 0$. 

Second, we apply a Young's inequality to \eqref{Eq:Reduction - sqrt estimate} to obtain the reduction property in \eqref{Eq:Reduction}.

\textbf{Step 1:} Let $v \in \CR_{k,0} ( \mathcal{T})$ and $\widehat{v} \in \CR_{k,0} ( \widehat{\mathcal{T}})$ be any coarse and fine $\CR_k$ function and observe
\begin{align*}
\eta ( \widehat{v}; \widehat{\mathcal{T}} \setminus \mathcal{T}, f) 
& \leq
 \eta ( v; \widehat{\mathcal{T}} \setminus \mathcal{T}, f)
+
\vert
\eta (\widehat{v}; \widehat{\mathcal{T}} \setminus \mathcal{T}, f )
- 
\eta (v;  \widehat{\mathcal{T}} \setminus \mathcal{T}, f )
\vert
=:
R_1 + R_2.
\end{align*}
$R_2$ has the same general structure as the left-hand side term of \eqref{Eq:Stability}. We therefore follow the arguments of the proof of Lemma \ref{Lem:Stability} on the fine mesh $\widehat{\mathcal{T}}$ and conclude that there exists $\widetilde{\Lambda}_2 > 0$ such that $R_2 \leq \widetilde{\Lambda}_2 \Vert \nabla_{\mathcal{T}} v - \nabla_{\widehat{\mathcal{T}}} \widehat{v} \Vert_{\mathbf{L}^{2}( \Omega )}$.
The term $R_1$ requires some more details. Using the definition of $\eta ( \cdot; \widehat{K}, f)$ from \eqref{SubEq:A posteriori - local}, we rearrange $R_1^2$ into
\begin{align*}
R_1^2
&=
\sum_{\widehat{K} \in \widehat{\mathcal{T}} \setminus \mathcal{T}}
\eta ( v ; \widehat{K} ,f )^2
=
\sum_{\widehat{K} \in \widehat{\mathcal{T}} \setminus \mathcal{T}}
\mu ( v; \widehat{K}, f)^2 
+
\nu (v ; \widehat{K})^2
=
\sum_{K \in \mathcal{T} \setminus \widehat{\mathcal{T}}}
\sum_{\widehat{K} \in \operatorname{succ} ( K )}
\mu ( v; \widehat{K}, f)^2 
+
\nu (v ; \widehat{K})^2
 \\
&= 
\sum_{K \in \mathcal{T} \setminus \widehat{\mathcal{T}}}
\alpha_K^2
+
\beta_K^2
+ 
\gamma_K^2
\end{align*}
with the terms $\alpha_K^2,\beta_K^2,\gamma_K^2 \in \mathbb{R}_{\geq 0}$ which are given by
\begin{subequations} \label{Eq:Reduction - 3term}
\begin{align}
\alpha_K^2 
&:=
\sum_{\widehat{K} \in \operatorname{succ} ( K )}
\vert \widehat{K} \vert
\Vert f + \Delta v \Vert_{L^2 ( \widehat{K} )}^2,
\label{SubEq:Reduction - 3term - alpha}
\\
\beta_K^2
&:=
\sum_{\widehat{K} \in \operatorname{succ} ( K )}
\sum_{E \in \mathcal{E}_{\Omega} ( \widehat{K} )}
\vert \widehat{K} \vert^{1/2} 
\Vert \langle \llbracket \nabla_{\mathcal{T}} v \rrbracket_{\widehat{E}} , \mathbf{n}_{\widehat{E}} \rangle \Vert_{L^{2}( \widehat{E} )}^2,
\label{SubEq:Reduction - 3term - beta}
\\
\gamma_K^2 
&:=
\sum_{\widehat{K} \in \operatorname{succ} ( K )}
\sum_{E \in \mathcal{E} ( \widehat{K} )}
\vert \widehat{K} \vert^{1/2} 
\Vert \langle \llbracket \nabla_{\mathcal{T}} v \rrbracket_{\widehat{E}} , \mathbf{t}_{\widehat{E}} \rangle \Vert_{L^{2}( \widehat{E} )}^2.
\label{SubEq:Reduction - 3term - gamma}
\end{align}
\end{subequations}
We first consider $\alpha_K^2$. Since $K \in \mathcal{T}$ and $v \in \CR_{k,0} ( \mathcal{T} )$ we have $\Delta ( \left. v \right\vert_K ) \in  \mathbb{P}_{k-2} ( K )$. Also $\widehat{K} \in \operatorname{succ} ( K )$ implies $\vert \widehat{K} \vert \leq \vert K \vert / 2$. Consequently we estimate 
\begin{align*}
\alpha_K^2 
& \leq
\frac{\vert K \vert}{2} 
\sum_{\widehat{K} \in \operatorname{succ} ( K )}
\Vert f + \Delta v \Vert_{L^2 ( \widehat{K} )}^2
=
\frac{\vert K \vert}{2} 
\Vert f + \Delta v \Vert_{L^2 ( K )}^2.
\end{align*}
For $\beta_K^2$ we consider any edge $\widehat{E} \in \mathcal{E}_{\Omega} ( \operatorname{succ} ( K ) )$. Clearly, if the interior of $\widehat{E}$ is contained in the interior of $K$, then $\llbracket \nabla_{\mathcal{T}} v \rrbracket_{\widehat{E}} \equiv 0$ holds and hence
\begin{align*}
\beta_K^2
& \leq
\frac{\vert K \vert^{1/2}}{\sqrt{2}}
\sum_{\widehat{K} \in \operatorname{succ} ( K )}
\sum_{E \in \mathcal{E}_{\Omega} ( \widehat{K} )}
\Vert \langle \llbracket \nabla_{\mathcal{T}} v \rrbracket_{\widehat{E}} , \mathbf{n}_{\widehat{E}} \rangle \Vert_{L^{2}( \widehat{E} )}^2
=
\frac{\vert K \vert^{1/2}}{\sqrt{2}}
\sum_{E \in \mathcal{E}_{\Omega} ( K )}
\sum_{\widehat{E} \in \operatorname{succ} ( E)}
\Vert \langle \llbracket \nabla_{\mathcal{T}} v \rrbracket_{\widehat{E}} , \mathbf{n}_{\widehat{E}} \rangle \Vert_{L^{2}( \widehat{E} )}^2
\\
& \leq
\frac{\vert K \vert^{1/2}}{\sqrt{2}}
\sum_{E \in \mathcal{E}_{\Omega} ( K )}
\Vert \left\langle [ \nabla_{\mathcal{T}} v ]_{E} , \mathbf{n}_{E} \right\rangle \Vert_{L^{2}( E )}^2.
\end{align*}
Since the same arguments hold verbatim for $ \gamma_K^2$, no further details are provided and we obtain that $\gamma_K^2 \leq \vert K \vert^{1/2}\Vert \sum_{E \in \mathcal{E} ( K )} \left\langle [ \nabla_{\mathcal{T}} v ]_{\widehat{E}} , \mathbf{t}_{\widehat{E}} \right\rangle \Vert_{L^{2}( E )}^2/ \sqrt{2}$. Combining all three estimates, we conclude that
\begin{align*}
R_1^2
& \leq
\frac{1}{\sqrt{2}}
\sum_{K \in \mathcal{T} \setminus \widehat{\mathcal{T}}}
\mu ( v ; K , f)^2
+
\nu ( v ; K )^2
=
\frac{1}{\sqrt{2}} 
\sum_{K \in \mathcal{T} \setminus \widehat{\mathcal{T}}}
\eta ( v ; K ,f)^2
=
\frac{1}{\sqrt{2}}
\eta ( v ; \mathcal{T} \setminus \widehat{\mathcal{T}} ,f)^2.
\end{align*}
Taking the square root and combining this estimate with the estimate of $R_2$ implies \eqref{Eq:Reduction - sqrt estimate}.

\textbf{Step 2:} Having proved the estimate \eqref{Eq:Reduction - sqrt estimate}, we employ a Young's inequality to arrive at
\begin{align*}
\eta ( \widehat{v} ; \widehat{\mathcal{T}} \setminus \mathcal{T}, f)^2
& \leq
2^{-1/2} ( 1 + \lambda) 
\eta ( v ; \mathcal{T} \setminus \widehat{\mathcal{T}} ,f)^2
+
\widetilde{\Lambda}_2 ( 1 + \lambda^{-1})
\Vert \nabla_{\mathcal{T}} v - \nabla_{\widehat{\mathcal{T}}} \widehat{v} \Vert_{\mathbf{L}^{2}( \Omega )}^2.
\end{align*}
Setting $\lambda := ( \sqrt{2} - 1)/2$, we compute that $2^{-1/2} ( 1 + \lambda) = (1 + \sqrt{2})/\sqrt{8} =: \rho_2 < 1$.
\end{proof}

\begin{figure}
\begin{center}
\includegraphics[width = .23 \textwidth]{"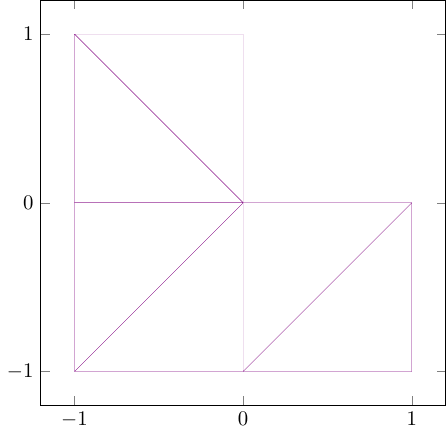"}
\\
\includegraphics[width = .23 \textwidth]{"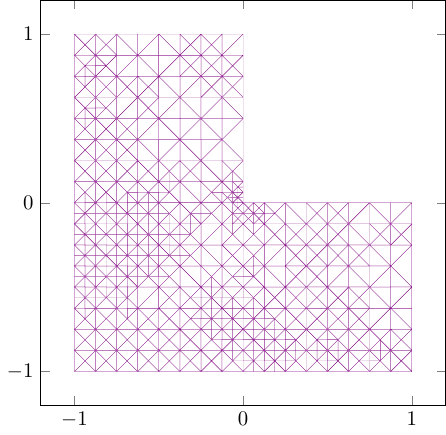"}
\hfil
\includegraphics[width = .23 \textwidth]{"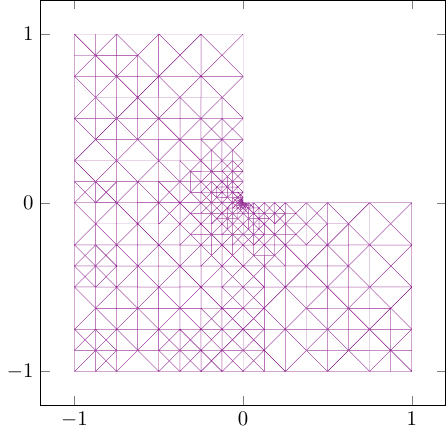"}
\hfil
\includegraphics[width = .23 \textwidth]{"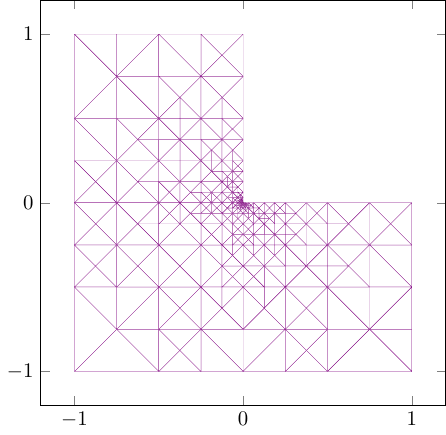"}
\hfil
\includegraphics[width = .23 \textwidth]{"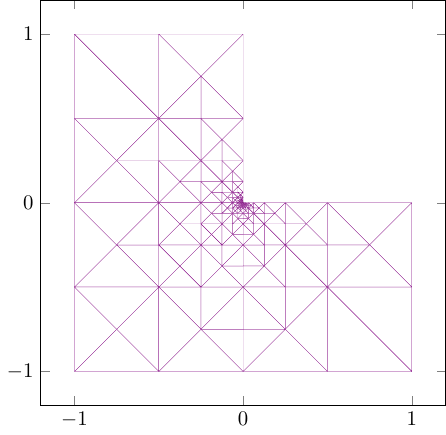"}
\end{center}
\caption{Top: initial triangulation $\mathcal{T}_0$. Bottom: examples of refined meshes generated by the adaptive algorithm. From left to right we have that $\operatorname{card} ( \mathcal{T} ) = 893$ for $k=1$, $\operatorname{card} ( \mathcal{T} ) = 941$ for $k = 3$, $\operatorname{card} ( \mathcal{T} ) = 975$ for $k = 5$ and $ \operatorname{card} ( \mathcal{T} ) = 926$ for $k = 7$.}
\label{Fig:Refinement history}
\end{figure}

\section{Numerical experiments}
\label{Sec:NumericalExperiments}

In Sections \ref{Sec:dRel} and \ref{Sec:StabilityReductionEfficiency}, we prove that the Crouzeix--Raviart finite element method for odd $k \geq 1$ along with the estimator form \eqref{SubEq:A posteriori - global} is stable \eqref{Eq:Stability}, has the reduction property \eqref{Eq:Reduction} and is discretely reliability \eqref{Eq:Discrete reliability}. For a theoretical proof of optimal rates, we would still need to show quasi-orthogonality \eqref{Eq:Quasi orthogonoality - general} but in this section, we provide numerical evidence that the odd high-order Crouzeix--Raviart AFEM loop \eqref{Eq:Adaptive Algorithm} converges with optimal rates.

\subsection{Numerical realization}

All numerical experiments are performed with the finite element software \texttt{NGSolve} \cite{NGSolve}. 
Since only the lowest order $\CR_1$ space is already available in the standard version of \texttt{NGSolve}, we implemented the higher order $\CR_k$ spaces of odd polynomial degree $k \geq 3$ using \texttt{mylittlengsolve}, which is intended for the self-implementation of custom finite element spaces. See https://github.com/NGSolve/mylittlengsolve for reference. 

The adaptive mesh refinement as illustrated in Figure \ref{Fig:Refinement history} uses the residual based error estimator described in \eqref{Eq:A posteriori}. The bulk parameter for the Dörfler marking \cite{Doerfler_convergent} is chosen as $\theta = 1/2$. Both adaptive algorithm and the uniform mesh refinement strategy is terminated once the dimension of $\CR_{k,0} ( \mathcal{T}_{\ell} )$ exceeds $10^5$. Here $\mathcal{T}_{\ell}$ for $\ell \in \mathbb{N}_0$ denotes the $\ell$-th admissible refinement of $\mathcal{T}_0$.

\subsection{L-shaped corner singularity}

Let us consider the L-shaped domain $\Omega := \left] -1,1 \right[ ^2 \setminus [0,1 [^2$ with interior angle $3 \pi / 2$ at the origin. We consider $u \in H_0^1 ( \Omega )$ from \cite{Grisward-Singularities} given by
\begin{align}
u ( r, \theta )
&:=
r^{2/3} 
\sin \left( \frac{2}{3}\left( \alpha - \frac{\pi}{2}\right) \right)
\left( r^2 \cos^2 \alpha - 1 \right)
\left( r^2 \sin^2 \alpha- 1 \right).
\end{align}
in polar coordinates
, i.e., for any $\mathbf{x} \in \Omega$, $r := \Vert \mathbf{x} \Vert$ is the distance from the origin and $\alpha \in \left[\pi/2, 2 \pi\right]$ is the angle of $\mathbf{x}$ measured against the $x_1$ axis. 
By choosing $f := - \Delta u$, the function $u$ is the exact solution to the variational Poisson problem \eqref{Eq:VariationalPoisson}. In Figure \ref{Fig:Convergence history}, we display the convergence history of the estimator $\eta ( \cdot ; \mathcal{T}_{\ell} )$ and the exact error $\Vert \nabla u - \nabla_{\mathcal{T}} u_{\ell} \Vert_{\mathbf{L}^{2}(\Omega)}$ in the $H^1$ semi norm for the $\CR_k$ FEM using polynomial degrees $k \in \{ 1, 3, 5, 7 \}$ on both uniform and adaptive meshes. As mentioned before, the adaptive algorithm is subject to the bulk parameter $\theta = 1/2$ for the marking strategy.
\begin{figure}
\begin{center}
\includegraphics[width = .47 \textwidth]{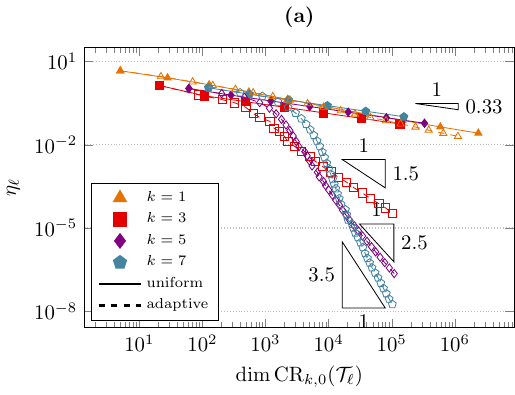}
\hfil
\includegraphics[width = .47 \textwidth]{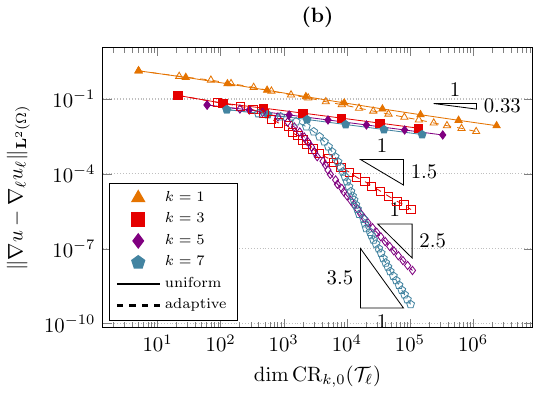}
\end{center}
\caption{Convergence history of the Crouzeix--Raviart element using both AFEM and uniform refinement for $k \in \{ 1,3,5,7 \}$. \textbf{(a)} shows the convergence history of the estimator $\eta_{\ell} := \eta ( \cdot ; \mathcal{T}_{\ell}, f)$ and \textbf{(b)} shows the convergence history of the error $u -u_{\ell}$ with respect to the piecewise $H^1$ semi-norm}
\label{Fig:Convergence history}
\end{figure}
Since $u$ exhibits a corner singularity at the origin, uniform mesh refinement leads to a sub-optimal convergence rate of $-1/3$ with respect to the number of DOFs. The adaptive algorithm on the other hand recovers the optimal convergence rate of $-k/2$ for all polynomial degrees $k \in \{ 1, 3, 5, 7 \}$. Since the optimal rate for the lowest order case of $k=1$ and the sub-optimal rate are very close in the pre-asymptotic range, we have extended the experiment to $10^6$ DOFs.

As expected, we see that the adaptive algorithm concentrates its refinement around the corner singularity of the exact solution $u$. In the bottom row of Figure \ref{Fig:Refinement history}, we display four meshes for $k \in \{ 1, 3, 5, 7 \}$ and with about $900$ elements. We see that the grading of the meshes increases with the polynomial degree, since the higher polynomial degree yields higher approximation orders in the smooth regions of $u$.

\subsection{Conclusion}

We have shown that the estimator $\eta ( \cdot; \mathcal{T}, f)$ is discretely reliable \eqref{Eq:Discrete reliability} (cf. Theorem \ref{Thm:Discrete reliabilty}), satisfies the stability axiom \eqref{Eq:Stability} (cf. Lemma \ref{Lem:Stability}) and  the reduction axiom \eqref{Eq:Reduction} (cf. Lemma\ref{Lem:Reduction}). Stability and reduction are obtained using standard arguments stemming from the optimality proofs of the conforming AFEM using $S_{k,0} ( \mathcal{T} )$ and its associated residual estimator.
The proof of discrete reliability for odd high-order $\operatorname{CR}_k$ FEM is rather technical and relies on the explicit construction of suitable approximation- and right-inverse operators. For a complete proof of optimal convergence of the odd high-order $\operatorname{CR}_k$ AFEM algorithm, one would also need to prove the axiom of generalized quasi-orthogonality \eqref{Eq:Quasi orthogonoality - general} but this is a topic of future research. However, the numerical experiments in Section \ref{Sec:NumericalExperiments} provide numerical evidence that the arbitrary high-order $\operatorname{CR}_k$ AFEM converges with optimal rates.

\appendix

\section{Technical results}

\begin{lemma} \label{A:Lem:Equivalence Jz JEk-1 for constants}
Let $\mathcal{S} \subseteq \mathcal{T}$ be a sub-mesh. For any vertex $\mathbf{z} \in \mathcal{V} ( \mathcal{S})$, consider some constant function $c : \omega_{\mathcal{S}_{\mathbf{z}}} \to \mathbb{R}$. We have that
\begin{align}
F_{\mathcal{S}, \mathbf{z}} ( c )
&=
c
= 
F_{E,k-1} ( c ) 
\qquad \forall E \in \mathcal{E} ( \mathcal{S}_{\mathbf{z}} ).
\label{Eq:Constant equivalence}
\end{align}
\end{lemma}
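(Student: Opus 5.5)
The proof splits into two independent identities: $F_{\mathcal{S},\mathbf{z}}(c)=c$ and $F_{E,k-1}(c)=c$. Both reduce to reproduction properties of the weight polynomials applied to the constant function.

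\textbf{Vertex functional.} By \eqref{Eq:S-conforming vertex average},
\[
F_{\mathcal{S},\mathbf{z}}(c)=\frac{1}{\operatorname{card}\mathcal{S}_{\mathbf{z}}}\sum_{K\in\mathcal{S}_{\mathbf{z}}}(g_K^{\mathbf{z}},c)_{L^2(K)} .
\]
I will apply Proposition \ref{Prop:Vertex value} \eqref{SubEq:Point eval} on each $K\in\mathcal{S}_{\mathbf{z}}$ with the polynomial $p\equiv c\in\mathbb{P}_0(K)\subseteq\mathbb{P}_k(K)$. This is the point-evaluation property: $g_K^{\mathbf{z}}$ reproduces the value at $\mathbf{z}$ of every polynomial of degree at most $k$. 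It gives $(g_K^{\mathbf{z}},c)_{L^2(K)}=c$ for every such $K$. Averaging over the $\operatorname{card}\mathcal{S}_{\mathbf{z}}$ triangles then yields $F_{\mathcal{S},\mathbf{z}}(c)=c$.

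If I want to check the normalisation directly for constants, I pull back to $T$ with $\mathbf{z}\mapsto\mathbf{Z}_0$. The integral then becomes a one-dimensional integral in $s=1-\lambda$ with weight $s$, and the $P_k^{(0,2)}$ moment is computed via the Jacobi–Rodrigues formula. With the factor $-\binom{k+2}{2}/|K|$ the result is $1$, which is exactly the constant case of \eqref{SubEq:Point eval}.

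\textbf{Edge functional.} Here I use that $F_{E,k-1}(c)=c\int_E g_{E,k-1}$, so it suffices to show $\int_E g_{E,k-1}=1$. Note that $b_{E,j}$ for $j\le k-2$ vanishes at the vertices, while by the first identity each $\psi$ takes the value $1$ at its vertex. The cleanest route, which I will follow, is to use Proposition \ref{Prop:Quasi biduality properties}: expand the constant $1|_K$ in the basis $\mathcal{B}_k^{\CR}$ restricted to $K$.
\begin{itemize}
\item On $E$, $b_{E,k-1}|_E=P_k^{(0,0)}(1-2\lambda_{K,E})|_E=P_k^{(0,0)}(1)=1$, since $\lambda_{K,E}$ vanishes on $E$.
\item On the other two edges $E'$ of $K$, $b_{E,k-1}$ restricts to $P_k^{(0,0)}(1-2t)$, which has zero mean because $k\ge1$.
\end{itemize}
So $1|_K=\sum_{E'\in\mathcal{E}(K)}b_{E',k-1}|_K+w$, and the task is to show $w=0$. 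Since $k$ is odd, $P_k^{(0,0)}(1-2\lambda)$ equals $-1$ at the vertex opposite $E$ and $+1$ at the vertices of $E$. Summing over the three edges therefore gives value $1$ at every vertex and, at least by symmetry, the constant $1$ on each edge. Moreover $\sum_{E'}b_{E',k-1}-1$ vanishes on $\partial K$ and, by Lemma \ref{A:Lem:Basis volume orthogonality}, has vanishing volume moments against $\mathbb{P}_{k-3}(K)$.

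By the local unisolvence underlying \eqref{Eq:Local moment preservation}, this forces $\sum_{E'}b_{E',k-1}|_K=1$. Applying $F_{E,k-1}$ and using \eqref{SubEq:Quasi biduality - edge} gives $F_{E,k-1}(1)=1$, hence $F_{E,k-1}(c)=c$.

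\textbf{Main obstacle.} The delicate step is showing that $\sum_{E'}b_{E',k-1}|_K\equiv1$ restricted to an edge; it should be the one-dimensional identity $P_k(1-2t)+P_k(2t-1)+P_k(1)\cdot$(correction) $=1$. If that route gets messy, my fallback is a direct computation of $\int_E g_{E,k-1}$ from \eqref{SubEq:Moment functions - ege}. Writing $x=2t-1$, this amounts to $\frac{\gamma_{k-1}}{1}(1-c_{k-1})\int_{-1}^1P_{k-1}^{(1,1)}(x)\,dx$, which is evaluated from the known value $\int_{-1}^1P_n^{(1,1)}=2(1+(-1)^n)/(n+2)$ together with the constants $\gamma_{k-1}$ and $c_{k-1}=1/(2k+1)$. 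Checking that this product is exactly $1$ is where I expect the arithmetic to be the bottleneck.
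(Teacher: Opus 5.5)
Your vertex part is the paper's argument: apply \eqref{SubEq:Point eval} with $p\equiv c$ on each $K\in\mathcal{S}_{\mathbf{z}}$, then average. The edge part, however, rests on a false step. You assert that $\sum_{E'\in\mathcal{E}(K)}b_{E',k-1}|_K-1$ has vanishing moments against $\mathbb{P}_{k-3}(K)$. From this you conclude by local unisolvence that $\sum_{E'}b_{E',k-1}|_K\equiv 1$, i.e.\ $w=0$.

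Lemma \ref{A:Lem:Basis volume orthogonality} gives the vanishing moments for $\sum_{E'}b_{E',k-1}$ itself, not for the difference with $1$. For $k\ge 3$ each $b_{E',k-1}$ is $L^2(K)$-orthogonal to $\mathbb{P}_{k-2}(K)$, so the sum has mean zero on $K$. Hence $\sum_{E'}b_{E',k-1}-1$ has moment $-|K|\neq 0$ against $p\equiv 1$. Concretely, for $k=3$ the sum at the barycenter is $3P_3(1/3)=-11/9$. So $w$ is a nonzero interior bubble; this is why $\overset{\bullet}{\Pi}_{k,0}$ appears in \eqref{Eq:Local Interpolation - equivalent descrp}. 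The identity $\sum_{E'}b_{E',k-1}|_K\equiv1$ holds only for $k=1$. Your ``main obstacle'' is also misplaced: the edge restriction is fine by oddness of $P_k$, and it is the volume claim that fails.

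The conclusion survives, because you never need $w=0$. $F_{E,k-1}$ only sees traces on $E$, and you have already shown that $w$ vanishes on $\partial K$. More directly, your first bullet already finishes the proof. Since $b_{E,k-1}|_E=P_k^{(0,0)}(1)=1$ from both sides of $E$, you get $F_{E,k-1}(1)=\int_E g_{E,k-1}\,b_{E,k-1}=F_{E,k-1}(b_{E,k-1})=1$ by \eqref{SubEq:Quasi biduality - edge}. Hence $F_{E,k-1}(c)=c$. This is exactly the paper's proof, so you should drop the expansion of $1|_K$ and the unisolvence step.

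Your fallback computation is also correct if the formula for $\gamma_j$ is used at $j=k-1$. Then $\gamma_{k-1}=(2k+1)(k+1)/(8k)$, $1-c_{k-1}=2k/(2k+1)$, and $\int_{-1}^1P_{k-1}^{(1,1)}=4/(k+1)$ for odd $k$, and their product is exactly $1$. That route needs the explicit constants from the reference, whereas the trace argument uses only biduality.
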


\begin{proof}
Note that 
$
F_{\mathcal{S}, \mathbf{z}} ( c )
=
(
\sum_{K \in \mathcal{S}_{\mathbf{z}}} c) / \operatorname{card} \mathcal{S}_{\mathbf{z}}
= c
$
holds true  for all vertices $\mathbf{z} \in \mathcal{V} ( \mathcal{S} )$ due to \eqref{SubEq:Point eval}. By construction we know that $b_{E,k-1}$ is continuous across $E$ with $\left. b_{E,k-1} \right\vert_E = 1$ (cf. \eqref{SubEq:Basis functions - nonconf}). Consequently, through an application of \eqref{SubEq:Quasi biduality - edge}, we compute that
$
F_{E,k-1} ( c ) 
 =
c F_{E,k-1}  ( 1 )
= 
c F_{E,k-1} ( b_{E,k-1} )
=
c.
$ 
 \end{proof}

\begin{lemma}[Proof of \eqref{A:Eq:Functional estimates}] \label{A:Lem:Functional estimates}
For every sub-mesh $\mathcal{S} \subseteq \mathcal{T}$ and function $u \in H_k^{\CR} ( \mathcal{T} )$ we have that
\begin{align*}
\vert F_{E,j} ( u ) \vert
& \lesssim
\begin{cases}
h_K^{-1}
(\Vert u \Vert_{L^2 ( K )} 
+ 
h_K \Vert \nabla u \Vert_{\mathbf{L}^2( K)} ) 
&
j = k-1,
\\
 \Vert \nabla u \Vert_{\mathbf{L}^2 ( K )}
&
j \in \range{k-2},
\end{cases}
\\
\vert F_{K, \boldsymbol{\alpha}} ( u ) \vert
& \lesssim
h_K^{-1}
\Vert u \Vert_{L^2 ( K )} ,
\\
\vert F_{\mathcal{S},\mathbf{z}} ( u ) \vert
&\lesssim
h_{\mathcal{T}_{\mathbf{z}}}^{-1} 
\Vert u \Vert_{L^2 ( \omega_{\mathcal{T}_{\mathbf{z}}} )},
\end{align*}
for every edge $E \in \mathcal{E} ( \mathcal{T} )$, triangle $K \in \mathcal{T}$, vertex $\mathbf{z} \in \mathcal{S}$ and (multi-) indices $j \in \range{k-1}$ and $\boldsymbol{\alpha} \in \multirange{2}{k-3}$.
\end{lemma}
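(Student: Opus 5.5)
The plan is to prove the three bounds functional by functional. Each time I pull back to the reference triangle $T$, use Cauchy--Schwarz, and apply a scaled trace inequality. The weight polynomials are fixed objects on $T$. So the only work is tracking powers of $h_K$ and, for the edge functionals with $j\le k-2$, exploiting a cancellation that kills the zeroth-order term.

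\emph{Volume functionals.} By definition $F_{K,\boldsymbol{\alpha}}(u)=(g_{K,\boldsymbol{\alpha}},u)_{L^2(K)}$, so Cauchy--Schwarz gives $|F_{K,\boldsymbol{\alpha}}(u)|\le \Vert g_{K,\boldsymbol{\alpha}}\Vert_{L^2(K)}\Vert u\Vert_{L^2(K)}$. By \eqref{Eq:Pairwise orthogonality} we have $\gamma_{K,\boldsymbol{\alpha}}\approx |K|^{-1}$. Moreover $\Vert P^{(1,1,1)}_{K,\boldsymbol{\alpha}}\Vert_{L^2(K)}\approx |K|^{1/2}$. Hence $\Vert g_{K,\boldsymbol{\alpha}}\Vert_{L^2(K)}\approx |K|^{-1/2}\approx h_K^{-1}$ by \eqref{Eq:Equivalence local mesh size and volume}.

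\emph{Vertex functionals.} Apply Cauchy--Schwarz on each $K\in\mathcal{S}_{\mathbf z}$ together with \eqref{SubEq:Point eval - wheight estimate}. This gives $|(g_K^{\mathbf z},u)_{L^2(K)}|\lesssim h_K^{-1}\Vert u\Vert_{L^2(K)}$. Averaging over at most $N_{\max}$ triangles and using $h_K\approx h_{\mathcal{T}_{\mathbf z}}$ yields the third bound.

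\emph{Edge functional with $j=k-1$.} Here $\Vert g_{E,k-1}\Vert_{L^2(E)}\lesssim h_E^{-1/2}$, since $g_{E,j}=O(|E|^{-1})$ pointwise. Combining this with the multiplicative trace inequality $\Vert u\Vert_{L^2(E)}^2\lesssim h_K^{-1}\Vert u\Vert_{L^2(K)}^2+h_K\Vert\nabla u\Vert_{\mathbf{L}^2(K)}^2$ on an adjacent $K$ gives $|F_{E,k-1}(u)|\lesssim h_K^{-1}\Vert u\Vert_{L^2(K)}+\Vert\nabla u\Vert_{\mathbf{L}^2(K)}$. The functional is well defined from either side by Remark \ref{Rem:Jump condition}.

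\emph{Edge functionals with $j\le k-2$.} This is the main obstacle: the bound must contain no $\Vert u\Vert_{L^2}$ term. The key observation is that these functionals annihilate constants, i.e.\ $F_{E,j}(1)=0$ for $j\in\range{k-2}$. Indeed the constant $1$ restricted to $K\in\mathcal{T}_E$ lies in $\mathbb{P}_k(K)$ and is reproduced by $I^{\operatorname{loc}}_{K,k}$. Using \eqref{Eq:Local Interpolation - equivalent descrp}, or directly Lemma \ref{A:Lem:Equivalence Jz JEk-1 for constants}, one has $1|_K=\sum_{E'}b_{E',k-1}|_K$ plus a volume part. Applying $F_{E,j}$ and using \eqref{SubEq:Quasi biduality - edge} and \eqref{SubEq:Quasi biduality - mixed} then gives $F_{E,j}(1)=0$. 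If this route is unclear, I would instead check $\int_E g_{E,j}=0$ directly from the Jacobi orthogonality: $P_j^{(1,1)}$ and $P^{(1,1)}_{k-1}$ have mean zero against the constant weight for the appropriate parity, and the constant $c_j$ is exactly what fixes the even-$j$ case.

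Given $F_{E,j}(1)=0$, I write $F_{E,j}(u)=F_{E,j}(u-\overline u_K)$ and apply the $j=k-1$ estimate to $u-\overline u_K$. The Poincaré inequality on $K$ then turns $h_K^{-1}\Vert u-\overline u_K\Vert_{L^2(K)}$ into $\Vert\nabla u\Vert_{\mathbf{L}^2(K)}$, which concludes the proof. Hidden constants depend only on $k$ and shape regularity, via the affine pullbacks.
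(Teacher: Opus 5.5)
Your proposal is correct and follows essentially the paper's route. Both arguments use Cauchy--Schwarz with the scalings $\Vert g_{K,\boldsymbol{\alpha}}\Vert_{L^2(K)}\approx h_K^{-1}$, $\Vert g_K^{\mathbf{z}}\Vert_{L^2(K)}\approx h_K^{-1}$ and $\Vert g_{E,j}\Vert_{L^2(E)}\lesssim h_E^{-1/2}$, the multiplicative trace inequality, and, for $j\in\range{k-2}$, the fact that $F_{E,j}$ annihilates constants followed by a Poincar\'e inequality on $K$.

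The one step to tighten is the justification of $F_{E,j}(1)=0$. Deriving it from \eqref{Eq:Local Interpolation - equivalent descrp} is circular as you phrase it: $\overset{\bullet}{\Pi}_{k,0}1$ contains the edge terms $F_{E',j}(1)\,b_{E',j}$ with $j\in\range{k-2}$, not only volume bubbles, and these coefficients are exactly what you want to show vanish. The paper's argument avoids this: $F_{E,j}$ sees only the trace on $E$ and $b_{E,k-1}|_E\equiv 1$, so $F_{E,j}(1)=F_{E,j}(b_{E,k-1})=\delta_{j,k-1}$ by \eqref{SubEq:Quasi biduality - edge}. Your fallback check of $\int_E g_{E,j}=0$ is also valid: use $\int_{-1}^{1}P_n^{(1,1)}=4/(n+2)$ for even $n$ together with the choice of $c_j$.
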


\begin{proof}
\textbf{@\labelcref{A:SubEq:Functional estimates - edge}:} Consider any function $u \in H_k^{\CR} ( \mathcal{T} )$ and edge $E \in \mathcal{E} ( \mathcal{T} )$. Combining \eqref{Eq:Constant equivalence} with \eqref{SubEq:Quasi biduality - edge}, we deduce that
\begin{align}
F_{E,j} ( c )
&=
c \delta_{j,k-1}
\qquad 0 \leq j \leq k-1
\label{Eq:Edge constants}
\end{align}
for any constant function $c : \Omega \to \mathbb{R}$. Thus for $j \in \range{k-2}$, an application of the multiplicative trace inequality \cite[(12.16)]{ErnGuermondI} and the Poincare inequality, we observe that
\begin{align*}
\vert F_{E,j} ( u ) \vert
&=
\vert F_{E,j} ( u - \overline{u}_K ) \vert
\leq
\Vert g_{E,j} \Vert_{L^{2}(  E )} 
\Vert u - \overline{u}_K \Vert_{L^{2}(  E )}
\\
& \lesssim
\Vert g_{E,j} \Vert_{L^{2}(  E )}
\Vert u - \overline{u}_K \Vert_{L^{2}(  K )}^{1/2}
(
h_K^{-1/2} \Vert u - \overline{u}_K \Vert_{L^{2}(  K )}^{1/2}
+
\Vert \nabla u \Vert_{\mathbf{L}^2 ( K )}^{1/2}
)
\lesssim
h_K^{1/2}
\Vert g_{E,j} \Vert_{L^{2}(  E )}
\Vert \nabla u \Vert_{\mathbf{L}^2 ( K )}.
\end{align*} 
Considering the term $ \Vert g_{E,j} \Vert_{L^2 ( E )}$, combining the definition \eqref{SubEq:Moment functions - ege}, with affine pullbacks to the interval $[ -1, 1]$, we have that $\Vert g_{E,j} \Vert_{L^{2}(  E )}
\lesssim
\vert E \vert^{-1/2}
\left( \Vert P_j^{(1,1)} \Vert_{L^{2}( [-1,1] )}
+
\Vert P_{k-1}^{(1,1)} \Vert_{L^{2}( [-1,1] )}\right)$
Some elementary computations using \cite[Eq. 18.9.15]{NIST:DLMF} reveals $\Vert P_j^{(1,1)} \Vert_{L^{2}( [-1,1] )}^2 = 4(j+1)/(j+2) \leq 4$ for all $j \in \mathbb{N}_0$.
A final application of the equivalence of local mesh-sizes \eqref{Eq:Equivalence local mesh size and volume} yields $\vert F_{E,j} ( u ) \vert \lesssim \Vert \nabla u \Vert_{\mathbf{L}^2 ( K )}$ if $j \in \range{k-2}$. For $j = k-1$, it is not possible to add a constant function into the estimate due to \eqref{Eq:Edge constants}. The multiplicative trace- and a Young's inequality reveals that $\vert F_{E,k-1} \vert
\lesssim
\Vert g_{E,k-1} \Vert_{L^{2}(  E )}
\left( h_K^{-1/2}  \Vert u \Vert_{L^{2}(  K )}
+
 h_K^{1/2} \Vert \nabla u \Vert_{L^2 ( K )} \right)$.
As for the case $j \in \range{k-2}$ we have that $\Vert g_{E,k-1} \Vert_{L^{2}(  E )} \lesssim \vert E \vert^{-1/2}$ and factoring out $h_K^{-1/2}$ in the last estimate leads to $\vert F_{E,k-1} ( u ) \vert
\lesssim
\left( \Vert u \Vert_{L^2 ( K )} 
+ 
h_K 
\Vert \nabla u \Vert_{\mathbf{L}^2 ( K )} \right)/h_K$.

\textbf{@\labelcref{A:SubEq:Functional estimates - volume}:} Similarly, consider any $K \in \mathcal{T}$. Applying \cite[Prop. 3.37]{verfuerth_book_neu} and \eqref{Eq:Pairwise orthogonality} yields
\begin{align*}
\vert F_{K, \boldsymbol{\alpha}} ( u )\vert
&\lesssim
\vert K \vert^{-1} 
\Vert P_{K, \boldsymbol{\alpha}}^{( 1,1,1 )} \Vert_{L^2 ( K )}
\Vert u \Vert_{L^2 ( K )}
\lesssim
\vert K \vert^{-1}
\lesssim
\Vert W_K^{1/2} P_{K, \boldsymbol{\alpha}}^{( 1,1,1 )} \Vert_{L^2 ( K )}
\Vert u \Vert_{L^2 ( K )}
\lesssim
h_K^{-1} 
\Vert u \Vert_{L^2 ( K )}.
\end{align*}

\textbf{@\labelcref{SubEq:Point eval - functional estimate}:} This is treated similarly as \eqref{A:SubEq:Functional estimates - volume} by applying \eqref{Prop:Vertex value}, \eqref{SubEq:Point eval - wheight estimate} and the local equivalence of mesh-sizes \eqref{Eq:Equivalence local mesh size and volume}.
\end{proof}

\section{Postponed proofs for \eqref{SubEq:Non-Conf -  error estimate}}
\label{A:Sec:Check prop}

In this section, we check the two properties from the proof of Lemma \ref{Lem:Upper bound by tangential jumps} \eqref{SubEq:Non-Conf -  error estimate}, which we have postponed to the appendix.

\begin{lemma}[Proof of \eqref{Eq:Localized companion estimate}] \label{Lem:Proof (3.3)}
Let $\widehat{\mathcal{T}} \in \mathbb{T} ( \mathcal{T} )$ be given. Then for any $\CR_k$ function $u \in \CR_{k,0} ( \mathcal{T} )$
\begin{align*}
\Vert \nabla_{\widehat{\mathcal{T}}} \widehat{u}^{\ast} - \nabla_{\mathcal{T}} u \Vert_{\mathbf{L}^2 ( \Omega )}
& \lesssim
\Vert \nabla_{\mathcal{T}} ( J_{k,0} u - u ) \Vert_{\mathbf{L}^2 ( \omega_{\mathcal{T} \setminus \widehat{\mathcal{T}}} )},
\end{align*}
holds true, where $\widehat{u}^{\ast} \in \CR_{k,0} ( \widehat{\mathcal{T}} )$ is given by $\widehat{u}^{\ast} = I_{k,0}^{\widehat{\mathcal{T}},\CR} ( J_{k,0} u )$. 
\end{lemma}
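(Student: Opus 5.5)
The plan is to work triangle by triangle on the coarse mesh and to split $\Omega$ into the unrefined part $\omega_{\mathcal{T}\cap\widehat{\mathcal{T}}}$ and the refined part $\omega_{\mathcal{R}}$, where $\mathcal{R}=\mathcal{T}\setminus\widehat{\mathcal{T}}$. Throughout, set $w:=J_{k,0}u\in S_{k+1,0}(\mathcal{T})\subseteq H_0^1(\Omega)\subseteq H_{k,0}^{\CR}(\widehat{\mathcal{T}})$, so that $\widehat{u}^{\ast}=I_{k,0}^{\widehat{\mathcal{T}},\CR}w$ is well-defined.

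\textbf{Unrefined triangles.} Take $K\in\mathcal{T}\cap\widehat{\mathcal{T}}$. By the local description \eqref{Eq:Local desciption} applied on the fine mesh, $\widehat{u}^{\ast}\vert_K=I_{K,k}^{\operatorname{loc}}(w\vert_K)$; this uses only the edge moments on $\mathcal{E}(K)$ and the volume moments on $K$, and $K$ is a fine triangle. By Lemma \ref{Lem:Properties conforming companion}, $w$ has the same edge moments \eqref{SubEq:Jkast moment transport - edge} and volume moments \eqref{SubEq:Jkast moment transport - vol} as $u$. On interior edges this is immediate. On boundary edges both $w$ and $u$ have vanishing moments up to degree $k-1$, since $w\in H_0^1$ and $u\in\CR_{k,0}(\mathcal{T})$. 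Hence $I_{K,k}^{\operatorname{loc}}(w\vert_K)=I_{K,k}^{\operatorname{loc}}(u\vert_K)=u\vert_K$ by the projection property \eqref{SubEq:Prop LkK - Local projection}. So $\nabla_{\widehat{\mathcal{T}}}\widehat{u}^{\ast}-\nabla_{\mathcal{T}}u$ vanishes on $\omega_{\mathcal{T}\cap\widehat{\mathcal{T}}}$.

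\textbf{Refined triangles.} Take $K\in\mathcal{R}$. Write $\widehat{u}^{\ast}-u=I_{k,0}^{\widehat{\mathcal{T}},\CR}(w-u)$ on $\operatorname{succ}(K)$. This holds because $u\vert_K\in\mathbb{P}_k(K)$ restricts to a polynomial on every fine triangle $\widehat K\subseteq K$, and $I_{\widehat K,k}^{\operatorname{loc}}$ reproduces it. The local version of \eqref{SubEq:Non-Conf Approx Operator - loc bound} on the fine mesh then gives
\[
\Vert\nabla(\widehat{u}^{\ast}-u)\Vert_{\mathbf{L}^2(\widehat K)}\lesssim\Vert\nabla(w-u)\Vert_{\mathbf{L}^2(\widehat K)}
\]
for each $\widehat K\in\operatorname{succ}(K)$. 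The local operator needs only $H^1(\widehat K)$-regularity of $w-u$, which holds inside $K$. Summing over $\operatorname{succ}(K)$ and then over $K\in\mathcal{R}$ yields the claimed bound on $\omega_{\mathcal{R}}=\omega_{\mathcal{T}\setminus\widehat{\mathcal{T}}}$. The hidden constant depends only on $\gamma_{\widehat{\mathcal{T}}}$, which is uniformly controlled by Remark \ref{Rem:Shape-regularity}.

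\textbf{Main obstacle.} The delicate point is the first step. One must verify carefully that edge moments of the $H^1$ function $w$ are single-valued and equal those of $u$ on every edge of an unrefined $K$, including edges on $\Gamma_{\mathcal{R}}$ shared with refined neighbours. For such edges $u$'s moments must be taken from the side of $K$; Remark \ref{Rem:Jump condition} guarantees that both sides agree.

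A second subtle point is the boundary case. Lemma \ref{Lem:Properties conforming companion} states moment preservation only for interior edges. For boundary edges one argues directly that both moments vanish, as explained above.
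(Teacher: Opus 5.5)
Your proof is correct and follows the paper's argument. On $\mathcal{T}\cap\widehat{\mathcal{T}}$ the difference $\widehat u^{\ast}-u$ vanishes; on each $\widehat K\in\widehat{\mathcal{T}}\setminus\mathcal{T}$ one writes $(\widehat u^{\ast}-u)\vert_{\widehat K}=I^{\operatorname{loc}}_{\widehat K,k}\bigl((J_{k,0}u-u)\vert_{\widehat K}\bigr)$, applies local boundedness, and sums over successors.

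The only deviation is on unrefined triangles. There the paper invokes the right-inverse identity $I_k^{\mathcal{T},\CR}\widehat u^{\ast}=u$, while you go directly through the moment preservation of $J_{k,0}$ and the fine local interpolant. This is the same mechanism unfolded, and it has the small advantage of never applying the coarse interpolant to $\widehat u^{\ast}$, which in general lies outside its stated domain $H_k^{\CR}(\mathcal{T})$.
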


\begin{proof}
We first consider any unrefined triangle $K \in \mathcal{T} \cap \widehat{\mathcal{T}}$. For every fine $\CR_k$ function $\widehat{v} \in \CR_{k,0} ( \widehat{\mathcal{T}} )$, we combine \eqref{Eq:Local desciption} with \eqref{SubEq:Prop LkK - Local projection} and observe that $
( I_{k,0}^{\mathcal{T}, \CR} \widehat{v} ) \vert_K
=
I_{K,k}^{\operatorname{loc}} \widehat{v}
=
\widehat{v} \vert_K
$.
Consequently, using this and the fact that $I_{k,0}^{\mathcal{T}, \CR} \widehat{u}^{\ast} = u$ from \eqref{SubEq:Non-Conf - right inverse}, we obtain $u \vert_K = \widehat{u}^{\ast} \vert_K$ for all $K \in \mathcal{T} \cap \widehat{\mathcal{T}}$, implying
$
\Vert \nabla_{\widehat{\mathcal{T}}} \widehat{u}^{\ast}
- 
\nabla_{\mathcal{T}} u \Vert_{\mathbf{L}^2 ( \Omega )}
= 
\Vert \nabla_{\widehat{\mathcal{T}}} ( \widehat{u}^{\ast} - u ) \Vert_{\mathbf{L}^2 ( \omega_{ \widehat{\mathcal{T}} \setminus \mathcal{T}} )}.
$
Here we use the fact that $\nabla_{\widehat{\mathcal{T}}} u = \nabla_{\mathcal{T}} u$ by definition of the piecewise gradient. For the estimate of $\Vert \nabla_{\widehat{\mathcal{T}}} ( \widehat{u}^{\ast} - u ) \Vert_{\mathbf{L}^2 ( \omega_{ \widehat{\mathcal{T}} \setminus \mathcal{T}} )}$, consider any refined triangle $\widehat{K} \in \widehat{\mathcal{T}} \setminus \mathcal{T}$. We obtain
\begin{align*}
\Vert \nabla ( \widehat{u}^{\ast} - u ) \Vert_{\mathbf{L}^2 ( \widehat{K} )}
&=
\Vert \nabla ( I_{k,0}^{\widehat{\mathcal{T}} ,\CR} ( J_{k,0}^{\ast} u ) 
- 
u) \Vert_{\mathbf{L}^2 ( \widehat{K} )}
\overset{\eqref{SubEq:Prop LkK - Local projection}}{=}
\Vert \nabla I_{\widehat{K},k}^{\operatorname{loc}} ( J_{k,0}^{\ast} u  
- 
u) \Vert_{\mathbf{L}^2 ( \widehat{K} )}
\lesssim
\Vert \nabla (J_{k,0}^{\ast} u
- 
u) \Vert_{\mathbf{L}^2 ( \widehat{K} )}
\end{align*}
by using the local boundedness of $I_{k,0}^{\widehat{\mathcal{T}},\CR}$ form \eqref{SubEq:Non-Conf Approx Operator - loc bound} in the last step. Recall $\omega_{ \widehat{\mathcal{T}} \setminus \mathcal{T}} = \omega_{\mathcal{T} \setminus \widehat{\mathcal{T}}}$ from \eqref{Eq:Refined domain}. We take the previous estimate and deduce that
\begin{align*}
\Vert \nabla ( \widehat{u}^{\ast} - u ) \Vert_{\mathbf{L}^2 (\omega_{ \widehat{\mathcal{T}} \setminus \mathcal{T}} )}
&=
\sum_{\widehat{K} \in \widehat{\mathcal{T}} \setminus \mathcal{T}}
\Vert \nabla ( I_{k,0}^{,\widehat{\mathcal{T}} \CR} ( J_{k,0}^{\ast} u ) 
- 
u) \Vert_{\mathbf{L}^2 ( \widehat{K} )}^2
\lesssim
\sum_{\widehat{K} \in \widehat{\mathcal{T}} \setminus \mathcal{T}}
\Vert \nabla (J_{k,0}^{\ast} u
- 
u) \Vert_{\mathbf{L}^2 ( \widehat{K} )}^2
\\
&\lesssim
\sum_{K \in \mathcal{T} \setminus \widehat{\mathcal{T}}}
\sum_{\widehat{K} \in \operatorname{succ} ( K )}
\Vert \nabla (J_{k,0}^{\ast} u
- 
u) \Vert_{\mathbf{L}^2 ( \widehat{K} )}^2
\lesssim
\sum_{K \in \mathcal{T} \setminus \widehat{\mathcal{T}}}
\Vert \nabla (J_{k,0}^{\ast} u
- 
u) \Vert_{\mathbf{L}^2 ( K )}^2
\\
&\lesssim
\Vert \nabla_{\mathcal{T}} (J_{k,0}^{\ast} u
- 
u) \Vert_{\mathbf{L}^2 ( \omega_{\mathcal{T} \setminus \widehat{\mathcal{T}}} )}^2.
\qedhere
\end{align*}
\end{proof}

\begin{lemma}[Proof of \eqref{Eq:C7}] \label{Lem:C7}
Consider any simplex $K \in \mathcal{T}$, and recall the definition of $\varPsi_K : H_{k,0}^{\CR} ( \mathcal{T} ) \to \mathbb{R}_{\geq 0}$ from \eqref{Eq:Local tangetial jumps}. If for any $v \in \CR_{k,0} ( \mathcal{T} ) \vert_{\omega_{\mathcal{T}_K}} $, $\varPsi_K ( v ) = 0$ holds, then $ \left. v \right\vert_{\omega_{\mathcal{T}_K}}  \in (\CR_{k,0} ( \mathcal{T} ) \cap H_0^1 ( \Omega )) \vert_{\omega_{\mathcal{T}_K}}$.
\end{lemma}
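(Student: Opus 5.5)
The plan is to show that vanishing tangential jumps on every edge of the patch, together with the $\CR_k$ moment conditions, force the jumps of $v$ itself to vanish on these edges. Continuity of $v$ on $\omega_{\mathcal{T}_K}$ then follows, and on boundary edges the homogeneous boundary values follow as well.

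Fix $E \in \mathcal{E}(\mathcal{T}_K)$ and set $j := \llbracket v \rrbracket_E$. For an interior edge, $j$ is the difference of the traces of two polynomials of degree $\le k$, so $j \in \mathbb{P}_k(E)$. For a boundary edge, $j = v\vert_{K_{\operatorname{L}}}\vert_E \in \mathbb{P}_k(E)$.

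The assumption $\varPsi_K(v)=0$ gives, since every term in \eqref{Eq:Local tangetial jumps} is nonnegative, $\langle \llbracket \nabla_{\mathcal{T}} v \rrbracket_E, \mathbf{t}_E\rangle = 0$ on $E$. The tangential derivative commutes with taking traces of the polynomials $v\vert_{K_{\operatorname{L}}}$ and $v\vert_{K_{\operatorname{R}}}$ along the segment $E$. Hence $\partial_{\mathbf{t}_E} j = 0$, so $j$ is constant on $E$.

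On the other hand, $v \in \CR_{k,0}(\mathcal{T})$ gives $j \perp_{L^2(E)} \mathbb{P}_{k-1}(E)$ by \eqref{Eq:Broken H1 jumps}, with the boundary case read through the convention \eqref{SubEq:Edge jump and mean - jump}. Since $k \ge 1$, constants belong to $\mathbb{P}_{k-1}(E)$. Testing with $q=1$ yields $j\,|E| = \int_E j = 0$, so $j \equiv 0$ on $E$.

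Consequently, $v$ is continuous across every interior edge of $\mathcal{T}_K$ and vanishes on every boundary edge in $\mathcal{E}_{\partial\Omega}(\mathcal{T}_K)$. A piecewise polynomial on $\omega_{\mathcal{T}_K}$ that is continuous across all interior edges lies in $H^1(\omega_{\mathcal{T}_K})$, and thus in $S_k(\mathcal{T}_K)$, with zero trace on $\partial\Omega \cap \partial\omega_{\mathcal{T}_K}$.

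It remains to identify this with the restriction of a global function in $\CR_{k,0}(\mathcal{T}) \cap H_0^1(\Omega) = S_{k,0}(\mathcal{T})$. I expect this extension step to be the main obstacle, since it requires some care. I would extend $v$ by the standard conforming Lagrange construction: keep the nodal values of $v$ at all Lagrange nodes in $\omega_{\mathcal{T}_K}$, and set all other nodal values to zero. The resulting function is piecewise of degree $k$ and continuous, because nodal values on shared edges are single-valued. It vanishes on $\partial\Omega$: on boundary edges inside the patch because $v$ does, and elsewhere by construction. Therefore it lies in $S_{k,0}(\mathcal{T})$ and coincides with $v$ on $\omega_{\mathcal{T}_K}$, which concludes the proof.

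One subtlety is a boundary vertex of the patch lying on $\partial\Omega$ without any adjacent boundary edge inside $\mathcal{T}_K$. In that case the extension must still vanish there. I would handle this by interpreting the claim as the paper does in \eqref{Eq:C6}: by \cite{CarstensenPuttkammer-howToProofDiscreteReliabiltiyNonConf}, only the local identity $J_{k,0}^K w = w\vert_K$ is needed. That identity holds for any $w \in S_k(\mathcal{T}_K)$ with the correct boundary-edge values, by the same nodal argument used in Lemma \ref{Lem:Properties conforming companion}.
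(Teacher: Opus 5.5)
Your core argument is the paper's own: $\varPsi_K(v)=0$ makes every $\llbracket v\rrbracket_E$, $E\in\mathcal{E}(\mathcal{T}_K)$, constant, and testing the $\CR_k$ orthogonality with $q=1$ kills the constant, on interior and boundary edges alike. The difference is the last step. The paper simply writes ``consequently $v\in S_{k,0}(\mathcal{T})\vert_{\omega_{\mathcal{T}_K}}$''. You instead build the extension and run into a patch vertex $\mathbf{y}\in\partial\Omega$ that has no boundary edge in $\mathcal{T}_K$.

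That obstruction is genuine, and the literal conclusion can fail. Take $k=1$. Let $T_{\mathbf{a}}=[\mathbf{y},\mathbf{a},\mathbf{y}_0]$ and its counterpart at $\mathbf{b}$ be the two triangles of $\mathcal{T}_{\mathbf{y}}$ carrying the boundary edges at $\mathbf{y}$, and assume neither belongs to $\mathcal{T}_K$. Start from the hat function $\varphi_{\mathbf{y}}$ and modify it as follows:
\begin{enumerate}
\item subtract $\lambda_{T_{\mathbf{a}},\mathbf{a}}$ on $T_{\mathbf{a}}$;
\item on the neighbour of $T_{\mathbf{a}}$ across $[\mathbf{a},\mathbf{y}_0]$, assumed to share no edge with $\mathcal{T}_K$, subtract the affine function with edge-midpoint values $\tfrac12,0,0$;
\item do the same at $\mathbf{b}$.
\end{enumerate}
The result lies in $\CR_{1,0}(\mathcal{T})$. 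It agrees with $\varphi_{\mathbf{y}}$ on $\omega_{\mathcal{T}_K}$ and from both sides of every $E\in\mathcal{E}(\mathcal{T}_K)$, because $\lambda_{T_{\mathbf{a}},\mathbf{a}}=0$ on $[\mathbf{y},\mathbf{y}_0]$. Hence $\varPsi_K=0$. Yet its value at $\mathbf{y}\in\partial\Omega$ is $1$, so it is not the restriction of any element of $S_{1,0}(\mathcal{T})$. The paper's final sentence, and \eqref{Eq:C7} as formulated, therefore do not hold in such configurations.

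Your fallback is the right repair. In the Carstensen--Puttkammer argument, \eqref{Eq:C6} and \eqref{Eq:C7} only serve jointly, to show that $J^K_{k,0}$ reproduces the kernel of $\varPsi_K$; the implication $\varPsi_K(v)=0\Rightarrow J^K_{k,0}v=v\vert_K$ is what is needed. To make it airtight, state two facts you leave implicit.
\begin{enumerate}
\item An offending vertex $\mathbf{y}$ is never a vertex of $K$. Otherwise $\mathcal{T}_{\mathbf{y}}\subseteq\mathcal{T}_K$ contains both boundary edges at $\mathbf{y}$, which forces $v(\mathbf{y})=0$.
\item $J_{k,0}w\vert_K$ depends on $w$ only through $F_{\mathcal{T},\mathbf{z}}(w)$ for $\mathbf{z}\in\mathcal{V}_{\Omega}(K)$ (point values at $\mathbf{z}$ by \eqref{SubEq:Point eval}) and through the moments on $K$ and its edges.
\end{enumerate}
The Lagrange basis function at $\mathbf{y}$ vanishes on $K$ and at every $\mathbf{z}\in\mathcal{V}(K)$. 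So resetting the nodal values at the offending vertices to zero changes neither $v\vert_K$ nor $J_{k,0}v\vert_K$, and it produces an element of $S_{k,0}(\mathcal{T})$. Then \eqref{Eq:Jkast conforming identity} gives $J^K_{k,0}v=v\vert_K$.
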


\begin{proof}
Let $v \in  \CR_{k,0} ( \mathcal{T} ) \vert_{\omega_{\mathcal{T}_K}} $ such that $\varPsi_K ( v ) = 0$ be given. By construction, we have that $\langle \llbracket \nabla_{\mathcal{T}} v \rrbracket_E , \mathbf{t}_E \rangle = 0$ for all $E \in \mathcal{E} ( \mathcal{T}_K )$. This implies that for every $E \in \mathcal{E} ( \mathcal{T}_K )$ there exists a constant $c_E \in \mathbb{R}$ such that $\llbracket v \rrbracket_E = c_E$. We distinguish two cases.

\textbf{Case 1: $E \in \mathcal{E} _{\Omega}( \mathcal{T}_K)$.} In this case, we know that $\mathcal{T}_E \subseteq \mathcal{T}_K$ and that $E$ is an inner edge. Since $v \in \CR_k ( \mathcal{T}_K )$, we know that $\llbracket v \rrbracket_E = c_E \perp_{L^2 (E)} \mathbb{P}_{k-1} ( E )$. Consequently $c_E = 0$ and it follows that $v$ is continuous over $E$.

\textbf{Case 2: $E \in \mathcal{E}_{\partial \Omega} ( \mathcal{T}_K)$.} Since $E \subseteq \partial \Omega$, we know that $\llbracket v \rrbracket_E = v \vert_E = c_E$. Since $v \in \CR_k ( \mathcal{T}_K ) \cap H_{k,0}^{\CR} (\mathcal{T})$, it follows from the previous identity $c_E  \perp_{L^2 (E)} \mathbb{P}_{k-1} ( E )$ due to the zero boundary condition in the $\CR_k$ sense. Therefore, we have $c_E = 0$.


{Cases 1-2} imply that $v$ is continuous over all edges in the interior of $\omega_{\mathcal{T}_K}$ and that $ v \vert_E = 0$ for all boundary edges of $\partial \omega_{\mathcal{T}_K}$. Consequently we know that $v \in S_{k,0} ( \mathcal{T} ) \vert_{\omega_{\mathcal{T}_K}} = (\CR_{k,0} (\mathcal{T}) \cap H_0^1 ( \Omega ))\vert_{\omega_{\mathcal{T}_K}}$.
\end{proof}

\printbibliography

@preamble{ "\newcommand{\noopsort}[1]{} "
	# "\newcommand{\printfirst}[2]{#1} "
	# "\newcommand{\singleletter}[1]{#1} "
	# "\newcommand{\switchargs}[2]{#2#1} " }

@article {Doerfler_convergent,
    AUTHOR = {D{\"o}rfler, Willy},
     TITLE = {A convergent adaptive algorithm for {P}oisson's equation},
   JOURNAL = {SIAM J. Numer. Anal.},
  FJOURNAL = {SIAM Journal on Numerical Analysis},
    VOLUME = {33},
      YEAR = {1996},
    NUMBER = {3},
     PAGES = {1106--1124},
      ISSN = {0036-1429},
     CODEN = {SJNAAM},
   MRCLASS = {65N50 (65N55)},
  MRNUMBER = {1393904 (97e:65139)},
MRREVIEWER = {S. F. McCormick},
       DOI = {10.1137/0733054},
      
}

@article {Stevenson_adapt1,
    AUTHOR = {Stevenson, Rob},
     TITLE = {Optimality of a standard adaptive finite element method},
   JOURNAL = {Found. Comput. Math.},
    VOLUME = {7},
      YEAR = {2007},
    NUMBER = {2},
     PAGES = {245--269}
}

@preamble{
   "\def\cprime{$'$} "
}

@book {verfuerth_book_neu,
    AUTHOR = {Verf{\"u}rth, R{\"u}diger},
     TITLE = {A posteriori error estimation techniques for finite element
              methods},
 PUBLISHER = {Oxford University Press},
   ADDRESS = {Oxford},
      YEAR = {2013}
}

@article {Bonito2010,
    AUTHOR = {Bonito, Andrea and Nochetto, Ricardo H.},
     TITLE = {Quasi-optimal convergence rate of an adaptive discontinuous
              {G}alerkin method},
   JOURNAL = {SIAM J. Numer. Anal.},
    VOLUME = {48},
      YEAR = {2010},
    NUMBER = {2},
     PAGES = {734--771}
}

@misc{NIST:DLMF,
         key = "{\relax DLMF}",
       title = "{\it NIST Digital Library of Mathematical Functions}",
howpublished = "http://dlmf.nist.gov/, Release 1.0.13 of 2016-09-16",
         url = "http://dlmf.nist.gov/",
        note = "F.~W.~J. Olver, A.~B. {Olde Daalhuis}, D.~W. Lozier, B.~I. Schneider,
                R.~F. Boisvert, C.~W. Clark, B.~R. Miller and B.~V. Saunders, eds."}

@article {ABCM02,
    AUTHOR = {Arnold, Douglas N. and Brezzi, Franco and Cockburn, Bernardo
              and Marini, L. Donatella},
     TITLE = {Unified analysis of discontinuous {G}alerkin methods for
              elliptic problems},
   JOURNAL = {SIAM J. Numer. Anal.},
    VOLUME = {39},
      YEAR = {2001/02},
    NUMBER = {5},
     PAGES = {1749--1779}
}

@article {Baran_Stoyan,
    AUTHOR = {Baran, {\'A}. and Stoyan, G.},
     TITLE = {Gauss-{L}egendre elements: a stable, higher order
              non-conforming finite element family},
   JOURNAL = {Computing},
    VOLUME = {79},
      YEAR = {2007},
    NUMBER = {1},
     PAGES = {1--21}
}

@article {CrouzeixRaviart,
    AUTHOR = {Crouzeix, M. and Raviart, P.-A.},
     TITLE = {Conforming and nonconforming finite element methods for
              solving the stationary {S}tokes equations. {I}},
   JOURNAL = {Rev. Fran\c caise Automat. Informat. Recherche
              Op\'erationnelle S\'er. Rouge},
    VOLUME = {7},
      YEAR = {1973},
    NUMBER = {R-3},
     PAGES = {33--75}
}

@article {Brenner_Crouzeix,
    AUTHOR = {Brenner, Susanne C.},
     TITLE = {Forty years of the {C}rouzeix-{R}aviart element},
   JOURNAL = {Numer. Methods Partial Differential Equations},
    VOLUME = {31},
      YEAR = {2015},
    NUMBER = {2},
     PAGES = {367--396}
}

@article {CDS,
    AUTHOR = {Ciarle{t, Jr.}, Patrick and Dunkl, Charles F. and Sauter, Stefan A.},
     TITLE = {A family of {C}rouzeix-{R}aviart finite elements in 3{D}},
   JOURNAL = {Anal. Appl. (Singap.)},
    VOLUME = {16},
      YEAR = {2018},
    NUMBER = {5},
     PAGES = {649--691}
}

@ARTICLE{Crouzeix_Falk,
  author = {M. Crouzeix and R.S. Falk},
  title = {Nonconforming finite elements for {S}tokes problems},
  journal = {Math. Comp.},
  year = {1989},
  volume = {186},
  pages = {437-456}
}

@ARTICLE{Fortin_Soulie,
  author = {M. Fortin and M. Soulie},
  title = {A nonconforming quadratic finite element on triangles},
  journal = {International Journal for Numerical Methods in Engineering},
  year = {1983},
  volume = {19},
  pages = {505-520}
}

@article {Fortin_d3,
    AUTHOR = {Fortin, M.},
     TITLE = {A three-dimensional quadratic nonconforming element},
   JOURNAL = {Numer. Math.},
    VOLUME = {46},
      YEAR = {1985},
    NUMBER = {2},
     PAGES = {269--279}
}

@article {ChaLeeLee,
    AUTHOR = {Cha, Youngjoon and Lee, Miyoung and Lee, Sungyun},
     TITLE = {Stable nonconforming methods for the {S}tokes problem},
   JOURNAL = {Appl. Math. Comput.},
    VOLUME = {114},
      YEAR = {2000},
    NUMBER = {2-3},
     PAGES = {155--174}
}

@book {ErnGuermondI,
    AUTHOR = {Ern, Alexandre and Guermond, Jean-Luc},
     TITLE = {Finite elements {I}---{A}pproximation and interpolation},     
 PUBLISHER = {Springer, Cham},
      YEAR = {2021}
}

@incollection{nochetto2011primer,
  title={Primer of adaptive finite element methods},
  author={Nochetto, Ricardo H and Veeser, Andreas},
  booktitle={Multiscale and adaptivity: modeling, numerics and applications},
  pages={125--225},
  year={2011},
  publisher={Springer}
}

@article{BohneSauterCiarletdD,
      title={Crouzeix-Raviart elements on simplicial meshes in $d$ dimensions}, 
      author={Nis-Erik Bohne and Ciarlet Jr., Patrick and Stefan Sauter},
      year={2024},
      eprint={2407.04361},
      archivePrefix={arXiv},
      primaryClass={math.NA},
      url={https://arxiv.org/abs/2407.04361},
	  pubstate = {To appear in: Foundations of Computational Mathematics}
}

@article{CarstensenPuttkammer-howToProofDiscreteReliabiltiyNonConf,
    AUTHOR = {Casten Carstensen and Sophie Puttkammer},
     TITLE = {How to prove discrete reliability for nonconforming finite element methods},
   JOURNAL = {Journal of Computational Mathematics},
    VOLUME = {38},
      YEAR = {2020},
    NUMBER = {1},
     PAGES = {142--175},
       DOI = {10.4208/jmc.1908-m2018-0174},
}

@article {ArnoldBrezzi-mixedNonConvFEM85,
    AUTHOR = {Arnold, D. N. and Brezzi, F.},
     TITLE = {Mixed and nonconforming finite element methods:
              implementation, postprocessing and error estimates},
   JOURNAL = {RAIRO Mod\'el. Math. Anal. Num\'er.},
  FJOURNAL = {RAIRO Mod\'elisation Math\'ematique et Analyse Num\'erique},
    VOLUME = {19},
      YEAR = {1985},
    NUMBER = {1},
     PAGES = {7--32},
      ISSN = {0764-583X,1290-3841},
   MRCLASS = {65N30},
  MRNUMBER = {813687},
       DOI = {10.1051/m2an/1985190100071},
      
}

@article {CKNS-AFEMOptimalRatesConf,
    AUTHOR = {Cascon, J. Manuel and Kreuzer, Christian and Nochetto, Ricardo
              H. and Siebert, Kunibert G.},
     TITLE = {Quasi-optimal convergence rate for an adaptive finite element
              method},
   JOURNAL = {SIAM J. Numer. Anal.},
  FJOURNAL = {SIAM Journal on Numerical Analysis},
    VOLUME = {46},
      YEAR = {2008},
    NUMBER = {5},
     PAGES = {2524--2550},
      ISSN = {0036-1429,1095-7170},
   MRCLASS = {65N30 (41A25)},
  MRNUMBER = {2421046},
MRREVIEWER = {Hans-Peter\ Helfrich},
       DOI = {10.1137/07069047X},
       
}

@article {CarstensenHellwig-DiscretePoissonFreidrichs,
    AUTHOR = {Carstensen, Carsten and Hellwig, Friederike},
     TITLE = {Constants in discrete {P}oincar\'e{} and {F}riedrichs
              inequalities and discrete quasi-interpolation},
   JOURNAL = {Comput. Methods Appl. Math.},
  FJOURNAL = {Computational Methods in Applied Mathematics},
    VOLUME = {18},
      YEAR = {2018},
    NUMBER = {3},
     PAGES = {433--450},
      ISSN = {1609-4840,1609-9389},
   MRCLASS = {65N30 (65J05)},
  MRNUMBER = {3824773},
       DOI = {10.1515/cmam-2017-0044},
       
}

@article {Shedensack-DiscreteHelmholtz,
    AUTHOR = {Bringmann, Philipp and Ketteler, Jonas W. and Schedensack,
              Mira},
     TITLE = {Discrete {H}elmholtz {D}ecompositions of {P}iecewise
              {C}onstant and {P}iecewise {A}ffine {V}ector and {T}ensor
              {F}ields},
   JOURNAL = {Found. Comput. Math.},
  FJOURNAL = {Foundations of Computational Mathematics. The Journal of the
              Society for the Foundations of Computational Mathematics},
    VOLUME = {25},
      YEAR = {2025},
    NUMBER = {2},
     PAGES = {417--461},
      ISSN = {1615-3375,1615-3383},
   MRCLASS = {65D18 (65N30 74B05 74S05 76D07)},
  MRNUMBER = {4884463},
       DOI = {10.1007/s10208-024-09642-1},
       
}

@article {GBS-PWStokesNumMat,
    AUTHOR = {Gr\"a{\ss}le, Benedikt and Bohne, Nis-Erik and Sauter, Stefan},
     TITLE = {The pressure-wired {S}tokes element: a mesh-robust version of
              the {S}cott-{V}ogelius element},
   JOURNAL = {Numer. Math.},
  FJOURNAL = {Numerische Mathematik},
    VOLUME = {156},
      YEAR = {2024},
    NUMBER = {5},
     PAGES = {1781--1807},
}

@article {Vohralik-DiscretePoincare,
    AUTHOR = {Vohral\'ik, Martin},
     TITLE = {On the discrete {P}oincar\'e-{F}riedrichs inequalities for
              nonconforming approximations of the {S}obolev space {$H^1$}},
   JOURNAL = {Numer. Funct. Anal. Optim.},
  FJOURNAL = {Numerical Functional Analysis and Optimization. An
              International Journal},
    VOLUME = {26},
      YEAR = {2005},
    NUMBER = {7-8},
     PAGES = {925--952},
      ISSN = {0163-0563,1532-2467},
   MRCLASS = {65N30 (26D15 46E35)},
  MRNUMBER = {2192029},
       DOI = {10.1080/01630560500444533},
       
}

@article {CFPP-AxiomsOfAdaptivity,
    AUTHOR = {Carstensen, C. and Feischl, M. and Page, M. and Praetorius,
              D.},
     TITLE = {Axioms of adaptivity},
   JOURNAL = {Comput. Math. Appl.},
  FJOURNAL = {Computers \& Mathematics with Applications. An International
              Journal},
    VOLUME = {67},
      YEAR = {2014},
    NUMBER = {6},
     PAGES = {1195--1253},
      ISSN = {0898-1221,1873-7668},
   MRCLASS = {65N50 (65N12 65N22 65N30 65N38)},
  MRNUMBER = {3170325},
MRREVIEWER = {Tsu-Fen\ Chen},
       DOI = {10.1016/j.camwa.2013.12.003},
      
}

@article {Rabus-OptimalRatesNCFEM,
    AUTHOR = {Rabus, H.},
     TITLE = {A natural adaptive nonconforming {FEM} of quasi-optimal
              complexity},
   JOURNAL = {Comput. Methods Appl. Math.},
  FJOURNAL = {Computational Methods in Applied Mathematics},
    VOLUME = {10},
      YEAR = {2010},
    NUMBER = {3},
     PAGES = {315--325},
      ISSN = {1609-4840,1609-9389},
   MRCLASS = {65N30 (65N12 65N50)},
  MRNUMBER = {2770297},
MRREVIEWER = {Tsu-Fen\ Chen},
       DOI = {10.2478/cmam-2010-0018},
       
}

@article {BohneGrässleSauter-PressureImprovedStokes,
    AUTHOR = {Bohne, Nis-Erik and Gr\"a{\ss}le, Benedikt and Sauter, Stefan
              A.},
     TITLE = {Pressure-improved {S}cott-{V}ogelius type elements},
   JOURNAL = {Calcolo},
  FJOURNAL = {Calcolo. A Quarterly on Numerical Analysis and Theory of
              Computation},
    VOLUME = {62},
      YEAR = {2025},
    NUMBER = {1},
     PAGES = {Paper No. 8, 33},
      ISSN = {0008-0624,1126-5434},
   MRCLASS = {65N30 (65N12 76D07)},
  MRNUMBER = {4844036},
       DOI = {10.1007/s10092-024-00627-8},
       
}

@article {Proriol-orthoPolyTriangle,
    AUTHOR = {Proriol, Joseph},
     TITLE = {Sur une famille de polynomes \`a{} deux variables orthogonaux
              dans un triangle},
   JOURNAL = {C. R. Acad. Sci. Paris},
  FJOURNAL = {Comptes Rendus Hebdomadaires des S\'eances de l'Acad\'emie des
              Sciences},
    VOLUME = {245},
      YEAR = {1957},
     PAGES = {2459--2461},
      ISSN = {0001-4036},
   MRCLASS = {33.00},
  MRNUMBER = {95994},
}

@article {BMS-NCAFEMOptimal,
    AUTHOR = {Becker, Roland and Mao, Shipeng and Shi, Zhongci},
     TITLE = {A convergent nonconforming adaptive finite element method with
              quasi-optimal complexity},
   JOURNAL = {SIAM J. Numer. Anal.},
  FJOURNAL = {SIAM Journal on Numerical Analysis},
    VOLUME = {47},
      YEAR = {2010},
    NUMBER = {6},
     PAGES = {4639--4659},
      ISSN = {0036-1429,1095-7170},
   MRCLASS = {65N30 (65N12 65N15 65N50)},
  MRNUMBER = {2595052},
MRREVIEWER = {Javier\ de Frutos},
       DOI = {10.1137/070701479},
      
}

@article {BCGT-StabFreeHHO,
    AUTHOR = {Bertrand, Fleurianne and Carstensen, Carsten and Gr\"a{\ss}le,
              Benedikt and Tran, Ngoc Tien},
     TITLE = {Stabilization-free {HHO} a posteriori error control},
   JOURNAL = {Numer. Math.},
  FJOURNAL = {Numerische Mathematik},
    VOLUME = {154},
      YEAR = {2023},
    NUMBER = {3-4},
     PAGES = {369--408},
      ISSN = {0029-599X,0945-3245},
   MRCLASS = {65N12 (65N30 65Y20)},
  MRNUMBER = {4630545},
       DOI = {10.1007/s00211-023-01366-8},
       
}

@book {DPD-HHOBook,
    AUTHOR = {Di Pietro, Daniele Antonio and Droniou, J\'er\^ome},
     TITLE = {The hybrid high-order method for polytopal meshes},
    SERIES = {Modeling, Simulation and Applications},
    VOLUME = {19},
      NOTE = {Design, analysis, and applications},
 PUBLISHER = {Springer, Cham},
      YEAR = {[2020] \copyright 2020},
     PAGES = {xxxi+525},
      ISBN = {978-3-030-37202-6; 978-3-030-37203-3},
   MRCLASS = {65-02 (74-01 76-01 78Mxx)},
  MRNUMBER = {4230986},
       DOI = {10.1007/978-3-030-37203-3},
      
}

@misc{NGSolve,
author = {Schöberl, Joachim},
title = {Finite element software NETGEN/NGSolve Version 6.2},
url = {https://ngsolve.org/}}

@book {Grisward-Singularities,
    AUTHOR = {Grisvard, P.},
     TITLE = {Singularities in boundary value problems},
    SERIES = {Recherches en Math\'ematiques Appliqu\'ees [Research in
              Applied Mathematics]},
    VOLUME = {22},
 PUBLISHER = {Masson, Paris; Springer-Verlag, Berlin},
      YEAR = {1992},
     PAGES = {xiv+199},
      ISBN = {2-225-82770-2},
   MRCLASS = {35-02 (35A20 35J25 35J65)},
  MRNUMBER = {1173209},
MRREVIEWER = {V.\ S.\ Rabinovich},
}

@book {Mitchell-NVB,
    AUTHOR = {Mitchell, William F.},
     TITLE = {Unified multilevel adaptive finite element methods for
              elliptic problems},
      NOTE = {Thesis (Ph.D.)--University of Illinois at Urbana-Champaign},
 PUBLISHER = {ProQuest LLC, Ann Arbor, MI},
      YEAR = {1988},
     PAGES = {119},
   MRCLASS = {99-05},
  MRNUMBER = {2637128},
}

@article {Brenner-confComp,
    AUTHOR = {Brenner, Susanne C.},
     TITLE = {Two-level additive {S}chwarz preconditioners for nonconforming
              finite element methods},
   JOURNAL = {Math. Comp.},
  FJOURNAL = {Mathematics of Computation},
    VOLUME = {65},
      YEAR = {1996},
    NUMBER = {215},
     PAGES = {897--921},
      ISSN = {0025-5718,1088-6842},
   MRCLASS = {65N30 (65F35 65N55)},
  MRNUMBER = {1348039},
MRREVIEWER = {I.\ Norman\ Katz},
       DOI = {10.1090/S0025-5718-96-00746-6},
}
\end{document}